
\documentclass[11pt]{article}
\usepackage[left=3cm,top=2.5cm,right=3cm]{geometry}             
\geometry{letterpaper}                   
\usepackage{graphicx,caption,subcaption}

\usepackage{enumerate}
\usepackage{appendix}

\usepackage{amssymb,amsmath,amsthm}

\addtolength{\jot}{4pt}

\newcommand{\RR}{\mathbb{R}}
\newcommand{\EE}{\mathbb{E}}
\newcommand{\CC}{\mathbb{C}}

\newcommand{\NN}{\mathbb{N}}

\newcommand{\BO}{\mathcal{O}}
\newcommand{\JJ}{\mathcal{J}}

\newcommand{\MM}{\mathbf{M}}
\newcommand{\YY}{\mathbf{Y}}
\newcommand{\afg}{\mathbf{g}}

\newcommand{\DIV}{\,\text{div}}
\newcommand{\VOL}{\,d\text{Vol}}

\usepackage{color} 
\usepackage[normalem]{ulem}                        



\makeatletter
\newtheorem*{rep@theorem}{\rep@title}
\newcommand{\newreptheorem}[2]{%
\newenvironment{rep#1}[1]{%
 \def\rep@title{#2 \ref{##1}}%
 \begin{rep@theorem}}%
 {\end{rep@theorem}}}
\makeatother

\newtheorem{theorem}{Theorem}[section]
\newreptheorem{theorem}{Theorem}

\newtheorem{lemma}[theorem]{Lemma}
\newreptheorem{lemma}{Lemma}

\newtheorem{prop}[theorem]{Proposition}
\newreptheorem{prop}{Proposition}

\newtheorem{corollary}[theorem]{Corollary}
\newreptheorem{corollary}{Corollary}

\newtheorem{defn}[theorem]{Definition}
\newreptheorem{defn}{Definition}

\theoremstyle{definition}
\newtheorem{remark}[theorem]{Remark}

\newtheorem*{question}{Question}

\newtheorem*{conjecture}{Conjecture}

\author{Spyros Alexakis
 \\University of 
Toronto \and Tracey Balehowsky \\University 
of Toronto \and Adrian Nachman\\University of Toronto }


\title{Determining a Riemannian Metric from Minimal Areas}
\date{\today}                                         

\begin{document}

\maketitle

\begin{abstract}
We prove that if $(M,g)$ is a topological 3-ball with a  $C^4$-smooth Riemannian metric $g$,
 and
 mean-convex boundary $\partial M$ then knowledge of least areas circumscribed by simple 
 closed curves $\gamma \subset \partial M$ uniquely determines the metric $g$,
 under some additional geometric assumptions. 
These are that $g$ is  either a) $C^3$-close to Euclidean or b) satisfies much weaker geometric conditions which hold when the manifold is
to a sufficient degree either \emph{thin}, or \emph{straight}. 

 In fact, the least area data that we require 
 is for  a much more restricted class of curves $\gamma\subset \partial M$. We also prove a 
 corresponding local result: assuming only that $(M,g)$ has strictly 
mean convex boundary at a point $p\in\partial M$, we prove that knowledge of the least areas 
circumscribed by any simple closed curve $\gamma$ in a neighbourhood $U\subset \partial M$ 
of $p$ uniquely determines the metric near $p$. Additionally, we sketch the proof of a global result with no thin/straight or curvature condition, but assuming the metric admits minimal foliations ``from all directions''.  

The proofs rely on finding the metric along a 
continuous sweep-out of $M$ by area-minimizing surfaces; they  
bring together ideas from 
the 2D-Calder\'on inverse problem, minimal surface theory, and the careful analysis of 
a system of pseudo-differential equations.
\end{abstract}

\tableofcontents

\section{Introduction}


The classical boundary rigidity problem in differential geometry asks whether knowledge of the 
distance between any two points on the boundary of a Riemannian manifold is sufficient to 
identify the metric up to isometries that fix the boundary. Manifolds for which this is the case 
are called \emph{boundary rigid}. One motivation for the problem comes from seismology, if one seeks to determine the interior structure of the Earth from measurements of travel times of seismic waves. There are counterexamples to boundary rigidity: intuitively, if the manifold has a region of large positive curvature in the interior, length-minimizing geodesics between boundary points need not pass through this region. A way to rule out such counterexamples is to assume the manifold is simple, meaning that any two points can be joined by a unique minimizing geodesic and that the boundary is strictly convex. Michel \cite{Michel} conjectured that simple manifolds are boundary rigid. Special cases have been proved by Michel \cite{Michel}, Gromov \cite{gromov}, Croke \cite{croke}, Lassas, Sharafutdinov, and Uhlmann \cite{lassas2003semiglobal}, Stefanov and Uhlmann \cite{stefanov2005boundary}, and Burago and Ivanov \cite{BuragoIvanov, BuragoIvanov2}. In two dimensions, the conjecture was settled by Pestov and Uhlmann \cite{PestovUhlmann}. Moving away from the simplicity assumption, important recent work of Stefanov, Uhlmann and Vasy solved a local version of the rigidity problem in a neighbourhood of any strictly convex point of the boundary, and obtained a corresponding global rigidity result for manifolds that admit a foliation satisfying a certain convexity condition. (see \cite{SVU2} and earlier references given there).

In this paper, we consider the following higher dimensional version of the boundary rigidity problem, where in lieu of lengths of geodesics the data consists of areas of minimal surfaces. 
\begin{question}
 Given any simple closed curve $\gamma$ on the boundary of a Riemannian 3-manifold $(M,g)$, suppose the area of any least-area surface $Y_\gamma\subset M$ circumscribed by the curve is known. Does this information determine the metric $g$? 
 \end{question}

 Under certain geometric conditions, we show that the answer is yes. In some cases, we only require the area data for a much smaller subclass of curves on the boundary.
  \begin{figure}[h]
\centering
 \includegraphics[width=0.4\textwidth]{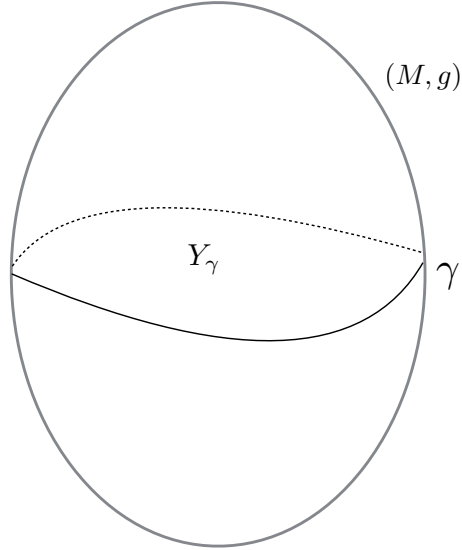}
  \caption{Simple closed curve $\gamma$ on $\partial M$.}
 \end{figure}
 
Theories posited by the AdS/CFT correspondence also provide strong physics motivation to consider the problem of using knowledge of the areas of certain submanifolds to determine the metric. Loosely speaking, the AdS/CFT correspondence states that the dynamics of $(n+2)$-dimensional supergravity theories modelled on an Anti-de Sitter (AdS) space are equivalent to the quantum physics modelled by $(n+1)$-dimensional conformal field theories (CFT) on the boundary of the AdS space (see \cite{Mald1}). This equivalence is often referred to as holographic duality. Analogous to the problem of boundary rigidity, one of the main goals of the AdS/CFT correspondence is to use conformal field theory information on the boundary to determine the metric on the AdS spacetime which encodes information about the corresponding gravity dynamics. 

Towards this goal, Maldecena \cite{Mald2} has proposed that given a curve on the boundary of 
the AdS spacetime, the renormalized area of the minimal surface bounding the curve contains 
information about the expectation value of the Wilson loop associated to the curve. More 
recently, Ryu and Takayanagi \cite{Ryu-T} have conjectured that given a region $A$ on the 
boundary of an $(n+2)$-dimensional AdS spacetime, the entanglement entropy of $A$ is 
equivalent to the renormalized area of the area-minimizing surface $Y_\gamma$ with boundary 
$\gamma=\partial A$ (see Figure \ref{ads-cft-fig}). The AdS/CFT correspondence is often 
studied in Riemannian signature, where one must consider a Riemannian, asymptotically 
hyperbolic manifold $(M,g)$, for which one knows information on the boundary and the 
renormalized area for minimal surfaces bounded by closed loops on the boundary. Hence, 
one is led to the question: Given 
a collection of simple closed curves on the boundary-at-infinity of $(M,g)$, does knowledge of 
the renormalized area of the area-minimizing surfaces bounded by these curves allow us to 
recover the metric? And even locally: Does knowledge of renormalized
areas of loops lying in a given domain $V\subset M$ allow one to reconstruct the
bulk metric, and in how large a (bulk) region is the reconstruction possible? Answers in the affirmative may provide new methods to describe the \
relationship between gravity theories and conformal field theories. 
  \begin{figure}[h]
\centering
 \includegraphics[width=0.38\textwidth]{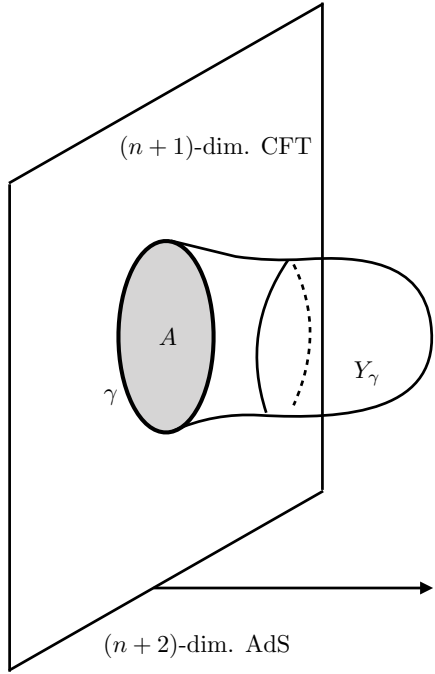}
  \caption{Depiction of a region $A$ in an $(n+1)$-dimensional CFT and a area-minimizing  
  surface $Y_\gamma$ in $(n+2)$-dimensional AdS.}
  \label{ads-cft-fig}
 \end{figure}

Here we do not consider boundaries at infinity, but rather the finite-boundary 
problem. We will also be working with foliations of the boundary $\partial M$
by a family of simple closed loops, and we will need the area-minimizers bounded  
by these loops to yield a foliation of $M$.\footnote{Loosely speaking, a \textbf{foliation} of 
an $n$-manifold $M$ is a continuous, 1-parameter family of $k$-submanifolds, $k<n$, which 
sweep out $M$.}  We use knowledge of the areas 
of the surfaces\footnote{These surfaces will always be homeomorphic to a disc.} of least area 
bounded by such curves to find the metric. We present two such results below, as well as a local 
determination result. All these rely on knowledge of areas for 
one foliation of our manifold by area-minimizers, and the area data 
 for all area-minimizing perturbations of this foliation by (nearby) area-minimizing 
 foliations. It is not clear what the \emph{minimal} knowledge of areas  
required to determine the interior metric is. We suspect that our assumptions on 
what areas are known are  essentially 
optimal. We propose a conjecture (only partially addressed here) which stipulates that 
they are sufficient:

\begin{conjecture}[Boundary rigidity for least area data]
Let $(M,g)$ be a Riemannian 3-ball which admits a foliation by properly embedded, 
area-minimizing surfaces. Suppose that for this foliation and any nearby perturbation, we
 know the areas of the leaves.

Then this information determines $g$ up to boundary-fixing isometries.
\end{conjecture}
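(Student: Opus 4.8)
The plan is to carry out the sweep-out reconstruction that this paper develops in the near-Euclidean and thin/straight regimes --- and sketches in the ``minimal foliations from all directions'' case --- while isolating the single analytic input that the geometric hypotheses are there to guarantee. \emph{Step 1 (foliation gauge).} Using the given foliation $\{Y_s\}_{s\in(0,1)}$ of $M$ by area-minimizing discs, identify $M\cong D^2\times(0,1)$ with $Y_s=D^2\times\{s\}$ and put $g$ in the adapted (ADM) form determined by the induced leaf metrics $h_s$, a shift $1$-form $\beta_s$ and a lapse $N_s$; minimality of the leaves is then the single scalar constraint $\partial_s\log\sqrt{\det h_s}=\mathrm{div}_{h_s}\beta_s$. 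The unknowns are the families $s\mapsto(h_s,\beta_s,N_s)$ modulo reparametrization of $s$ and $s$-dependent diffeomorphisms of $D^2$ fixing $\partial D^2$ --- which is exactly the boundary-fixing isometry freedom of $g$ --- so it suffices to pin them down in any one gauge.

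\emph{Step 2 (areas to Jacobi DN maps).} Fix a leaf $Y_s$. Since any simple closed curve $\gamma'$ near $\gamma_s=\partial Y_s$ in $\partial M$ is a leaf of some nearby perturbed foliation, the area functional $\gamma'\mapsto\mathrm{Area}(Y_{\gamma'})$ is known on a full neighbourhood of $\gamma_s$. Writing the minimizer $Y_{\gamma'}$ as a normal graph $\exp(u\,\nu_s)$ over $Y_s$, minimality yields $L_s u=(\text{terms nonlinear in }u,\nabla u)$ with Jacobi operator $L_s u=\Delta_{h_s}u+\big(|A_s|^2+\mathrm{Ric}_g(\nu_s,\nu_s)\big)u$; the first variation of area recovers the contact angle of $Y_s$ along $\gamma_s$, the second variation recovers the Dirichlet-to-Neumann map $\Lambda_{L_s}$ of $L_s$ on $Y_s$ (transversality of the leaf to $\partial M$ guarantees that all Dirichlet data on $\gamma_s$ are attained), and the higher variations encode, in addition, the transverse derivatives of the ambient curvature along $Y_s$.

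\emph{Step 3 (2D Calder\'on, then propagation in $s$).} As $Y_s$ is a disc, choose isothermal coordinates, $h_s=e^{2\lambda_s}|dz|^2$. By the two-dimensional inverse results for the Schr\"odinger equation (Nachman; Bukhgeim; Lassas--Uhlmann for the conformal class), $\Lambda_{L_s}$ determines the potential $V_s:=e^{2\lambda_s}\big(|A_s|^2+\mathrm{Ric}_g(\nu_s,\nu_s)\big)$ on $D^2$ and the boundary value $\lambda_s|_{\partial D^2}$, but \emph{not}, for a single leaf, the conformal factor $\lambda_s$ itself nor the trace-free tensor $A_s$ (equivalently the Hopf differential $\Phi_s$, whose $\bar\partial$ is an explicit ambient-curvature quantity by Codazzi). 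These missing leaf-wise data and the transverse data $(N_s,\beta_s)$ are coupled to $V_s$, to the contact-angle and higher-variation data of Step~2, and to one another through the Gauss equation, the minimality constraint, and the $s$-evolution (Riccati) equations of the foliation; differentiating these relations in $s$ produces a closed first-order system, pseudodifferential along the leaves and ODE-like in $s$. One then reconstructs by propagation: the base case is a degenerate leaf at a pole $p_0\in\partial M$, where a blow-up of the shrinking leaves turns the area data into a boundary-determination statement recovering the induced metric on $\partial M$ and the full jet of $g$ at $p_0$; integrating the system across $s\in(0,1)$ and matching gauges yields $(h_s,\beta_s,N_s)$, hence $g$.

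\emph{Main obstacle.} The crux is the global solvability of the $s$-propagation in Step~3, the minimal-surface counterpart of layer-stripping in boundary rigidity. Just as minimizing geodesics can become trapped, area-minimizing leaves may fail to sweep out $M$ in a controlled way, or the linearization of the coupled pseudodifferential system may lose invertibility partway along the sweep-out, so that $\Lambda_{L_s}$ no longer determines enough of the geometry of $Y_s$ to continue. The near-Euclidean and thin/straight hypotheses of this paper are precisely what rules this out (they keep the Jacobi potentials $V_s$ small and the foliation geometry tame); the conjecture asserts that this pathology never arises as soon as one area-minimizing foliation exists. A promising intermediate step is to show that the area data for one foliation together with all its nearby perturbations already determines the Jacobi Dirichlet-to-Neumann maps on a family of minimal surfaces rich enough to deduce the ``minimal foliations from all directions'' hypothesis, and then to invoke the global result sketched in this paper.
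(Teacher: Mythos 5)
The statement you are asked to prove is, in the paper, stated as a \emph{conjecture}, and the authors prove it only for the restricted manifold classes of Definition \ref{class1and2} (near-Euclidean, thin/straight) or under the ``foliations from all directions'' hypothesis; in full generality it remains open. Your proposal is therefore not, and does not claim to be, a complete proof; what it is, is an honest reconstruction of the paper's strategy together with an explicit identification of the exact gap that the paper's extra hypotheses are designed to close.

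On the parts that match: your Steps~1--2 reproduce the paper's reduction accurately. Putting $g$ into an adapted form over the foliation, recovering the contact angle from the first variation, and recovering the Dirichlet-to-Neumann map of the Jacobi operator from the polarized second variation is precisely Propositions~\ref{area-to-angle}, \ref{n-dim_area_to_DN} and \ref{area_to_DN}. Your Step~3's use of the 2D Calder\'on result to pass from $\Lambda_{L_s}$ to the potential $V_s=e^{2\lambda_s}\bigl(|A_s|^2+\mathrm{Ric}(\nu_s,\nu_s)\bigr)$ is the paper's use of \cite{IN}. And your ``main obstacle'' paragraph is exactly the content of the paper's discussion: the coupled leafwise-pseudodifferential / transverse-propagation system is where the reconstruction lives, and its invertibility is what the near-Euclidean and thin/straight conditions are bought to guarantee in the estimates of Proposition~\ref{psido_lem} and in inverting $(I-\mathcal{K})$ and $(I-Q_0)$.

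A few technical points where you diverge from the paper's actual mechanism, and where your sketch is vaguer than it should be if it is to survive scrutiny. First, you assert that transversality of $Y_s$ to $\partial M$ guarantees that all Dirichlet data on $\gamma_s$ are attained by varying curves within $\partial M$; the paper instead extends $(M,g)$ to $(\tilde M,\tilde g)$ and uses normal variations of curves in the extension, taking a limit $\epsilon\to0$ --- this is not merely convenience, it is what lets them prescribe the boundary data freely while keeping the perturbed minimizers unique and regular. Second, you invoke ``higher variations'' of area to obtain transverse derivatives of ambient curvature; the paper never uses third or higher variations. The mechanism that closes the system is instead the family of \emph{tilt} variations $\psi_{p,i}$ of the entire foliation (Lemma~\ref{sep_lin_lem}, Lemma~\ref{psi-lem-thin}), which turn the known function $\|\nabla x^3\|_g$ on each perturbed foliation into equations for $g^{3k}$; this is a genuine idea and your sketch does not contain a substitute. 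Third, your proposed base case (a blow-up at a pole recovering the full jet of $g$ at $p_0$) is not what the paper does --- the paper simply uses the given boundary metric and that $\delta\phi$, $\delta g^{3k}$ vanish outside $M$ as the Cauchy data for the evolution.

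In summary: as an account of the paper's route to the restricted Theorems~\ref{global_metric_thm}--\ref{onion_thm}, your proposal is broadly on target, and your concluding paragraph correctly diagnoses why the general conjecture is not settled by this route. But it is a strategy sketch, not a proof, and the two specific mechanisms you would need to make it rigorous --- the extension to $\tilde M$ to obtain full boundary data for the Jacobi problem, and the tilt-variation machinery to recover $g^{3k}$ --- are either misattributed or absent.
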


We prove this conjecture for particular classes of Riemannian manifolds. To describe in detail 
these classes, we make the following definitions.

\begin{defn}
\label{cl.eucl}
Let $(M,g)$ be a Riemannian 3-manifold. For $k\in \NN$, we say the metric $g$ is 
$\epsilon$-\textbf{\boldmath$C^k$-close} to the Euclidean metric $g_\EE$ on $\RR^3$ if 
there exists a 
  global coordinate chart $(x^\alpha)$, $\alpha=1,2,3$, on 
$M$ for which we have 
$$||g_{\alpha\beta}(x)-(g_\EE)_{\alpha\beta}(x)||_{C^k(M)}<\epsilon$$ for all 
$\alpha,\beta=1,2,3$.
\end{defn}

From this point onwards when we say a metric is $C^k$-close to Euclidean we mean that 
it is $\epsilon$-$C^k$-close, for some sufficiently small $\epsilon>0$. 

\begin{defn}\label{thin-defn}
We say a Riemannian manifold with boundary $(M,g)$ is  \textbf{\boldmath$(K,\epsilon_0,\delta_0)$-thin} if  for some parameters $K,\epsilon_0,\delta_0>0$ the following holds:
\begin{enumerate}
\item there exist global coordinates $(y^\alpha)$, $\alpha=1,2,3$ on $(M,g)$ such that 
the surfaces $Y(t):=\{y^3=\text{constant}=t\}$ are properly embedded and area-minimizing 
discs, 
$\{Y(t)\,:\,t\in(-1,1)\}=M$, and the coordinates $(y^1,y^2,y^3)$ are regular Riemannian 
coordinates in the sense that the Beltrami coefficient $\mu(y^1,y^2,y^3)$ satisfies $|\partial^k_i \mu|\le 10/(\epsilon_0)^i$ for $i=1,2$ (see \eqref{confmap1});
\item $\frac12<||g^{33}||_{L^\infty(M)}<2$, and for $i,j\in\{1,2\}$, and $k,l\in\{0,1,2\}$, with $0<k+l\le2$, $|| \partial_i^k\partial_j^l g^{33}||_{L^\infty(M)}\le K\epsilon_0^{-(k+l)+1}$; 
\item for $i,j,k\in\{1,2\}$, and $\beta,\gamma \in\{0,1,2,3,4\}$ with $\beta+\gamma\le4$, $|| \partial_i^\beta\partial_j^\gamma g^{3k}||_{L^\infty(M)}\le K\epsilon_0^{-(\beta+\gamma)}$; 
\item for each $t$, ${\rm Area}[Y(t)]$ is bounded above by $4\pi\epsilon_0^2$;
\item The Riemann curvature tensor $\text{Rm}_g$ of the metric $g$ 
and the  second fundamental forms $A(t)$ of each $Y(t)$ 
satisfy the bounds:\footnote{$\overline{\nabla}$ is the connection of the metric $g$ on 
$Y_t$ and $\nabla$ is the connection of $g$.}
\[
||A(t)||\le \sqrt{\delta_0}\cdot \epsilon^{-1}_0, ||\overline{\nabla} A(t)||\le 
\sqrt{\delta_0}\cdot \epsilon^{-2}_0, 
||{\rm Rm}_g||\le \delta_0\cdot \epsilon^{-2}_0, ||\nabla {\rm Rm}_g||\le
 \delta_0\cdot \epsilon^{-3}_0
\]
\end{enumerate}
\end{defn}

%

\begin{figure}[h]
    \centering
    \begin{subfigure}[b]{0.4\textwidth}
    \centering
        \includegraphics[width=0.3\textwidth]{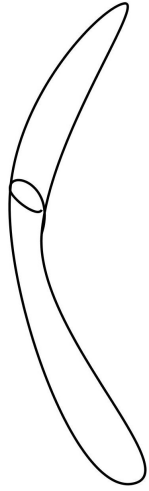}
\caption{The case when $\epsilon_0<<1$. The\\ thin manifold is allowed to be ``bent''.}
    \end{subfigure}
    ~
    \begin{subfigure}[b]{0.3\textwidth}
    \centering
        \includegraphics[width=0.9\textwidth]{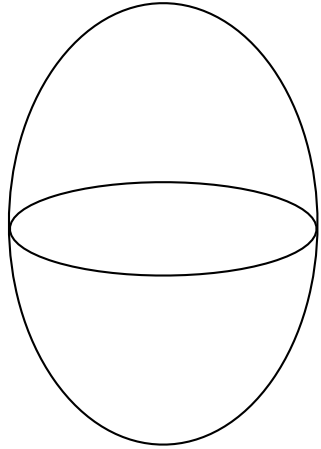}
        \caption{The case when $\epsilon_0>1$. This fatter manifold is required to be ``straight''.}
    \end{subfigure}
    \caption{Depiction of $(K,\epsilon_0,\delta_0)$-thin manifolds.}
\end{figure}

�Let us describe how the parameters $K,\epsilon_0,\delta_0$ correspond to different bounds on the geometry of the minimal foliated $(M,g)$:

The parameter $\epsilon_0$ in requirement 4 corresponds to a weak notion of ``girth'' of the manifold. Note that in conjunction with bounds on the geodesic curvature on the boundary $\gamma(t)=\partial Y(t)$ and the bound in 5, requirement 4 implies bounds on the diameter of each leaf $Y(t)$. Thus a small $\epsilon_0$ implies the manifold has thin girth.

Requirement 5 bounds the curvature of the ambient manifold, as well as the extrinsic geometry of the minimal leaves. These bounds can be large when $\delta_0$ is fixed and $\epsilon_0$ is taken small enough. Requirement 1 (weakly) bounds the intrinsic geometry of the leaves, while the estimates in requirements 2 and 3 bound the ``straightness'' of the metric $g$. In particular, the functions $\partial_i g^{33}$, $\partial_i g^{3k}$ vanish when the vector fields
 $\partial_3$ are hypersurface-orthogonal affine geodesic vector fields, i.e. the metric is expressed in Fermi-type coordinates

�\[
g=(dx^3)^2+\sum_{i,j=1}^2 g_{ij}(x^1, x^2, x^3)dx^idx^j.
\]
Thus, the bounds in 2 and 3 measure the departure from this \emph{straight} picture; smallness of $K$ should be thought of as a nearly \emph{straight} minimal foliated manifold.

With Definitions \ref{cl.eucl} and \ref{thin-defn} in mind, we work with Riemannian manifolds belonging to either of the next two classes: 


\begin{defn}\label{class1and2} Let $(M,g)$ be a $C^4$-smooth, Riemannian manifold which has the properties
\begin{enumerate}
\item $M$ homeomorphic to a 3-dimensional ball in $\RR^3$, 
\item $(M,g)$  has $C^4$-smooth, mean convex boundary $\partial M$,
\item there is a foliation of $\partial M$ by simple closed curves, $\{\gamma(t)\,:\,t\in(-1,1)\}=\partial M$, 
which induces a foliation $\{Y(t)\,:\,t\in(-1,1)\}=M$ by properly embedded, area minimizing 
surfaces\footnote{Note: as $t\to\pm1$, the loops $\gamma(t)$ and the surfaces $Y(t)$ collapse to points on $\partial M$.}, and satisfies a regularity assumption: 
The geodesic curvatures of the curves $\gamma(t)\subset \partial M$
obey\footnote{We also note that the area bound on the leaves $Y(t)=\{x^3=t\}$ together with the geodesic curvature bounds, the bounds on the
ambient curvature, and bounds on $||A||_g$ imply diameter bounds on each $Y(t)$, of the form: ${\rm
diam}(Y(t))\le 4 \sqrt{{\rm Area} [Y(t)]}$.}
\[
||\kappa||\le  
2{\rm Area}[Y(t)]^{-1/2}, ||\overline{\nabla}\kappa||\le  
2{\rm Area}[Y(t)]^{-1}. 
\]
\end{enumerate}
 If additionally the metric $g$ is $\epsilon_0-C^3$-close to Euclidean for some small 
 $\epsilon_0>0$, we say $(M,g)$ is of {\bf Class 1}. If  
$(M,g)$ is $(K,\epsilon_0,\delta_0)$-thin, for some sufficiently small 
$\delta_0>0$, and $K,\epsilon_0>0$ are such that  $K\epsilon_0$ is sufficiently 
small,\footnote{We do not keep track of the constants but careful
 tracking of the proof could yield universal bounds.} we say $(M,g)$ is of {\bf Class 2}.
 \end{defn}
 
In these settings, we prove:
\begin{theorem}\label{global_metric_thm}
Let $(M,g)$ be a manifold of Class 1 or Class 2 above, and $g|_{\partial M}$ be given. Let 
$\{\gamma(t)\,:\,t\in(-1,1)\}=\partial M$ and $\{Y(t)\,:\,t\in(-1,1)\}=M$ be as in Definition \ref{class1and2}. Suppose that for each curve $\gamma(t)$ and any nearby perturbation 
$\gamma(s,t)\subset \partial M$, we know the area of the properly embedded surface $Y(s,t)$
 which solves the least-area problem for $\gamma(s,t)$.  

Then the knowledge of these areas uniquely determines the metric $g$ (up to isometries which 
fix the boundary).
\end{theorem}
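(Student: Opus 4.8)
\emph{Proof sketch.} The plan is to reconstruct $g$ leaf-by-leaf along the given foliation, converting the area measurements into Cauchy data for a coupled system that is elliptic (indeed of Calder\'on type) in the in-leaf directions and that propagates in the transversal parameter $t$. Work in coordinates adapted to the foliation, so that $Y(t)=\{x^3=t\}$ and
\[
g = g_{33}\,(dx^3)^2 + 2g_{3i}\,dx^3 dx^i + g_{ij}\,dx^i dx^j,
\]
the unknowns being the induced metrics $g_t := g_{ij}(\cdot,\cdot,t)\,dx^i dx^j$ on the leaves together with the transversal components $g_{3i},g_{33}$ (equivalently a shift vector $\beta_t$ and a lapse $\phi_t$ for the foliation). \textbf{Step 1: from areas to Dirichlet-to-Neumann maps.} Fix $t$ and consider the perturbations $\gamma(s,t)\subset\partial M$ of $\gamma(t)$, with areas $\mathrm{Area}[Y(s,t)]$ known. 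Since $Y(t)$ is minimal, the first variation of area is a boundary integral over $\gamma(t)$ of the outward conormal of $Y(t)$; as $g|_{\partial M}$ is given, this already recovers the angle at which $Y(t)$ meets $\partial M$ along $\gamma(t)$. Differentiating once more, the Hessian of the area functional on the space of curves in $\partial M$ at $\gamma(t)$ equals — after subtracting explicitly computable zeroth-order terms built from $g|_{\partial M}$, the geodesic curvature $\kappa(t)$, and the conormal angle — the Dirichlet-to-Neumann operator $\Lambda_t$ of the Jacobi operator
\[
L_t := \Delta_{g_t} + V_t, \qquad V_t := \|A(t)\|^2 + \mathrm{Ric}_g(N,N),
\]
on the minimal disc $Y(t)$, where $N$ is its unit normal. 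Area-minimality of the leaves, together with the Class 1 / Class 2 hypotheses and mean convexity, makes the foliation strictly stable, so $0$ is not a Dirichlet eigenvalue of $L_t$ and $\Lambda_t$ is well defined.

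\textbf{Step 2: the two-dimensional inverse problem on each leaf.} Because $Y(t)$ is a topological disc, the solution of the Calder\'on problem for Schr\"odinger operators on surfaces applies: $\Lambda_t$ determines the conformal class of $g_t$ — equivalently, in the regular Riemannian coordinates, the Beltrami coefficient $\mu(\cdot,\cdot,t)$, the boundary parametrisation and $g_t|_{\gamma(t)}$ fixing the residual gauge — together with the potential $V_t$. Hence after this step the only remaining in-leaf unknown is the conformal factor $\lambda_t$ of $g_t$, whose boundary values $\lambda_t|_{\gamma(t)}$ are already known from $g|_{\partial M}$. A crucial additional output: differentiating the identity $H\equiv 0$ along the foliation shows that the lapse obeys $L_t\phi_t = 0$ on $Y(t)$, while its boundary values $\phi_t|_{\gamma(t)}=\langle\partial_t,N\rangle$ are computable from the curve foliation of $\partial M$ and the conormal angle of Step 1; by strict stability $\phi_t$ is therefore uniquely determined on every leaf.

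\textbf{Step 3: closing the transversal system.} It remains to recover $\lambda_t$, the Hopf differential of $A(t)$ (the trace-free part of $A(t)$, a quadratic differential, since $\mathrm{tr}\,A(t)=0$), the shift $\beta_t$, and thereby $g_{3i},g_{33}$. These are coupled by: the metric evolution $\partial_t g_t = 2\phi_t A(t) + \mathcal L_{\beta_t} g_t$, whose trace part (using $H\equiv 0$) is a transport equation for $\lambda_t$ driven by $\overline{\mathrm{div}}_{g_t}\beta_t$, and whose trace-free part has known left side $\partial_t\mu_t$ and thus expresses $A(t)$ through $\beta_t$; the Codazzi equations, which with $H\equiv 0$ read schematically $\overline{\mathrm{div}}_{g_t}A(t) = (\mathrm{Ric}_g(N,\cdot))^\top$ and constrain the quadratic differential by a $\bar\partial$-type equation; the Gauss equation $K_{g_t} = \mathrm{sec}_g(TY(t)) - \tfrac12\|A(t)\|^2$, an elliptic equation for $\lambda_t$ on each leaf with known boundary data; and the identity $V_t = \|A(t)\|^2 + \mathrm{Ric}_g(N,N)$ from Step 2, tying $\|A(t)\|^2$ to the ambient Ricci curvature, which is in turn expressed through $g_{ij},g_{3i},g_{33}$ and their derivatives. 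Assembling these gives a determined first-order evolution in $t$ for $(\lambda_t,\mu_t,g_{3i},g_{33})$ whose right-hand side requires, on each leaf, solving elliptic and $\bar\partial$-problems fed by the Calder\'on data of Steps 1--2 — i.e.\ a system of \emph{pseudodifferential} equations in the $t$-variable. One integrates it from the degenerate leaf $t\to-1$ (or from any reference leaf), with Cauchy data supplied by $g|_{\partial M}$ and with the prescribed areas $\mathrm{Area}[Y(t)]$ entering as integrated constraints; uniqueness of $g$ up to boundary-fixing isometries follows from uniqueness for this system.

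\textbf{Main obstacle.} The heart of the matter, and the reason for the Class 1 / Class 2 hypotheses, is the well-posedness and uniqueness of the coupled pseudodifferential system of Step 3: it mixes the nonlocal in-leaf solution operators (the inverse of $L_t$, the resolution of the shift, the $\bar\partial$-inversion for the Hopf differential) with a transport in $t$ that can a priori lose derivatives, so one must show that the linearisation around the background is invertible \emph{uniformly along the entire sweep-out}. This is exactly what the smallness of $\epsilon_0$ (thin girth), of $K\epsilon_0$ (near-straightness of the minimal foliation), and of $\delta_0$ (small ambient curvature and extrinsic geometry) — or, in Class 1, $C^3$-closeness to the Euclidean metric — are designed to provide, via a contraction-mapping / continuation-in-$t$ argument.
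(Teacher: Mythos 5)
Your Steps 1--2 track the paper closely: converting the first and second variations of area into the Dirichlet-to-Neumann map $\Lambda_t$ of the Jacobi operator (cf.\ Propositions~\ref{area-to-angle}, \ref{n-dim_area_to_DN}, \ref{area_to_DN}), invoking the 2D Calder\'on result to recover the potential in the normalized isothermal coordinates, and observing that the lapse $\|\nabla x^3\|_g$ solves the Jacobi equation with known boundary data, hence $g^{33}$ is recovered (Proposition~\ref{inf_sep_prop}). That much is sound and essentially the paper's route.

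Step~3 is where your proposal diverges, and where there is a genuine gap. You attempt to close the transversal system with only (i) the structure equations (Gauss, Codazzi, the evolution identity $\partial_t g_t = 2\phi_t A + \mathcal{L}_{\beta_t}g_t$), (ii) the known quantities $V_t$ and the lapse per leaf, and (iii) the scalar areas $\mathrm{Area}[Y(t)]$ as ``integrated constraints''. This does \emph{not} suffice, and the paper's argument shows why: the crucial additional input is the DN map on the \emph{perturbed} foliations $Y(s,t)$, used via the ``tilt'' variations at each point $p$ (Lemma~\ref{sep_lin_lem}, Lemma~\ref{psi-lem-thin}). By choosing a Jacobi field $\psi_p$ with $\psi_p(p)=0$, $\nabla\psi_p(p)=\partial_i$, one linearizes $\|\nabla x^3_s\|_g(p)$ in $s$ to obtain two new \emph{pointwise} equations (\ref{dgEQ1})--(\ref{dgEQ2}) involving $\delta g^{31},\delta g^{32}$ at $p$ and the nonlocal variation $\dot x^k$ of the isothermal coordinates. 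Your Step~3 never exploits this data; it uses the DN map only on the background leaves. Counting informally, the background $\Lambda_t$ gives one scalar function per leaf (the potential) while three scalar functions per leaf ($\phi$, $g^{31}$, $g^{32}$) remain unknown; the scalar $\mathrm{Area}[Y(t)]$ is one number per leaf and cannot make up the deficit. In addition, the structure-equation system you write down has further unaddressed problems: (a) $\mathrm{Ric}_g(N,\cdot)^\top$, $\mathrm{sec}_g(TY(t))$, and $\mathrm{Ric}_g(N,N)$ involve second transversal derivatives of the full metric, so Gauss and Codazzi are not first-order in $t$ and cannot be assembled into a ``determined first-order evolution'' without an additional reduction (which is precisely what the tilt data provide in the paper); (b) the claim that ``$\partial_t\mu_t$ is known'' is unjustified --- the isothermal chart on each leaf depends nonlocally on the as-yet-unknown interior metric, and what the exterior determination (Proposition~\ref{outside-same}) gives you is $\Phi$ on $\RR^2\setminus\Omega$, not the Beltrami coefficient inside; (c) the potential recovered from Calder\'on is $e^{2\phi}(\|A\|^2+\mathrm{Ric}(N,N))$, not $\|A\|^2+\mathrm{Ric}(N,N)$, so it is already entangled with the unknown conformal factor, and your equation (d) tacitly assumes you can separate them. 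The paper resolves all of this by explicitly deriving the pseudodifferential representation $\delta g^{3k} = P^k_{-1}(\delta\phi) + Q^k_{-2}(\partial_3\delta\phi)$ (Proposition~\ref{psido_lem}), inverting $I-\mathcal{K}$ by smallness (which is where Class~1/Class~2 enters), and feeding this into the $H\equiv 0$ transport equation (Proposition~\ref{phi_lem}) to obtain a scalar hyperbolic problem for $\delta\phi$. Your sketch correctly identifies where the geometric smallness hypotheses should act, but without the tilt-variation equations the system is not closed, so the continuation-in-$t$ argument has nothing to act on.
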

We note that for our first result the curvature is required to be small. For the second result,
the curvature can be very large, but this will be compensated by the thinness condition. 
(The requirement on the geodesic curvature is a technical condition imposed to ensure that a 
certain extension of our surfaces to infinity can be performed while preserving the bounds we 
have).

Our $C^3$-close to Euclidean or $(K,\epsilon_0,\delta_0)$-thin assumption\footnote{We 
remark, but do not prove, that under such assumptions we expect that the existence of area 
minimizing foliations for $(M,g)$ can be derived by a perturbation argument.} is technically 
needed for very different reasons than those in \cite{lassas2003semiglobal,stefanov2005boundary, BuragoIvanov}, as we use it to obtain 
unique solvability of the system of  pseudodifferential equations for the metric components 
which we will describe later in this paper.  We impose that $\partial M$ is mean convex to 
ensure the solvability of the Plateau problem for a given simple curve on the the boundary of 
$M$ (see \cite{Meeks-Yau}). However, this is not necessary; it is easy to see that one could 
have foliations on $C^3$-close to Euclidean or $(K,\epsilon_0,\delta_0)$-thin manifolds without 
mean convex boundary. Our hypothesis on foliations of the manifold by area-minimizing surfaces 
has some similarity with that of \cite{SVU2}. However, the proofs are completely different. The 
existence of foliations of $(M,g)$ by properly embedded, area-minimizing surfaces ensures that 
there are no unreachable regions trapped between minimal surfaces bounded by the same 
curve, thus avoiding obstructions to uniqueness analogous to the ones for the boundary 
distance problem. 

\medskip
As a consequence of Theorem \ref{global_metric_thm}, we have the local result:\\

\begin{theorem}\label{local_metric_thm}
Let $(M,g)$ be a $C^4$-smooth, Riemannian manifold with boundary $\partial M$. Assume that $\partial M$ is both $C^4$-smooth and mean convex at $p\in \partial M$. Let $U\subset \partial M$ be a neighbourhood of $p$, and let $\{\gamma(t)\,:\,t\in(-1,1)\}$ be a foliation of $U$ by simple, closed curves which satisfy the estimates in Definition \ref{class1and2}. Suppose that $g|_{U}$ is known, and for each $\gamma(t)$ and any nearby perturbation $\gamma(s,t)\subset U$, we know the area of the properly embedded surface $Y(s,t)$ which solves the least-area problem for $\gamma(s,t)$. Then, there exists a neighbourhood $V\subset M$ of $p$ such that, up to isometries which fix the boundary, $g$ is uniquely determined on $V$.
\end{theorem}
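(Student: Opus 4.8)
The plan is to deduce Theorem \ref{local_metric_thm} from Theorem \ref{global_metric_thm} by a localization argument. First I would use the mean convexity of $\partial M$ at $p$ to produce, after possibly shrinking $U$, a small embedded topological disc $D\subset \partial M$ containing $p$ such that $\partial D$ is a simple closed curve bounding a properly embedded least-area surface $Y$ in $M$; the Meeks--Yau solvability of Plateau's problem applies because mean convexity is an open condition and persists on a neighbourhood of $p$. The region $N\subset M$ cut off by $Y$ together with $D$ is then a topological $3$-ball with mean convex boundary. The foliation $\{\gamma(t)\}$ of $U$, restricted to the part lying in $D$ (again shrinking if necessary), together with its least-area fillings, provides a foliation of a sub-$3$-ball $M'\subset N$ by properly embedded area-minimizing surfaces $Y(t)$, and the geodesic-curvature and diameter/curvature estimates assumed in Definition \ref{class1and2} are inherited on $M'$. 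By taking $U$ (hence $D$, hence $M'$) small enough, the metric $g$ restricted to $M'$ is, in the global coordinate chart afforded by the foliation, as $C^3$-close to Euclidean as we wish (or satisfies the thinness bounds with suitable small parameters), since on a sufficiently small neighbourhood any $C^4$ metric is a small $C^3$-perturbation of its value at $p$, which we may take to be Euclidean after a linear change of coordinates. Hence $(M',g|_{M'})$ is a manifold of Class 1 (or Class 2).

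Next I would check that the hypothesized data for Theorem \ref{local_metric_thm} supplies exactly the data required by Theorem \ref{global_metric_thm} for $M'$. The boundary metric $g|_U$ is known, so in particular $g|_{\partial M'}$ is known (the part of $\partial M'$ on $\partial M$ lies in $U$, and the part along $Y$ has its induced metric determined once we know it is least-area with known boundary — but in fact we only need $g$ on the portion of $\partial M'$ inside $\partial M$, since the foliation and its perturbations live there). For each leaf $\gamma(t)$ and each nearby perturbation $\gamma(s,t)\subset U$ that still bounds a disc inside $D$, the area of the least-area surface $Y(s,t)$ is among the data we are given; these perturbations exhaust a full neighbourhood of each $\gamma(t)$ within the relevant class, which is what Theorem \ref{global_metric_thm} consumes. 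Applying Theorem \ref{global_metric_thm} to $(M',g|_{M'})$ then determines $g$ on $M'$ up to a boundary-fixing isometry; since $p$ lies in the interior of $M'$ relative to $M$ (more precisely, $p\in \partial M$ but a full one-sided neighbourhood $V$ of $p$ in $M$ is contained in $M'$), we obtain the desired neighbourhood $V\subset M$ of $p$ on which $g$ is uniquely determined up to isometries fixing the boundary.

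I would then address the one genuinely delicate point: ensuring that the foliation $\{Y(t)\}$ of $M'$ produced above is a bona fide foliation by \emph{properly embedded} area-minimizers sweeping out a genuine sub-ball, rather than merely a family of discs that might fail to foliate or might escape $N$. Here the smallness of the domain is essential: for $U$ small, the curves $\gamma(t)$ have uniformly small diameter, and by the interior curvature bounds their least-area fillings lie in an arbitrarily small neighbourhood of $p$, hence stay inside $N$; embeddedness and the non-crossing (hence foliation) property of the minimal fillings of a nested family of curves follow from the maximum principle for minimal surfaces, exactly as in the global setting, once the metric is close enough to Euclidean on that small region. One must also check that the coordinates $(y^1,y^2,y^3)$ adapted to the foliation (with $Y(t)=\{y^3=t\}$) can be chosen to satisfy the regularity requirement in Definition \ref{thin-defn}(1) or the $C^3$-closeness in Definition \ref{cl.eucl}; this again uses only that we are working in a small coordinate ball.

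The main obstacle I anticipate is precisely this construction of the localized minimal foliation with all the quantitative hypotheses of Definition \ref{class1and2} intact — in particular arranging simultaneously that $M'$ is a topological $3$-ball with mean convex boundary, that the leaves $Y(t)$ remain properly embedded and area-minimizing \emph{in $M'$} (not just stationary), and that the geodesic-curvature bounds $\|\kappa\|\le 2\,{\rm Area}[Y(t)]^{-1/2}$, $\|\overline\nabla\kappa\|\le 2\,{\rm Area}[Y(t)]^{-1}$ hold for the chosen foliation of the small cap $D$. Once the localization is set up so that $(M',g|_{M'})$ literally satisfies the Class 1 (or Class 2) hypotheses and receives the required area data, the conclusion is an immediate application of Theorem \ref{global_metric_thm}, with the uniqueness-up-to-boundary-isometry statement passing to the neighbourhood $V$ of $p$ without further work.
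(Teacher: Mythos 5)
Your proposal takes essentially the same route as the paper: shrink to a small one-sided neighbourhood $V$ of $p$, observe that mean convexity at $p$ persists on $V\cap\partial M$, check that $(V,g|_V)$ satisfies the hypotheses of one of the two classes in Definition \ref{class1and2}, and apply Theorem \ref{global_metric_thm}. The paper's actual proof is even terser than yours — it simply asserts that $V$ can be chosen so that $(V,g|_V)$ is $(K,\epsilon_0,\delta_0)$-thin (Class 2), that $V\cap\partial M$ is mean convex, and that the foliation is non-degenerate, and then invokes the global theorem.

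One point worth flagging: your claim that on a sufficiently small neighbourhood the metric becomes ``as $C^3$-close to Euclidean as we wish'' is not correct as stated. Shrinking the domain makes $\|g-g_\mathbb{E}\|_{C^0}$ and $\|\partial g\|_{C^0}$ small (working in normal coordinates at $p$), but the second and third derivatives of $g$ remain $O(1)$ — they do not vanish as the domain shrinks, so the $C^3$-norm in Definition \ref{cl.eucl} is not made small this way. This is precisely why the paper uses the $(K,\epsilon_0,\delta_0)$-thin condition (Class 2) for the local theorem rather than Class 1: the thin bounds are scaled by powers of $\epsilon_0$ (the girth) and the area bound ${\rm Area}[Y(t)]\le 4\pi\epsilon_0^2$, so on a small one-sided neighbourhood of $p$ they hold automatically with $\delta_0\to0$ and $K\epsilon_0\to0$. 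Since you do mention Class 2 as an alternative in the parenthetical, your argument survives, but the Class 1 route should be dropped. Your remaining concerns about the foliation being properly embedded, genuinely area-minimizing in $M'$, and satisfying the geodesic-curvature estimates are reasonable, and the paper's proof does not address them in any more detail than you do; they are implicitly absorbed into the hypothesis that the curves $\gamma(t)$ in $U$ ``satisfy the estimates in Definition \ref{class1and2},'' which the theorem statement assumes.
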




The methods that are developed in this paper can be used to prove further results. 
We highlight one such further result below and provide the sketch of the proof (for brevity's sake) 
at the end of Section 4.

The result below again proves 
the uniqueness of the metric given knowledge of minimal areas. We do not impose 
any thinness or small curvature restrictions. Instead we assume that the manifold admits
a foliation by strictly mean-convex spheres that shrink down to a point, and
 minimal foliations from ``all directions'' and that the areas of all these minimal surfaces and
 their perturbations are known. More precisely, we define the following class of manifolds. 

\begin{defn}\label{foliations-all-directions}
A Riemannian manifold $(M,g)$ \textbf{admits minimal foliations from all directions} if
\begin{itemize}
\item $(M,g)$ has mean convex boundary $\partial M$,
\item there exists a foliation $\{N(r)\,:\, r\in[0,1)\}=M$ by (strictly) mean convex surfaces 
$N(r)\subset M$ with $N(0):=\partial M$, and $lim_{r\to 1} N(r)\to q$, where $q\in M$ 
is a point and the mean curvature of $N(r)$ tends to $+\infty$ as $r\to 1$.

\item for every $r\in [0,1)$ and $p\in N(r)$, there is a foliation 
$\{\gamma(t,p)\,:\,t\in(-1,1)\}=\partial M$ by simple closed curves, which induces a fill-in 
$\{Y(t,p)\,:\,t\in(-1,1)\}=M$ by properly embedded, area minimizing surfaces, with the
 property that $Y(t_0)$ is tangent to $N(r)$ at $p$ for some $t_0\in(-1,1)$.
 \end{itemize}
\end{defn}

\begin{figure}[h]
    \centering
    \begin{subfigure}[b]{0.4\textwidth}
        \includegraphics[width=\textwidth]{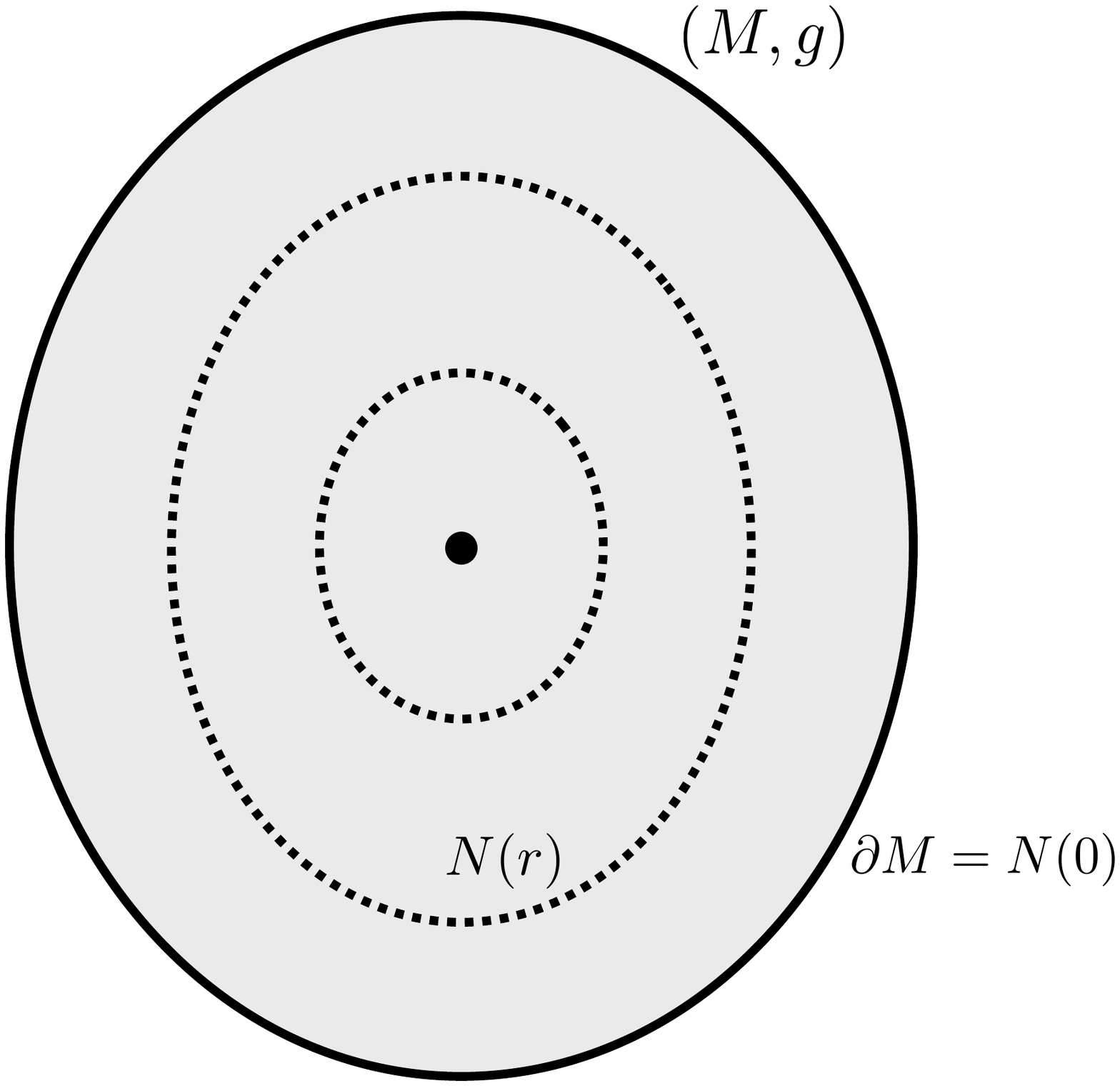}
        \caption{Foliation by strictly mean convex submanifolds $N(r)$.}
    \end{subfigure}
    ~
    \begin{subfigure}[b]{0.4\textwidth}
        \includegraphics[width=\textwidth]{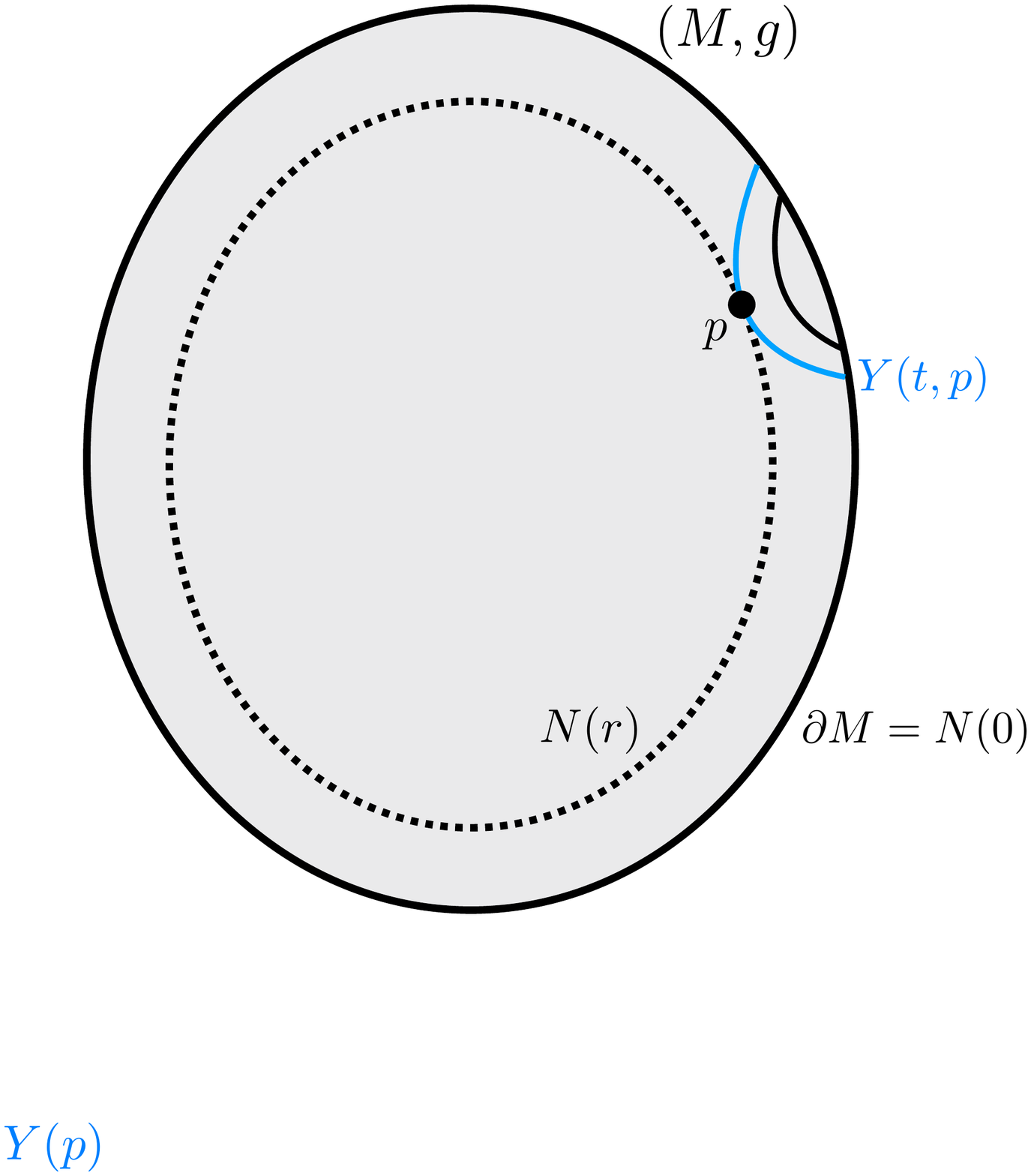}
        \caption{Minimal surface $Y(t,p)$ tangent at $p\in M$.}
    \end{subfigure}
    \caption{Illustration of the property of ``admits minimal foliations from all directions''.}
\end{figure}

\begin{theorem}\label{onion_thm} 
Let $(M,g)$ be a $C^4$-smooth Riemannian manifold which admits minimal foliations from all directions, 
and let $g|_{\partial M}$ be given. Suppose that for all $p\in M$ and for each $\gamma(t,p)$ 
as in Definition \ref{foliations-all-directions}, and any nearby perturbation 
$\gamma(s,t,p)\subset \partial M$, we know the area of the properly embedded surface
 $Y(s,t,p)$ which solves the least-area problem for $\gamma(s,t,p)$.  \\

Then the knowledge of these areas uniquely determines the metric $g$ (up to isometries 
which fix the boundary).
\end{theorem}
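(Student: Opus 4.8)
The plan is to reduce Theorem \ref{onion_thm} to Theorem \ref{local_metric_thm} by a propagation argument along the foliation $\{N(r)\}$, exploiting the fact that at every point $p\in M$ we are handed a local family of minimal surfaces passing through $p$ in a non-degenerate way. First I would observe that Theorem \ref{local_metric_thm} is genuinely local: given that $g$ is already known on an open neighbourhood of a point $p\in M$ and that, through $p$, we have area data for a foliation $\{\gamma(t,p)\}$ of $\partial M$ whose area-minimizing fill-in $\{Y(t,p)\}$ has a leaf $Y(t_0,p)$ tangent to $N(r)$ at $p$, the argument of Theorem \ref{local_metric_thm} determines $g$ on a full neighbourhood of $p$ in $M$ — that is, it spreads the knowledge of $g$ from a surface (here a piece of $N(r)$) into a solid neighbourhood on both sides. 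The mean convexity of $\partial M$ supplies the initial data: $g|_{\partial M}$ is given, and near every boundary point $\partial M$ is mean convex, so Theorem \ref{local_metric_thm} applies there and determines $g$ in a collar neighbourhood of $\partial M$, i.e. on $N(r)$ for all small $r$.

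Next I would set up the continuation. Let $r^* = \sup\{\rho : g \text{ is uniquely determined on } \bigcup_{r\le \rho} N(r)\}$ (up to boundary-fixing isometry, which we fix once and for all using the collar). By the previous paragraph $r^*>0$. Suppose $r^*<1$. Since $g$ is determined on $N(r)$ for all $r<r^*$ and these leaves sweep out the open region $R^* = \bigcup_{r<r^*} N(r)$, $g$ is known on $R^*$; by continuity $g$ is then known on $\overline{R^*}\supset N(r^*)$. Now take any $p\in N(r^*)$. By Definition \ref{foliations-all-directions} there is a foliation $\{\gamma(t,p)\}$ of $\partial M$ whose minimal fill-in has a leaf $Y(t_0,p)$ tangent to $N(r^*)$ at $p$; moreover the area data for this foliation and all its perturbations is known. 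Crucially, because $g$ is already known on $N(r^*)$ near $p$, and $Y(t_0,p)$ is tangent to $N(r^*)$ there, $g$ is known on a hypersurface through $p$ that is a leaf of the minimal foliation $\{Y(t,p)\}$ — this is exactly the initial-data configuration Theorem \ref{local_metric_thm} requires (strict mean convexity at $p$ is inherited from strict mean convexity of $N(r^*)$). Hence Theorem \ref{local_metric_thm} determines $g$ on a neighbourhood $V_p\subset M$ of $p$, in particular on $N(r)$ for $r$ slightly beyond $r^*$ near $p$. A compactness argument over $p\in N(r^*)$ (the leaves $N(r)$ are compact) yields a uniform $\eta>0$ so that $g$ is determined on $\bigcup_{r\le r^*+\eta}N(r)$, contradicting the definition of $r^*$. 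Therefore $r^*=1$ and, since $N(r)\to q$ and the $N(r)$ sweep out $M$, $g$ is determined everywhere.

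The step I expect to be the main obstacle is verifying that Theorem \ref{local_metric_thm} really can be \emph{re-initialized} off an interior hypersurface that is not the boundary $\partial M$ — that is, that its proof only uses (a) knowledge of $g$ on one leaf of the relevant minimal foliation together with (b) the area data for that foliation and its perturbations, and does not secretly require the starting surface to be $\partial M$ or to carry the extra structure (global coordinates, thin/straight bounds) from Definition \ref{class1and2}. This is plausible because the mechanism of Theorem \ref{global_metric_thm} is a sweep-out: the metric is recovered leaf-by-leaf along the minimal foliation, using the pseudodifferential system for the metric components, once it is known on an initial leaf. One must check that the hypotheses needed to run that system for the foliation $\{Y(t,p)\}$ — a regularity assumption on the geodesic curvatures of $\gamma(t,p)$ as in Definition \ref{class1and2}, and whatever smallness/closeness is used — can be arranged near $p$; here strict mean convexity of $N(r^*)$ at $p$, continuity of $g$, and freedom to shrink the neighbourhood $U\subset\partial M$ and work with short sweep-outs should suffice, exactly as in the passage from Theorem \ref{global_metric_thm} to Theorem \ref{local_metric_thm}. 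A secondary technical point is bookkeeping of the boundary-fixing isometry: one fixes it in the collar at the start, and since continuation proceeds through overlapping neighbourhoods on the connected leaves $N(r)$, uniqueness is propagated without ambiguity.
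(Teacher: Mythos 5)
Your overall skeleton — fix the metric near $\partial M$ by the local theorem, then run an open/closed continuation along the level sets $N(r)$ using the tangency hypothesis at each $p\in N(r^*)$, with a compactness argument over the compact leaf $N(r^*)$ — is precisely the paper's. But the crucial step is the re-initialization at an interior point, and this is where your argument has a real gap, not merely a technical wrinkle.

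You try to close the gap by ``re-applying Theorem~\ref{local_metric_thm}'' at $p\in N(r^*)$, suggesting that shrinking $U\subset\partial M$ and working with short sweep-outs recovers the thin/straight hypotheses, ``exactly as in the passage from Theorem~\ref{global_metric_thm} to Theorem~\ref{local_metric_thm}.'' This does not work. In the proof of Theorem~\ref{local_metric_thm}, the neighbourhood $V$ of the \emph{boundary} point can be taken $(K,\epsilon_0,\delta_0)$-thin because the leaves $Y(t)$ filling in small boundary loops are themselves small discs. At an interior point $p\in N(r^*)$, the surfaces $Y(t,p)$ run all the way out to $\partial M$; they are macroscopic. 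Shrinking $U$ on $\partial M$ does not shrink these leaves, so the operator norms of $\mathcal{K}$ and $Q_0$ in the pseudodifferential system (\ref{matrixdg}), (\ref{psido_c1}) are not made small, and $(I-\mathcal{K})^{-1}$, $(I-Q_0)^{-1}$ are not available. Also, your statement that ``$g$ is known on a hypersurface through $p$ that is a leaf of the minimal foliation'' is not accurate: $N(r^*)$ is tangent to $Y(t_0,p)$ only at the single point $p$, it is not a leaf of the foliation $\{Y(t,p)\}$, so the initial-data picture of Theorem~\ref{local_metric_thm} (metric known on an initial minimal leaf) does not literally obtain.

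The paper's fix is different and this is the idea you are missing. One works with two metrics $g_1,g_2$ producing the same area data, already shown isometric on $\bigcup_{r\le r_0}N(r)$. Although the tilt functions $\psi_{p,i}$, and $g^{33}$, $|\mathrm{Rm}|$, $|A|$, can now be large (no thinness of the geometry), the unknowns that actually enter the system are the \emph{differences} $\delta\phi$, $\delta g^{3k}$, and these vanish identically on $\bigcup_{r\le r_0}N(r)$. Restricting to $t\in[t_0,t_0+\zeta]$ with $\zeta$ small, the intersection of each leaf $Y(t,p)$ with the region where $g_1$ and $g_2$ may still differ is a disc of arbitrarily small size $\eta$. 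It is the smallness of the \emph{support} of $\delta\phi,\delta g^{3k}$ on each leaf — not thinness of $(M,g)$ — that makes the relevant operator norms small and yields invertibility of $I-\mathcal{K}$ and $I-Q_0$ (via the same mechanism as Lemma~\ref{psido_lem}). Without this observation the continuation step does not close, so the proposal as written is incomplete.
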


\begin{remark}
We note that our theorems (and extensions of these results that one can obtain by adapting 
these methods) use foliations of the unknown manifold $(M,g)$ by a family of area-minimizing 
discs. Not all foliations of $\partial M$ by closed loops $\gamma(t)$ yield fill-ins 
by area minimizing surfaces $Y(t)$ which foliate $M$. For example, if $(M,g)$ admits a 
closed minimal surface in the interior, then the set of area-minimizers $Y(t)$ that fill-in the
loops  $\gamma(t)$ must contains ``gaps''. We note that recently Haslhofer and Ketover \cite{Hasl-Ket} derived that a positive-mean-curvature foliation �as required in Definition \ref{thin-defn} does exist, under the first assumption, and assuming the non-existence of a closed minimal surface inside $(M,g)$. Now, regarding the third requirement of Definition \ref{thin-defn}, it is interesting to note that the area data 
function ${\rm Area}[Y(t)]$, seen as a function of $t$, detects whether the 
area-minimizing fill-ins $\{Y(t)\,:\, t\in(-1,1)\}$ yield a foliation or display gaps. 
\end{remark}

\begin{prop} \label{C1.sees}
Let $(M,g)$ be a Riemannian manifold with mean convex boundary $\partial M$. Let $\{\gamma (t)\,:\, t \in (-1,1)\}=\partial M\}$ be a foliation of $\partial M$. Let $A(t)$  denote the area of area-minimizing surfaces $Y(t)$ that bound $\gamma(t)$\footnote{This function is well-defined
even when there are multiple area-minimizing surfaces bounding $\gamma(t)$.}.

Then there is a foliation of M by fill-ins of area minimizing surfaces that bound $\gamma(t)$ if and only if $A(t)$ is a  $C^1$-smooth function of $t$.
\end{prop}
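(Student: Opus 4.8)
The plan is to prove both implications by relating the derivative of the area function $A(t)$ to the geometry of the area-minimizing surfaces $Y(t)$ via the first variation of area. First suppose that the fill-ins $\{Y(t)\}$ do form a foliation of $M$. Then $t\mapsto Y(t)$ is a continuous one-parameter family of embedded minimal discs, and the normal variation field of this family is a well-defined (continuous) section of the normal bundle of each leaf. Since each $Y(t)$ is minimal (mean curvature zero), the first variation formula for area gives $\frac{d}{dt}A(t)=\int_{\gamma(t)}\langle X, \nu\rangle\, ds$, where $X$ is the variation vector field of the boundary curves $\gamma(t)$ and $\nu$ is the outward conormal to $Y(t)$ along $\gamma(t)$; the interior term vanishes because $H\equiv 0$ on $Y(t)$. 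Because $\{\gamma(t)\}$ is a (smooth) foliation of $\partial M$ and the surfaces vary continuously, the right-hand side is a continuous function of $t$, so $A(t)\in C^1$. One subtlety here is that a priori the minimizer bounding $\gamma(t)$ need not be unique; but if the $Y(t)$ foliate $M$ then for each $t$ the leaf through the foliation is singled out, and one must check that this is consistent with the ``well-defined'' value of $A(t)$ in the footnote — this follows since any two area-minimizers with the same boundary have the same area, so $A(t)$ is unambiguous regardless.

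For the converse, suppose $A(t)$ is $C^1$ but the fill-ins fail to foliate $M$, i.e. there is a ``gap'': some open region of $M$ is not swept out, which (by the structure of area-minimizers in a mean-convex ball, invoking Meeks--Yau as in the paper) forces the existence of two distinct area-minimizing surfaces $Y^-(t_0)$, $Y^+(t_0)$ bounding a common curve $\gamma(t_0)$, enclosing a region of positive volume between them, with other leaves $Y(t)$ for $t$ near $t_0$ accumulating onto $Y^-(t_0)$ from one side and $Y^+(t_0)$ from the other. The strategy is to show this produces a jump in $A'(t)$ at $t_0$. Concretely, I would compute the one-sided derivatives $A'(t_0^-)$ and $A'(t_0^+)$ using the first variation formula above applied to the two families approaching from either side; the conormal vectors $\nu^-$ and $\nu^+$ of $Y^-(t_0)$ and $Y^+(t_0)$ along $\gamma(t_0)$ differ (the surfaces are tangent to $\partial M$ along $\gamma(t_0)$ in different ways, or more precisely leave $\gamma(t_0)$ into the interior along genuinely different directions since they bound a region of positive volume), and hence pairing against the same boundary variation field $X$ yields $A'(t_0^-)\neq A'(t_0^+)$, contradicting $A\in C^1$. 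Thus no gap can occur, and continuity/monotonicity arguments upgrade ``no gaps'' to ``$\{Y(t)\}$ is a genuine foliation of $M$''.

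The main obstacle I expect is the converse direction, specifically making rigorous the claim that a gap forces $\nu^-\neq\nu^+$ along $\gamma(t_0)$ in a quantitative enough way to produce a genuine jump in the one-sided derivatives — one has to rule out the degenerate possibility that $Y^-(t_0)$ and $Y^+(t_0)$ are tangent to first order along $\gamma(t_0)$ yet still bound positive volume (this cannot happen for two distinct minimal surfaces by unique continuation / the maximum principle, so invoking strong maximum principle for the minimal surface equation along $\gamma(t_0)$, or Hopf's lemma at the boundary, should close this), and to confirm that the boundary variation field $X$ can be chosen transverse to $\gamma(t_0)$ so that the integrals $\int_{\gamma(t_0)}\langle X,\nu^\pm\rangle\,ds$ genuinely differ. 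A secondary technical point is justifying differentiation under the integral sign / the validity of the first variation formula for the possibly only-continuous family $Y(t)$; here I would use that area-minimizers in this setting enjoy interior regularity and depend semi-continuously on their boundary data, so the family is actually smooth away from the exceptional parameter values, which suffices to identify $A'$ with the boundary flux integral on each side.
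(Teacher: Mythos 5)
Your proposal matches the paper's proof essentially step for step: the easy direction is the first-variation computation showing the boundary flux integral $\int_{\gamma(t)}\langle X,\nu\rangle\,ds$ is continuous in $t$ for a foliating family, and the substantive direction is a contradiction argument at a critical parameter $T$ where two distinct area-minimizers $Y_{\pm}(T)$ bound $\gamma(T)$, with the Hopf maximum principle forcing the conormals $\nu_{\pm}$ to differ transversally and hence the one-sided derivatives of $A$ to jump. The paper is a bit more careful than your sketch in explicitly constructing $T$ as the supremum of the interval where uniqueness holds and in applying the maximum principle to show $Y_{+}(T)\cap Y_{-}(T)=\emptyset$ before invoking Hopf, but these are exactly the points you flagged as needing rigor, and you identified the correct tools to close them.
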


We provide a proof of Proposition \ref{C1.sees} in the Appendix.

\subsection{Outline of the main ideas.}

We briefly describe below our approach to the proof of Theorem \ref{global_metric_thm}. The main strategy to reconstructing the metric is to use the background foliation 
of $(M,g)$ by area-minimizing, topological discs, 

\[
M=\{Y(t)\,:\,t\in(-1,1)\},
\] to progressively solve for the metric by moving
``upwards'' along the foliation. 

The metric is solved for in a normalized gauge: the metric $g$ is expressed in a new
coordinate system $\{x^1, x^2, x^3\}$ such that $x^3$ is constant on each leaf $Y(t)$ 
of the foliation, and $x^1, x^2$ restricted to each leaf $Y(t)$ are isothermal coordinates for 
$g|_{Y(t)}$. The non-uniqueness of isothermal coordinates is fixed by
moving to an auxiliary extension $(\tilde{M}, \tilde{g})$ of our manifold and 
 imposing a normalization at infinity.

 In such a coordinate system, there are four non-zero independent entries of the metric: 
\[ g=\begin{pmatrix} e^{2\phi} & 0 & g_{31}\\
                                  0 & e^{2\phi} & g_{32}\\
                                  g_{13} & g_{23} & g_{33}
                                  \end{pmatrix}. \]
 The proof then proceeds in reconstructing the components of $g^{-1}$.

Our main strategy is to 
not use the area data directly, but rather the second variation of areas, which is also 
known (since it corresponds to the second variation of a known functional). 
We show in Section 2 (Proposition \ref{area_to_DN}) that the second variation of area yields knowledge of the
 Dirichlet-to-Neumann map for the stability operator 
 
\begin{align}
\JJ:=\Delta_{g_{Y_\gamma}}+\left(\text{Ric}_g(\vec{n},\vec{n}) + ||A||_{g}^2\right)\label{stability-op}
\end{align}
 associated to each of the minimal surfaces  $Y(t)$. In fact 
 we can learn the Dirichlet-to-Neumann map for the associated Schr\"{o}dinger operator 
 
  \begin{align}
\Delta_{g_{\EE}}+e^{2\phi}\left(\text{Ric}_g(\vec{n},\vec{n}) + ||A||_{g}^2\right),\label{stability-op-iso}
\end{align}
in isothermal coordinates. The existence of the foliation $\{Y(t)\,:\,t\in(-1,1)\}$ implies that the equation 
 \begin{align}
\Delta_{g_{\EE}}\psi+e^{2\phi}\left(\text{Ric}_g(\vec{n},\vec{n}) + ||A||_{g}^2\right)\psi=0,\label{stability-op-iso-EQ}
\end{align}
 has positive solutions. Thus, the operator \eqref{stability-op-iso} has strictly negative 
 eigenvalues (see for instance \cite{BarrySimon}), and its potential is of the form covered in 
 \cite{nachman_aniso} (see also \cite{IN}). 
  The main result in \cite{nachman_aniso} implies that such a potential can be determined from the Dirichlet-to-Neumann map. We therefore also have knowledge of \emph{all solutions} of (\ref{stability-op-iso})  for given boundary data.
That is, we first prove
\begin{prop} \label{area_to_DN}
Let $(M,g)$ be a $C^4$-smooth, Riemannian manifold which is homeomorphic to a 3-dimensional ball in $\RR^3$, and has mean convex boundary $\partial M$. Let $g$ be given on ${\partial M}$. Let $\gamma$ be a given simple closed curve on ${\partial M}$, and set $Y_\gamma\subset M$ to be a surface of least area bounded by $\gamma$. Suppose that the stability operator on $Y_\gamma$ is non-degenerate, and that for $\gamma$ and any nearby perturbation $\gamma(s)$, the area of the least-area surface $Y_{\gamma(s)}$ enclosed by $\gamma(s)$ is known. 

Equip a neighbourhood of $Y_\gamma$ with coordinates $(x^\alpha)$ such that on $Y_\gamma$, $x^3=0$ and $(x^1,x^2)$ are isothermal coordinates. Then, 
\begin{enumerate}
\item the first and second variations of the area of $Y_\gamma$ determine the Dirichlet-to-Neumann map associated to the boundary value problem 
\begin{align*}
\Delta_{g_\EE}\psi+e^{2\phi}\left(\text{Ric}_g(\vec{n},\vec{n})+||A||^2_g\right)\psi&=0,\\
\psi|_{\partial Y_\gamma}&=g\left(\left.\frac{d}{ds}\gamma(s)\right|_{s=0},\vec{n}\right),
\end{align*} 
on $Y_\gamma$, where $e^{2\phi}g_\EE=e^{2\phi}[(dx^1)^2+(dx^2)^2]$ is the metric on $Y_\gamma$ in the coordinates $(x^1,x^2)$.
\item Knowledge of the first and second variations of the area of $Y_\gamma$ determines any solution $\psi(x)$ to the above boundary value problem, in the above isothermal coordinates.
\end{enumerate}
\end{prop}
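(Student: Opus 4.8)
The plan is to compute the first and second variation of the area functional along a one-parameter family of surfaces $Y_{\gamma(s)}$ bounding the perturbed curves $\gamma(s)$, and to recognize the second variation as a quadratic form built from the stability operator $\JJ$. Let me set this up. Since $Y_\gamma$ is area-minimizing, the first variation of area vanishes for any variation, and in particular ${\rm Area}[Y_{\gamma(s)}]$ has a critical point structure; the leading $s$-dependence of the area is second order. I would parametrize the family $Y_{\gamma(s)}$ by writing each surface as a normal graph over $Y_\gamma$, i.e. $Y_{\gamma(s)} = \{\exp_x(u(s,x)\vec n(x)) : x \in Y_\gamma\}$ for a function $u(s,\cdot)$ on $Y_\gamma$ with prescribed boundary values $u(s,\cdot)|_{\partial Y_\gamma}$ determined by how $\gamma(s)$ moves off $\gamma$. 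The key classical fact (second variation of area for minimal surfaces, see e.g. Colding–Minicozzi) is that
\[
\frac{d^2}{ds^2}\Big|_{s=0}{\rm Area}[Y_{\gamma(s)}] = -\int_{Y_\gamma} \psi\,\JJ\psi \, dA_{g_{Y_\gamma}} + \int_{\partial Y_\gamma}\psi\,\partial_\nu\psi\, ds,
\]
where $\psi = \partial_s u|_{s=0}$ is the normal variation field (a function on $Y_\gamma$), $\JJ = \Delta_{g_{Y_\gamma}} + ({\rm Ric}_g(\vec n,\vec n)+\|A\|_g^2)$ is the stability operator from \eqref{stability-op}, and $\partial_\nu$ is the outward conormal derivative along $\partial Y_\gamma$. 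Here $\psi|_{\partial Y_\gamma} = g(\tfrac{d}{ds}\gamma(s)|_{s=0},\vec n)$ is exactly the prescribed boundary data appearing in the statement, since the boundary of $Y_{\gamma(s)}$ must track $\gamma(s)$.

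Next I would argue that the area-minimizing (equivalently, for small perturbations, stable minimal) property forces $\psi$ to be the solution of the Dirichlet problem $\JJ\psi = 0$ on $Y_\gamma$ with the given boundary data: the surfaces $Y_{\gamma(s)}$ are themselves minimal, so differentiating the minimal surface equation in $s$ shows that $\psi$ lies in the kernel of $\JJ$, i.e. it solves the Jacobi equation with the prescribed Dirichlet data. Here the non-degeneracy hypothesis on the stability operator is exactly what guarantees this boundary value problem is uniquely solvable, so $\psi$ is well defined and depends only on the boundary data $g(\tfrac{d}{ds}\gamma(s)|_{s=0},\vec n)$. Plugging $\JJ\psi=0$ into the second variation formula collapses it to the boundary pairing
\[
\frac{d^2}{ds^2}\Big|_{s=0}{\rm Area}[Y_{\gamma(s)}] = \int_{\partial Y_\gamma} \psi\,\partial_\nu\psi \, ds = \int_{\partial Y_\gamma} \big(\psi|_{\partial Y_\gamma}\big)\,\Lambda_\JJ\big(\psi|_{\partial Y_\gamma}\big)\, ds,
\]
where $\Lambda_\JJ$ is the Dirichlet-to-Neumann map of $\JJ$ on $Y_\gamma$. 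Since the left-hand side is the second variation of a known functional (the areas are known for $\gamma$ and all nearby $\gamma(s)$), and since the boundary data $\psi|_{\partial Y_\gamma}$ can be made to range over a dense set of functions on $\partial Y_\gamma$ by suitably choosing the perturbations $\gamma(s)$ (ranging over normal and tangential displacements of the curve), polarization of this quadratic form recovers $\Lambda_\JJ$ completely. Finally, since $g|_{\partial M}$ is known, the conformal factor $e^{2\phi}$ on $Y_\gamma$ is determined along $\partial Y_\gamma$, so we may pass between $\JJ = \Delta_{g_{Y_\gamma}} + V$ and the Schrödinger form $\Delta_{g_\EE} + e^{2\phi}V$ in the isothermal coordinates $(x^1,x^2)$ — the two Dirichlet-to-Neumann maps carry equivalent information (conjugation by the known conformal factor on the boundary), giving part (1). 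Part (2) then follows immediately: knowing $\Lambda_\JJ$ is equivalent to knowing, for each Dirichlet datum, the full Cauchy data of the solution $\psi$ on $\partial Y_\gamma$; combined with the elliptic equation this determines $\psi$ throughout $Y_\gamma$ by unique continuation / the fact that the Cauchy data at the boundary plus the PDE determine the solution (and, more constructively, via the known interior reconstruction for such Schrödinger operators as in \cite{nachman_aniso}).

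The main obstacle I anticipate is the rigorous justification of the normal-graph parametrization and the differentiability of $s \mapsto Y_{\gamma(s)}$ and of $s\mapsto {\rm Area}[Y_{\gamma(s)}]$: area-minimizers need not be unique, the dependence on boundary data is a priori only as regular as the implicit function theorem for the minimal surface system allows, and one must ensure the variation field $\psi$ genuinely has the claimed boundary trace and that boundary terms in the second-variation integration by parts are correctly accounted for (in particular the distinction between moving the boundary curve within $\partial M$ versus normal displacement). The non-degeneracy assumption on $\JJ$ is what rescues this: it makes the minimal surface near $Y_\gamma$ locally unique and a smooth function of boundary data via the implicit function theorem, so the family $Y_{\gamma(s)}$ is well defined and smooth in $s$, and all the variational computations are legitimate. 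A secondary technical point is checking that the perturbations $\gamma(s) \subset \partial M$ available to us produce enough boundary data $\psi|_{\partial Y_\gamma}$ to pin down $\Lambda_\JJ$ — this requires relating displacements of $\gamma$ inside the known surface $\partial M$ to the normal component $g(\dot\gamma,\vec n)$ along $Y_\gamma$, which uses only $g|_{\partial M}$ and the (known) geometry of how $Y_\gamma$ meets $\partial M$.
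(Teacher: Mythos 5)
Your high-level plan is the right one — second variation of area equals the quadratic form of the Jacobi DN map, then invoke the 2D Schr\"odinger inverse result — but there is a genuine gap in the second-variation step that the paper's proof is largely devoted to resolving. The second variation formula you quote omits a boundary term: for a variation vector field $V$ that is not assumed to vanish at the boundary, the correct formula (as written in the paper's proof of Proposition~\ref{n-dim_area_to_DN}) contains an additional term $\int_{\partial Y_\gamma}\tilde g(\nabla_V V,\nu)\,dS$. This term depends on the full vector field $V$, including its covariant derivatives, and for perturbations $\gamma(s)\subset\partial M$ the variation vector $V=\dot\gamma$ is tangent to $\partial M$ — so $\nabla_V V$ involves the second fundamental form of $\partial M$ in $M$, which is \emph{not} determined by $g|_{\partial M}$. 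Your remark that one only needs to relate displacements of $\gamma$ inside $\partial M$ to the normal component $g(\dot\gamma,\vec n)$ via the known intersection angle addresses the boundary trace of $\psi$, but not this extra second-order boundary term, so the quadratic form you read off is polluted by an unknown quantity.

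The paper's resolution — the centerpiece of the proof of Proposition~\ref{n-dim_area_to_DN} — is to extend $(M,g)$ to $(\tilde M,\tilde g)$ with $\tilde g$ known (chosen) on $\tilde M\setminus M$, perturb to curves $\tilde\gamma(t,\epsilon)\subset\tilde M\setminus M$, and deform these by variations that are \emph{normal} to the extended minimal surfaces $\tilde Y(t,\epsilon)$ along the boundary; then $\tilde g(\nabla_V V,\nu)$ is explicitly computable since all the data live in the known exterior, and one recovers the DN map on $\tilde Y(t,\epsilon)$ before passing to the limit $\epsilon\to 0$. This in turn creates a further obstruction you do not address: one must know the area of $\tilde Y(t,\epsilon)$ and the intersection curve $\tilde Y(t,\epsilon)\cap\partial M$, neither of which is given directly, and Lemma~\ref{interior-curve} is a nontrivial argument (a uniqueness result for matching a known exterior annulus to an interior least-area disc along a common normal) that establishes exactly this. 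So the ``secondary technical point'' you flag at the end is in fact the main content of the paper's proof, and your proposal does not supply a substitute mechanism for computing the missing boundary term. (As a smaller remark on part~2: ``Cauchy data plus the PDE determine $\psi$'' is circular as written, since the potential is unknown; the point is that the DN map determines the potential via the 2D Schr\"odinger uniqueness result, after which the Dirichlet problem can be solved — you do cite \cite{nachman_aniso}, so the intent is right, but the logic should be stated in that order.)
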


This then yields information on the (first variation of) the position of nearby
 minimal surfaces  $Y(s,t)$ relative to any of our $Y(t)$ without having found 
 \emph{any} information yet on the metric. This first variation of position is expressed in the 
 above isothermal coordinates, thus at this point we make full use of the invariance of 
 solutions of the two equations (\ref{stability-op}) and (\ref{stability-op-iso}) above under 
 conformal changes of the underlying 
 2-dimensional metric. 
 
We note that in the chosen isothermal coordinates, the component $\sqrt{g^{33}}$ is the 
lapse function associated to the foliation:
\begin{defn}
Let $\Omega\subset \RR^2$ be a domain with boundary, and 
$f(\cdot,t):\Omega\times[0,1]\hookrightarrow M$ a foliation of $M$ by the surfaces
 $Y(t):=f(\Omega,t)$.
Set $\vec{n}_t$ to be a unit normal to $Y(t)$. Then, the normal component of the variational 
vector 
$$g\left(f_*\left(\frac{\partial }{\partial t}\right),\vec{n}_t\right)=:\psi,$$ is called the 
\emph{lapse function} of the foliation $f$.
\end{defn}
As the lapse function is a Jacobi field on any such area-minimizing surface, by Proposition 
\ref{area_to_DN}, knowledge of the solutions to equation (\ref{stability-op-iso}) directly yields 
the component $g^{33}$ of the 
 inverse metric, everywhere on $M$. To find further components, we will apply the above 
 strategy not only to
  our background foliation, but also to a suitable family of first variations of our background
   foliation. For each point $p\in Y(t)\subset M$ we can identify two 1-parameter families of 
   foliations $Y(s,t)$ for which the first variation  of  $p$ vanishes at $p$, but the first 
   variation 
   of tangent space is either in the direction $\partial_1$ or $\partial_2$. This is done in 
   Subsection \ref{system_section}
below. These variations lead to a nonlocal
system of equations involving $g^{31}$, $g^{32}$, and $\phi$ (in fact for 
the differences $\delta g^{31}$, $\delta^{32}$, $\delta\phi$ of these quantities for two
putative metrics with the same area data). We then solve for $\delta g^{31}$, $\delta g^{32}$
in terms of the last quantity $\delta\phi$. This involves the inversion of a system of pseudodifferential
equations. The assumption of close to Euclidean or thin/straight is used at this point in a most essential way.
   
   Invoking the knowledge of $g^{33}_s$ obtained above for each of the new foliations 
   $Y(s,t)$ at $p$ and linearizing in $s$ yields new equations on $g^{31}, g^{32}$ at 
   the chosen point $p$. These equations involve the (still unknown) conformal factor $\phi$, 
   but also the first variations of the isothermal coordinates $(x^1, x^2)$. 
   
   This results in a non-local system of the equations. Thus so far for each $p\in M$ we have 
   derived three equations on the four unknown components of the metric $g$. The required
    fourth equation utilizes the fact that each $Y(t)$ is minimal.\footnote{So only now we 
   use the minimal surface equation directly, not its first variation.} This results in an evolution 
   equation (in $t=x^3$) on the components $\phi, g^{13}, g^{23}$. We prove the uniqueness 
   for the resulting system of equations in Section 4. We note that it is at this point that the 
   existence of a global  foliation (without ``gaps'') is used in the most essential way. \\

\textbf{Outline of the paper:} In Section 2 we explicitly describe how we asymptotically extend $(M,g_1)$ and $(M,g_2)$ and construct the coordinates systems on $(M,g_1)$ and $(M,g_2)$ we work with. We also prove Proposition \ref{area_to_DN}.  In Section 3, we collect arguments for determining the lapse function in our chosen coordinates and the first variation of the lapse, as well as the pseudodifferential equation governing the evolution from leaf to leaf of the metric components of $g_1$ and $g_2$ which are purely tangent to the leaves. In Section 4, we give the proofs of all theorems.  \\

\textbf{Notation:} We use the Einstein summation notation throughout this paper. Thus an instance of an index in an up and down position indicates a sum over the index; e.g. $T^iT_j:= \Sigma_{i=j}T^iT_j$. Denote by $\nabla$ the Levi-Civita connection associated to $g$, and $\text{Rm}_g$ the Riemann curvature tensor of $(M,g)$. We'll use the following sign convention for the curvature tensor: $$\text{Rm}_g(U,V)W:= \nabla_V\nabla_UW-\nabla_U\nabla_VW+\nabla_{[U,V]}W.$$ The Ricci curvature of $M$ is $\text{Ric}_g(U,V):= \text{tr}_g\text{Rm}_g(U,\cdot,V,\cdot)$. If $u:M\to \RR$, we write $\nabla u:=\text{grad}(u)$.

\subsection*{Acknowledgements}

S.A. was supported by and NSERC discovery grant and an ERA award. 
T.B. was supported by an OGS fellowship. She also is 
grateful to A. Greenleaf 
for many helpful comments on her PhD thesis, where part of these results were 
developed. A.N. was supported by 
an NSERC discovery grant.


\section{Asymptotically Flat Extension and Conformal Maps}


In this section, we define asymptotically flat extensions of the 3-manifolds we work with. For technical reasons, we also define an extended foliation $\bar{Y}(t)$ of $M$ and a coordinate system $(x^\alpha)$ adapted to the foliation $\bar{Y}(t)$ such that $x^3$ is constant on the leaves $Y(t)$ and $(x^1,x^2)$ are isothermal for the metric restricted to the leaves $x^3=\text{constant}$. We then prove that knowledge of the area of any area-minimizing surface near $\bar{Y}(t)$ determines the Dirichlet-to-Neumann map for the stability operator on $\bar{Y}(t)$ in our preferred coordinates $(x^\alpha)$, and from this information, we also determine the image of $\bar{Y}(t)$ under our chosen isothermal coordinate map.

From this point onwards all estimates we write out will involve a constant $C>0$. Unless stated otherwise, the constant $C$ will depend on the
parameters $K,\delta_0$ in the setting of Class 2, or will be a uniformly fixed parameter in the setting of Class 1.

\subsection{Extension to an Asymptotically Flat Manifold}\label{AFcoords}

In this section, we let $(M,g)$ be a $C^4$-smooth, 3-manifold that satisfies the assumptions 
of Theorem \ref{global_metric_thm}. In particular,  we assume that $(M,g)$ admits foliations by 
properly embedded, area minimizing surfaces. 
In this setting, we use such a foliation to equip $(M,g)$ with a preferred coordinate system. The 
preferred coordinate system we construct will be used in several of the proofs of this paper to 
simplify computations and derive relevant equations for the components of the metric.

To construct the desired coordinates, let $\Omega \subset \RR^2$ be a bounded 
$C^{2,\alpha}$ domain for some $\alpha>0$, and let $\gamma:\partial \Omega\times(-1,1)\to \partial M$ be a given foliation of the boundary by embedded, closed curves. Let $f(\cdot,t):\Omega\to M$ solve Plateau's problem for $\gamma(\cdot,t)$, for each $t\in (-1,1)$. In particular, $f(\cdot,\cdot)$ defines a foliation of $M$ by properly embedded, codimension 1, area-minimizing surfaces such that $f(\partial \Omega\times\{t\})=\gamma(\cdot, t)$ for each $t\in(-1,1)$. We denote the leaves of the foliation $f$ in $M$ by $Y(t):=f( \Omega\times\{t\})$.

Our first choice of coordinate is the parameter identifying each minimal surface $Y(t)$: label the 
coordinate $x^3=t$. Now, to obtain two other coordinate functions, we will choose conformal 
coordinates $(x^1,x^2)$ on each leaf $Y(t)$ of the foliation. Then $(x^1,x^2, x^3)$ will be a 
global coordinate system on $(M,g)$. However, there are many choices for conformal 
coordinates $(x^1,x^2)$ on a 2-dimensional surface $Y(t)$. To remove this ambiguity, we 
extend $(M,g)$ to an asymptotically flat manifold and impose decay conditions at infinity on the 
conformal coordinates on the extension of $Y(t)$ which renders these coordinates unique. 

To this end, let $(z^1,z^2,t):M\to \Omega\times(-1,1)$ be the regular
 coordinates on $M$ stipulated by our Theorems for manifold Classes 1 and 2, respectively. 
 Considering a fixed extension 
 operator for metrics (and using our assumed bounds on curvature and second fundamental
 forms), we may smoothly extend the metric $g|_{\partial M}$ to a 
tubular neighbourhood $N$ of $\partial M$, preserving (up to a multiplicative factor) the 
bounds on curvature and on the second fundamental form of $Y(t)$ assumed in our Theorems.

Let $g_{\EE}$ denote the Euclidean metric on $\MM:= \RR^2\times (-1,1)$, and let 
$\chi:\MM\to\RR$ be a smooth cutoff function such that $\chi|_{M}=1$ and $\chi=0$ outside
 $M\cup N$. We then extend $(M,g)$ to an asymptotically flat manifold $(\MM,\afg)$ with 
 metric $\afg$ defined as $$\afg := \chi g+(1-\chi)g_{\EE}.$$
Again via an extension operator, we obtain a smooth extension 
$\mathbf{f}:\RR^2\times(-1,1)\to \MM$ of the foliation $f$. The smooth (but not 
necessarily minimal with respect to $\afg$) extension of $Y(t)$ to $\MM$ is then 
$\mathbf{f}(\RR^2,t)=:\YY(t)\cong \RR^2$.

A well known result of Ahlfors \cite{ahlfors} then gives the unique existence of \textbf{isothermal coordinates} on $\YY(t)$ which are normalized at infinity. That is, for some $p>2$, there exists a conformal map 
\begin{align*}
\Phi(t): \RR^2\to\RR^2,\quad z:=z^1+iz^2\mapsto x^1+ix^2
\end{align*}
in $L^p(\RR^2)$ satisfying
\begin{align}
\bar\partial \Phi(t) &= \mu(t)\partial\Phi(t)\label{confmap1}\\
\Phi(z,t)-z &= L^p(\RR^2),\label{confmap2}
\end{align}
with \textbf{dilation} $\mu(t) := \frac{g_{11}-g_{22}+2ig_{12}}{g_{11}+g_{22}+2\sqrt{\afg|_{\YY(t)}}}$. In these coordinates, $\Phi(t)$ pushes forward $\afg_t:=\afg|_{\YY(t)}$ to a metric conformal to the Euclidean metric on $\RR^2$: $$\afg_t=e^{2\phi(x,t)}[(dx^1)^2+(dx^2)^2]$$ for some conformal factor $\phi(\cdot, t):\RR^2\to\RR$. We call the images $(x^1,x^2)$ \textbf{isothermal coordinates} on $\Omega$, and denote the conformal image of $\Omega$ as $\tilde\Omega(t):= \Phi(\Omega,t)$.  



Since $\afg$ is $C^4$-smooth by construction, $\mu_t$ is $C^4$-smooth in $t$. As 
shown in \cite{AhlforsBers}, $\Phi(z,t)$ is a $C^4$-smooth map in $t$. Therefore, 
\begin{align*}
\Phi(z(\cdot),t(\cdot))&:\MM\to\tilde\Omega(t)\times(-1,1)\\
\Phi(z(p),t(p))&=(x^1,x^2,x^3)
\end{align*}
defines a global coordinate chart on $(\MM,\afg)$ In this chart, the metric takes the form
$$ \afg = \afg_{3\alpha}dx^3dx^\alpha+\afg_t,$$ where $\afg_t$ is conformally flat for each $t$, and additionally $\afg_{3k}=0$ for $k=1,2$ outside a compact set containing $M$. In particular, the metric $\afg$ restricted to $M$ is written as
$$ g = g_{3\alpha}dx^3dx^\alpha+e^{2\phi(t)}[(dx^1)^2+(dx^2)^2].$$
 

Next we prove that for such a coordinate system $\Phi(z,t)=(x^1,x^2,x^3)$ on $(\MM, \afg)$ as described above, knowledge of the areas of properly embedded area minimizing surfaces in $(M,g)$ determines the conformal map $\Phi(\cdot, t)$ on the complement $\RR^2\setminus \Omega$, for every $t\in(-1,1)$. To prove such a statement, we first show that the knowledge of a Dirichlet-to-Neumann map for a non-degenerate Schr\"{o}dinger operator on $\Omega$ determines the conformal map $\Phi$ satisfying (\ref{confmap1}) and (\ref{confmap2}) on the open set $Z:= \RR^2\setminus\Omega$ (see \cite{AstLasPav}, \cite{SU}, for similar results). Then, we prove that the knowledge of the area of any properly embedded area minimizing surface in $(M,g)$ determines the Dirichlet-to-Neumann map associated to the stability operator on $Y(t)$ and its perturbations.

In the proofs below, we will construct solutions to the Dirichlet problem for the Schr\"{o}dinger operator. Precise asymptotic statements will be given in terms of a weighted $L^2$ space. The particular weighted space we require has norm 
\begin{align*}
||f||_{L^2_{-\delta}(Z)}&= \left(\int_Z |f(w)|^2(1+|w|^2)^{-\delta}\,dw\right)^\frac12.
\end{align*}

The following propositions also recall the construction of isothermal coordinates on a domain in $\RR^2$ as well as existence and uniqueness for an exterior problem which will be crucial to the proofs of the theorems in this paper.

\begin{prop}\label{uniquness-up-down}
Let $\Omega\subset \RR^2$ be a bounded, $C^2$-smooth domain. Let $g$ be a $C^2$-smooth Riemannian metric on $\Omega$, and $\afg$ to be a $C^2$-smooth extension of $g$ to $\RR^2$ with  
\begin{align*}
\afg&=g_{\EE} \text{ outside a large compact set containing $\Omega$},\\
\afg&=g \text{ on $\Omega$},
\end{align*}
where $g_\EE$ is the Euclidean metric on $\RR^2$. Write $Z:=\RR^2\setminus \Omega$ and $\nu$ for the outward pointing unit normal vector field to $\partial\Omega$.
Let $(z^1,z^2)$ be coordinates on $\RR^2$.

\begin{enumerate}
\item For some $p>2$, there exists a unique conformal map $\Phi: \RR^2\to\RR^2$ satisfying 
\begin{align}
&\Phi(z)-z\in L^p(\RR^2)\\
 &\Phi_{*}(\afg)=e^{2\phi(x)}[(dx^1)^2+(dx^2)^2].
\end{align}

\item Let $V\in L^\infty(\Omega)$ and suppose $0$ is not a Dirichlet eigenvalue of $\Delta_{g}+V$. Let $\Lambda:H^{\frac12}(\partial \Omega)\to H^{-\frac12}(\partial \Omega)$ be the Dirichlet-to-Neumann map associated to $\Delta_{g}\psi+V$.

Then there exists and $R_1>0$ such that for any $\xi\in \CC$ with $|\xi|>R_1$ and $1-\frac1p<\delta<1$, there exists a unique solution $\psi(\cdot,\xi)$ to the exterior problem 
\begin{align}
e^{-iz\xi}\psi(\cdot,\xi)-1&\in L^2_{-\delta}(Z), \label{outside-bound1}\\ 
\Delta_{g}\psi(\cdot,\xi)&=0 \text{ on $Z$,}\label{outside-pde1}\\ 
g(\nabla\psi(\cdot,\xi),\nu)&=\Lambda(\psi)(\cdot,\xi)  \text{ on $\partial\Omega$},\label{outside-condt1}.
\end{align}
Moreover,
\begin{align}
||e^{-i\Phi(z)\xi}\psi(\cdot,\xi)-1||_{L^2_{-\delta}(Z)}&< \frac{C}{|\xi|},\label{outside-est1}
\end{align}
for some constant $C>0$ and all $\xi\in \CC$ with $|\xi|>R_1$.
 \end{enumerate}

 \end{prop}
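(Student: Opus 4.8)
The plan is to treat the two parts in sequence, since the conformal map $\Phi$ from part (1) is the change of coordinates that makes the exterior Beltrami-type analysis in part (2) reduce to honest harmonic analysis on a domain. For part (1), I would invoke the measurable Riemann mapping theorem (the Ahlfors--Bers normalization, as cited in the paper via \cite{ahlfors, AhlforsBers}): since $\afg$ agrees with $g_\EE$ outside a large compact set, the associated Beltrami coefficient $\mu = \frac{g_{11}-g_{22}+2ig_{12}}{g_{11}+g_{22}+2\sqrt{\det \afg}}$ is compactly supported, bounded, and satisfies $\|\mu\|_\infty < 1$ (this uses positive-definiteness of $\afg$, which forces the numerator to have modulus strictly less than the denominator). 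Then there is a unique quasiconformal homeomorphism $\Phi:\RR^2 \to \RR^2$ solving $\bar\partial \Phi = \mu\,\partial\Phi$ with the hydrodynamic normalization $\Phi(z) - z \to 0$ at infinity; the normalization upgrades to $\Phi(z) - z \in L^p(\RR^2)$ for some $p>2$ by the standard Cauchy-transform representation $\Phi(z) = z + \mathcal{C}[\omega](z)$ with $\omega \in L^p$ solving $(\mathrm{Id} - \mu S)\omega = \mu$ (here $S$ is the Beurling transform, and $\|\mu S\|_{L^p\to L^p}<1$ for $p$ close enough to $2$ by Astala's theorem / the invertibility of $\mathrm{Id}-\mu S$). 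That $\Phi$ pushes $\afg$ forward to a conformally flat metric is the defining property of isothermal coordinates for a surface; uniqueness follows because two such maps would differ by a conformal automorphism of $\RR^2 = \CC$ fixing $\infty$ and tangent to the identity there, hence the identity.

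For part (2), I would first note that $\Phi$ restricts to a conformal diffeomorphism $Z = \RR^2\setminus\Omega \to \RR^2 \setminus \tilde\Omega$, where $\tilde\Omega = \Phi(\Omega)$, and that $\afg|_Z = g_\EE$ (up to the compactly supported perturbation in $N$, which I would either absorb or note vanishes once we are far enough out — more carefully, on $Z$ one has $\Delta_{\afg} = e^{-2\phi}\Delta_{g_\EE}$ in the $x$-coordinates, so $\afg$-harmonic is the same as Euclidean-harmonic). Thus the exterior problem \eqref{outside-bound1}--\eqref{outside-condt1} transforms, under $z \mapsto \Phi(z)$, into the exterior problem for the Laplacian on $\RR^2 \setminus \tilde\Omega$ with complex exponential behavior $e^{i\Phi(z)\xi}$ at infinity, subject to a Robin-type boundary condition on $\partial\tilde\Omega$ given by the transported DN map $\Lambda$. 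This is precisely the setup of the exterior construction in the 2D Calderón problem (Nachman's approach via complex geometrical optics, and the Astala--Päivärinta / Sylvester--Uhlmann machinery — hence the citations \cite{AstLasPav, SU, nachman_aniso}): one seeks $\psi = e^{i\Phi(z)\xi}(1 + r(\cdot,\xi))$ with $r \in L^2_{-\delta}(Z)$, reduces the equation on the exterior domain to a Lippmann--Schwinger-type integral equation using the Faddeev Green's function $G_\xi$ for $\Delta + 2i\xi\cdot\nabla$, and proves that for $|\xi| > R_1$ the relevant operator $\mathrm{Id} + (\text{boundary integral operator built from }\Lambda\text{ and }G_\xi)$ is invertible on the weighted space, with operator norm of the perturbation $O(1/|\xi|)$. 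That last estimate immediately yields both uniqueness and the bound \eqref{outside-est1} with $C/|\xi|$.

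The main obstacle I anticipate is the analysis underlying \eqref{outside-est1}: establishing unique solvability of the exterior boundary-value problem uniformly for large $|\xi|$ and extracting the $1/|\xi|$ decay rate. This requires the delicate mapping properties of the Faddeev Green's function $G_\xi$ between the weighted spaces $L^2_{-\delta}$ and $L^2_{\delta}$ (Sylvester--Uhlmann / Nachman estimates), control of the single- and double-layer potentials associated to $\Lambda$ on $\partial\tilde\Omega$, and the observation that composing with the fixed quasiconformal map $\Phi$ does not destroy these estimates — this is where one uses that $\Phi$ is bi-Lipschitz on a neighborhood of $\partial\Omega$ (since $\mu \in C^4$ there by the regularity of $\afg$) so that the weighted spaces and trace spaces are comparable before and after the change of variables. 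Everything else — the Beltrami solvability, the conformal equivalence of the exterior Laplacians, the reduction to an integral equation — is by now classical and I would cite rather than reprove it; I would spend the bulk of the write-up on carefully tracking the $\xi$-dependence through the change of coordinates and the layer-potential estimates.
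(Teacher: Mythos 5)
Your Part (1) matches the paper's (which simply cites Ahlfors); the extra Cauchy-transform/Beurling-operator detail you supply is the content of that reference, so no discrepancy there. For Part (2), however, you propose a genuinely different route than the paper's. You would seek the exterior solution directly in the form $\psi = e^{i\Phi(z)\xi}(1+r)$ with $r\in L^2_{-\delta}(Z)$ and solve a boundary integral equation on $\partial\tilde\Omega$ built from $\Lambda$ and the Faddeev Green's function --- essentially the Nachman-style layer-potential/boundary-integral approach used in reconstruction from DN data. The paper instead avoids layer potentials entirely: it extends the potential to $\tilde V := e^{2\phi}(V\circ\Phi^{-1})$ on $\Phi(\Omega)$, set to zero outside, and solves the \emph{global} Lippmann--Schwinger equation $\tilde\psi = e^{ix\xi} - G_\xi * (\tilde V\tilde\psi)$ on all of $\RR^2$ with the Faddeev kernel; the restriction of $\tilde\psi$ to $\Phi(Z)$ then satisfies the exterior problem automatically, because in $\Phi(\Omega)$ the global CGO solves the Schr\"odinger equation with its own boundary trace as Dirichlet data, so the DN boundary condition is tautologically met. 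Uniqueness is handled in the reverse direction: any exterior solution is extended into $\Omega$ by solving the Dirichlet problem there, the DN condition guarantees a $C^1$ glue across $\partial\Omega$, and elliptic regularity plus the known uniqueness for global CGO solutions finishes the job. The estimate \eqref{outside-est1} is then just the standard Sylvester--Uhlmann bound for the global CGO, transferred to the original coordinates via the auxiliary lemma on how the weighted $L^2_{-\delta}$ norm behaves under the quasiconformal map. What each approach buys: the paper's route is shorter and sidesteps the exterior layer-potential estimates you flag as the main obstacle, at the cost of invoking $V$ explicitly in the construction (which is fine here, since Part (2) is a statement about a given $V$, not a reconstruction procedure). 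Your route is closer to a bona fide reconstruction from $\Lambda$ alone, but requires the more delicate analysis of the boundary integral operator that you correctly identify as the hard part; you would also need to make precise the ``boundary integral operator built from $\Lambda$ and $G_\xi$'' and verify its unique solvability uniformly in large $|\xi|$ --- plausible given \cite{nachman_aniso}, but substantially more work than the paper undertakes.
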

 
 \begin{proof}
1. This statement is proved in \cite{ahlfors}.\\

2. We will prove existence and uniqueness to (\ref{outside-bound1}), (\ref{outside-pde1}), (\ref{outside-condt1}) by transforming the problem into a Euclidean one via the map $\Phi$. 

First, note that for $z=z^1+iz^2\in \partial \Omega$, in the coordinates $\Phi(z)=x^1+ix^2=:x \in \partial\Phi(\Omega)$ the Dirichlet-to-Neumann map $\Lambda$ is given by the bilinear form
 
\begin{align*}
(\chi,\Lambda(\psi))&=\int_{\partial \Omega} \chi(z) g(\nabla\psi(z),\nu(z))\,dS\\
&= \int_{\partial\Phi(\Omega)}\tilde\chi(x)g_\EE(\tilde\nabla\psi\circ\Phi^{-1}(z),e^{-\phi(x)}\tilde\nu(x))e^{\phi(x)}\, d\tilde{S}\\
&= \int_{\partial\Phi(\Omega)}\tilde\chi(x)g_\EE(\tilde\nabla\tilde\psi(x),\tilde\nu(x))\, d\tilde{S}\\
&=:(\tilde\chi,\tilde\Lambda(\tilde\psi)).
\end{align*}
Here $\chi,\psi\in H^{\frac12}(\partial \Omega)$, $\tilde\chi(x):=\chi\circ\Phi^{-1}(x^1+ix^2)$, $\tilde\psi(x):=\psi\circ\Phi^{-1}(x^1+ix^2)$, and $\tilde\nu$ is the outward pointing unit normal vector field to $\partial \Phi(\Omega)$, with respect to the metric $g_\EE$.
 
The boundary value problem (\ref{outside-pde1}), (\ref{outside-condt1}) expressed in the conformal coordinates given by $\Phi$ becomes
\begin{align}
\Delta_{g_\EE}\tilde\psi(\cdot,\xi) &=0 \text{ on $\Phi(Z)$},\label{conf-outside-pde1} \\
g_\EE(\tilde\nabla\tilde\psi(\cdot,\xi),\tilde\nu)&=\tilde\Lambda(\tilde\psi(\cdot,\xi))  \text{ on $\Phi(\partial\Omega)$}.\label{conf-outside-condt1} 
\end{align}
We claim condition (\ref{outside-bound1}) is equivalent to
\begin{align}
e^{-ix\xi}\tilde\psi(x,\xi)-1\in L^2_{-\delta}(\Phi(Z)). \label{conf-outside-bound1}
\end{align}
To show this assertion, we need the following lemma:

\begin{lemma}\label{norm-estimates1} For $f\in {L^2_{-\delta}(\Phi(Z))}$,
$||f||_{L^2_{-\delta}(\Phi(Z))} \le C ||f\circ\Phi||_{L^2_{-\delta}(Z)}$, for some constant $C>0$.
\end{lemma}

\begin{proof}
By a simple change of variable,
\begin{align*}
||f||_{L^2_{-\delta}(\Phi(Z))}^2&= \int_{\Phi(Z)}\left|f(x)\right|^2(1+|x|^2)^{-\delta}\,dx\\
&=\int_{Z}\left|f(\Phi(z))\right|^2(1+|\Phi(z)|^2)^{-\delta}|\Phi'(z)|^2\,dz.
\end{align*}


Choose $R>0$ large so that $\Omega\subset B_R(0)$. Consider $z\in Z$ satisfying $|z|>2R$. We have $\Phi(z)-z\in L^p(\RR)$ and $|\Phi(z)-z|^p$ is subharmonic. Then, the Mean Value Property for $|\Phi(z)-z|^p$ over the ball $B_z(|z|-R)$ gives 
\begin{align}
|\Phi(z)-z|^p&\le \left(\frac{1}{\text{Vol}(B_z(|z|-R))}\right)\int_{B_z(|z|-R)}|\Phi(w)-w|^p\,dw\notag\\
&\le C\frac{||\Phi(w)-w||_{L^p(Z)}^p}{(|z|-R)^2}\notag\\
&\le C\frac{||\Phi(w)-w||_{L^p(Z)}^p}{|z|^2}.\label{phi-minus-z-bound}
\end{align}

On the other hand, for $z\in Z$ and $|z|>3R$,
\begin{align}
|\Phi'(z) -1|&\le \frac{1}{2\pi}\int_{|w-z|=R}\frac{|\Phi(w)-w|}{|(w-z)^2|}\,|dw|\\
&\le C\int_{|w-z|=R}\frac{||\Phi(v)-v||_{L^p(Z)}}{|w|^\frac2p}\frac{1}{|w-z|^2}\,|dw|\\
&\le C\frac{1}{|R|^\frac2p}\\
&\le \frac{C}{|z|^\frac2p}.\label{dphi-est-1}
\end{align}

Therefore 
\begin{align*}
||f||_{L^2_{-\delta}(\Phi(Z\cap\{|z|>3R\}))}^2 &= \int_{Z\cap\{|z|>3R\}}\left|f(\Phi(z))\right|^2(1+|\Phi(z)|^2)^{-\delta}|\Phi'(z)|^2\,dz\\ 
&\le C\int_{Z\cap\{|z|>3R\}}\left|f(\Phi(z))\right|^2(1+|z|^2)^{-\delta}\,dz.
\end{align*}
Correspondingly, for $\{z\in Z\,:\, |z|<3R\}$,
\begin{align*}
\int_{\Phi(Z\cap\{|z|<3R\})}\left|f(x)\right|^2(1+|x|^2)^{-\delta}\,dx &\le C \int_{Z\cap\{|z|<3R\}}\left|f(\Phi(z))\right|^2(1+|z|^2)^{-\delta}\,dz
\end{align*}
since $\Phi$ is a $C^1$-smooth diffeomorphism on $B_R(0)$.

So indeed, $$||f||_{L^2_{-\delta}(\Phi(Z))} \le C ||f\circ\Phi||_{L^2_{-\delta}(Z)}.$$

\end{proof}

Using the previous lemma,
\begin{align*}
||e^{-ix\xi}\tilde\psi(x,\xi)-1||_{L^2_{-\delta}(\Phi(Z))} &\le C||e^{-i\Phi(z)\xi}\psi(z,\xi)-1 ||_{L^2_{-\delta}(Z)}\\
&\le C||e^{-i[\Phi(z)-z]\xi}-1||_{L^\infty(Z)}||e^{-iz\xi}\psi(z,\xi)-1||_{L^2_{-\delta}(Z)}\\
&\qquad+C||e^{-i[\Phi(z)-z]\xi}-1||_{L^2_{-\delta}(Z)}+C||e^{-iz\xi}\psi(z,\xi) -1 ||_{L^2_{-\delta}(Z)}.
\end{align*}

We assume $||e^{-iz\xi}\psi(z,\xi) -1 ||_{L^2_{-\delta}(Z)}<\infty$; to prove $||e^{-ix\xi}\tilde\psi(\cdot,\xi)-1||_{L^2_{-\delta}(\Phi(Z))}<\infty$, it remains to show $||e^{-i[\Phi(z)-z]\xi}-1||_{L^2_{-\delta}(Z)}<\infty$ and $||e^{-i[\Phi(z)-z]\xi}-1||_{L^\infty(Z)}<\infty$.

Since $\Phi(z)-z\to 0$ as $|z|\to\infty$, $\Phi(z)\in L^\infty(Z)$. Thus $||e^{-i[\Phi(z)-z]\xi}-1||_{L^\infty(Z)}$ is bounded. Now we show a bound for $||e^{-i[\Phi(z)-z]\xi}-1||_{L^2_{-\delta}(Z)}$. To do this, we use the estimate (\ref{phi-minus-z-bound}) from Lemma \ref{norm-estimates1} to obtain for $|z|>2R$
\begin{align*}
\left|e^{-i[\Phi(z)-z]\xi}-1\right|
&\le C(|\xi|) |\Phi(z)-z|\\
&\le C(|\xi|)\frac{||\Phi(w)-w||_{L^p(Z)}}{|z|^\frac2p}.
\end{align*}

Therefore,
\begin{align*}
||e^{-i[\Phi(z)-z]\xi}-1||_{L^2_{-\delta}(Z)}&= \int_{Z}\left|e^{-i[\Phi(z)-z]\xi}-1\right|^2(1+|z|^2)^{-\delta}\,dz\\
&= \int_{Z\cap\{|z|>3R\}}\left|e^{-i[\Phi(z)-z]\xi}-1\right|^2(1+|z|^2)^{-\delta}\,dz\\
&\qquad+ \int_{Z\cap\{|z|<3R\}}\left|e^{-i[\Phi(z)-z]\xi}-1\right|^2(1+|z|^2)^{-\delta}\,dz\\
&\le C(|\xi|)\int_{Z\cap\{|z|>3R\}}\frac{||\Phi(w)-w||^2_{L^p(Z)}}{|z|^\frac4p}(1+|z|^2)^{-\delta}|z|\,d|z| +C(|\xi|),
\end{align*}
which is finite since we take $1>\delta>1-\frac1p$.
Hence 
\begin{align*}
e^{-ix\xi}\tilde\psi(\cdot,\xi)-1\in L^2_{-\delta}(\Phi(Z)). 
\end{align*}
By a similar argument as above, if (\ref{conf-outside-bound1}) holds, then (\ref{outside-bound1}) holds.

Now we construct solutions to (\ref{conf-outside-bound1}), (\ref{conf-outside-pde1}), and (\ref{conf-outside-condt1}). Consider $\tilde\psi(\cdot,\xi):\RR^2\to\RR$ defined by the integral equation
\begin{align}
\tilde\psi(x,\xi) &= e^{i(x^1+ix^2)\xi}-G_\xi\ast(\tilde V\tilde\psi_1(\cdot,\xi)),\label{psi-implicit}
\end{align}
where 
\begin{align*}
G_\xi(w) &= \frac{e^{i\xi(w^1+iw^2)}}{4\pi^2}\int_{\RR^2}\frac{e^{iw\cdot\zeta}}{|\zeta|^2+2\xi(\zeta^1+i\zeta^2)}\, d\zeta
\end{align*}
is Faddeev's Green function (see \cite{faddeev1}, \cite{nachman_aniso}, \cite{SylUhl87}), and $\tilde V:= e^{2\phi}(V\circ\Phi^{-1})$ in $\Phi(\Omega)$ and is extended to be zero outside. Equation (\ref{psi-implicit}) is known to have unique solutions $\tilde\psi(x,\xi)$ with $e^{-i(x^1+ix^2)\xi}\tilde\psi(x,\xi)-1\in L^2_{-\delta}(\RR^2)$, for $|\xi|$ sufficiently large. Moreover, for large $|\xi|$, these solutions satisfy  
\begin{align}
||e^{-i(x^1+ix^2)\xi}\tilde\psi_1(x,\xi)-1||_{L^2_{-\delta}(\RR^2)}&\le \frac{C}{|\xi|}. \label{conf-outside-est1}
\end{align}

Consider the pullback $\psi(z,\xi):=\tilde\psi(\Phi_k(z),\xi)$. The functions $\psi(z,\xi)$ then satisfy the exterior problem (\ref{outside-bound1}), (\ref{outside-pde1}), (\ref{outside-condt1}). In addition, from the estimate (\ref{conf-outside-est1}) for $\tilde\psi(x,\xi)$, the estimate (\ref{outside-est1}) holds for $\psi(z,\xi)$. This proves existence. Uniqueness for $\psi$ follows from the uniqueness of $\tilde\psi$.
%

 \end{proof}



\begin{prop}\label{outside-same}
Let $(z^1,z^2)$ be coordinates on $\RR^2$, and $\Omega\subset \RR^2$ be a bounded domain with Lipschitz boundary $\partial \Omega$.  Set $g_1,g_2$ to be two $C^2$-smooth Riemannian metrics on $\Omega$.\\

For $k=1,2$, let $\afg_k$, $\Phi_k$, $V_k\in L^\infty(\Omega)$ be as in Proposition \ref{uniquness-up-down}. Define by $\Lambda_{k}$, $k=1,2$, the Dirichlet-to-Neumann maps associated to $\Delta_{g_k}\psi_k+V_k\psi_k$ in $\Omega$.\\

Then, if $\Lambda_1=\Lambda_2$, the conformal maps $\Phi_1(z)=x^1+ix^2$, $\Phi_2(z)=y^1+iy^2$ are equal on the exterior set $Z:=(\RR^2\setminus \Omega)\cup \partial \Omega$. In particular, the Dirichlet-to-Neumann maps determine the domain $\Phi_1(\Omega)=\Phi_2(\Omega)$.
 \end{prop}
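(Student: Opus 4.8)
The plan is to reduce the statement to a unique-continuation-type assertion for harmonic functions in the exterior region. First I would note that by Proposition \ref{uniquness-up-down}, both conformal maps $\Phi_1,\Phi_2$ exist and are unique, and both push $\afg_k$ forward to conformally flat metrics on $\RR^2$; moreover for each large $|\xi|$ there exist unique exterior solutions $\psi_k(\cdot,\xi)$ to the problems \eqref{outside-bound1}--\eqref{outside-condt1} with the Faddeev-type asymptotics, and these solutions are pullbacks under $\Phi_k$ of the (complex-geometric-optics) solutions $\tilde\psi_k(\cdot,\xi)$ built from Faddeev's Green function. The key point is that the exterior data $\Lambda_k$ together with the exterior PDE and the asymptotic normalization uniquely pin down $\psi_k$ on $Z$; since $\Lambda_1=\Lambda_2$, the two exterior boundary value problems \emph{on the same set $Z$ and with the same boundary operator} agree. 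Hence, working with one solution at a time, I would like to conclude that $\psi_1(\cdot,\xi)$ and $\psi_2(\cdot,\xi)$ must ``be the same function'' in a sense that forces $\Phi_1=\Phi_2$ on $Z$.

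To make that precise, the main step is to transplant everything to a common Euclidean picture. Recall that in the $\Phi_k$-coordinates the boundary value problem becomes the genuinely Euclidean exterior problem \eqref{conf-outside-pde1}--\eqref{conf-outside-bound1} on $\Phi_k(Z)$ with boundary operator $\tilde\Lambda_k$ on $\Phi_k(\partial\Omega)$, and $\tilde\Lambda_k$ is the Euclidean Dirichlet-to-Neumann map of the Schr\"odinger operator $\Delta_{g_\EE}+\tilde V_k$ on $\Phi_k(\Omega)$. Since $\Lambda_1=\Lambda_2$ as bilinear forms on $H^{1/2}(\partial\Omega)\times H^{1/2}(\partial\Omega)$, the change-of-variables identity in the proof of Proposition \ref{uniquness-up-down} shows the pushed-forward forms agree after the respective conformal identifications; the standard boundary-determination argument (see \cite{AstLasPav}, \cite{SU}) then gives that $\Phi_1(\partial\Omega)=\Phi_2(\partial\Omega)$ as sets. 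Once the two exterior domains have the same boundary, the exterior harmonic functions $\tilde\psi_1,\tilde\psi_2$ solve the \emph{same} Euclidean exterior problem with the same Cauchy data on that common curve and the same $e^{ix\xi}$-normalization at infinity; by uniqueness of the exterior problem (part 2 of Proposition \ref{uniquness-up-down}, or directly unique continuation from the Cauchy data on $\Phi_k(\partial\Omega)$ plus the decay) we get $\tilde\psi_1(x,\xi)=\tilde\psi_2(x,\xi)$ on the common exterior, for every large $|\xi|$.

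Finally I would recover the maps themselves from this family of solutions. On $Z$ we have $\psi_k(z,\xi)=\tilde\psi_k(\Phi_k(z),\xi)$, and the asymptotic relation $e^{-i\Phi_k(z)\xi}\psi_k(z,\xi)-1\in L^2_{-\delta}$ with the $O(1/|\xi|)$ bound \eqref{outside-est1} means that, to leading order in $|\xi|$, $\psi_k(z,\xi)\sim e^{i\Phi_k(z)\xi}$. Taking the two metrics' solutions, which we have just shown agree on the common exterior, and extracting the leading exponential asymptotics as $|\xi|\to\infty$ (for instance by comparing $\xi^{-1}\log\psi_k$, or by testing against the known asymptotics along a suitable sequence of $\xi$'s), forces $\Phi_1(z)\xi=\Phi_2(z)\xi$ for all large $\xi$ in the relevant cone, hence $\Phi_1(z)=\Phi_2(z)$ for every $z\in Z$. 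In particular $\Phi_1(\Omega)=\Phi_2(\Omega)$, which is the claimed determination of the domain.

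\medskip

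\noindent\textbf{Main obstacle.} The delicate point is the interface step: knowing $\Lambda_1=\Lambda_2$ only as operators on $\partial\Omega$ in the \emph{original} $z$-coordinates, one must argue that the corresponding exterior problems can genuinely be solved on the \emph{same} set and that the conformal boundary curves $\Phi_k(\partial\Omega)$ coincide before one can invoke uniqueness/unique continuation. This is exactly where the boundary-determination machinery of \cite{AstLasPav, SU} and the careful tracking of the weighted $L^2_{-\delta}$ asymptotics (to isolate $\Phi_k$ from the Faddeev solutions without interference from the $O(1/|\xi|)$ errors) are needed; getting the two pieces to match up cleanly, rather than each of the downstream uniqueness arguments, is the crux.
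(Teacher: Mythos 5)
You take a genuinely different route from the paper's, and there is a gap precisely at the step you yourself flag as the crux.

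The paper never passes to the two Euclidean images $\Phi_1(Z)$ and $\Phi_2(Z)$ in order to compare solutions there. It compares $\psi_1(\cdot,\xi)$ and $\psi_2(\cdot,\xi)$ directly on $Z$ in the \emph{original} $z$-coordinates. The normalization in the exterior problem \eqref{outside-bound} is $e^{-iz\xi}\psi_k(\cdot,\xi)-1\in L^2_{-\delta}(Z)$, written against $z$ and not against $\Phi_k(z)$. Since the coefficients and boundary operator agree on $Z$ ($V_1=V_2$ there by construction, $\Lambda_1=\Lambda_2$ by hypothesis, and the metric extensions coincide on $Z$), $\psi_1$ and $\psi_2$ solve the \emph{same} boundary value problem on the \emph{same} set with the \emph{same} normalization, and uniqueness from Proposition~\ref{uniquness-up-down}(2) gives $\psi_1=\psi_2$ on $Z$ with no knowledge whatsoever of $\Phi_1(\partial\Omega)$ or $\Phi_2(\partial\Omega)$. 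The conformal maps only reenter through the sharper estimate \eqref{outside-est1}, valid for each $k$, which for the common solution $\psi$ yields
\[
\left\| \bigl(e^{i(\Phi_1-\Phi_2)\xi}-1\bigr)\, e^{-i\Phi_1\xi}\,\psi(\cdot,\xi) \right\|_{L^2_{-\delta}(Z)}\le \frac{2C}{|\xi|},
\]
and a contradiction argument with $\xi$ purely imaginary of large modulus forces $\Phi_1=\Phi_2$ on $Z$.

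Your plan instead first establishes $\Phi_1(\partial\Omega)=\Phi_2(\partial\Omega)$ and then compares pushed-forward Euclidean exterior problems. Two problems. First, invoking the "boundary-determination machinery" to get $\Phi_1(\partial\Omega)=\Phi_2(\partial\Omega)$ is essentially invoking the conclusion of this very proposition (its last sentence is that $\Lambda_1=\Lambda_2$ determines $\Phi_1(\Omega)=\Phi_2(\Omega)$). The references \cite{AstLasPav}, \cite{SU} are cited in the paper as containing \emph{similar} results, not as a black box replacing the present proof, so using them as a premise leaves the chain incomplete. Second, even if you granted $\Phi_1(\partial\Omega)=\Phi_2(\partial\Omega)$ as sets, the operators $\tilde\Lambda_1,\tilde\Lambda_2$ on that common curve are defined by pulling back through $\Phi_1^{-1}$ and $\Phi_2^{-1}$ respectively; they need not coincide unless one also knows $\Phi_1|_{\partial\Omega}=\Phi_2|_{\partial\Omega}$, which is again part of what must be proved. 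The paper's route avoids both issues entirely by applying uniqueness before ever leaving the $z$-coordinates; your final asymptotic step is in the right spirit but the path to it needs to be replaced by the paper's direct comparison on $Z$.
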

 
\begin{proof}

%
%

Extend $V_k$, $k=1,2$ to all of $\RR^2$ such that $V_k=0$ outside a compact set containing $\Omega$, and $V_1=V_2$ is known on $\RR^2\setminus\Omega$. 

Let $\xi\in\CC\setminus\{0\}$ and $\delta$ sasify the conditions of Proposition \ref{uniquness-up-down} for $g_1$ and $g_2$. For $k=1,2$, consider the exterior problems 
\begin{align}
\psi_k(\cdot,\xi)\in L^2_\text{loc}(Z)\text{ and }&e^{-iz\xi}\psi_k(\cdot,\xi)-1\in L^2_{-\delta}(Z), \label{outside-bound}\\
\Delta_{g_k}\psi_k(\cdot,\xi)+V_k\psi_k(\cdot,\xi)&=0 \text{ on $Z$}\label{outside-pde}\\
g_k(\nabla\psi_k(\cdot,\xi),\nu)&=\Lambda_k\psi_k(\cdot,\xi)  \text{ on $\partial\Omega$},\label{outside-condt}
\end{align}
where $\nu$ is the outward pointing unit normal vector field to $\partial\Omega$. 

By Proposition \ref{uniquness-up-down}, there exists for $k=1,2$ unique families of solutions $\psi_k(\cdot,\xi)$ to (\ref{outside-bound}), (\ref{outside-pde}), (\ref{outside-condt}) which additionally satisfy 
\begin{align}
||e^{-i\Phi_k(z)\xi}\psi_k(\cdot,\xi)-1||_{L^2_{-\delta}(Z)}&\le \frac{C}{|\xi|}.\label{outside-est}
\end{align}

%
%

Since we imposed $V_1=V_2$ on $Z$, and from the assumption that $\Lambda_1=\Lambda_2$, $\psi_1(z.\xi)$ solves the same problem as $\psi_2(z,\xi)$ on $Z$. Proposition \ref{uniquness-up-down} gives uniqueness of the solutions $\psi_k(z,\xi)$ to the exterior problems (\ref{outside-bound}), (\ref{outside-pde}), (\ref{outside-condt}); thus $\psi_1(z,\xi)=\psi_2(z,\xi)$ on $Z$. we now show that this together with (\ref{outside-est}) implies $\Phi_1=\Phi_2$ on $Z$. 

Write $\psi(z,\xi):= \psi_1(z,\xi)=\psi_2(z,\xi)$. From the estimates on $\psi(z,\xi)$, we have
\begin{align}
\frac{2C}{|\xi|}\ge& \left|\left|e^{-i\Phi_2(z)\xi}\psi(\cdot,\xi)-e^{-i\Phi_1(z)\xi}\psi(\cdot, \xi)\right|\right|_{L^2_{-\delta}(Z)}\\
&=\left|\left|\left[e^{i(\Phi_1-\Phi_2)(z)\xi}-1\right]e^{-i\Phi_1(z)\xi}\psi(\cdot, \xi)\right|\right|_{L^2_{-\delta}(Z)}.\label{phi-est}
\end{align}

Using the above estimate and proof by contradiction, we show $\Phi_1(z)=\Phi_2(z)$ for $z\in Z$. 

Suppose $|\Phi_1(z_0)- \Phi_2(z_0)|>0$ for some $z_0\in Z$.  Without loss of generality, we may assume that $\text{Re}(\Phi_1(z_0)- \Phi_2(z_0))>0$. By the continuity of $\Phi_j$, $j=1,2$, there exists $\epsilon>0$ and $c>0$ such that 
\begin{align*}
\text{Re}(\Phi_1(z)- \Phi_2(z))>c
\end{align*}
for $z\in B_{z_0}(\epsilon)$.

Consider $\xi=0+i\xi^2$, for $\xi^2<0$. We find
\begin{align*}
\left|e^{i(\Phi_1-\Phi_2)(z)\xi}-1\right|&\ge \left|\left|e^{i(\Phi_1-\Phi_2)(z)\xi}\right|-1\right|\\
&\ge \left|e^{-\text{Re}(\Phi_1(z_0)- \Phi_2(z_0))\xi^2}-1\right|\\
&\ge \left|e^{-c\xi^2}-1\right|,
\end{align*}
for all $z\in B_{z_0}(\epsilon)$.

Taking $\xi^2\to-\infty$, we have $\left|e^{i(\Phi_1-\Phi_2)(z)\xi}-1\right|\to \infty$. This violates (\ref{phi-est}), since the right hand side goes to zero as $|\xi|\to\infty$.

Therefore, $|\Phi_1(z)- \Phi_2(z)|=0$ on $Z$. This completes the proof.

\end{proof} 

We note some further useful bounds on the conformal factors  $\phi_t$ for future use, 
which stem 
from the above estimates on $\Phi'$, along with the Gauss equation and standard elliptic estimates 
applied to the equation $\Delta_g\phi=-\frac{1}{2}R(g_t)$ (where $R(g_t)$ 
is the Gauss curvature of the metric $g|_{Y_t}$):
%

\begin{lemma}
\label{conf.bounds}
The conformal factors $\phi_t$ have small $\mathcal{C}^2$ norm in the close to
Euclidean 
setting. In the thin/straight setting, the conformal factors $\phi_t$ satisfy the bounds: 

\[
|\phi_t|\le C\delta_0, |\partial \phi_t|\le C\delta_0\epsilon^{-1}_0, 
|\partial^2 \phi_t|\le C\delta_0\epsilon^{-2}_0,
\]
for some universal constant $C>0$; here $\partial$ is the coordinate derivative.  
\end{lemma}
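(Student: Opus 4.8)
The key point is that $\phi_t$ satisfies a Liouville-type equation. Since in the isothermal coordinates $(x^1,x^2)$ the leaf metric is $g_t=e^{2\phi_t}[(dx^1)^2+(dx^2)^2]$, the Gauss curvature $R(g_t)$ of $(Y(t),g|_{Y(t)})$ obeys $\Delta_{g_\EE}\phi_t=-e^{2\phi_t}R(g_t)$ on $\YY(t)\cong\RR^2$ (equivalently $\Delta_{g_t}\phi_t=-R(g_t)$), together with the normalization $\phi_t\to 0$ at infinity coming from $\Phi_t(z)-z\in L^p(\RR^2)$. First I would bound the source. By the Gauss equation and the minimality of $Y(t)$—which forces the principal curvatures to be $\pm\kappa$, so that the extrinsic term is $-\tfrac12|A(t)|_g^2$—one has $R(g_t)=\mathrm{sec}_g(TY(t))-\tfrac12|A(t)|_g^2$, hence $|R(g_t)|\le\|\text{Rm}_g\|+\tfrac12\|A(t)\|^2$. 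In the $(K,\epsilon_0,\delta_0)$-thin setting, Definition~\ref{thin-defn}(5) gives $|R(g_t)|\le C\delta_0\epsilon_0^{-2}$, and differentiating the Gauss equation and using the stated bounds on $\overline{\nabla}A(t)$ and $\nabla\text{Rm}_g$ gives $|\overline{\nabla}R(g_t)|\le C\delta_0\epsilon_0^{-3}$; in the close-to-Euclidean setting $R(g_t)$ and $\overline{\nabla}R(g_t)$ are simply small, by Definition~\ref{cl.eucl}.

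Since $\phi_t$ is scale-invariant under $x\mapsto\hat x=x/\epsilon_0$ while $R(g_t)$ picks up a factor $\epsilon_0^2$, I would work on the rescaled leaf, whose isothermal image has unit diameter (using the area bound of Definition~\ref{thin-defn}(4), the geodesic-curvature hypothesis, and the $L^\infty$ bound on $\phi_t$ to compare Euclidean and intrinsic distances), and on which the equation reads $\Delta_{\hat x}\phi_t=-\epsilon_0^2 e^{2\phi_t}R(g_t)$ with source of size $\delta_0$ in $C^{0,\alpha}$. The bound $\|\phi_t\|_{L^\infty}\le C\delta_0$ I would obtain by a continuity argument: interpolating the leaf metric to the flat one, the corresponding $\phi_s$ depend continuously on $s$ and satisfy the Liouville equation with source bounded by $C\delta_0$, so the Newtonian potential estimate $\|\phi_s\|_\infty\le C\|\epsilon_0^2 e^{2\phi_s}R(g_s)\|_{L^q}$ ($q>1$, compact support) is self-improving once $\delta_0$ is small, forcing $\|\phi_s\|_\infty\le C\delta_0$ for all $s$. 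With $\phi_t$ bounded, global Schauder estimates on $\RR^2$—using that the source is compactly supported and that $\phi_t\to0$ at infinity with rate, via $\phi_t\circ\Phi_t=-\log|\Phi_t'|$ on the Euclidean region and the decay $|\Phi_t'-1|\le C|z|^{-2/p}$ proved in Lemma~\ref{norm-estimates1}—yield $\|\phi_t\|_{C^{2,\alpha}}\le C\delta_0$ in the rescaled picture. Undoing the rescaling reproduces $|\phi_t|\le C\delta_0$, $|\partial\phi_t|\le C\delta_0\epsilon_0^{-1}$, $|\partial^2\phi_t|\le C\delta_0\epsilon_0^{-2}$; the close-to-Euclidean statement is the same argument with $\delta_0$ replaced by the (small) size of the curvature and $\epsilon_0$ fixed.

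The main obstacle is the circularity built into the Liouville equation: bounding $\phi_t$ requires bounding $e^{2\phi_t}R(g_t)$, which requires bounding $\phi_t$—this is precisely what the continuity argument is designed to break. A secondary, milder circularity occurs at the $C^1$ level, because passing from the intrinsic bounds on $\overline{\nabla}A(t)$, $\nabla\text{Rm}_g$ to the \emph{coordinate} derivative $\partial R(g_t)$ involves Christoffel symbols of $g_t$ in isothermal coordinates, hence $\partial\phi_t$ itself; this is handled by the order of the bootstrap—first $\|\phi_t\|_\infty\le C\delta_0$ from $\|R(g_t)\|_{L^q}$ alone, then $\phi_t\in C^{1,\alpha}$ from $R(g_t)\in C^0$, then, the Christoffel symbols now being controlled, $|\partial R(g_t)|\le C\delta_0\epsilon_0^{-3}$, and finally $\phi_t\in C^{2,\alpha}$. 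The remaining effort is bookkeeping: making every constant uniform in $t\in(-1,1)$, and checking that the metric extension to the tubular neighbourhood $N$ is carried out so that the source stays supported in a region of Euclidean size comparable to $\epsilon_0$.
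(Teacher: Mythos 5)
Your proof is correct at the level of rigor of the paper's own sketch, but takes a genuinely different route. The paper keeps the intrinsic form $\Delta_{g_t}\phi_t=-\tfrac12 R(g_t)$, rescales by $({\rm Area}[Y(t)])^{-1}$, and treats the result as a \emph{linear} elliptic equation whose coefficients are controlled a priori by combining the Beltrami-coefficient hypothesis of Definition~\ref{thin-defn}(1) with the curvature bounds; a one-shot $W^{4,p}$ estimate with small source then gives $\|\tilde\phi_t\|_{\mathcal{C}^2}\le C\delta_0$. You instead work with the flat-Laplacian Liouville form $\Delta_{g_\EE}\phi_t=-e^{2\phi_t}R(g_t)$ and dissolve the exponential nonlinearity by a continuity-in-parameter open-closed argument, which avoids leaning on the Beltrami hypothesis to control coefficients; the cost is the extra bookkeeping of the open-closed argument plus the two-stage bootstrap $C^0\to C^{1,\alpha}\to C^{2,\alpha}$, which you correctly identify and order so that the coordinate Christoffel symbols are controlled before you need to differentiate $R(g_t)$. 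Two points deserve a sentence in a full write-up. First, your assertion that the rescaled isothermal image has unit diameter "using \dots the $L^\infty$ bound on $\phi_t$" invokes the quantity you are proving; this is harmless only because it sits inside the open-closed argument (the bootstrap hypothesis $\|\phi_s\|_\infty\le C\delta_0<1$ makes the intrinsic-vs-Euclidean diameter comparison uniform along the whole family), and you should say so. Second, the Newtonian-potential estimate $\|\phi_s\|_\infty\le C\|\epsilon_0^2 e^{2\phi_s}R(g_s)\|_{L^q}$ tacitly uses that the source has zero total integral — otherwise the 2D logarithmic potential is not in $L^\infty$; this does hold, by Gauss--Bonnet on $\YY(t)\cong\RR^2$ since $\afg_t=g_\EE$ outside a compact set (the boundary term at infinity recovers $2\pi\chi$), but it is a nontrivial hypothesis of the estimate and should be flagged.
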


\begin{proof}[Sketch of Proof] The argument is based on standard elliptic estimates. 
It is convenient to consider the re-scaling of the metric $g_t$ by a factor 
$({\rm Area}[Y(t)])^{-1}$. Denote the resulting metric by $\tilde{g}_t$. 
We also rescale the underlying isothermal coordinates by $({\rm Area}[Y(t)])^{-1/2}$, 
 and denote the 
 conformal factor for $\tilde{g}_t$ 
 over the new coordinate system by $e^{2\tilde{\phi}_t}$. The function
$\tilde{\phi}_t$ 
 is just the push-forward of $\phi_t$ under the dilation map (with dilation factor 
 $({\rm Area}[Y(t)])^{-1/2}$). 
 
 In the equation 
 \[
 \Delta_{\tilde{g}_t}\tilde{\phi}_t=-\frac{1}{2}R(\tilde{g}_t)
 \]
 The right hand side now has a $W^{1,p}$ norm bounded by $C\delta_0$,
 for any $p>2$. The $C^2$ norm of the metric $\tilde{g}_t$ is uniformly bounded, via the bounds on the Beltrami coefficient and the assumed curvature bounds; thus we derive:
 
 \[
 ||\tilde{\phi}_t||_{W^{4,p}(\mathbb{R}^2)}\le C\delta_0\implies 
 ||\tilde{\phi}_t||_{\mathcal{C}^2}\le C\delta_0.
 \]
The above estimates imply the claimed bounds in the original metric $g_t$. 
\end{proof}

\begin{remark}\label{est.still.hold.note} We also make note of a consequence of the above bounds, which will be useful further
down: 

Given the formula $$(\Phi')^\top \afg_t\Phi'=:\Phi_*(\afg_t)=e^{2\phi_t}[(dx^1)^2+(dx^2)^2]$$ 
the $C^2$ bounds that we were assuming for the metric $g$ over $M$  in the 
original coordinate system continue to hold when $\afg_t$ is expressed in 
the new isothermal coordinate system $(x^1,x^2)$, up to increasing the constant in those bounds by
a  fixed amount. 
\end{remark}


\subsection{Least Area Data Implies Dirichlet-to-Neumann Data}
Once again, in this section we assume that $(M,g)$ is a $C^4$-smooth, Riemannian manifold which is homeomorphic to a 3-dimensional ball in $\RR^3$. Further, we suppose that $(M,g)$ has $C^4$-smooth, mean convex boundary $\partial  M$.

Recall that for a properly embedded minimal surface $\Omega\hookrightarrow M$, we defined the stability operator as $\Delta_{g_Y}+\left(\text{Ric}_g(\vec{n},\vec{n})+||A||^2_g\right)$ (see (\ref{stability-op})). Then,
\begin{defn}
The \textbf{Dirichlet-to-Neumann map} associated to the stability operator is the map
\begin{align*}
\Lambda_{g_{Y_\gamma}}&:H^{\frac12}(\partial Y_\gamma)\to H^{-\frac12}(\partial Y_\gamma)\\
\Lambda_{g_{Y_\gamma}}(\psi)&:= g_{Y_\gamma}(\nabla \psi,\nu)|_{\partial Y_\gamma},
\end{align*}
where $\psi$ solves (\ref{stability-op}), $\nu$ is the outward pointing normal vector field to the boundary $\partial Y_\gamma$ and ${g_{Y_\gamma}}$ is the metric induced on $Y_\gamma$ by $g$.
\end{defn}

We restate Proposition \ref{outside-same} in a form which connects the Dirichlet-to-Neumann map of the stability operator with knowledge of the isothermal coordinates (normalized at infinity) outside an open set.  
\begin{corollary}\label{DN-outside-same}
Let $g_1,g_2$ be two $C^4$-smooth metrics on $M$, and $\Omega\subset \RR^2$ be a bounded domain. Suppose $g_1=g_2$ on $\partial M$. Let $\Omega\hookrightarrow Y_k\subset M$ for $k=1,2$ be properly embedded, area-minimizing surfaces in $(M, g_k)$ with $\partial Y_1=\partial Y_2$.

As defined earlier, let $\afg_k$ be the extensions of $g_k$ to an asymptotically flat 3-manifold $\MM\supset M$. Set $\Phi_k$ to be the unique conformal maps inducing isothermal coordinates on the extensions of $Y_k$.

Consider the Dirichlet-to-Neumann map $\Lambda_{g_k}$ associated to the stability operator $$\Delta_{g_k|_{Y_k}}+\text{Ric}_{g_k}(\vec{n}_k,\vec{n}_k)+||A_{Y_k}||_{g_k}^2,$$ $k=1,2$. If $\Lambda_{g_1}=\Lambda_{g_2}$, then $\Phi_1=\Phi_2$ on $(\RR^2\setminus \Omega)\cup\partial\Omega$.
\end{corollary}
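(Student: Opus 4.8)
The plan is to reduce Corollary \ref{DN-outside-same} to Proposition \ref{outside-same} by identifying the right Schrödinger operator and the right potential. First I would recall that, by construction in Subsection \ref{AFcoords}, the isothermal coordinate map $\Phi_k$ pushes the induced metric $g_k|_{Y_k}$ forward to a conformally flat metric $e^{2\phi_k}[(dx^1)^2+(dx^2)^2]$ on $\RR^2$. Under this conformal change the Laplace--Beltrami operator transforms by $\Delta_{g_k|_{Y_k}} = e^{-2\phi_k}\Delta_{g_\EE}$, so the stability equation $\Delta_{g_k|_{Y_k}}\psi + (\text{Ric}_{g_k}(\vec n_k,\vec n_k)+\|A_{Y_k}\|_{g_k}^2)\psi = 0$ on $Y_k$ is equivalent, after pulling back by $\Phi_k^{-1}$ (i.e. working on $\Omega$), to a Schrödinger equation of the form $\Delta_{g_k}\psi_k + V_k\psi_k = 0$ with potential $V_k := \text{Ric}_{g_k}(\vec n_k,\vec n_k)+\|A_{Y_k}\|_{g_k}^2$ — here I keep $g_k$ as the metric on $\Omega$ and note the conformal factor cancels in exactly the way described for the Faddeev construction in the proof of Proposition \ref{uniquness-up-down}. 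The key observation is that this $V_k$ is precisely of the form to which Proposition \ref{outside-same} applies: it lies in $L^\infty(\Omega)$ (by the $C^4$ regularity of $g_k$ and the curvature/second-fundamental-form bounds), and the existence of the foliation guarantees a positive solution to the stability equation, so $0$ is not a Dirichlet eigenvalue of $\Delta_{g_k}+V_k$ (as noted in the outline following \eqref{stability-op-iso-EQ}).

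Next I would verify the compatibility of the two Dirichlet-to-Neumann maps. The Dirichlet-to-Neumann map $\Lambda_{g_k}$ of the stability operator is, by definition, $\psi \mapsto g_{Y_k}(\nabla\psi,\nu)|_{\partial Y_k}$, and I would observe — exactly as in the bilinear-form computation in the proof of Proposition \ref{uniquness-up-down} — that this coincides with the Dirichlet-to-Neumann map of $\Delta_{g_k}+V_k$ on $\Omega$, since the conformal factor $e^{\phi_k}$ appearing in the normal derivative and in the arclength element on the boundary cancel. Then, the hypothesis $\Lambda_{g_1}=\Lambda_{g_2}$ translates directly into $\Lambda_1 = \Lambda_2$ in the notation of Proposition \ref{outside-same}. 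The only remaining point is that Proposition \ref{outside-same} also requires $V_1 = V_2$ on the exterior set $Z$; this holds because the extension $\afg_k$ is chosen to be Euclidean off a compact set and, on the transition region, the extensions of $g_1$ and $g_2$ agree since $g_1=g_2$ on $\partial M$ and we fixed a single extension operator — so $V_1=V_2$ there (in fact $V_k$ is supported in $\overline\Omega$, or can be arranged to be known and equal on $Z$ as in the proof of Proposition \ref{outside-same}).

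With these identifications in place, Proposition \ref{outside-same} applies verbatim and yields $\Phi_1 = \Phi_2$ on $(\RR^2\setminus\Omega)\cup\partial\Omega$, which is exactly the claim. I would also note, following Proposition \ref{outside-same}, that this in particular forces $\Phi_1(\Omega) = \Phi_2(\Omega)$, so the two putative metrics produce the same isothermal image of the leaf.

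The main obstacle, I expect, is not any single hard estimate but the careful bookkeeping of the conformal transformation: one must check that the potential $V_k$ really transforms as a multiplication operator (no first-order terms are generated), that the conformal factor cancels between the Laplacian, the normal derivative in the Dirichlet-to-Neumann map, and the boundary measure, and that the resulting $V_k$ genuinely satisfies the $L^\infty$ and non-degeneracy hypotheses of Proposition \ref{outside-same} with constants controlled by the Class 1 / Class 2 assumptions. Once the conformal invariance of the stability operator (and of its Dirichlet-to-Neumann map) is cleanly stated, the corollary is essentially a restatement of Proposition \ref{outside-same}.
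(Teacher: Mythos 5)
Your reduction is precisely what the paper intends: Corollary \ref{DN-outside-same} is stated as a direct restatement of Proposition \ref{outside-same} (the paper offers no separate proof), obtained by taking the two-dimensional metric on $\Omega$ to be the induced metric $g_k|_{Y_k}$, the potential to be $V_k=\text{Ric}_{g_k}(\vec n_k,\vec n_k)+\|A_{Y_k}\|_{g_k}^2$ (extended by a fixed known function on $Z$), and checking conformal invariance of the equation and of the Dirichlet-to-Neumann map. Your bookkeeping of the conformal factors and of the non-degeneracy hypothesis matches the paper's setup, so the proposal is correct and takes essentially the same route.
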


Key to all subsequent proofs in this paper, we now show that our minimal area data determines the Dirichlet-to-Neumann map associated to the stability operator on an area minimizing surface with boundary.

\begin{prop}\label{area-to-angle} 
 Let $g|_{\partial M}$ be given. Suppose that 
\begin{enumerate}
\item $M$ admits a properly embedded, area-minimizing foliation $Y(t)$
\item the area of each $Y(t)$ and any nearby perturbation of $Y(t)$ by area-minimizing surfaces is known. 
\end{enumerate}

Then,  the first variations of the area of  $Y(t)$ determine the angle at which $Y(t)$ cuts the boundary of $M$.
\end{prop}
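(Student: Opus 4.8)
The plan is to convert the known first variation of the area data into a boundary integral over $\gamma(t)$ whose integrand is the conormal of $Y(t)$, and then to recover that conormal pointwise. Fix $t$ and abbreviate $Y:=Y(t)$, $\gamma:=\gamma(t)=\partial Y$. Given a perturbation $\gamma(s)\subset\partial M$ with $\gamma(0)=\gamma$, I would first use the non-degeneracy of the stability operator on $Y$ (the standing assumption here, which also underlies the existence of the foliation) together with boundary regularity for the Plateau problem at a mean convex boundary to conclude that the least-area discs spanning $\gamma(s)$ form a smooth family $Y(s)$ with $Y(0)=Y$; in particular $s\mapsto\operatorname{Area}[Y(s)]$ is a smooth function, and since this function is exactly the given data, its $s$-derivative at $0$ is known. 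Writing $X$ for the variation field of the family $Y(s)$, its boundary restriction $X|_\gamma=\left.\tfrac{d}{ds}\gamma(s)\right|_{s=0}$ is a vector field along $\gamma$ tangent to $\partial M$. By the first variation of area and the minimality of $Y$ (which kills the mean-curvature term),
\[
\left.\frac{d}{ds}\operatorname{Area}[Y(s)]\right|_{s=0}\;=\;\int_\gamma g\big(X,\eta\big)\,ds,
\]
where $\eta$ is the outward unit conormal of $Y$ along $\gamma$. The essential point is that the right-hand side depends only on $X|_\gamma$ and not on how $Y(s)$ moves in the interior of $M$; this is precisely what makes minimality (as opposed to some fixed filling) indispensable.

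Next, since $g|_{\partial M}$ is given and the perturbing curves $\gamma(s)$ may be chosen freely inside $\partial M$, the field $X|_\gamma$ is known, and by flowing along a compactly supported vector field inside $\partial M$ one can arrange $X|_\gamma=\varphi\,\tau$ for an arbitrary smooth function $\varphi$ on $\gamma$, where $\tau$ is the unit conormal of $\gamma$ inside $(\partial M,g|_{\partial M})$, oriented consistently with the foliation. A tangential component of $X$ contributes nothing, since $\eta\perp T\gamma$; hence for such perturbations the identity above reads $\left.\tfrac{d}{ds}\operatorname{Area}[Y(s)]\right|_{s=0}=\int_\gamma\varphi\,g(\eta,\tau)\,ds$, and letting $\varphi$ vary recovers the pointwise values of $g(\eta,\tau)$ along $\gamma$.

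To finish, along $\gamma$ let $\vec{n}_{\partial M}$ denote the unit normal of $\partial M$ pointing into $M$. Then $\{T\gamma,\tau,\vec{n}_{\partial M}\}$ is orthogonal, and since $\eta\perp T\gamma$ and $|\eta|=1$ we may write $\eta=a\,\tau+b\,\vec{n}_{\partial M}$ with $a=g(\eta,\tau)$ now known and $a^2+b^2=1$. Because $Y$ is properly embedded in $M$, a curve in $Y$ issuing from a point of $\gamma$ in the inward conormal direction $-\eta$ stays in $M$, which forces $g(-\eta,\vec{n}_{\partial M})\ge 0$, i.e. $b\le 0$; hence $b=-\sqrt{1-a^2}$ is determined as well. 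This pins down $\eta$ pointwise along $\gamma$, and in particular the angle $\arccos\!\big(g(\eta,\tau)\big)$ at which $Y(t)$ meets $\partial M$ along $\gamma(t)$, which is the desired conclusion.

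I expect the only genuinely delicate step to be the first one: making precise that the least-area surfaces spanning the perturbed curves form a smooth family near $Y(t)$, so that the classical first-variation formula applies and its boundary term really is the derivative of the known area function (in particular, that for small $s$ the least area for $\gamma(s)$ is attained by the nearby surface $Y(s)$ rather than by some distant competitor). This rests on non-degeneracy of the stability operator, uniqueness of the local Plateau solution, a compactness argument, and standard boundary regularity — all available in the present setting. Once the conormal has been extracted, the remaining steps are elementary linear algebra on the two-plane $(T\gamma)^\perp$.
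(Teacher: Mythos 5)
Your proof is correct and follows the paper's approach exactly: the first variation of area together with minimality of $Y(t)$ collapses the derivative of the known area data to the boundary integral $\int_\gamma g(X,\eta)\,dS$ against the outward conormal $\eta$, and letting the tangential variation field $X|_\gamma$ range freely recovers $\eta$, hence the contact angle, pointwise along $\gamma(t)$. You fill in two details the paper leaves implicit — that non-degeneracy of the stability operator (which follows from the foliation hypothesis, since the lapse is a positive Jacobi field) guarantees a smooth one-parameter family of minimizers so the first-variation formula is legitimately the derivative of the data, and the explicit recovery of $\eta=a\,\tau+b\,\vec{n}_{\partial M}$ from $a=g(\eta,\tau)$ via $a^2+b^2=1$ together with the sign constraint $b\le 0$ coming from $Y(t)\subset M$.
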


\begin{proof}

Write $\gamma(t)=\partial Y(t)\subset\partial M$.  Let $\gamma(s,t):(-\eta,\eta)\times(-1,1)\to \partial M$ be a one parameter variation of $\gamma(t)$ by simple closed curves, chosen so that $\left.\frac{\partial}{\partial s}\right|_{s=0}\gamma(s,t)$ is a vector field tangent to the boundary.  Denote by $Y(s,t)$ the area minimizing surface circumscribed by $\gamma(s,t)$, and $A(s,t)$ the area of $Y(s,t)$. Write $X_0:= \left.\frac{\partial}{\partial s}\right|_{s=0}Y(s,t)$. By standard computations the first variation in the area of $Y(t)$ is
\begin{align*}
\left.\frac{\partial}{\partial s}A(s,t)\right|_{s=0}&=\int_{Y(t)}g(X_0,H) \VOL_{g_{t}} +\int_{\partial Y(t)} g(X_0,\nu)\,dS,
\end{align*}
where $g_{t}$ is the metric restricted to $Y(t)$ and $\nu$ is the unit outward pointing normal vector to $\partial Y(t)$ and tangent to $Y(t)$. Since we have assumed knowledge of the area of any minimal perturbation of $Y(t)$, we know $\left.\frac{\partial}{\partial s}A(s,t)\right|_{s=0}$; further, the fact that $Y(t)$ is minimal implies 
\begin{align*}
\left.\frac{\partial}{\partial s}A(s,t)\right|_{s=0}&= \int_{\partial Y(t)}g(X_0,\nu)\,dS.
\end{align*} 
Since $X_0$ is known, and is allowed to be any vector field tangent to the boundary, we have determined the angle at which $Y(t)$ cuts the boundary of $M$.

Note: the above argument holds for arbitrary dimension.

\end{proof}


\begin{prop}\label{n-dim_area_to_DN}
 Let $g|_{\partial M}$ be given. 

Let $\gamma(t)$, $t\in(-1,1)$ be a 1-parameter family of simple closed curves which foliate $\partial M$. For each $t$, let $Y(t)$ be the area-minimizing surface which is bounded by $\gamma(t)$; we assume that $\{Y(t)\,:\,t\in(-1,1)\}$ defines a foliation of $M$. Suppose that for each $t$, the area of $Y(t)$ and any nearby perturbation of $Y(t)$ by area-minimizing surfaces is known.   

Then, this data determines the Dirichlet-to-Neumann map associated to the stability operator on $Y(t)$.


\end{prop}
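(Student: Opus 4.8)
The plan is to recover the Dirichlet-to-Neumann map from the \emph{second} variation of area, which is available because the area is a known function $A(s,t)$ of the perturbation parameter $s$ (and, by smooth dependence of area minimizers on their boundary data when $\JJ$ is nondegenerate, is $C^2$ in $s$). Fix $t$ and abbreviate $Y=Y(t)$, $\gamma=\partial Y=\gamma(t)$; write $\JJ=\Delta_{g_Y}+\bigl(\text{Ric}_g(\vec n,\vec n)+||A||_g^2\bigr)$ for the stability operator on $Y$ and $\Lambda=\Lambda_{g_Y}$ for its Dirichlet-to-Neumann map. As recalled in the introduction, the existence of the foliation $\{Y(t)\}$ forces $\JJ$ to be nondegenerate (the lapse is a positive Jacobi field, so the first Dirichlet eigenvalue of $-\JJ$ is strictly positive), so $\Lambda$ is well defined. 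By Proposition \ref{area-to-angle}, together with the given data $g|_{\partial M}$, the unit normal $\vec n$ of $Y$ and the outward conormal $\nu$ of $Y$ along $\gamma$ may be treated as known. Now let $V_0$ be an arbitrary vector field along $\gamma$ tangent to $\partial M$, and extend it to a smooth variation $\gamma(s)$ of $\gamma$ through simple closed curves in $\partial M$ (any extension will do); let $Y(s)$ be the area-minimizing fill-in of $\gamma(s)$, $A(s)=\mathrm{Area}[Y(s)]$, and $X$ the variation field of $Y(s)$ at $s=0$, so $X|_\gamma=V_0$. Since each $Y(s)$ is minimal, $\psi:=g(X,\vec n)$ satisfies $\JJ\psi=0$ on $Y$ with known Dirichlet datum $h:=\psi|_\gamma=g(V_0,\vec n)$; and since $Y$ meets $\partial M$ transversally, $\vec n$ has nonzero component tangent to $\partial M$ along $\gamma$, so $h$ ranges over all of $H^{1/2}(\gamma)$ as $V_0$ varies.

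Next I would differentiate the first-variation identity. As each $Y(s)$ is minimal, $\partial_s A(s)=\int_{\gamma(s)}g(\dot\gamma(s),\nu(s))\,dS_s$ for all $s$. Differentiating once more at $s=0$, decomposing $\partial_s\nu_s|_{0}$ in the adapted orthonormal frame $\{\tau,\nu,\vec n\}$ along $\gamma$, and using the relation $\partial_s\vec n_s|_{0}=-\nabla^{Y}\psi+(\text{a term linear in }X^\top\text{ and the second fundamental form of }Y)$ (which, paired with $\nu$, produces $-\partial_\nu\psi$ plus known quantities) yields an identity of the form
\begin{equation*}
\partial_s^2 A(s)\big|_{s=0}=\int_{\gamma} h\,\partial_\nu\psi\,dS + \mathcal R,
\end{equation*}
where every term in $\mathcal R$ is built from: the chosen variation $\gamma(s)$ (via $\dot\gamma_0$, $\ddot\gamma_0$ and their derivatives along $\gamma$), the boundary metric $g|_{\partial M}$, the known normal and conormal of $Y$ along $\gamma$, and $h=\psi|_\gamma$ together with the second fundamental form of $Y$ along $\gamma$ (which is itself computable from $g|_{\partial M}$ and the known contact angle). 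Hence $\mathcal R$ is known; the left-hand side is known from the area data; and $\int_\gamma h\,\partial_\nu\psi\,dS=\langle h,\Lambda h\rangle$. Thus $\langle h,\Lambda h\rangle$ is determined for this $h$.

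Finally, letting $V_0$ and hence $h$ range over all of $H^{1/2}(\gamma)$ pins down the quadratic form $h\mapsto\langle h,\Lambda h\rangle$. Since $\Lambda$ is symmetric (Green's identity, using $\JJ\psi=0$), polarization recovers the bilinear form $(h_1,h_2)\mapsto\langle h_1,\Lambda h_2\rangle$, hence $\Lambda=\Lambda_{g_Y}$ itself; as $t$ was arbitrary, this proves the proposition (and the first part of Proposition \ref{area_to_DN} is this computation carried out on a single leaf). The step I expect to be the main obstacle is establishing the displayed second-variation formula: one must compute the variation of the conormal $\nu_s$ in the presence of a moving boundary and verify that, apart from the single term $\int_\gamma h\,\partial_\nu\psi\,dS$, all contributions --- the acceleration term, the variation of the line element $dS_s$, and the terms involving the tangential part $X^\top$ of the variation field and the second fundamental form of $Y$ along $\gamma$ --- are genuinely expressible through $g|_{\partial M}$, the known contact angle, and the chosen boundary variation. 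I expect the cleanest route is to realize $Y(s)$ as a normal graph over a slightly enlarged copy of $Y$ whose boundary is slid along $\partial M$ in the direction $V_0$, so that the tangential and boundary-motion contributions become manifestly explicit.
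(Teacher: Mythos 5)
Your central idea---recovering the bilinear form $\langle h,\Lambda h\rangle$ from the second $s$-derivative of the area---is exactly the engine of the paper's proof, and your observations about nondegeneracy of $\JJ$, the density of the boundary data $h=g(V_0,\vec n)$ in $H^{1/2}(\gamma)$, and recovery via polarization are all correct. But the route you take to the second-variation identity is genuinely different from the paper's, and the obstacle you flag at the end is a real gap, not just a computation to be done.

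The paper does not differentiate the first-variation identity along a family of curves constrained to lie in $\partial M$. Instead it first extends $(M,g)$ to $(\tilde M,\tilde g)$ with $\tilde g$ explicitly prescribed outside $M$, pushes the loop out to $\tilde\gamma(t,\epsilon)\subset\tilde M\setminus M$, and varies \emph{normally} to the auxiliary minimal surface $\tilde Y(t,\epsilon)$. Because the variation field is purely normal, the second variation collapses to
\[
\int_{\partial\tilde Y}\psi\,\tilde g(\nabla\psi,\nu)\,dS+\int_{\partial\tilde Y}\tilde g(\nabla_VV,\nu)\,dS
\]
with no tangential variation field $X^\top$, no $A_Y$ along the boundary, and no second fundamental form of $\partial M$ appearing; the correction term $\int\tilde g(\nabla_VV,\nu)$ is manifestly computable since $\tilde g$ is known along $\tilde\gamma(t,\epsilon)$. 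The price of this detour is Lemma \ref{interior-curve} (identifying the crossing curve $\tilde Y(t,\epsilon)\cap\partial M$ and the area of $\tilde Y(t,\epsilon)$ from the given data) plus the $\epsilon\to0$ limit to return to $Y(t)$.

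In your version, two pieces of $\mathcal R$ are not determined by "$g|_{\partial M}$ and the contact angle" alone. First, $g(\nabla_XX,\nu)$ along $\gamma$ involves the second fundamental form of $\partial M$ in $M$ (since $X$ is tangent to $\partial M$ while $\nu$ has a nonzero normal-to-$\partial M$ component), which requires the full $1$-jet of $g$ at $\partial M$, not merely the induced metric. Second, the claim that the second fundamental form of $Y$ along $\gamma$ is computable from the contact angle is too quick: $A_Y(\tau,\tau)$ comes from the geodesic curvature of $\gamma$ in $M$ (again a $1$-jet quantity), $A_Y(\nu,\nu)$ then follows from minimality, and $A_Y(\tau,\nu)$ from differentiating the contact angle along $\gamma$---this is precisely the Cauchy-data argument for the minimal-surface PDE that the paper carries out inside Lemma \ref{interior-curve}, and it needs to be spelled out rather than asserted. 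So the step you identified as the main obstacle really is one: your identity does hold in principle, but establishing that $\mathcal R$ is known is at least as much work as the paper's extension-plus-normal-variation route, and it additionally makes you confront head-on the question of the boundary $1$-jet that the paper's extension hypothesis quietly absorbs.
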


\begin{proof}


It is convenient to consider variations of $Y(t)$ that are normal to it at the boundary curve $\gamma$. From such variations, we discover information about the Dirichlet-to-Neumann map associated to the stability operator on each $Y(t)$. Such variations need not arise as variations of $\gamma(t)$ on the boundary of $M$; hence we smoothly extend $M$ and work with this extension. Let $N$ to be a tubular neighbourhood of $\partial M$. Let $\tilde{M}:=N\cup M$ and extend $g$ to a $C^4(\tilde{M})$-smooth metric $\tilde{g}$ on $\tilde{M}$. We further impose that $N$ was chosen so that the Riemannian manifold $(\tilde M,\tilde{g})$ has mean convex boundary $\partial\tilde M$. 

Next, we construct an auxiliary family of unique, area-minimizing surfaces in $\tilde M$ which we will vary normally to obtain information about a 1-parameter family of Dirichlet-to-Neumann maps which, loosely speaking, are close in some sense to the Dirichlet-to-Neumann map we seek to identify on $Y(t)$. Towards this end, for each fixed  $t$, we select a 1-parameter family simple closed curves in $\tilde M\setminus M$ which are $C^4(\tilde{M})$-close to $\gamma(t)$. Denote the curves in this family by $\tilde\gamma(t,\epsilon)$; here $\epsilon\in[0,1]$ and $\tilde\gamma(t,0):= \gamma(t)$.

\begin{figure}[h]
\centering
\includegraphics[width=0.8\textwidth]{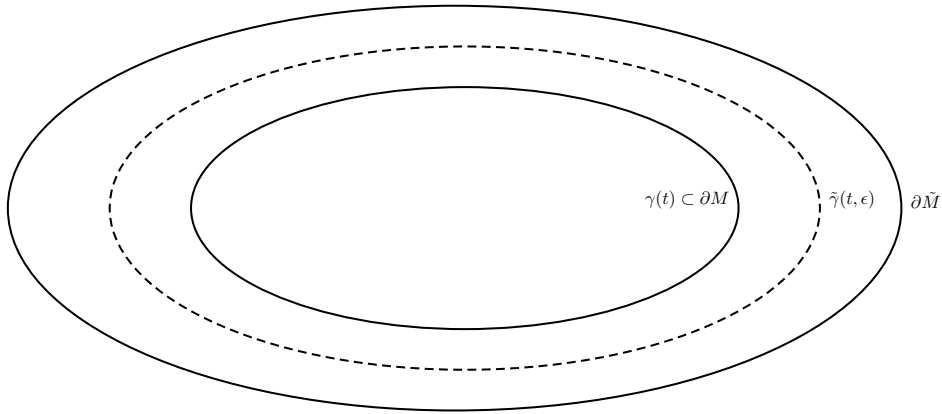}
\caption{Depiction of the curves $\gamma(t)$ and $\tilde{\gamma}(t,\epsilon)$.}
\end{figure}

We have the following two facts: since we have assumed that $\gamma(t)$ bounds a unique, area-minimizing surface, so too for every $t$ and small enough $\epsilon>0$ the curves $\gamma(t, \epsilon)$ bound a unique, area-minimizing surface. Thus, given some bounded domain $\Omega\subset \RR^2$, there exists embeddings
\begin{align*}
&\tilde{f}_{t,\epsilon}:\Omega \to \tilde M,\\
&\left.\tilde{f}_{t,\epsilon}\right|_{\partial \Omega}=\tilde\gamma(t,\epsilon),
\end{align*}
such that $\tilde{f}_{t,\epsilon}(\Omega)=:\tilde{Y}(t,\epsilon)$ is the unique surface which solves the least area problem for $\tilde\gamma(t,\epsilon)$.

Now we describe normal variations of $\gamma(t, \epsilon)$: for every $s\in[0,1]$, define $\tilde{\gamma}(s,t,\epsilon)$ to be a simple closed curve satisfying $\left.\frac{d}{ds}\right|_{s=0}\tilde{\gamma}(s,t,\epsilon)$ is parallel to the unit normal vector field $\vec{n}_{t,\epsilon}$ on the surface $�\tilde {Y}(t,\epsilon)$. Here we write $\tilde{\gamma}(0,t,\epsilon)=\tilde{\gamma}(t,\epsilon)$. Once more, the variations $\tilde{\gamma}(s,t,\epsilon)$ are $C^4(\tilde M)$-close to $\gamma(t)$, and since $\gamma(t)$ bounds a unique, area-minimizing surface, for each $s$, the curves $\tilde{\gamma}(s,t,\epsilon)$ bound a unique, area-minimizing surface. 

In particular, there exist embeddings $\tilde{f}_{s,t,\epsilon}:\Omega \to \tilde M$ satisfying
\begin{align*}
&\left.\tilde{f}_{s,t,\epsilon}\right|_{\partial \Omega}=\tilde\gamma(t,\epsilon),\\
&\tilde{f}_{s,t,\epsilon}(\Omega)=\tilde{Y}(s,t,\epsilon) \text{ solves the least area problem for } \tilde\gamma(s,t,\epsilon),\text{ and}\\
&\left.\frac{d}{ds}\tilde{f}_{s,t,\epsilon}\right|_{s=0}= \psi_{t,\epsilon}\vec{n}_{t\epsilon}.
\end{align*}
Moreover, the $C^4(\tilde{M})$-smooth function $\psi_{t,\epsilon}:\tilde{Y}(t,\epsilon)\to \RR$  
solves the boundary value problem
\begin{align}
\Delta_{\tilde{g}_{t,\epsilon}}\psi_{t,\epsilon}+\left(\text{Ric}_{\tilde{g}}(\vec{n}_{t,\epsilon},\vec{n}_{t,\epsilon})+||A_{t,\epsilon}||^2_{\tilde{g}}\right)\psi_{t,\epsilon}&=0,&&\text{on }\tilde{Y}(t,\epsilon)\\
\psi_{t,\epsilon}|_{\partial \tilde{Y}(t,\epsilon)}&=\psi_{t,\epsilon}^\sharp ,&& \text{on }\partial \tilde{Y}(t,\epsilon), \label{JEQ1}
\end{align} 
for prescribed boundary data $\psi_{t,\epsilon}^\sharp :=g\left(V,\vec{n}_{t,\epsilon}\right)$, $V:= \left.\frac{d}{ds}\tilde\gamma_{s,t,\epsilon}\right|_{s=0}$. Here $\tilde{g}_{t,\epsilon}$ is the metric $\tilde{g}$ restricted to $\tilde{Y}(t,\epsilon)$ and $A_{t,\epsilon}$ is the second fundamental form of $\tilde{Y}_{t,\epsilon}$.

We know the metric on $(\tilde{M}\setminus M)\cup \partial M$. Therefore, by the following lemma (Lemma \ref{interior-curve}), we determine the intersection of $\tilde{Y}(s,t,\epsilon)\cap\partial M$. From this, the area of $\tilde{Y}(s,t,\epsilon)$, denoted by $\text{Area}(s,t,\epsilon)$, is found. An easy computation shows that for each $(s,t,\epsilon)$, the second variation in area is the number given by
\begin{align*}
\left.\frac{\partial^2}{\partial s^2}\text{Area}(s,t,\epsilon)\right|_{s=0}&=\int_{\partial \tilde{Y}(t,\epsilon)} \psi_{t,\epsilon}\tilde{g}(\nabla\psi_{t,\epsilon},\nu_{t,\epsilon})+\tilde{g}(\nabla_{V}V,\nu_{t,\epsilon})\,dS\\
&\qquad -\int_{\tilde{Y}(t,\epsilon)}\psi_{t,\epsilon}\Delta_{\tilde{g}_{t,\epsilon}}\psi_{t,\epsilon}+\psi_{t,\epsilon}^2\left(\text{Ric}_{\tilde g}(\vec{n},\vec{n})+||A||^2_{\tilde g}\right) \VOL_{\tilde{g}_{\bar{Y}(t)}}\\
&= \int_{\partial \tilde{Y}(t,\epsilon)} \psi_{t,\epsilon}\tilde{g}(\nabla\psi_{t,\epsilon},\nu_{t,\epsilon})\,dS+\int_{\partial \tilde{Y}(t,\epsilon)}\tilde{g}(\nabla_{V}V,\nu_{t,\epsilon})\,dS,
\end{align*} 
where $\nu_{t,\epsilon}$ is the outward pointing normal to $\partial \tilde{Y}(t,\epsilon)$ and tangent to $\tilde{Y}(t,\epsilon)$.

The quantity $\int_{\partial \tilde{Y}(t,\epsilon)}\tilde{g}(\nabla_{V}V,\nu_{t,\epsilon})\,dS$ appearing in the above second variation of area is known, since the metric $\tilde{g}_{t,\epsilon}$ is given on $\tilde M\setminus M$, and both $\nu_{t,\epsilon}$  and $V$ are known on $\tilde\gamma(t,\epsilon)$. 
Therefore, 
\begin{align}
\int_{\partial \tilde{Y}(t,\epsilon)} \psi_{t,\epsilon} \tilde{g}(\nabla\psi_{t,\epsilon},\nu_{t,\epsilon})\,dS&=\left.\frac{\partial^2}{\partial s^2}\text{Area}(s,t,\epsilon)\right|_{s=0}-\int_{\partial \tilde{Y}(t,\epsilon)}\tilde{g}(\nabla_{V}V,\nu_{t,\epsilon})\,dS\notag\\
&= \text{ known quantity.} \label{DN1int}
\end{align}
Then, by polarizing, our area data has determined the functional $$L(\phi^\sharp  ,\psi^\sharp  ) :=\int_{\partial \tilde{Y}(t,\epsilon)} \phi^\sharp g(\nabla\psi,\nu_{t,\epsilon})\,dS$$ for any functions $\phi^\sharp  , \psi^\sharp  :\partial \tilde{Y}(t,\epsilon)\to\RR\in C^2(\tilde{Y}(t,\epsilon)).$

In particular, we have learned the Dirichlet-to-Neumann map $$\Lambda_{t,\epsilon}(\psi^\sharp  ):= \tilde{g}(\nabla\psi,\nu_{t,\epsilon})|_{\partial \tilde{Y}(t,\epsilon)}$$ associated to equation (\ref{JEQ1}). We remark that the operators $$\JJ_{t,\epsilon}:= \Delta_{\tilde{g}_{t,\epsilon}}+\text{Ric}_{\tilde{g}}(\vec{n}_{t,\epsilon},\vec{n}_{t,\epsilon})+||A_{t,\epsilon}||^2_{\tilde{g}}$$ are non-degenerate for $\epsilon>0$ small enough, since the eigenvalues of $\JJ_{t,\epsilon}$ depend continuously on the strictly negative eigenvalues of $\JJ_{t,0}$. Hence the Dirichlet-to-Neumann map $\Lambda_{t,\epsilon}$ is 
well-defined for each $t,\epsilon$.

Now since the surfaces $\tilde{Y}(t,\epsilon)$ are $C^4(\tilde{M})$-close to $\tilde{Y}(t)$, as $\epsilon\to0$ the component functions of the metrics $\tilde{g}_{t, \epsilon}$ tend to those of $\tilde{g}_{t,0}$ in the $C^4(\tilde{M})$-norm. Also, for each $t$, the potentials $\left(\text{Ric}_{\tilde{g}}(\vec{n}_{t,\epsilon},\vec{n}_{t,\epsilon})+||A_{t,\epsilon}||^2_{\tilde{g}}\right)$ converge to $\left(\text{Ric}_{\tilde{g}}(\vec{n}_{t,0},\vec{n}_{t,0})+||A_{t,0}||^2_{\tilde{g}}\right)$ in $C^1(\tilde{M})$ as $\epsilon\to 0$. Finally, since each $\psi_{t,\epsilon}$ depends continuously on $g_{t,\epsilon}$ and $\left(\text{Ric}_{\tilde{g}}(\vec{n}_{t,\epsilon},\vec{n}_{t,\epsilon})+||A_{t,\epsilon}||^2_{\tilde{g}}\right)$, the functions $\psi_{t,\epsilon}$ converge to $\psi_{t,0}$ in $C^4(\tilde{M})$. Take the limit as $\epsilon\to0$ of (\ref{DN1int}). Since $Y(t)=: Y(t,0)$, on the original leaf $Y(t)$ we learn
\begin{align*}
\int_{\partial \tilde{Y}(t)} \psi_{t,0} \tilde{g}(\nabla\psi_{t},\nu_t)\,dS&=\left.\frac{\partial^2}{\partial s^2}\text{Area}(s,t,0)\right|_{s=0}+\text{known quantity},
\end{align*}
and thus determine the Dirichlet-to-Neumann map  $$\Lambda_{t}(\psi^\sharp  ):= \tilde{g}(\nabla\psi,\nu_{t})|_{\partial \tilde{Y}(t)}$$ associated to the stability operator on $Y(t)$.

Now we prove our assumption about the boundaries and areas of $\tilde{Y}(t,\epsilon)$. 

\begin{lemma}\label{interior-curve}
Suppose that $(M,g)$ admits a foliation by properly embedded, area-minimizing surfaces. Let $(\tilde{M}, \tilde{g})$ be a smooth extension of $(M,g)$ such that $\tilde{g}|_{M}=g$, $\tilde{g}$ is known on $(\tilde{M}\setminus M)\cup \partial M$, and $\partial \tilde M$ is mean convex. 

Let $\gamma(t)$ be a given 1-parameter family of simple closed curves which foliate $\partial M$, and let $Y(t)$ be unique, area-minimizing leaves of the foliation induced on $M$ by solving the least-area problem for $\gamma(t)$. Suppose that for each $t$, the area of $Y(t)$ and any nearby perturbation of $Y(t)$ by area-minimizing surfaces is known. 

For each fixed $t$, choose $\tilde\gamma(t,\epsilon)$, $\epsilon\in[0,1]$ to be a family of simple closed curves which lie in $\tilde{M}\setminus (M\cup \partial M)$, and satisfy $\tilde\gamma(t,0)=\gamma(t)$ and $\tilde\gamma(t,\epsilon)$ is $C^4(\tilde{M})$-close to $\gamma(t)$. Define $Y(t)$ and $\tilde{Y}(t,\epsilon)$ be the surface of least area which bound $\gamma(t)$ and $\tilde\gamma(t,\epsilon)$, respectively.

Then,
 
 \begin{enumerate} 
 \item[a.]  We know the closed curve given by the intersection $c(t,\epsilon):=\tilde{Y}(t,\epsilon)\cap\partial M$.
 \item[b.] We know the area of $\tilde{Y}(t,\epsilon)$, with respect to $\tilde g$.
\end{enumerate}
\end{lemma}

\begin{proof}



a.  Let $c(t,\epsilon)$ the curve given by $\tilde{Y}(t,\epsilon)\cap\partial M$. Consider the set $\Sigma$ of all simple closed curves on $\partial M$ which are $C^4(\tilde{M})$-close to $c(t,\epsilon)$. For any curve $\sigma\in \Sigma$, denote by $Y_\sigma\subset M$ the surface which minimizes the area enclosed by $\sigma$. 

For $\sigma\in \Sigma$, let $A_{\sigma}$ be the area-minimizing annulus which lies between $\sigma$ and $\tilde{\gamma}(t,\epsilon)$. The metric $\tilde g$ is known on $(\tilde{M}(r)\setminus M)\cup \partial M$, so for any such annulus $A_{\sigma}$, we can determine the inward pointing (with respect to $A_{\sigma}$) unit normal vector field $\tilde\nu_\sigma$ tangent to $A_\sigma$ and normal to the curve $\sigma$. 

Now, given any $\sigma\in \Sigma$ the first variations in the area of $Y_\sigma$ determine the angle at which $Y_\sigma$ cut the boundary of $M$ (see Proposition \ref{area-to-angle}). Thus, we may determine the outward pointing (with respect to $Y_{\sigma}$) unit normal vector fields $\nu_\sigma$ which are tangent to $Y_\sigma$, and normal to the curve $\sigma$.

Consider the annulus $A_{c(t,\epsilon)}$. Notice that the vectors $\nu_{c(t,\epsilon)}$ and $\tilde{\nu}_{c(t,\epsilon)}$ are collinear, since the surface $\tilde{Y}(t,\epsilon)$ is smooth at the curve $c(t,\epsilon)$. We claim that $c(t,\epsilon)$ is the only curve in $\Sigma$ with this property. 

To show the uniqueness of $c(t,\epsilon)$, suppose that $\sigma^\sharp\in\Sigma$ is a curve such that $A_{\sigma^\sharp}$ is the minimal annulus for which $\nu_{\sigma^\sharp}$ and $\tilde\nu_{\sigma^\sharp}$ are collinear on $\partial M$. Note for any $p\in \sigma^\sharp$, the tangent space $T_pA_{\sigma^\sharp}$ coincides with the tangent space $T_pY_{\sigma^\sharp}$, since they are both spanned by $\nu_{\sigma^\sharp}$ and any vector tangent to $\sigma^\sharp$.  Hence, $A_{\sigma^\sharp}\cup Y_{\sigma^\sharp}$ is a $C^1(\tilde{M})$ surface which minimizes area bounded by $\tilde \gamma(t, \epsilon)$. This fact follows by general minimal surface theory, but we include a brief proof. We claim that $A_{\sigma^\sharp}\cup Y_{\sigma^\sharp}$ is in fact a smooth minimal surface and further $A_{\sigma^\sharp}\cup Y_{\sigma^\sharp}\equiv \tilde{Y}(t,\epsilon)$.

To prove that $A_{\sigma^\sharp}\cup Y_{\sigma^\sharp}$ is smooth, we express it as a graph of a function $z$ and show that the derivatives of $z$ exist and are continuous. To this end, let $T_{\sigma^\sharp}\subset \tilde M$ be the surface obtained by following geodesics $c_\theta(\rho)$ with $\theta\in \sigma^\sharp$ and initial direction $\frac{\partial }{\partial r}c_\theta(0)=\nu_{\sigma^\sharp}(\theta)$; that is  $T_{\sigma^\sharp}:=\{p\in \tilde M\,:\,p=c_\theta(\rho), \text{for some }\rho\in[-1,1],\,\theta\in\sigma^\sharp\}.\,$

Express $T_{\sigma^\sharp}$ in the natural coordinate system $(\rho,\theta)$. View $A_{\sigma^\sharp}$ as a graph of a function $z=z(\rho,\theta)$ over $T_{\sigma^\sharp}$. Again, we repeat that since $A_{\sigma^\sharp}\cup Y_{\sigma^\sharp}$ is smooth away from $\rho=0$, to show $A_{\sigma^\sharp}\cup Y_{\sigma^\sharp}$ is smooth, it suffices to show that  the second order derivatives of $z(\rho,\theta)$ at $\rho=0$ are continuous. This follows from standard elliptic regularity; the surfaces $A_{\sigma^\sharp}$ and $Y_{\sigma^\sharp}$ are minimal, hence $z=z(\rho,\theta)$ solves the minimal surface equation $$\DIV_g\left(\frac{\nabla z}{||\nabla z||_g}\right)=0.$$ 

Let $(\rho,\theta,z)$ form a local coordinate system near $T_{\sigma^\sharp}$. Since $A_{\sigma^\sharp}$ agrees with $T_{\sigma^\sharp}$ on $\sigma^\sharp$ to first order, $z(0,\theta)=0$ and $\partial_\rho z(0,\theta)=\partial_\theta z(0,\theta)=0$. The minimal surface equation for $A_{\sigma^\sharp}$ written in our chosen coordinates is
\begin{align*}
0&=\DIV_g\left(\frac{\nabla z}{||\nabla z||_g}\right)\\
&= \frac{||\nabla z ||_g\DIV_g(\nabla z)-g(\nabla z,g(\nabla \nabla z,\nabla z))}{||\nabla z ||_g^3}
\end{align*}
Substituting $\rho=0$ into the above equation and using $z(0,\theta)=0$, $\partial_\rho z(0,\theta)=\partial_\theta z(0,\theta)=0$, we determine $\nabla_{\tilde N}\nabla_{\tilde N} z$, where $\tilde N:=\frac{\nabla z}{||\nabla z||_g}$ is the unit normal to $A_{\sigma^\sharp}$ at $\rho=0$. 
Likewise $Y_{\sigma^\sharp}$ agrees with $T_{\sigma^\sharp}$ on $\sigma^\sharp$ to first order and is also a minimal surface, so by similar analysis we determine $\nabla_N\nabla_N z$, where $N:=\frac{\nabla z}{||\nabla z||_g}$ is the unit normal to $Y_{\sigma^\sharp}$ at $\rho=0$. In particular, at $\rho=0$ we have $\nabla_{\tilde N}\nabla_{\tilde N} z=\nabla_N\nabla_N z$.

So $z=z(\rho,\theta)$ is $C^2(\tilde M(r))$ on $A_{\sigma^\sharp}\cup Y_{\sigma^\sharp}$. Since $A_{\sigma^\sharp}$ agrees with $Y_{\sigma^\sharp}$ at $\sigma^\sharp$ up to second order, we have by elliptic regularity that $A_{\sigma^\sharp}\cup Y_{\sigma^\sharp}$ is smooth.

Now we show that $A_{\sigma^\sharp}\cup Y_{\sigma^\sharp}$ is unique. Since we have $Y(t)$ is a unique area minimizer for each $t$, the perturbed minimal surfaces $\tilde{Y}(t,\epsilon)$ are unique for $\epsilon$ small enough. Now, by construction $A_{\sigma^\sharp}\cup Y_{\sigma^\sharp}$ is $C^4$-close to $\tilde{Y}(t,\epsilon)$, and hence the surface $A_{\sigma^\sharp}\cup Y_{\sigma^\sharp}$ is unique.

In particular, the uniqueness of both $A_{\sigma^\sharp}\cup Y_{\sigma^\sharp}$ and $\tilde{Y}(t,\epsilon)$ implies that $A_{\sigma^\sharp}\cup Y_{\sigma^\sharp}\equiv \tilde{Y}(t,\epsilon)$. Therefore, $\sigma^\sharp\equiv c(t,\epsilon)$.

\vskip 1 em

b.  From part a, for any $t$ we may determine the curves $\gamma(t,\epsilon)$ cut by the intersection $\tilde Y(t,\epsilon)\cap \partial M$. In particular, we can find the area of the annulus $\tilde{Y}(t,\epsilon)\setminus(\tilde{Y}(t,\epsilon)\cap M)$.

We have 
\begin{align*}
\text{Area}(\tilde{Y}(t,\epsilon))&= \text{Area}(\tilde{Y}(t,\epsilon)\setminus(\tilde{Y}(t,\epsilon)\cap M))+\text{Area}(\tilde{Y}(t,\epsilon)\cap M).
\end{align*}
Since the metric $\tilde g$ is known on $\tilde{Y}(t,\epsilon)\setminus(\tilde{Y}(t,\epsilon)\cap M)$, we may compute this area. Since we assumed the knowledge of any minimal surface $M$, the area of $\tilde{Y}(t,\epsilon)\cap M$ in known. Therefore, $\text{Area}(\tilde{Y}(t,\epsilon))$ is known.

\end{proof}

\end{proof}



\begin{repprop}{area_to_DN} 
Let $(M,g)$ be a $C^4$-smooth, Riemannian manifold which is homeomorphic to a 3-dimensional ball in $\RR^3$, and has mean convex boundary $\partial M$. Let $g$ be given on ${\partial M}$. Let $\gamma$ be a given simple closed curve on ${\partial M}$, and set $Y_\gamma\subset M$ to be a surface of least area bounded by $\gamma$. Suppose that the stability operator on $Y_\gamma$ is non-degenerate, and that for $\gamma$ and any nearby perturbation $\gamma(s)$, the area of the least-area surface $Y_{\gamma(s)}$ enclosed by $\gamma(s)$ is known. 

Equip a neighbourhood of $Y_\gamma$ with coordinates $(x^\alpha)$ such that on $Y_\gamma$, $x^3=0$ and $(x^1,x^2)$ are isothermal coordinates. Then, 
\begin{enumerate}
\item the first and second variations of the area of $Y_\gamma$ determine the Dirichlet-to-Neumann map associated to the boundary value problem 
\begin{align*}
\Delta_{g_\EE}\psi+e^{2\phi}\left(\text{Ric}_g(\vec{n},\vec{n})+||A||^2_g\right)\psi&=0,\\
\psi|_{\partial Y_\gamma}&=g\left(\left.\frac{d}{ds}\gamma(s)\right|_{s=0},\vec{n}\right),
\end{align*} 
on $Y_\gamma$, where $e^{2\phi}g_\EE=e^{2\phi}[(dx^1)^2+(dx^2)^2]$ is the metric on $Y_\gamma$ in the coordinates $(x^1,x^2)$.
\item Knowledge of the first and second variations of the area of $Y_\gamma$ determines any solution $\psi(x)$ to the above boundary value problem with  given boundary data $\psi|_{\partial Y_\gamma}$, in the above isothermal coordinates.
\end{enumerate}
\end{repprop} 

\begin{proof}

Without loss of generality, set $t=0$. Let $g_0$ be the metric $g$ restricted to $Y(0)$. From Lemma \ref{n-dim_area_to_DN}, the minimal area data enables us to find the Dirichlet-to-Neumann map $ \Lambda_{g_0}(\psi):= g(\nabla\psi,\nu)|_{\partial \tilde\Omega}$ associated to the boundary value problem for the stability operator 
\begin{align}\label{JEQ2}
\Delta_{g_0}\psi+\left(\text{Ric}_g(\vec{n},\vec{n})+||A||^2_g\right)\psi&=0&&\text{on } \tilde\Omega,\\
\psi|_{\partial \Omega}&=\psi_0 &&\text{on }\partial \tilde\Omega.
\end{align}
In our chosen coordinate system, the metric $g_0$ pulled back to $\tilde\Omega$ takes the form $g_0=e^{2\phi}g_\EE$. In these coordinates, the problem (\ref{JEQ2}) is transformed to
\begin{align}\label{JEQ3}
\Delta_{g_\EE}\psi+e^{2\phi}\left(\text{Ric}_g(\vec{n},\vec{n})+||A||^2_g\right)\psi&=0&&\text{on } \tilde\Omega,\\
\psi|_{\partial \Omega}&=\psi_0 &&\text{on }\partial \tilde\Omega,
\end{align}
and the solutions $\psi(x)$ of (\ref{JEQ2}) are the same as the solutions $\psi(x)$ of (\ref{JEQ3}).

Using the isothermal coordinates $(x^1,x^2)$,
\begin{align*}
\int_{\partial\tilde\Omega}\psi\Lambda_{g_0}(\psi)\, dS&:=\int_{\partial\tilde\Omega} \psi \nu(\psi)\, dS\\
&= \int_{\partial\tilde\Omega} \psi e^{-\phi}\tilde\nu(\psi)\, e^{\phi}d\tilde{S}\\
&= \int_{\partial\tilde\Omega} \psi \Lambda_{g_\EE}(\psi)\, d\tilde{S}
\end{align*}
where $\tilde \nu=e^{-\phi}\nu$ is the unit outward pointing normal with respect to the Euclidean metric $g_\mathbb{E}$, and $\tilde\nabla$ denotes the gradient of $\psi:= \psi(x^1,x^2)$ with respect to the metric $g_\mathbb{E}$.
 
By polarizing, the knowledge of the area of any minimal surface in $M$ has determined the Dirichlet-to-Neumann map $\Lambda_{g_\mathbb{E}}$ associated to the Schr\"{o}dinger equation in (\ref{JEQ3}), with respect to the Euclidean metric.

Employing the result in \cite{IN} for linear Schr\"{o}dinger equations, the Dirichlet-to-Neumann map $\Lambda_{g_\mathbb{E}}$ determines the potential $$e^{2\phi}\left(\text{Ric}_g(\vec{n},\vec{n})+||A||^2_g\right)$$ on $\tilde\Omega$. Now that we know this potential in coordinates $(x^1,x^2)$,  all solutions $\psi(x^1,x^2)$ to the Dirichlet problem (\ref{JEQ3}) are known.

\end{proof} 


\section{Equations for the Components of the Inverse Metric}




\begin{prop}\label{inf_sep_prop} Let $(M,g)$ be a $C^4$-smooth, Riemannian manifold. Let $\gamma(t)$ be a foliation of $\partial M$ by simple closed curves. Suppose that $(M,g)$ admits a non-degenerate foliation by properly embedded, area-minimizing surfaces $Y(t)$ with $\partial Y(t)=\gamma(t)$.  Further, suppose that for $\gamma(t)$ and any nearby perturbation $\gamma(s,t)\subset \partial M$, the area of the least-area surface $Y(s,t)$ enclosed by $\gamma(s,t)$ is known. 

As in section \ref{AFcoords}, extend $(M,g)$ to an asymptotically flat manifold $(\MM,\afg)$ and extend each $Y(t)$ smoothly to $\YY(t)$ in $\MM$. Equip $\MM$ with coordinates $(x^\alpha)$, $\alpha =1,2,3$ such that $x^3=t$ and for each $t$ fixed $x^1,x^2:\YY(t)\to\RR$ are the unique conformal coordinates given by Proposition \ref{uniquness-up-down}. 

In these coordinates, $g^{33}:=||\nabla x^3||_{g}^2$, may be recovered on $M$ from the area data. \end{prop}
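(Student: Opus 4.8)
The plan is to identify $g^{33}$ with $\psi_0^{-2}$, where $\psi_0$ is the normal component of the deformation field of the background foliation — the lapse function —, to observe that $\psi_0$ is a Jacobi field on each leaf whose boundary value is dictated by known data, and then to invoke Proposition~\ref{area_to_DN} to recover it leaf by leaf.

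I would work in the coordinates $(x^1,x^2,x^3)$ from Section~\ref{AFcoords}, in which $x^3=t$ is constant on $Y(t)$, $(x^1,x^2)$ are the normalized isothermal coordinates on each leaf, and $g=g_{3\alpha}\,dx^3dx^\alpha+e^{2\phi}[(dx^1)^2+(dx^2)^2]$. The unit normal to the level set $Y(t)=\{x^3=t\}$ is $\vec n=\nabla x^3/\sqrt{g^{33}}$, and since $g(\partial_3,\nabla x^3)=dx^3(\partial_3)=1$, the normal component of the deformation field $\partial_3=\partial/\partial x^3$ of the foliation is
\[
\psi_0:=g(\partial_3,\vec n)=\frac{1}{\sqrt{g^{33}}}.
\]
Because the leaves $Y(t)$ are pairwise disjoint, $\partial_3$ is transverse to every leaf, so $\psi_0$ is nowhere zero and may be taken positive; hence recovering $\psi_0$ as a function on $M$ is equivalent, via $g^{33}=\psi_0^{-2}$, to recovering $g^{33}$.

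Next I would use the standard fact that the normal part of the deformation field of a foliation by minimal surfaces is a Jacobi field: on each leaf $Y(t)$ one has $\Delta_{g_t}\psi_0+(\text{Ric}_g(\vec n,\vec n)+||A||^2_g)\psi_0=0$, equivalently, in the isothermal coordinates $(x^1,x^2)$,
\[
\Delta_{g_\EE}\psi_0+e^{2\phi}\bigl(\text{Ric}_g(\vec n,\vec n)+||A||^2_g\bigr)\psi_0=0 .
\]
Its trace on $\gamma(t)=\partial Y(t)\subset\partial M$ is $\psi_0|_{\gamma(t)}=g(\partial_t\gamma(t),\vec n)$: here $\partial_t\gamma(t)$ is the velocity of the prescribed boundary foliation and is known from $g|_{\partial M}$, while $\vec n$ along $\gamma(t)$ is determined by the angle at which $Y(t)$ meets $\partial M$, which Proposition~\ref{area-to-angle} recovers from the first variations of area. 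Thus $\psi_0$ is precisely a solution of the boundary value problem of Proposition~\ref{area_to_DN} — with the admissible variation $\gamma(s)$ taken to be the foliation $s\mapsto\gamma(t+s)$ itself — with known boundary data, so part~(2) of that proposition, applicable since the foliation is assumed non-degenerate, determines $\psi_0$ on $Y(t)$ in the coordinates $(x^1,x^2)$ for every $t$. Since these are exactly the coordinates appearing in the statement, this pins down $\psi_0$, hence $g^{33}=\psi_0^{-2}$, on all of $M$.

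The remaining points are bookkeeping: checking that the solution produced by Proposition~\ref{area_to_DN} in the leaf-dependent isothermal gauge is the lapse $\psi_0$ expressed in the coordinates $(x^\alpha)$ of the statement (true by construction of those coordinates), and confirming that the boundary datum $g(\partial_t\gamma(t),\vec n)$ is accessible, which reduces to Proposition~\ref{area-to-angle} together with knowledge of $g|_{\partial M}$. I expect the only genuinely delicate step to be this last identification of $\vec n$ along the boundary; no close-to-Euclidean or thin/straight hypothesis enters here, as those are needed only later, for the components $g^{31},g^{32},\phi$.
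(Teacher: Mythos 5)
Your proof is correct and follows the same route as the paper: identify the lapse of the background foliation as a Jacobi field on each leaf, determine its boundary value from the boundary metric and the contact angle, then invoke Proposition~\ref{area_to_DN} to recover it in isothermal coordinates on each leaf, leaf by leaf. There are two points worth flagging, both to your credit. First, your formula $\psi_0 = g(\partial_3,\vec n) = 1/\sqrt{g^{33}}$ is the correct one: since $g(\partial_3,\nabla x^3)=dx^3(\partial_3)=1$ and $\|\nabla x^3\|_g=\sqrt{g^{33}}$, the normal component of $\partial_3$ is $1/\|\nabla x^3\|_g$, not $\|\nabla x^3\|_g$ as the paper writes in this step (the paper's identity $g^{33}=\|\nabla x^3\|_g^2$ is of course right, and the conclusion is unaffected because knowing either $\psi_0$ or $1/\psi_0$ determines $g^{33}$, but your sign/reciprocal is the one consistent with the Jacobi-field interpretation and with $\dot x^3 = \psi\,\|\nabla x^3\|_g$ used later in Lemma~\ref{sep_lin_lem}). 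Second, you are more explicit than the paper about why the boundary datum $\psi_0|_{\gamma(t)}=g(\partial_t\gamma,\vec n)$ is accessible, tracing it to $g|_{\partial M}$ together with the contact angle recovered by Proposition~\ref{area-to-angle}; the paper asserts this more tersely. These are improvements in bookkeeping, not a different method.
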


\begin{proof} 
Recall $\nabla x^3:= \text{grad}(x^3)$. Set $\vec{n}:=\frac{\nabla x^3}{||\nabla x^3||_{g}}$ to be a unit normal vector field on $Y(t)$ for $t\in (-1,1)$, and write $g_t$ for the restriction of the metric $g$ to the surface $Y(t)$.

For each fixed $t$, we may view the nearby leaves of the foliation $Y(t+\delta t)$ as a variation of $Y(t)$ by area-minimizing surfaces. From this viewpoint, the variation is captured by the vector field $$\frac{\partial }{\partial x^3}=:\partial_3.$$

The associated lapse function is 
\begin{align*}
g(\partial_3,\vec{n}) &= g\left(\partial_3, \frac{\nabla x^3}{||\nabla x^3||_{g}}\right)\\
&=||\nabla x^3||_{g}.
\end{align*}

Recall $x^k:Y(t)\to\RR$, $k=1,2$, are conformal on $Y(t)$. 
Since $||\nabla x^3||_g:Y(t)\to \RR$ is a nontrivial solution of the Jacobi equation
\begin{align}
\Delta_{g_t}\omega+\left(\text{Ric}_g(\vec{n},\vec{n})+||A||^2_g\right)\omega&=0\label{jac1}
\end{align}
on $Y(t)$ (see appendix A.3), the stability operator $\Delta_{g_t}+\left(\text{Ric}_g(\vec{n},\vec{n})+||A||^2_g\right)$ is non-degenerate for each $t$.

Therefore, written in the coordinates $(x^\alpha)$,  for $x^3=t$ fixed, the function $||\nabla x^3||_g$ solves  
\begin{align}
\Delta_{g_\EE}\psi+e^{2\phi(t)}\left(\text{Ric}_g(\vec{n},\vec{n})+||A||^2_g\right)\psi&=0\label{jacobieq}
\end{align}
on $Y(t)$, where the metric on $Y(t)$ is expressed as $g_t=e^{2\phi(t)}[(dx^1)^2+(dx^2)^2]=:e^{2\phi(t)}g_\EE$.

Now, we know $g|_{\partial M}$ and the curves $\partial Y(t)=(x^3)^{-1}(t)\cap\partial M$. Thus, the function $||\nabla x^3||_{g}$ on $\partial Y(t)$ is known. By Proposition \ref{area_to_DN}, we determine the lapse function $||\nabla x^3||_g$ on $Y(t)$, in the conformal coordinate system given by $(x^1,x^2, x^3=t)$. Since we now know $||\nabla x^3||_g$ on $Y(t)$ for any $t\in(-1,1)$, we have determined $||\nabla x^3||_g$ on $M$.

We have $$g^{33}:= g(dx^3,dx^3)=||\nabla x^3||_g^2,$$ in the chosen coordinates $(x^\alpha)$, $\alpha =1,2,3$. Hence, in these coordinates, the metric component $g^{33}$ is known on $M$.

\end{proof}



%

\begin{lemma} \label{sep_lin_lem}
Let $(M,g)$ be a $C^4$-smooth, Riemannian manifold. Let $\gamma(t)$ be a foliation of $\partial M$ by simple closed curves. Suppose that $(M,g)$ admits a foliation by properly embedded, area-minimizing surfaces $Y(t)$ with $\partial Y(t)=\gamma(t)$.  Further, suppose that for $\gamma(t)$ and any nearby perturbation $\gamma(s,t)\subset \partial M$, the area of the least-area surface $Y(s,t)$ enclosed by $\gamma(s,t)$ is known. 

Extend $M$ and each $Y(t)$ to asymptotically flat manifolds $\MM$ and $\YY(t)$ as defined in section \ref{AFcoords}. Further, set $(x^1,x^2)=\Phi(\cdot,t):\YY(t)\to\RR^2$ to be unique isothermal coordinates on $\YY(t)$ given by Proposition \ref{uniquness-up-down}; write $\tilde\Omega(t):=\Phi(Y(t))$. Set $x^3=t$.

Consider a point $p\in Y(t)$. Let $h:[0,S]\times\tilde\Omega(t)\to\MM$, $h(s,x^1,x^2,t)=:Y(s,t)$ be a variation of $Y(t)\subset \MM$ by properly embedded, area-minimizing surfaces which has the property that the component of $h_*\left(\left.\frac{\partial }{\partial s}\right|_{s=0}\right)$ projected onto the normal vector field to $Y(t)$, denoted by $\psi_{p}=\psi_p(x^1,x^2)$, vanishes at $p$.\footnote{It is not always the case that such a variation exits. We do not prove the existence here, but  later in Section 4.}  Set $(x^1_s,x^2_s)=\Phi(\cdot,s,t):\YY(s,t)\to\RR^2$ to be the unique isothermal coordinates on the extended, new foliation $\YY(s,t)$.\\

Then, the linearization of $||\nabla x^3_s||_g$ at the point $p$ is 
\begin{align}
\left.\frac{d}{ds}||\nabla x^3_s||_g(p)\right|_{s=0}
&=g^{3\alpha}(p)\partial_\alpha\psi_{p}(x^1(p),x^2(p))+\partial_\alpha||\nabla x^3||_g(p)\dot{x}^\alpha(p),\label{equation-lin-sep}
\end{align}
where $\dot{x}^\alpha:=\left. \frac{d}{ds}x_s^k\right|_{s=0}$ is the first variation in the coordinate functions $x^\alpha_s$ at $p$, for $\alpha=1,2,3$.\\

Moreover, the quantity $\left.\frac{d}{ds}||\nabla x^3_s||_g(p)\right|_{s=0}$ is known in the coordinates $(x^\alpha)$, $\alpha =1,2,3$.
\end{lemma}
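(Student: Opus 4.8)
The plan is to compute the linearization of $\|\nabla x_s^3\|_g$ directly from the chain rule, keeping track of the two distinct sources of $s$-dependence: the change in which surface we are on (the geometric variation $h_*\partial_s$) and the change in the isothermal coordinate functions themselves (the maps $\Phi(\cdot,s,t)$). First I would fix the point $p$ and write $\|\nabla x_s^3\|_g$ as a function on $\MM$ expressed in the moving coordinates $(x^1_s,x^2_s,x^3_s)$; since $x^3_s$ is by construction the defining function whose level set is $Y(s,t)$, the quantity $\|\nabla x^3_s\|_g$ evaluated at the image point of $p$ under the variation is precisely the lapse of the foliation $\{Y(s,t')\}_{t'}$. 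The total $s$-derivative at $s=0$ then splits, via the chain rule, into (i) the derivative coming from evaluating the fixed function $\|\nabla x^3\|_g$ at the moved point — this contributes $\partial_\alpha\|\nabla x^3\|_g(p)\,\dot x^\alpha(p)$, where $\dot x^\alpha$ is the first variation of the coordinate of the tracked point — and (ii) the derivative of the family of functions $\|\nabla x^3_s\|_g$ at the fixed point. For term (ii) I would use that differentiating $g(\nabla x^3_s,\nabla x^3_s)^{1/2}$ in $s$ and using $\|\nabla x^3\|_g\,\vec n=\nabla x^3$ reduces it to $g(\vec n,\nabla \dot x^3)=g^{3\alpha}\partial_\alpha\dot x^3$; but $\dot x^3$, the first variation of the level-set function, is exactly the normal displacement $\psi_p$ of the surface (this is the standard identification of the lapse/Jacobi field with the variation of the defining function), giving $g^{3\alpha}(p)\partial_\alpha\psi_p(x^1(p),x^2(p))$. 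Adding the two contributions yields \eqref{equation-lin-sep}.

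The second assertion — that the left-hand side is a \emph{known} quantity in the coordinates $(x^\alpha)$ — is where I expect the real content to lie, and it is here that Proposition \ref{area_to_DN} and Proposition \ref{inf_sep_prop} get invoked. The point is that the new foliation $\{Y(s,t)\}$ is itself a nearby area-minimizing foliation whose leaves' areas (and those of their perturbations) are among the known data; hence Proposition \ref{area_to_DN} applies to each leaf $Y(s,t)$, and Proposition \ref{inf_sep_prop} recovers $g^{33}_s=\|\nabla x^3_s\|_g^2$ on each such leaf, \emph{expressed in the isothermal coordinates of that leaf}. Differentiating this known function of $s$ at $s=0$ produces a known quantity. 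The one subtlety to handle carefully is that $\|\nabla x^3_s\|_g$ must be compared at a consistent point: the formula \eqref{equation-lin-sep} is written at the \emph{fixed} coordinate location $(x^1(p),x^2(p))$ of $p$ inside $\tilde\Omega(t)$, whereas the natural object produced by the recovery procedure lives on $\tilde\Omega(s,t)$; but since the coordinate maps $\Phi(\cdot,s,t)$ vary $C^4$-smoothly in $s$ (by the Ahlfors--Bers theory cited in Section \ref{AFcoords}), this identification is smooth and the $s$-derivative of the composite is again known. I would also remark that $\psi_p$ itself is known by Proposition \ref{area_to_DN} part (2), consistent with — but not needed for — the claim.

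The main obstacle, as I see it, is not the chain-rule bookkeeping but rather making rigorous the assertion that the variation $h$ with $\psi_p(p)=0$ actually exists and that the resulting $Y(s,t)$ genuinely form an area-minimizing foliation whose data we possess; the excerpt itself flags this in the footnote and defers it to Section 4, so in this lemma I would simply assume it and use it. A secondary, purely technical point is ensuring all derivatives in \eqref{equation-lin-sep} are interpreted in the correct coordinate frame: $\partial_\alpha\|\nabla x^3\|_g$ and $g^{3\alpha}$ are evaluated in the \emph{unperturbed} isothermal coordinates $(x^1,x^2,x^3)$ at $p$, while $\dot x^\alpha$ records how $p$'s coordinates move; conflating the perturbed and unperturbed frames would introduce spurious terms, so I would set up notation at the outset that cleanly distinguishes the fixed chart on $\YY(t)$ from the moving charts on $\YY(s,t)$, and verify that no cross terms survive beyond the two displayed.
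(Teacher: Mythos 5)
Your decomposition of the $s$-derivative into a ``fixed-point'' part and a ``moving-point'' part is exactly the paper's strategy (the paper linearizes $\|\nabla x^3_s\|_g^2$ via $x^\alpha_s=x^\alpha+s\dot x^\alpha+\BO(s^2)$ and then picks up the $\partial_\alpha\|\nabla x^3\|_g(p)\dot x^\alpha(p)$ chain-rule term), and your handling of the ``knownness'' of the left-hand side via Proposition~\ref{inf_sep_prop} also matches. However, the two identities you invoke in step (ii) are each off by a factor of $\|\nabla x^3\|_g$, and they compensate only because $\psi_p(p)=0$ --- which you never actually use in that computation. Specifically, with $\vec n=\nabla x^3/\|\nabla x^3\|_g$ one has
\begin{equation*}
g(\vec n,\nabla f)=\frac{g^{3\alpha}\partial_\alpha f}{\|\nabla x^3\|_g},
\end{equation*}
not $g^{3\alpha}\partial_\alpha f$; and the variation of the defining function is $\dot x^3=\pm\|\nabla x^3\|_g\,\psi_p$ (since $dx^3(\vec n)=\|\nabla x^3\|_g$), not $\psi_p$. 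Putting the correct versions together gives
\begin{equation*}
\frac{1}{\|\nabla x^3\|_g}\,g^{3\alpha}\partial_\alpha\bigl(\|\nabla x^3\|_g\,\psi_p\bigr)
=g^{3\alpha}\partial_\alpha\psi_p+\frac{g^{3\alpha}\psi_p\,\partial_\alpha\|\nabla x^3\|_g}{\|\nabla x^3\|_g},
\end{equation*}
and the second term vanishes at $p$ precisely because $\psi_p(p)=0$; this is the step the paper makes explicit when it writes $\dot x^3=\|\nabla x^3\|_g\psi_p$ and then drops the $\psi_p\partial_\alpha\|\nabla x^3\|_g$ cross term at $p$. As written, your argument would ``prove'' \eqref{equation-lin-sep} even without the hypothesis $\psi_p(p)=0$, which is false, so you should restore the missing factors in both places and invoke $\psi_p(p)=0$ to kill the cross term. (Your remark that the real obstacle is the \emph{existence} of the variation with $\psi_p(p)=0$ is well placed and agrees with the paper's footnote deferring that to Section~4.)
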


\begin{figure}[h]
\centering
 \includegraphics[width=0.8\textwidth]{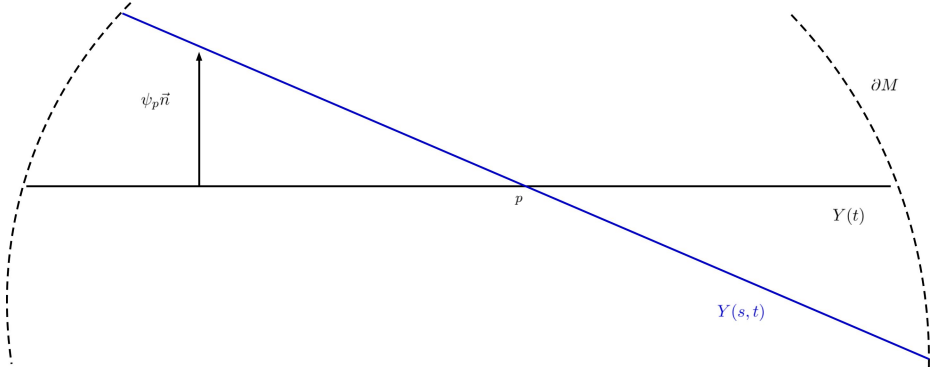}
  \caption{Depiction of the leaves $Y(s,t)$.}
 \end{figure}

\begin{proof}
 Let $\vec{n}:=\frac{\nabla x^3}{||\nabla x^3||_g}$ denote the unit normal vector field to $Y(t)$.  

Via Taylor expansion, the new coordinate functions $(x^\alpha_s)$, $\alpha=1,2,3$ on $Y(s,t)$ in terms of the ``original'' coordinate functions $(x^\alpha)$ on $Y(t)$ are expressed as 
\begin{align}
x^\alpha_s&=x^\alpha+s\dot{x}^\alpha+\BO(s^2).\label{xseq}
\end{align}

Then, linearizing $||\nabla x^3_s||_g^2(p)$ about $s=0$,  
\begin{align*}
\left.\frac{d}{ds}\right|_{s=0}||\nabla x^3_s||_g^2(p)&=\left.\frac{d}{ds}\right|_{s=0}\left[\left(||\nabla x^3||^2_g+2sg(\nabla x^3,\nabla \dot{x}^3)\right)\circ(p)+\BO(s^2)\right]\notag\\
&= 2g(\nabla x^3,\nabla \dot{x}^3)(p)+\partial_\alpha||\nabla x^3||^2_g(p)\dot{x}^\alpha(p)\\
&= 2g^{3\alpha}(p)\partial_\alpha[||\nabla x^3||_g\psi_{p}](x^1(p),x^2(p))+\partial_\alpha||\nabla x^3||^2_g(p)\dot{x}^\alpha(p)\\
&=2||\nabla x^3||_g(p)g^{3\alpha}(p)\partial_\alpha\psi_{p}(x^1(p),x^2(p))+\partial_\alpha||\nabla x^3||^2_g(p)\dot{x}^\alpha(p), 
\end{align*}
at the chosen point $p$.

Now, by Proposition \ref{inf_sep_prop}, the function $||\nabla x^3_s||_g^2(p)$ is known on $M$ for $s\ge0$. Hence $\left.\frac{d}{ds}\right|_{s=0}||\nabla x^3_s||_g^2(p)$ is known.

\end{proof}
\begin{figure}[h]
\centering
 \includegraphics[width=0.6\textwidth]{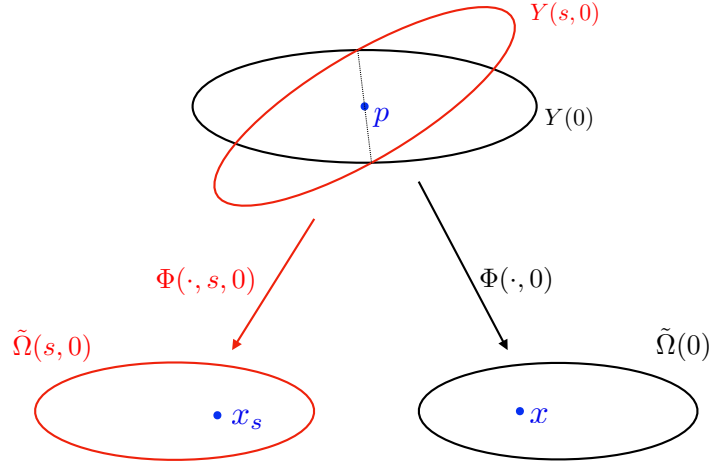}
  \caption{Depiction of the coordinate maps $\Phi(\cdot,0)$ and $\Phi(\cdot,s,0)$.}
 \end{figure}


Since the left hand side equation (\ref{equation-lin-sep}) is known, and the function $\psi_p$ as defined above is known from Proposition \ref{area_to_DN}, we would like to use this equation to solve for $g^{3k}$, $k=1,2$. However, solving equation (\ref{equation-lin-sep}) is complicated due to the presence of the terms containing $\dot{x}^k$, $k=1,2$. It will thus serve our purposes to find an expression for $\dot{x}^k$ in terms of $g^{13},g^{23}$ and $\phi$. The calculations for such an expression are carried out below.

\begin{lemma} \label{sep_lin_dotx}
 Let $(M,g)$, $\tilde\Omega(t)$, $Y(t)$, $Y(s,t)$, $p\in Y(t)$, and $\psi_{p}:\tilde\Omega(t)\to\RR$ be as defined in Lemma \ref{sep_lin_lem}. For $\alpha=1,2,3$, let $x^\alpha:Y(t)\to\RR$ and $x^\alpha_s:Y(s,t)\to\RR$  be the  coordinate systems on $Y(t)$ and $Y(s,t)$, as defined in Lemma \ref{sep_lin_lem}, and write $\dot{x}^k:Y(t)\to\RR$, $k=1,2$ for the first order change in the isothermal coordinates $x^k_s$. 

Then on $Y(t)$, for given variations of $\partial Y(t)$ the functions $\dot{x}^k$, $k=1,2$ are determined via a Poisson equation 
\begin{align*}
\Delta_{g_\mathbb{E}}\dot{x}^k &= \mathcal{F}^k(g^{13},g^{23},\phi,\psi_{p},d\psi_{p},p),
\end{align*}
where $\mathcal{F}^k$ us given explicitly below in (\ref{dotx2}) and (\ref{dotxEQfinal}), $\phi=\phi(x^1,x^2,t)$ is the conformal factor on $Y(t)$, and $\mathcal{F}^k$ is a second order differential operator acting on $g^{13}$, $g^{23}$, and $\phi$.
\end{lemma}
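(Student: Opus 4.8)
The plan is to derive the equation for $\dot{x}^k$ by exploiting the defining property of the coordinates $(x^1_s,x^2_s)$: they are isothermal coordinates for the metric $g_s$ restricted to $Y(s,t)$, normalized at infinity. Since being isothermal is equivalent to being harmonic with respect to the induced metric (the coordinate functions $x^k_s$ satisfy $\Delta_{g_s|_{Y(s,t)}} x^k_s = 0$), I would write out this Laplace--Beltrami equation on the perturbed surface and linearize it in $s$ at $s=0$. The key point is that the linearization produces a term $\Delta_{g_\EE}\dot{x}^k$ (since on $Y(t)$ the unperturbed $x^k$ are already isothermal, so $\Delta_{g_\EE}x^k=0$), plus terms coming from (a) the first-order change in the induced metric $g_s|_{Y(s,t)}$ as the surface moves, and (b) the first-order change in the coordinate expression of the ambient metric as we pass from the frame adapted to $Y(t)$ to that adapted to $Y(s,t)$.

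The main computational step is to express the first variation of the induced metric in terms of known or soon-to-be-solved-for quantities. The surface $Y(s,t)$ moves with variation vector field whose normal component is $||\nabla x^3||_g\,\psi_p$ (by the setup of Lemma \ref{sep_lin_lem}) and whose tangential component is captured by $\dot{x}^1,\dot{x}^2$. The first variation of the induced metric under a normal displacement with speed $u$ is $-2uA$, where $A$ is the second fundamental form of $Y(t)$; under a tangential displacement it is the Lie derivative of $g_t$ along the tangential field. Both contributions are then re-expressed: the second fundamental form of a minimal surface in the coordinates $(x^\alpha)$ can be written in terms of derivatives of the metric components, and in particular the off-diagonal inverse-metric entries $g^{13},g^{23}$ together with $\phi$ and $g^{33}$ (which is already known by Proposition \ref{inf_sep_prop}) control how the leaf $Y(t)=\{x^3=t\}$ sits inside $\MM$. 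Collecting the normal part gives a source term built from $\psi_p$, $d\psi_p$, $\phi$ and $g^{13},g^{23}$; collecting the tangential part and moving the $\Delta_{g_\EE}\dot{x}^k$ to the left gives a Poisson equation. The normalization of $(x^1_s,x^2_s)$ at infinity on the extension $\YY(s,t)$ (from Proposition \ref{uniquness-up-down}) pins down $\dot{x}^k$ uniquely as the solution of this Poisson problem with the appropriate decay, which is what makes $\dot{x}^k$ well-defined rather than only determined up to a harmonic function.

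The hard part will be bookkeeping: correctly separating the variation of $x^k_s$ into the piece due to the physical motion of the surface and the piece due to re-solving the Beltrami/conformal equation, and making sure the second-order differential operator acting on $g^{13},g^{23},\phi$ that emerges is written cleanly as $\mathcal{F}^k$ in \eqref{dotx2} and \eqref{dotxEQfinal}. I would organize it by first writing the minimal-surface/induced-metric data for $Y(t)$ in the $(x^\alpha)$ coordinates, isolating the dependence on $g^{13},g^{23},\phi,g^{33}$; then linearizing the harmonicity condition $\Delta_{g_s|_{Y(s,t)}}x^k_s=0$; then substituting the known normal variation $||\nabla x^3||_g\psi_p$ and the formula for $\dot x^3$ (which, since $x^3_s = t$ is unchanged, contributes only through the reparametrization); and finally grouping terms. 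A subtlety to watch is that $\dot x^1,\dot x^2$ appear on both sides — on the right through the tangential Lie-derivative term — so one must check this lower-order contribution can be absorbed or treated perturbatively using the close-to-Euclidean or thin/straight bounds; but at the level of this lemma it suffices to record the Poisson equation with $\mathcal{F}^k$ depending on these quantities, deferring the actual inversion of the resulting pseudodifferential system to later in Section 4.
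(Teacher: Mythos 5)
Your overall line of attack matches the paper: both start from the fact that the isothermal coordinates $x^k_s$ are harmonic for the induced metric on $Y(s,t)$, linearize $\Delta_{g_s|_{Y(s,t)}}x^k_s=0$ in $s$ using that the unperturbed $x^k$ are already isothermal (so the zeroth-order terms drop), and convert the first variation of the induced metric into second-fundamental-form data, which is then re-expressed via derivatives of $g^{13},g^{23},\phi$. The paper also uses minimality of $Y(t)$ at one point to annihilate the trace piece $g_0^{kl}\nabla_l\bigl(g_0^{ij}(\dot g_0)_{ij}\bigr)$, which you should fold into your ``collecting terms'' step. So the skeleton is right.

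There are, however, two concrete problems with your proposal. First, the normal speed of the variation is $\psi_p$, not $\|\nabla x^3\|_g\,\psi_p$: in Lemma \ref{sep_lin_lem}, $\psi_p$ is by definition the projection $g\bigl(h_*(\partial_s|_{s=0}),\vec n\bigr)$ onto the \emph{unit} normal, so the normal part of the variation field is $\psi_p\vec n$ and the first variation of the induced metric is $\dot g_0=-2\psi_p A$. The quantity $\|\nabla x^3\|_g\,\psi_p$ is instead $\dot x^3$ (the first-order change in the leaf-labeling coordinate), which is what appears in equation \eqref{equation-lin-sep} of Lemma \ref{sep_lin_lem} — these must not be conflated, or the coefficients in $\mathcal{F}^k$ come out wrong.

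Second, and more importantly, your worry that $\dot x^1,\dot x^2$ re-enter the right-hand side through a tangential Lie-derivative contribution to $\dot g_0$ — and your proposal to ``absorb or treat perturbatively'' these terms — points to a misreading of the setup. In the paper's computation the coordinate vector field $X_s:=h_*(\partial_s)$ is taken to satisfy $X_s|_{s=0}=\psi_p\vec n$ exactly (this is made explicit in Section 4 where one reads $(h_{i,1})_*(\partial_s)|_{s=0}=\psi_{p,i,1}\vec n_1$); one is free to reparametrize $h$ so the variation is purely normal, and the paper does so. Then there is no $\mathcal{L}_T g_0$ term at all, $\dot g_0=-2\psi_p A$ exactly, and $\mathcal{F}^k$ in \eqref{dotx2}--\eqref{dotxEQfinal} depends only on $g^{13},g^{23},\phi,\psi_p,d\psi_p$ and $p$, with $\dot x^k$ appearing solely under the Laplacian on the left. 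This is not a small convenience: the clean Poisson form with $\mathcal{F}^k$ free of $\dot x^k$ is exactly what makes the subsequent pseudodifferential inversion of Proposition \ref{psido_lem} tractable, whereas an equation with $\dot x^k$ on both sides would require an extra fixed-point argument at this stage. You should replace ``absorb perturbatively'' with ``normalize the variation to be purely normal,'' which eliminates the issue at the source.
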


\begin{proof} 
 Without loss of generality, fix $t=0$ and consider $Y(0)$. Write $g_0$ for the metric induced by $g$ on $Y(0)$, and $g_{s,0}:=g|_{Y(s,0)}$ for the metric induced on the leaves $Y(s,0)$. Recall from Lemma \ref{sep_lin_lem}, we express the foliation $Y(s,0)$ as embeddings $h:[0,S]\times\tilde\Omega(0)\to\MM$ into the extension $\MM$ of $M$; that is, $h(s,x^1,x^2,0)=Y(s,0)$.

The equation (\ref{xseq}) which expresses $x^k_s$ in terms of $x^k$ is 
\begin{align*}
x^k_s&=x^k+s\dot{x}^k+\BO(s^2).
\end{align*}

Now, to compute how $\dot{x}^k$ depends on the components of the metric $g$, we linearize $x^k_s$ in $s$. 

The conformal coordinates $(x^k_s)$ on the leaves $Y(s,0)$ are harmonic functions, and thus satisfy $$\Delta_{g_{s,0}}x_s^1=0=\Delta_{g_{s,0}}x_s^2.$$ Linearizing about $s=0$ and noting $\partial_jx_s^k=\delta_j^k$, we derive
\begin{align}
0&=\frac{d}{ds}[ \Delta_{g_{s,0}}x_s^k]\notag\\\notag
&= \left[\frac{d g_{s,0}^{ij}}{ds}\partial_i\partial_jx_s^k-\frac{d}{ds}[g_{s,0}^{ij}\Gamma^k_{ij}(g_{s,0})]+\Delta_{g_{s,0}}\frac{d}{ds}x_s^k+\mathcal{O}(s)\right]_{s=0}\\\notag
&= 0-(\dot{g}_{0})^{ij}\Gamma^k_{ij}(g_{0,0})-g_{0,0}^{ij}\dot{\Gamma}^k_{ij}(g_{0,0})+\Delta_{g_{0,0}}\frac{d}{ds}x_s^k|_{s=0}\\\notag
&= -(\dot{g}_{0})^{ij}\Gamma^k_{ij}(g_{0,0})-\frac{1}{2}g_{0,0}^{ij}g_{0,0}^{kl}[\nabla_j(\dot{g}_{0})_{il}+\nabla_i(\dot{g}_{0})_{jl}-\nabla_l(\dot{g}_{0})_{ij}]+\Delta_{g_{0,0}}\dot{x}^k\\\notag
&=  -\dot g^{ij}\Gamma^k_{ij}(g_{0,0})-g_{0,0}^{ij}\nabla_j(g_{0,0}^{kl}(\dot{g}_{0})_{il})+\frac{1}{2}g_{0,0}^{kl}\nabla_l(g_{0,0}^{ij}(\dot{g}_{0})_{ij})+\Delta_{g_{0,0}}\dot{x}^k,\label{firstxeq}
\end{align}
on $Y(0)$, for $k=1,2$.

To perform further analysis, we require and expression for the linearization $\dot{g}_0$ of the induced metric on the leaves $Y(s,0)$, as well as the Christofffel symbols associated to the metric $g_{0,0}:=g_0$ on $Y(0)$.

In all computations which follow, let $i,j,k,l,m,p$ sum over $1,2$ and $\alpha,\beta,\gamma$ sum over $1,2,3$. In the coordinates $(x^\alpha)$, $g_0=e^{2\phi}g_{\EE}$. Hence, the Christoffel symbols of $g_0$ are 
\begin{align*}
\Gamma^k_{ij}(g_0)&=\Gamma^k_{ij}(e^{2\phi}{g_\mathbb{E}})\\
&= \Gamma^k_{ij}({g_\mathbb{E}})+{g_\mathbb{E}}^k_i\partial_j\phi+{g_\mathbb{E}}^k_j\partial_i\phi-(g_\mathbb{E})_{ij}{g_\mathbb{E}}^{kl}\partial_l\phi\\
&= {g_\mathbb{E}}^k_i\partial_j\phi+{g_\mathbb{E}}^k_j\partial_i\phi-(g_\mathbb{E})_{ij}{g_\mathbb{E}}^{kl}\partial_l\phi.
\end{align*}

For ease of computation of the linearization $\dot{g}_0$, we employ Gaussian coordinates adapted to $Y(0)$: for $i=1,2$, define the coordinate vector fields $X_i:= h(\cdot,s,0)_{*}\left(\frac{\partial}{\partial x^i}\right)$ and $X_s:= h(\cdot,s,0)_{*}\left(\frac{\partial}{\partial s}\right)$. Then in these coordinates, the components of the metric $g_{s,0}$ induced on the leaves $Y(s,0)$ are given by $(g_{s,0})_{ij}:=g(X_i,X_j)$. 

Now Taylor expand $g_{s,0}$ in terms of $s$:  $g_{s,0}= g_0+s\dot{g}_{0}+\mathcal{O}(s^2)$.  Then,
\begin{align*}
(\dot{g}_{0})_{ij}:=\left.\frac{d}{ds}(g_{s,0})_{ij}\right|_{s=0} &=\left. \frac{d}{ds}g(X_i,X_j)\right|_{s=0}\\
&= [g(\nabla_{X_s}X_i,X_j)+g(X_i,\nabla_{X_s}X_j)]|_{s=0}\\
&= g(\nabla_{X_i}(\psi_{p}\vec{n}),X_j)+g(X_i,\nabla_{X_j}(\psi_{p}\vec{n}))\\
&= -2\psi_{p} A_{ij},
\end{align*}
where $A_{ij}$ are the components of the second fundamental form of $Y(0)$. Thus, the first order change in $g_{s,0}$ is given by the coordinate free expression
\begin{align}
\dot{g}_{0}&= -2\psi_{p} A.
\end{align}

Recall $g_{0,0}=g_0$, and substitute $\dot{g}_{0}=-2\psi_{p} A$ into equation (\ref{firstxeq}); the resulting PDE describes the first variation in $s$ of the coordinates $(x^i_s)$:
\begin{align}
\Delta_{g_0}\dot{x}^k&=  -2\psi_{p} A^{ij}\Gamma^k_{ij}(g_0)-2g_{0}^{ij}\nabla_j(g_0^{kl}\psi_{p} A_{il})+g_0^{kl}\nabla_l(g_0^{ij}\psi_{p} A_{ij})\notag\\
&=-2\psi_{p} A^{ij}\Gamma^k_{ij}(g_0)-2g_0^{ij}\nabla_j(\psi_{p} A_i^k).\label{dotx1}
\end{align}
Note the term $g_0^{kl}\nabla_l(g_0^{ij}\psi_{p} A_{ij})$ is zero since the surface $Y(0)$ is minimal.

We now expand each term in equation (\ref{dotx1}) in terms of the components of $g$ and $g^{-1}$ which we aim to uniquely determine. To this end, a quick calculation gives that in the coordinates $(x^\alpha)$, the normal vector field to the leaves $Y(t)$ is
\begin{align*}
\vec{n}&:= \frac{\nabla x^3}{||\nabla x^3||_g}=  \frac{1}{||\nabla x^3||_g}g^{\alpha\beta}\partial_\beta x^3\partial_\alpha=  \frac{1}{||\nabla x^3||_g}g^{\alpha3}\partial_\alpha.
\end{align*}
Hence the components of the second fundamental form are
\begin{align*}
A_{ij}
&=-\frac{1}{2||\nabla x^3||_g}g^{3\alpha}(\partial_ig_{\alpha j}+\partial_jg_{i\alpha}-\partial_\alpha g_{ij}).
\end{align*}  
Raising an index and noting $-g_{\alpha j}\partial_ig^{3\alpha}=\partial_ig_{\alpha j}g^{3\alpha}$ then gives
\begin{align}
A_i^k
&=-\frac{e^{-2\phi}(g_\EE)^{jk}}{2||\nabla x^3||_g}(g_{\alpha j}\partial_ig^{3\alpha}+g_{i\alpha}\partial_jg^{3\alpha}+g^{3\alpha}\partial_\alpha g_{ij}).\label{2ndForm}
\end{align}

So we calculate a factor of the first term in (\ref{dotx1}) to be
\begin{align*}
-2||\nabla x^3||_gg^{im}_0A_m^j\Gamma^k_{ij}(g_0)&=e^{-2\phi}\cdot(g_\EE)^{im}\cdot e^{-2\phi}(g_\EE)^{jl}\cdot\left(g_{l \alpha}\partial_mg^{3\alpha}+g_{m \alpha}\partial_lg^{3\alpha}+g^{3\alpha}\partial_\alpha g_{ml}\right)\cdot\\
&\qquad\left({g_\mathbb{E}}^k_i\partial_j\phi+{g_\mathbb{E}}^k_j\partial_i\phi-(g_\mathbb{E})_{ij}{g_\mathbb{E}}^{kp}\partial_p\phi\right)\\
%
%
%
&= 2e^{-4\phi}\left\{g_\EE^{km}g_\EE^{jl}g_{3l}\partial_mg^{33}\partial_j\phi+g_\EE^{im}g_\EE^{kl}g_{3l}\partial_mg^{33}\partial_i\phi\right.\\
&\qquad-g_\EE^{kj}g_\EE^{ml}g_{3l}\partial_mg^{33}e^{2\phi}\partial_j\phi+g_\EE^{km}e^{2\phi}\partial_mg^{3j}\partial_j\phi\\
&\qquad+g_\EE^{im}e^{2\phi}\partial_mg^{3k}\partial_i\phi-g_\EE^{kj}\partial_mg^{3m}e^{4\phi}\partial_j\phi+\left.0\right\}.
\end{align*}

For the second term in (\ref{dotx1}), using (\ref{2ndForm}), observe the partial coordinate derivatives of the components of the second fundamental form are 
\begin{align*}
2\partial_j(A_i^k)&=-2\partial_j\phi A^k_i-\frac{1}{||\nabla x^3||_g}\partial_j||\nabla x^3||_gA^k_i+\frac{e^{-2\phi}(g_\EE)^{km}}{||\nabla x^3||_g}\left\{\partial_jg_{\alpha m}\partial_ig^{3\alpha}+g_{\alpha m}\partial_j\partial_ig^{3\alpha}\right.\\
&\qquad +\partial_jg_{\alpha i}\partial_mg^{3\alpha}+ g_{\alpha i}\partial_j\partial_mg^{3\alpha}+2e^{2\phi}(g_\EE)_{im}\partial_jg^{3\alpha}\partial_\alpha\phi\\
&\qquad+\left.2e^{2\phi}(g_\EE)_{im}g^{3\alpha}\partial_j\partial_\alpha\phi-4e^{2\phi}(g_\EE)_{im}g^{3\alpha}\partial_\alpha\phi\partial_j\phi\right\}.
\end{align*}
Substituting the expressions for $\partial_j(A_i^k)$ and $g^{im}A_m^j\Gamma^k_{ij}(g_0)$ above into equation (\ref{dotx1}), the equation for the first order change in conformal coordinates is given below (indices run over values $\alpha\in\{1,2,3\}$ and $i,j,k,l,m\in\{1,2\}$).
\begin{align}
\Delta_{g_{0}}\dot{x}^k &=-2\psi_{p} A^{ij}\Gamma^k_{ij}(g_0)-2g_0^{ij}\nabla_j(\psi_{p} A_i^k)\notag\\
&=-2\psi_{p} g^{im}A^j_m\Gamma^k_{ij}(g_0)-2g_0^{ij}\nabla_j(\psi_{p})A_i^k-2g_0^{ij}\psi_{p}[\partial_jA_i^k-\Gamma^m_{ij}(g_0)A_m^k+\Gamma^k_{mj}(g_0)A_i^m]\notag\\
&=-2g_0^{ij}\nabla_j(\psi_{p})A_i^k-2\psi_{p} g_0^{ij}\partial_jA_i^k-4\psi_{p} g^{im}A^j_m\Gamma^k_{ij}(g_0)\notag\\ 
&=-g_0^{ij}\nabla_j(\psi_{p})\frac{e^{-2\phi}(g_\EE)^{kl}}{||\nabla x^3||_g}(g_{\alpha l}\partial_ig^{3\alpha}+g_{i\alpha}\partial_lg^{3\alpha}+g^{3\alpha}\partial_\alpha g_{il})\notag\\
&\qquad+g_0^{ij}\psi_{p}\partial_j\phi\frac{e^{-2\phi}(g_\EE)^{kl}}{||\nabla x^3||_g}(g_{\alpha l}\partial_ig^{3\alpha}+g_{i\alpha}\partial_lg^{3\alpha}+g^{3\alpha}\partial_\alpha g_{il})\notag\\
&\qquad+g_0^{ij}\psi_{p}\frac{1}{||\nabla x^3||_g}\partial_j||\nabla x^3||_g\frac{e^{-2\phi}(g_\EE)^{kl}}{2||\nabla x^3||_g}(g_{\alpha l}\partial_ig^{3\alpha}+g_{i\alpha}\partial_lg^{3\alpha}+g^{3\alpha}\partial_\alpha g_{il})\notag\\
&\qquad -g_0^{ij}\psi_{p}\frac{e^{-2\phi}(g_\EE)^{km}}{||\nabla x^3||_g}\left\{\partial_jg_{\alpha m}\partial_ig^{3\alpha}+g_{\alpha m}\partial_j\partial_ig^{3\alpha}+\partial_jg_{\alpha i}\partial_mg^{3\alpha}+g_{\alpha i}\partial_j\partial_mg^{3\alpha}\right.\notag\\
&\qquad+\left.2e^{2\phi}(g_\EE)_{im}\partial_jg^{3\alpha}\partial_\alpha\phi+2e^{2\phi}(g_\EE)_{im}g^{3\alpha}\partial_j\partial_\alpha\phi-4e^{2\phi}(g_\EE)_{im}g^{3\alpha}\partial_\alpha\phi\partial_j\phi\right\}\notag\\
&\qquad -8\psi_{p}e^{-4\phi}\left\{g_\EE^{km}g_\EE^{jl}g_{3l}\partial_mg^{33}\partial_j\phi+g_\EE^{im}g_\EE^{kl}g_{3l}\partial_mg^{33}\partial_i\phi-g_\EE^{kj}g_\EE^{ml}g_{3l}\partial_mg^{33}e^{2\phi}\partial_j\phi\right.\notag\\
&\qquad+\left.g_\EE^{km}e^{2\phi}\partial_mg^{3j}\partial_j\phi+g_\EE^{im}e^{2\phi}\partial_mg^{3k}\partial_i\phi-g_\EE^{kj}\partial_mg^{3m}e^{4\phi}\partial_j\phi\right\}.
\end{align}
We may express this complicated PDE schematically as
\begin{align}
\Delta_{g_0}\dot{x}^k  &=e^{-2\phi}\psi_{p} A^{ijk}_{\alpha}\partial_i\partial_jg^{3\alpha}+e^{-2\phi}\psi_{p} B_{\alpha}^{ik}\partial_i\partial_\alpha\phi+e^{-2\phi}\psi_{p} C^{ijk}_\alpha\partial_i\phi\partial_jg^{3\alpha}\notag\\
&\qquad+e^{-2\phi}\psi_{p} D^{ijk}_{\alpha \beta}\partial_ig^{3\alpha}\partial_jg^{3\beta}+e^{-2\phi}\psi_{p} F^{ik\alpha }\partial_i\phi\partial_\alpha\phi\notag\\
&\qquad+e^{-2\phi}(\psi_{p} H^{ik}_\alpha+\nabla_j\psi_{p}I^{ijk}_{\alpha})\partial_i g^{3\alpha}+(\psi_{p}J^{k\alpha}_1+\nabla_i\psi_{p}J^{ik\alpha}_2)\partial_\alpha \phi,\notag\\
&=:e^{-2\phi}\mathcal{F}^k(g^{13},g^{23},\phi,\psi_{p},p)\label{dotx2}
\end{align}
where $A^{ijk}_{l},B^{ik\alpha},...,J^{ik\alpha}_{j}$ are smooth functions of $\phi,g^{13},g^{23},g^{33}$, and the indices range over $i,j,k,l\in\{1,2\}$, and $\alpha,\beta\in\{1,2,3\}$. Now, since $g_0=e^{2\phi}g_\mathbb{E}$, $\Delta_{g_0}=e^{-2\phi}\Delta_{g_\mathbb{E}}$; so on $Y(0)$ we have the equation 
\begin{align}
\Delta_{g_\mathbb{E}}\dot{x}^k &= \mathcal{F}^k(g^{13},g^{23},\phi,\psi_{p},p),\label{dotxEQfinal}
\end{align}
 where the differential operator $\mathcal{F}^k(g^{13},g^{23},\phi,\psi_{p},p)$ is defined in (\ref{dotx2}).

\end{proof}


Equation (\ref{dotxEQfinal}) together with equations of the form of  (\ref{equation-lin-sep}) will allow us to solve for the metric components, $g^{3k}$, $k=1,2$, in terms of the conformal factor, $\phi$. Thus we require one more equation to ultimately determine all components of the metric. Such an equation will be provided by a transport-type equation for the conformal factor $\phi=\phi(x^1,x^2,t)$ on the leaf $Y(t)$, which we derive in the next proposition.


\begin{prop}\label{phi_lem}
Let $(M,g)$ be a $C^4$-smooth, compact, 3-dimensional Riemannian manifold  and let 
$\gamma(t)$ be a foliation of $\partial M$ by simple closed curves. Suppose that $(M,g)$ admits a foliation by properly embedded, area-minimizing surfaces $Y(t)$ with $\partial Y(t)=\gamma(t)$.  Further, suppose that for $\gamma(t)$ and any nearby perturbation $\gamma(s,t)\subset \partial M$, the area of the least-area surface $Y(s,t)$ enclosed by $\gamma(s,t)$ is known. 

Extend $M$ and each $Y(t)$ to asymptotically flat manifolds $\MM$ and $\YY(t)$ as defined in section \ref{AFcoords}. Further, set $(x^1,x^2)=\Phi(\cdot,t):\YY(t)\to\RR^2$ to be unique isothermal coordinates on $\YY(t)$ given by Proposition \ref{uniquness-up-down}. Set $x^3=t$.


Then, the evolution in $x^3$ of the conformal factor $\phi=\phi(x^1,x^2,t)$ is described by the transport-type equation 
\begin{align}
g^{31}\partial_1 \phi+g^{32}\partial_2 \phi+g^{33}\partial_3 \phi+\frac{1}{2}\partial_kg^{3k}-\frac{1}{2}g^{3 k}\partial_k\log(g^{33})=0,\label{meancurve1}
\end{align}
where $g_t:= e^{2\phi(t)}g_\EE$ is the metric on the leaf $Y(t)$ in the coordinate system $(x^\alpha)$. 
\end{prop}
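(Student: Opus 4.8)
The plan is to use the one piece of geometric information not yet exploited: the fact that each leaf $Y(t)=\{x^3=t\}$ is minimal. As already noted in the excerpt (and used in the proof of Lemma~\ref{interior-curve}), minimality of $Y(t)$ is equivalent to
\[
\DIV_g\!\left(\frac{\nabla x^3}{||\nabla x^3||_g}\right)=0\qquad\text{on }Y(t).
\]
First I would justify this: since $x^3$ is a coordinate function, $\vec n:=\nabla x^3/||\nabla x^3||_g$ is an honest unit vector field on a neighbourhood of $Y(t)$, so $\DIV_g\vec n$ restricted to $Y(t)$ equals the mean curvature of $Y(t)$; in an adapted orthonormal frame $e_1,e_2$ tangent to $Y(t)$ and $e_3=\vec n$, the normal-direction term $g(\nabla_{\vec n}\vec n,\vec n)=\tfrac12\vec n(||\vec n||_g^2)$ vanishes and the remaining sum is $\mathrm{tr}\,A=0$.

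Next I would carry out a direct computation in the coordinates $(x^\alpha)$. From $\nabla x^3=g^{3\alpha}\partial_\alpha$ and $||\nabla x^3||_g^2=g^{33}$ one reads off $n^\alpha=(g^{33})^{-1/2}g^{3\alpha}$. For the volume density, a cofactor expansion of the metric matrix $(g_{\alpha\beta})$ — whose upper-left $2\times2$ block is $e^{2\phi}\,\mathrm{Id}$ — gives the identity $g^{33}=e^{4\phi}/\det(g_{\alpha\beta})$, hence $\sqrt{|g|}=e^{2\phi}/\sqrt{g^{33}}$. Substituting into $\DIV_g\vec n=|g|^{-1/2}\partial_\alpha(|g|^{1/2}n^\alpha)$, all factors of $\sqrt{g^{33}}$ cancel and minimality collapses to the compact statement
\[
\partial_\alpha\!\left(\frac{e^{2\phi}g^{3\alpha}}{g^{33}}\right)=0\qquad\text{on }Y(t).
\]

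Finally I would split this sum over $\alpha$. The $\alpha=3$ contribution is $\partial_3(e^{2\phi})=2e^{2\phi}\partial_3\phi$, while for $\alpha=k\in\{1,2\}$ the product rule together with $\partial_k(g^{33})^{-1}=-(g^{33})^{-2}\partial_kg^{33}$ gives
\[
\partial_k\!\left(\frac{e^{2\phi}g^{3k}}{g^{33}}\right)=\frac{e^{2\phi}}{g^{33}}\Big(2g^{3k}\partial_k\phi+\partial_kg^{3k}-g^{3k}\,\partial_k\log g^{33}\Big).
\]
Adding the two contributions and dividing through by $2e^{2\phi}/g^{33}$ yields exactly \eqref{meancurve1}. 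The computation is elementary; the only steps requiring care are the reduction of minimality to $\DIV_g\vec n=0$ for the \emph{extended} unit field (so that the normal-derivative term drops out) and the determinant identity $\det(g_{\alpha\beta})=e^{4\phi}/g^{33}$ that makes the $\sqrt{g^{33}}$ factors disappear cleanly. I do not anticipate a genuine obstacle here.
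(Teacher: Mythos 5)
Your proof is correct. It uses the same underlying geometric input as the paper — the vanishing of the mean curvature of the leaves $Y(t)$ — but organizes the coordinate computation along a cleaner route. The paper computes the components $A_i^k$ of the second fundamental form explicitly in the $(x^\alpha)$ chart (a formula it also needs in Lemma~\ref{sep_lin_dotx}), takes the trace to get an intermediate expression involving the lower-index components $g_{31},g_{32}$, and then substitutes the algebraic identities $g_{3k}=-e^{2\phi}g^{3k}/g^{33}$ from the appendix to convert to the inverse-metric components. You instead identify $H$ with $\DIV_g\vec n$ for the unit normal field extended off the leaf, invoke the coordinate divergence formula, and use the determinant identity $\det(g_{\alpha\beta})=e^{4\phi}/g^{33}$; the factors of $\sqrt{g^{33}}$ then cancel and minimality collapses to $\partial_\alpha\!\bigl(e^{2\phi}g^{3\alpha}/g^{33}\bigr)=0$, from which \eqref{meancurve1} falls out by a single product rule with no detour through $g_{3k}$. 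Both steps you flag as needing care are handled correctly: extending $\vec n$ as $\nabla x^3/\|\nabla x^3\|_g$ keeps it unit so the normal-derivative term in the divergence vanishes, and the determinant identity is the cofactor relation $g^{33}=C_{33}/\det g=e^{4\phi}/\det g$. Your version is shorter and easier to audit; the paper's has the side benefit of reusing machinery ($A_i^k$) already set up for the earlier lemmas.
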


\begin{proof}
Recall the mean curvature of $Y(t)$ is the trace of the second fundamental form: $H(Y(t)):= A^i_i$ (we do not average over the dimension). 

As demonstrated in the proof of Lemma \ref{sep_lin_dotx}, equation (\ref{2ndForm}), the second fundamental form may be written as
\begin{align*}
A_i^k&=-\frac{e^{-2\phi}(g_\EE)^{jk}}{2||\nabla x^3||_g}(g_{\alpha j}\partial_ig^{3\alpha}+g_{i\alpha}\partial_jg^{3\alpha}+g^{3\alpha}\partial_\alpha g_{ij})
\end{align*}
in the coordinates $(x^\alpha)$, $\alpha=1,2,3$.

Therefore the mean curvature of $Y(t)$ is given by
\begin{align*}
H&:=A^i_i
=-\frac{e^{-2\phi}}{||\nabla x^3||_g}(g_{3 1}\partial_1g^{33}+e^{2\phi}\partial_kg^{3k}+g_{3 2}\partial_2g^{33}+2e^{2\phi}g^{3\alpha}\partial_\alpha \phi),
\end{align*}
where $k$ sums over $1,2$.

Since $Y(t)$ is minimal for each $t\in \RR$, $H(Y(t))=0$ provides the differential equation
\begin{align*}
 0&=e^{-2\phi}(g_{3 1}\partial_1g^{33}+g_{3 2}\partial_2g^{33})+(\partial_kg^{3k}+2g^{3\alpha}\partial_\alpha \phi)
\end{align*}
which we rewrite as
\begin{align}
g^{31}\partial_1 \phi+g^{32}\partial_2 \phi+g^{33}\partial_3 \phi+\frac{1}{2}\partial_kg^{3k}+\frac{e^{-2\phi}}{2}(g_{3 1}\partial_1g^{33}+g_{3 2}\partial_2g^{33})=0.\label{phiindet}
\end{align}
As shown in the appendix, we can express the components $g_{31},g_{32}$ in terms of the components of the inverse metric as 
\begin{align*}
g_{31}&= - \frac{g^{31}}{g^{33}}e^{2\phi},\quad g_{32}= - \frac{g^{32}}{g^{33}}e^{2\phi}.
\end{align*}
 Substituting the above into equation (\ref{phiindet}), we obtain
  \begin{align*}
g^{31}\partial_1 \phi+g^{32}\partial_2 \phi+g^{33}\partial_3 \phi+\frac{1}{2}\partial_kg^{3k}-\frac{1}{2}[g^{3 1}\partial_1\log(g^{33})+g^{3 2}\partial_2\log(g^{33})]=0.
\end{align*}

\end{proof}

\emph{Remark.} Notice that if $g^{13}$, $g^{23}$, and $g^{33}$ were known functions on $Y(t)$, then equation (\ref{meancurve1}) reduces to a simple first order, linear differential equation for the conformal factor $\phi$, which can be easily solved.


\section{Proof of the Main Theorems}



In this section, we prove the main theorems stated in the introduction. We first prove: 

\begin{reptheorem}{global_metric_thm}
Let $(M,g)$ be a manifold of Class 1 or Class 2, and $g|_{\partial M}$ be given. Let 
$\{\gamma(t)\,:\,t\in(-1,1)\}=\partial M$ and $\{Y(t)\,:\,t\in(-1,1)\}=M$ be as in Definition \ref{class1and2}. Suppose that for each curve $\gamma(t)$ and any nearby perturbation 
$\gamma(s,t)\subset \partial M$, we know the area of the properly embedded surface $Y(s,t)$
 which solves the least-area problem for $\gamma(s,t)$.  

Then the knowledge of these areas uniquely determines the metric $g$ (up to isometries which 
fix the boundary).
\end{reptheorem}

\subsection{Proof of Theorem \ref{global_metric_thm}}
\begin{proof}[Proof of Theorem \ref{global_metric_thm}]
To show $g_1$ is isometric to $g_2$ on $M$, we construct coordinate systems on $(M,g_1)$ and $(M,g_2)$, and explicitly construct a diffeomorphism $F:(M,g_1)\to(M,g_2)$ which maps one coordinate system to the other. In this setting, we prove that the components of the inverses of the metrics $g_1$ and $F^*(g_2)$ satisfy  
$$g_1^{\alpha\beta}-F^*(g_2)^{\alpha\beta}=0.$$
This equation implies our uniqueness result.

\bigskip
\subsubsection{Construction of the diffeomorphism $F:(M,g_1)\to(M,g_2)$:} 
As in section \ref{AFcoords}, extend $(M,g_1)$ to an asymptotically flat manifold $(\MM,\afg_1):=(\RR^2\times(-1,1),\afg_1)$. Smoothly extend each leaf $Y_1(t)$ to an asymptotically flat manifold $\YY_1(t)$ as defined in section \ref{AFcoords}. Further, set $(x^1,x^2)=\Phi_1(\cdot,t):\YY_1(t)\to\RR^2$ to be unique isothermal coordinates on $\YY_1(t)$ given by Proposition \ref{uniquness-up-down}; write $\tilde\Omega_1(t):=\Phi_1(Y_1(t))$. Set $x^3=t$.

Let $Y_2(t)$, $t\in(-1,1)$, be a foliation of $(M,g_2)$ by properly embedded, area minimizing surfaces which is found by solving the least-area problem for $\gamma(t)$. As in section \ref{AFcoords}, we also extend $(M,g_2)$ to an asymptotically flat manifold $(\MM,\afg_2):=(\RR^2\times(-1,1),\afg_2)$ and smoothly extend each leaf $Y_2(t)$ to an asymptotically flat manifold $\YY_2(t)$. As above, we set $(y^1,y^2)=\Phi_2(\cdot,t):\YY_2(t)\to\RR^2$ to be unique isothermal coordinates on $\YY_2(t)$ given by Proposition \ref{uniquness-up-down}; write $\tilde\Omega_2(t):=\Phi_1(Y_2(t))$. Set $y^3=t$.

Then, define 
\begin{align*}
F&:(\MM,\afg_1)\to(\MM,\afg_2)\\
F&(p)=\Phi_2^{-1}\circ \Phi_1(p).
\end{align*}
From Proposition \ref{outside-same}, in section 2, we know $\tilde\Omega_1(t)=\tilde\Omega_2(t)$ for all $t\in(-1,1)$ and $F=\text{Id}$ on $\MM\setminus M\cup \partial M$. The restriction of $F$ to $(M,g_1)$ is then our desired diffeomorphism. 

\textbf{Notation:} Abusing notation, we write $g_2$ for the pulled-back metric $F^*(g_2)$ on 
$M$ in all that follows. Note that in the $(x^\alpha)$ coordinates, the metrics $g_1$ and 
$g_2$ take the form 
\begin{align*}
g_1 &= (g_1)_{3\alpha}dx^3dx^\alpha+e^{2\phi_1(t)}\left[(dx^1)^2+(dx^2)^2\right]\\
g_2 &= (g_2)_{3\alpha}dx^3dx^\alpha+e^{2\phi_2(t)}\left[(dx^1)^2+(dx^2)^2\right].
\end{align*}
Further, since Proposition \ref{outside-same} shows our area data determines 
$\tilde\Omega_1(t)=\tilde\Omega_2(t)$ for all $t\in(-1,1)$, by choosing a new family of
 conformal maps, we may assume that $\tilde\Omega_1(t)=\tilde\Omega_2(t)=D(r(t))$ for some 
 disc of radius $r(t)>0$, where $r(t)$ is chosen via the requirement: 
\[
4\pi r^2(t)={\rm Area}[Y(t)].
 \]
 (We write $r$ instead of $r(t)$ below for simplicity of notation).  We note that given the regularity assumptions on the boundaries of 
 $Y(t)$, the conformal factors corresponding to the two new conformal maps satisfy the same 
 bounds as the conformal factor over the domain $\tilde\Omega_1(t)=\tilde\Omega_2(t)$,
 in Lemma \ref{conf.bounds}
 up to a uniformly bounded multiplicative factor.

We make this choice for simplicity in the proofs to follow. Again abusing notation, we still write $(x^1,x^2)=\Phi_1$ and $(y^1,y^2)=\Phi_2$ for the resulting maps to the discs $D(r(t))$.

\bigskip


\begin{lemma}\label{g33-equiv}
In the coordinate system described above, $g_1^{\alpha\beta}-g_2^{\alpha\beta}=0$ on $\MM\setminus M$, and $g_1^{33}-g_2^{33}=0$ on $\MM$.
\end{lemma}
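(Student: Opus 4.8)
The plan is to split the lemma into its two assertions, the first of which is essentially formal and the second of which carries the real content.

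For $g_1^{\alpha\beta}-g_2^{\alpha\beta}=0$ on $\MM\setminus M$: by construction $\afg_1$ and $\afg_2$ are produced from the \emph{same} boundary metric $g|_{\partial M}$ by the \emph{same} fixed extension operator and the same cutoff $\chi$, so $\afg_1=\afg_2$ pointwise on $\MM\setminus M$ (in particular on the collar $N$, where the common extension lives, and equal to $g_\EE$ further out). By Proposition \ref{outside-same} (equivalently Corollary \ref{DN-outside-same}) the conformal maps satisfy $\Phi_1(\cdot,t)=\Phi_2(\cdot,t)$ on $\RR^2\setminus\Omega$ for every $t$, so the charts $(x^\alpha)$ and $(y^\alpha)$ coincide on $\MM\setminus M$ and $F=\mathrm{Id}$ there. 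Hence $F^*\afg_2=\afg_2=\afg_1$ on $\MM\setminus M$; with the agreed abuse $g_2:=F^*\afg_2$, all inverse-metric components agree there, in particular $g_1^{33}=g_2^{33}$ on $\MM\setminus M$.

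For $g_1^{33}-g_2^{33}=0$ on $M$: recall from Proposition \ref{inf_sep_prop} that in the $(x^\alpha)$-chart $g_k^{33}=\|\nabla x^3\|_{g_k}^2$ is the square of the lapse of the foliation, and that, restricted to a leaf $Y_k(t)$ and read in the normalized isothermal coordinates, this lapse solves the Schrödinger/Jacobi boundary value problem \eqref{jacobieq} with potential $e^{2\phi_k(t)}\!\left(\mathrm{Ric}_{g_k}(\vec n_k,\vec n_k)+\|A_k\|_{g_k}^2\right)$ on the isothermal image of $Y_k(t)$. I would then check that the Cauchy data of this problem are \emph{metric-independent}:
\begin{enumerate}
\item[(i)] the isothermal images agree: the common area datum $A(t)=\mathrm{Area}_{g_k}[Y_k(t)]$ forces the radii $r_k(t)$, defined by $4\pi r_k(t)^2=A(t)$, to coincide, so both domains are $D(r(t))$, and since $\Phi_1=\Phi_2$ on $\partial\Omega$ the boundary parametrizations match;
\item[(ii)] the Dirichlet data $\|\nabla x^3\|_{g_k}|_{\gamma(t)}$ agree: $g_1=g_2$ on $\partial M$, and the $1$-form $dx^3$ at a point $p\in\gamma(t)$ is the same for both metrics, because its restriction to $T_p\partial M$ is fixed by the common foliation $\{\gamma(t)\}$ of $\partial M$ while its kernel is $T_pY_k(t)$, and by Proposition \ref{area-to-angle} the first variation of area (a given datum) determines the angle at which $Y_k(t)$ meets $\partial M$, so $T_pY_1(t)=T_pY_2(t)$;
\item[(iii)] the area data is the same, so by Proposition \ref{area_to_DN}(2) the solution of this boundary value problem, as a function on $D(r(t))$, is determined by the data alone.
\end{enumerate}
Combining (i)--(iii), $\|\nabla x^3\|_{g_1}=\|\nabla x^3\|_{g_2}$ on each leaf $Y(t)$, hence on $M$; squaring gives $g_1^{33}=g_2^{33}$ on $M$, and together with the first part on all of $\MM$.

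The step requiring the most care — and the only genuinely geometric one — is (ii): one must rule out that the two area-minimizing foliations meet $\partial M$ differently, which is exactly what Proposition \ref{area-to-angle} supplies, since otherwise the constant-$x^3$ hypersurfaces could have different tangent planes along $\gamma(t)$ and the Dirichlet data for \eqref{jacobieq} would differ between $g_1$ and $g_2$. A secondary bookkeeping point is to verify that the re-normalization of the conformal maps onto the standard disc $D(r(t))$ is performed compatibly for the two metrics, so that ``the solution determined by the area data'' in Proposition \ref{area_to_DN}(2) is literally one and the same function; this is automatic because that proposition's conclusion is phrased intrinsically in terms of the (now common) isothermal coordinates together with the common boundary and area data.
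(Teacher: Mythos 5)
Your proof is correct and takes essentially the same route as the paper: $F=\mathrm{Id}$ on $\MM\setminus M\cup\partial M$ gives the exterior claim, and the fact that the lapse $\|\nabla x^3\|_{g_k}$ is determined, in the normalized isothermal chart, by the common area data (via Propositions \ref{inf_sep_prop} and \ref{area_to_DN}) gives $g_1^{33}=g_2^{33}$ on $M$. The only difference is that you make explicit, via Proposition \ref{area-to-angle}, why the Dirichlet data for the lapse agrees on $\gamma(t)$ (the tangent planes $T_pY_k(t)$ coincide, so $dx^3$ is the same $1$-form along $\partial Y(t)$) -- a point the paper leaves implicit inside the proof of Proposition \ref{inf_sep_prop}.
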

\begin{proof}
Since $F=\text{Id}$ on $\MM\setminus M\cup \partial M$, $g_1^{\alpha\beta}-g_2^{\alpha\beta}=0$ on $\MM\setminus M$.

By the hypothesis on $M$, we know the area of $Y_k(t)$ for all $t\in(-1,1)$,, $k=1,2$ as measured by $g_1$ and $g_2$ respectively. Proposition \ref{inf_sep_prop} tells us this area information determines the lapse functions $||\nabla x^3||_{g_1}^2$ and $||\nabla y^3||_{g_2}^2$. Moreover, since we have assumed the area of $Y_1(t)$ as measured by $g_1$ equals the area of $Y_2(t)$ as measured by $g_2$ for each $t\in(-1,1)$, by Proposition \ref{area_to_DN} the Dirichlet-to-Neumann maps associated to the stability operators on the leaves are equal. Hence, in the coordinates $(x^\alpha)$,
\begin{align*}
g^{33}_1=||\nabla x^3||_{g_1}^2 &= ||\nabla y^3||_{g_2}^2\circ F = g_2^{33}.
\end{align*}

\end{proof}

Next, we prove uniqueness for all the remaining metric components by showing the differences $g^{\alpha\beta}_1-g^{\alpha\beta}_2$ vanish on $M$. First, in the coordinates $(x^\alpha)$, we derive a system of equations for the differences 
\begin{align*}
\delta g^{3j}:=g_1^{3j}-g^{3j}_2, \text{ and }\mathbf{\delta\phi}:=\phi_1-\phi_2
\end{align*}
on $\MM$, $j=1,2$. Then, using Lemma \ref{sep_lin_lem} and Lemma \ref{sep_lin_dotx}, we will express $\delta g^{3j}$ as a linear combination of pseudodifferential operators acting on $\delta\phi$ or $\partial_3\delta\phi$. The requirement that $(M,g_i)$, $i=1,2$, are either $C^3$-close to Euclidean or $(K,\epsilon_0,\delta_0)$-thin will play a crucial role here. After this has been achieved, it will suffice to show that $\delta\phi=\phi_1-\phi_2=0$.

We use Proposition \ref{phi_lem} and the pseudodifferential expressions for $\delta g^{3j}$ to obtain a hyperbolic Cauchy problem for $\delta\phi$. Here too, our assumption that $(M,g_i)$, $i=1,2$, are either $C^3$-close to Euclidean or $(K,\epsilon_0,\delta_0)$-thin will be key to obtaining the hyperbolic problem for $\delta\phi$. The desired result $\delta\phi=\phi_1-\phi_2=0$ will then follow from a very standard energy argument.


\subsubsection{Derivation of a system of equations for the metric components:}\label{system_section}
First consider the foliation $\{Y_1(t)\,:\, t\in(-1,1)\}$ of $(M,g_1)$. Notice that in the coordinates $(x^\alpha)$, the gradient of the function $x^3:M\to\RR$  is
\begin{align*}
\nabla x^3&:= g^{\alpha\beta}_1\partial_\beta x^3\partial_\alpha= g^{\alpha3}_1\partial_\alpha.
\end{align*}
Geometrically, the components of the inverse metric $g_1^{3\alpha}$, for $\alpha=1,2,3$, correspond to rescalings of the components of the normal vector field $\vec{n}_1:=\frac{\nabla x^3}{||\nabla x^3||_{g_1}}$ to a leaf $Y_1(t):= \{x^3=t\}$. Further, we saw above that if we view $Y_1(t+\delta t)$ as a variation of $Y_1(t)$ by area-minimizing surfaces, the lapse function associated to the foliation $\{Y_1(t)\,:\,t\in(-1,1)\}$, is $$g_1(\partial_3,\vec{n}_1):= ||\nabla x^3||_{g_1}=\sqrt{g_1^{33}}.$$ Below we will consider two chosen variations $\{Y_{i,1}(s,t)\,:\,t\in(-1,1),s\in[0,S]\}$, $i=1,2$, of the foliation $Y_1(t)$, $t\in(-1,1)$. Knowledge of the areas of the leaves $Y_1(t)$ and $Y_{i,1}(s,t)$ $i=1,2$ allows us to recover information about the \emph{new} lapse function for the new foliation, and we use this to find equations which describe the differences $\delta g^{3k}$.

We construct two variations of $Y_1(t)$ as follows: Consider a point $p\in Y_1(t)$. For $i=1,2$, define the maps $h_i:[0,S]\times D(r(t))\to\MM$, $h_i(s,x^1,x^2,t)=:Y_{i,1}(s,t)$ to be a variation of $Y_1(t)\subset \MM$ by properly embedded, area-minimizing surfaces which has the property that the component of $(h_i)_*\left(\left.\frac{\partial }{\partial s}\right|_{s=0}\right)$ projected onto the normal to $Y(t)$, denoted by 
$$\psi_{p,i,1}=g\left((h_i)_*\left(\left.\frac{\partial }{\partial s}\right|_{s=0}\right),\vec{n}_1\right),$$  
vanishes at $p$. We further require that $\nabla\psi_{p,i,1}(x(p)) =\frac{\partial}{\partial x^i}$.  We write $(x^1_s,x^2_s,s,t)=\Phi_1(\cdot,s,t):\YY_1(s,t)\to\RR^2$ for the unique isothermal coordinates on a smooth, asymptotically flat extension $\YY_1(s,t)$ of the new foliation $Y_1(s,t)$ (see Proposition \ref{uniquness-up-down}).

The existence of the desired foliations $Y_{i,1}(s,t):=h_i(s,x^1_t,x^2_t)$, $s\in [0,S]$, $t\in(-1,1)$, is equivalent to the existence of the desired functions $\psi_{p,i,1}$. In particular, the existence of the new foliations $Y_{i,1}(s,t):=h_i(s,x^1,x^2,t)$, $s\in [0,S]$, $t\in(-1,1)$, follows from the (uniform) non-degeneracy of the stability operator on each leaf $Y_1(t)$.  In the coordinate system $(x^\alpha)$, $\alpha=1,2,3$, we claim that from Proposition \ref{area_to_DN}, for each $t\in(-1,1)$ and for each $p\in Y_1(t)\subset M$, we may construct two distinct, nontrivial solutions $\psi_{p,i,1}\in C^2(\RR^2)$, $i=1,2$, of the Jacobi equation 
$$\Delta_{g_\EE}\psi_{p,i,1}+e^{2\phi_1}\left(\text{Ric}_{g_1}(\vec{n},\vec{n})+||A||^2_{g_1}\right)\psi_{p,i,1}=0$$
on $\RR^2$, which additionally satisfy  
\begin{align*}
\psi_{p,i,1}(x(p))&=0, \\
\nabla\psi_{p,i,1}(x(p)) &=\frac{\partial}{\partial x^i}.
\end{align*}
Moreover, from the area data for each $Y_1(t)$ and nearby perturbation, Proposition \ref{area_to_DN} implies that these functions $\psi_{p,i,1}$ are known in our chosen coordinates system $(x^\alpha)$, $\alpha=1,2,3$.

%


In the setting of the first case of Theorem \ref{global_metric_thm}, we have imposed that all the metric components are $C^3$-close to Euclidean, and we will show this is enough to obtain the functions $\psi_{p,i,1}$. In the setting of the second case of Theorem \ref{global_metric_thm}, the condition on the size of each of the the radii of $D(r(t))$ will play the crucial role in place of the $C^3$-close assumption, and we will show that we are able to construct the functions $\psi_{p,i,1}$ in this case too. We also seek such solutions $\psi_{p,i,1}$ which have suitable bounds in the following function spaces:

Let $\Omega \subset \RR^2$. For $k=0,1,2,\dots$, define
\begin{align*}
C^k_x(\Omega)&:=\left\{f(x,p):\Omega\times\Omega\to \RR\,|\, \frac{\partial^{i+j}}{\partial x^i\partial x^j}f\text{ is continuous for all }i+j\le k\right\}\\
C^k_p(\Omega)&:=\left\{f(x,p):\Omega\times\Omega\to \RR\,|\, \frac{\partial^{i+j}}{\partial p^i\partial p^j}f\text{ is continuous for all }i+j\le k\right\}.
\end{align*}



\begin{lemma}\label{psi-lem-thin} Let $(M, g)$, $\epsilon_0$, and $Y(t)$ satisfy the conditions of the first or second case of Theorem \ref{global_metric_thm}. Write $\vec{n}$ for the unit normal vector field to $Y(t)$, and $A$ the second fundamental form of $Y(t)$. For $0< r(t)\le \epsilon_0$, set $(x^1,x^2)\subset D(r(t))$ to be isothermal coordinates on $Y(t)$, so $g|_{Y(t)}:=e^{2\phi(t)}g_\EE$. Suppose $V:=e^{2\phi(t)}\left(\text{Ric}_{g}(\vec{n},\vec{n})+||A||^2_{g}\right) \in W^{1,q}(D(r(t)))$ with $q>2$. 

Then, for any $p\in D(r(t))$, and fixed $i=1,2$, there exists a function $\psi_{p,i}\in C^{3}_x(D(r(t)))$ which satisfies 
\begin{enumerate}
\item $(\Delta_{g_\EE} +V)\psi_{p,i}=0$ on $D(r(t))$,
\item $\psi_{p,i}(p)=0$,
\item $\frac{\partial}{\partial x^j}\psi_{p,i}(p)=\delta_{ij}$.
\end{enumerate}
Moreover, for $j,k,l=1,2$, there is a $C>0$ independent of $p$ (and depending  on 
$\delta_0$ in Case 2) such that
\begin{enumerate}
\item[4.]  $||\psi_{p,i}-(x^j-p^j)\delta_{ij}||_{C^0_x(D(r(t)))} \le Cr^3$, 
\item[5.]  $||\partial_{x^j}\psi_{p,i}(x)-\delta_{ij}||_{C^0_x(D(r(t)))}\le Cr^2$,
\item[6.]  $||\partial_{x^k}\partial_{x^j}\psi_{p,i}(x)||_{C^0_x(D(r(t)))}\le Cr$,
\item[7.]  $||\partial_{p^j}\psi_{p,i}(x)+\delta_{ij}||_{C^0_x(D(r(t)))}\le Cr^2$,
\item[8.]  $||\partial_{p^k}\partial_{p^j}\psi_{p,i}(x)||_{C^0_x(D(r(t)))}\le Cr$,
\item[9.] $||\partial_{p^l}\partial_{p^k}\partial_{p^j}\psi_{p,i}(x)||_{C^0_x(D(r(t)))}\le C$,
\item[10.]  $||\partial_{p^k}\partial_{x^j}\psi_{p,i}(x)||_{C^0_x(D(r(t)))}\le Cr$,
\item[11.]  $||\partial_{p^l}\partial_{p^k}\partial_{x^j}\psi_{p,i}(x)||_{C^0_x(D(r(t)))}\le C$
\item[12.]  $||\partial_{p^l}\partial_{x^k}\partial_{x^j}\psi_{p,i}(x)||_{C^0_x(D(r(t)))}\le C$,
\end{enumerate}
where $\partial_{p^j}\psi_{p,i}$ denotes differentiation with respect to the $j$-th component of $p$. Lastly, we have
\begin{enumerate}
\item[13.] the area data for $(M,g)$ determines the function $\psi_{p,i}(x)$ on $D(r(t))$.
\end{enumerate}
\end{lemma}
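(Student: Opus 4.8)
The plan is to build $\psi_{p,i}$ as a small perturbation of the Euclidean model solution $\ell_{p,i}(x):=x^i-p^i$, which satisfies (1)--(3) when $V\equiv 0$. Writing $\psi_{p,i}=\ell_{p,i}+u$, condition (1) becomes the Poisson equation $\Delta_{g_\EE}u=-V\ell_{p,i}-Vu$, while (2)--(3) become $u(p)=0$, $\nabla u(p)=0$. Let $N[h]$ be the Newtonian potential of $h$ on $D(r(t))$ (so $\Delta_{g_\EE}N[h]=h$), and set $\mathcal{T}h:=N[h]-N[h](p)-\nabla N[h](p)\cdot(x-p)$, obtained by subtracting the affine — hence harmonic — first-order Taylor polynomial of $N[h]$ at $p$; then $\mathcal{T}h$ solves $\Delta_{g_\EE}(\mathcal{T}h)=h$ with vanishing $1$-jet at $p$. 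The problem for $u$ is then the fixed-point equation $u=-\mathcal{T}(V\ell_{p,i})-\mathcal{T}(Vu)$. The elementary bounds $\|N[h]\|_{C^0(D(r))}\lesssim r^2\|h\|_{L^\infty}$ and $\|\nabla N[h]\|_{C^0(D(r))}\lesssim r\|h\|_{L^\infty}$ give $\|\mathcal{T}h\|_{C^0(D(r))}\lesssim r^2\|h\|_{L^\infty}$, so the linear map $u\mapsto-\mathcal{T}(Vu)$ has operator norm $\lesssim r(t)^2\|V\|_{L^\infty}$ on $C^0(D(r(t)))$.

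The construction succeeds as soon as $r(t)^2\|V\|_{L^\infty}$ is small. In Case 1 this holds because $r(t)\le\epsilon_0$ with $\epsilon_0$ small and $\|V\|_{L^\infty}$ uniformly bounded; in Case 2 it holds because the curvature and second-fundamental-form bounds of Definition \ref{thin-defn}, together with Lemma \ref{conf.bounds}, give $\|V\|_{L^\infty}\lesssim\delta_0\epsilon_0^{-2}$, so $r(t)^2\|V\|_{L^\infty}\lesssim\delta_0$. Hence for $\delta_0$ (resp. $\epsilon_0$) small enough the Neumann series $u=-\sum_{k\ge0}(-\mathcal{T}V)^k\,\mathcal{T}(V\ell_{p,i})$ converges in $C^0(D(r(t)))$; elliptic (Schauder) regularity then upgrades the limit to $C^3_x(D(r(t)))$ — while the bare hypothesis $V\in W^{1,q}$ only yields $C^{2,\alpha}$, in the setting of Theorem \ref{global_metric_thm} the metric is $C^4$, so $\text{Ric}_g$ is $C^2$, the second fundamental form of the minimal leaves is $C^{1,\alpha}$, and the conformal factor obeys the $W^{4,p}$ bound of Lemma \ref{conf.bounds}, so in fact $V\in C^{1,\alpha}$ and $\psi_{p,i}\in C^{3,\alpha}$. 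This gives $\psi_{p,i}=\ell_{p,i}+u$ satisfying (1)--(3), with $u$ small; tracking the potential estimates through the leading term $\mathcal{T}(V\ell_{p,i})$ gives (4).

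For the estimates (5)--(12) I would differentiate the fixed-point equation — equivalently the Neumann series — in the $x$- and $p$-variables and bookkeep the Newtonian-potential bounds together with their derivatives. The $p$-dependence enters only affinely through $\ell_{p,i}$ and through the base point of the Taylor subtraction in $\mathcal{T}$, so $\partial_p$-derivatives are controlled by the same potential estimates as $\partial_x$-derivatives; each further $x$- or $p$-derivative of $\mathcal{T}(V\ell_{p,i})$ improves the power of $r(t)$ by one relative to (4) (with Hölder norms of $V$ entering at the second- and third-derivative level), which produces the pattern in (5)--(12). The constants are uniform in $p$, and in Case 2 depend only on $\delta_0$, because $\mathcal{T}$ and the bounds on $V$ are. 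Finally, (13) follows from Proposition \ref{area_to_DN}: the first and second variations of area determine the potential $V=e^{2\phi}(\text{Ric}_g(\vec n,\vec n)+\|A\|^2_g)$ on $D(r(t))$ in the isothermal coordinates, and the disc $D(r(t))$ is fixed by $4\pi r(t)^2=\text{Area}[Y(t)]$; since $\psi_{p,i}$ is produced from $V$, $D(r(t))$, and the base point $p$ by the explicit construction above, it is determined by the area data.

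I expect the main obstacle to be the bookkeeping for (4)--(12) — making the estimates genuinely uniform in $p\in D(r(t))$ up to third-order and mixed $x/p$ derivatives, which requires bounding derivatives of the Newtonian potential of order up to three on $D(r(t))$ without degeneration as $p$ nears $\partial D(r(t))$, and carefully tracking the interaction of differentiating the Taylor-subtraction base point with differentiating the integrand. A secondary technical point is that landing in $C^3_x$, rather than merely $C^{2,\alpha}$, forces one to use the genuine $C^{1,\alpha}$-regularity of $V$ inherited from $g\in C^4$, beyond the stated $W^{1,q}$ hypothesis.
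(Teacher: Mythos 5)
Your construction is genuinely different from the paper's, but it is a viable alternative route to the same conclusion. The paper works on the rescaled unit disk $D(1)$: it solves Dirichlet boundary value problems $(\Delta_{g_\EE}+r^2\tilde V)\tilde\chi^j=0$ with prescribed boundary data $\tilde x^1$, $\tilde x^2$, $1$, forms the combination $\gamma^j(\tilde p,\tilde x)=\omega(\tilde p)\tilde\chi^j(\tilde x)-\tilde\chi^j(\tilde p)\omega(\tilde x)$ (which vanishes at $\tilde p$ by construction), normalizes a further linear combination to enforce the gradient condition, and derives all estimates from $W^{3,q}(D(1))\hookrightarrow C^{2,\alpha}(D(1))$ bounds before rescaling back to $D(r)$. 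You instead work directly on $D(r)$ and build $\psi_{p,i}$ as a perturbation $\ell_{p,i}+u$ of the model affine solution, with $u$ obtained by a Neumann series for the $p$-jet-normalized solution operator $\mathcal{T}h=N[h]-N[h](p)-\nabla N[h](p)\cdot(x-p)$. What the paper's route buys you is that Sobolev embedding on the fixed domain $D(1)$ automatically delivers interior-and-boundary $C^{2,\alpha}$ control with no $\log r$ bookkeeping, and the $p$-dependence of $\tilde\psi_{\tilde p}$ is by construction polynomial in the $(\tilde p$-evaluated$)$ solutions $\tilde\chi^j$, $\omega$ — so the $\partial_p$-estimates in (7)--(12) reduce to differentiating an explicit rational expression. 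What your route buys is conceptual economy: no rescaling, one explicit contraction mapping, and estimates that visibly degrade by one power of $r$ per derivative.

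Two points to be careful about, both of which you've partially anticipated. First, the clean bound $\|N[h]\|_{C^0(D(r))}\lesssim r^2\|h\|_{L^\infty}$ holds for the Dirichlet Green's function on $D(r)$ (by the maximum principle, via $\Delta w=-1$, $w|_{\partial D(r)}=0$, $w=\frac{r^2-|x|^2}{4}$); the free-space logarithmic potential carries an extra factor of $(1+|\log r|)$ in the sup bound (though not in the gradient bound). If you mean the free-space potential, the $\log$ is harmless for convergence but will pollute the sharp powers of $r$ in (4)--(12); the cleanest fix is to define $N$ as the Dirichlet inverse on $D(r)$, which is what the paper's Lemma \ref{T-L2-H3-bound} effectively does. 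Second, and this is the genuinely delicate point you flagged only in passing: differentiating $\mathcal{T}$ in $p$ produces $D^2N[h](p)$, which is a Calder\'on--Zygmund quantity and is not controlled by $\|h\|_{L^\infty}$; you need the $W^{1,q}$ (hence $C^{0,\alpha}$) control on $h=V\ell_{p,i}$ and $h=Vu$, and every further $p$-derivative sends another derivative onto $N[\cdot]$, so for the three-derivative estimates (9), (11), (12) you need $D^3N[h]$ and hence $C^{1,\alpha}$-type control on $h$. This is available here exactly because $g\in C^4$ gives $V\in C^{1,\alpha}$ as you observed, but you should make this regularity ladder explicit in the bookkeeping for (9)--(12) rather than relying on the hypothesis $V\in W^{1,q}$. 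With those caveats addressed, your approach closes; the paper's route sidesteps both issues by working with solutions of Dirichlet problems on $D(1)$ and never directly manipulating the kernel of the Newtonian potential.
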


\begin{proof}

Part 1. In the first case of Theorem \ref{global_metric_thm} we are assuming that the metric $g$ is $C^3$-close to Euclidean, hence the norm $||V||_\infty$ is small. Then, by setting $r(t)=1$ in the proof of Part 2 below, we obtain the desired claims.

Part 2. Now, let $(M, g)$, $0< r\le \epsilon_0$, $D(r)$, and $Y(t)$ be as in the second case of Theorem \ref{global_metric_thm}. We seek a function $\psi_{p,i}: D(r)\to \RR$ with the properties 1--13. Without loss of generality, set $i=1$ and write $\psi_p:= \psi_{p,i}$ for simplicity. We also simply write $Y:=Y(t)$, $r:=r(t)$, and $\phi:=\phi(t)$ below.

We will construct $\psi_{p}$ from linear combinations of solutions which are close to either $x^1$, $x^2$, or 1 in $C^2(D(r))$. 

To this end, let $\chi^1$ be a function which solves the boundary value problem 
\begin{align*}
(\Delta_{g_\mathbb{E}} +V)\chi^1&=0&& \text{on }D(r),\\
\chi^1&=x^1 && \text{on }\partial D(r).
\end{align*}

Rescale the coordinates $(x^i)$, $i=1,2$, to $(\tilde x^i)$, $i=1,2$, so that we work over the unit disk $D(1)$: define $f:D(1)\to D(r)$ to be the change of coordinates map $f(\tilde x)=r\tilde x=x$. Set $\tilde\chi^1:= \frac1r\chi^1\circ f$, $\tilde V:= V\circ f$, and $\tilde p=f^{-1}(p)$. Then 
\begin{align}
f^*(\Delta_{g_\mathbb{E}} +V)&= r^{-2}\Delta_{g_\mathbb{E}} +\tilde V,
\end{align}
and we seek solutions $\tilde\chi^1$ to 
\begin{align}
(\Delta_{g_\mathbb{E}} +r^2\tilde V)\tilde\chi^1&=0,&& \text{on }D(1)\label{tildepsi1}\\
\tilde\chi^1&= \tilde x^1 && \text{on }\partial D(1).\label{tildepsi1b}
 \end{align}

Since $(M,g)$ is $(K,\epsilon_0,\delta_0)$-thin and using the definition of $V$ and the bounds
 in Lemma \ref{conf.bounds}, the potential satisfies the estimate 
 $||V||_{W^{1,q}(D(r))}<\frac{\delta_0}{\epsilon_0^2}$ for some small $\delta_0>0$; 
 this combined with the fact $r<\epsilon_0$ gives 
 $||r^2\tilde V||_{W^{1,q}(D(1))}\le \delta_0$. Now, as $\delta_0$ is small, the operator 
 $(\Delta_{g_\mathbb{E}} +r^2\tilde V) : W^{3,q}(D(1))\to W^{1,q}(D(1))$ is invertible, 
 and the norm of the inverse is 
 bounded by a constant depending on $\delta_0$ for the Class 2 case. 
Therefore there exists a unique solution $\tilde\chi^1$ to (\ref{tildepsi1}), (\ref{tildepsi1b}).

Now consider
\begin{align*}
(\Delta_{g_\mathbb{E}} +r^2\tilde V)\left[\tilde\chi^1-\tilde x^1\right]&= -(r^2\tilde V) \tilde x^1&& \text{on }D(1),\\
\tilde\chi^1-\tilde x^1&=0 && \text{on }\partial D(1).
 \end{align*}
Since $r$ is chosen to be small, for $q>2$ we have via Sobolev embedding
\begin{align}
\left|\left|\tilde\chi^1-\tilde x^1\right|\right|_{C^{2}(D(1))}&\le C\left|\left|\tilde\chi^1-\tilde x^1\right|\right|_{W^{3,q}(D(1))}\notag\\
&\le C\left[||(r^2\tilde V) \tilde x^1||_{W^{1,q}(D(1))}\right]\notag\\
&\le C\left[r^2||\tilde V \tilde x^1||_{L^{q}(D(1))}+r^2||\tilde\nabla(\tilde V\tilde x^1)||_{L^{q}(D(1))}\right]\notag\\
&\le Cr^2 \label{good-est0}
 \end{align}
 for $C>0$ depending on $\delta_0$, $q$, and $D(1)$. 
 
Therefore, the function $\tilde\chi^1$ satisfies the estimate
\begin{align}
\left|\left|\tilde\chi^1- \tilde x^1\right|\right|_{C^{2}(D(1))}&\le Cr^2. \label{good-est1}
 \end{align}
 In particular, the estimate (\ref{good-est1}) implies
  \begin{align}
 ||\partial_{\tilde x^1}\tilde\chi^1 - 1||_{C^1(D(1))}&\le Cr^2,\label{1closeto1}\\
  ||\partial_{\tilde x^2}\tilde\chi^1 - 0||_{C^1(D(1))}&\le Cr^2.\label{1closeto0}
 \end{align}

Similar to above, we construct a function $\tilde\chi^2$, which solves the boundary value problem 
\begin{align*}
(\Delta_{g_\mathbb{E}} +r^2\tilde V)\tilde\chi^2&=0&& \text{on }D(1),\\
\tilde\chi^2&= \tilde x^2 && \text{on }\partial D(1),
 \end{align*}
 and satisfies
 \begin{align}
\left|\left|\tilde\chi^2- \tilde x^2\right|\right|_{C^{2}(D(1))}&\le Cr^2. \label{good-est2}
 \end{align}
Further, let $\omega$ solve
\begin{align*}
(\Delta_{g_\mathbb{E}} +r^2\tilde V)\omega&=0&& \text{on }D(1),\\
\omega&=1 && \text{on }\partial D(1).
\end{align*} 
As argued above, the function $\omega$ satisfies
\begin{align}
\left|\left|\omega- 1\right|\right|_{C^{2}(D(1))}&\le Cr^2. \label{good-est3}
 \end{align} 

Now, for $j=1,2$, and $\tilde p\in D(1)$ fixed, the function 
\begin{align*}
\gamma^j(\tilde{p},\tilde{x}) &:= \omega(\tilde p)\tilde\chi^j(\tilde x)-\tilde\chi^j(\tilde p)\omega(\tilde x)
\end{align*}
 solves
\begin{align*}
(\Delta_{g_\mathbb{E}} +r^2\tilde V)\gamma^j &=0&& \text{for }D(1),\\
\gamma^j(\tilde{p},\tilde{x}) &=\omega(\tilde p)\tilde x^j-\tilde \chi^j(\tilde p) && \text{on }\partial D(1),
\end{align*} 
and has the property $\gamma^j(\tilde p)=0$. The function $\gamma^j$ also obeys the following estimates. 
%
From (\ref{good-est1}), (\ref{good-est2}), and (\ref{good-est3}),
\begin{align}
||\gamma^j(\tilde{p},\tilde{x})-(\tilde{x}^j-\tilde{p}^j)||_{C^0_x(D(1))} &= ||\omega(\tilde p)\tilde\chi^j(\tilde x)-\tilde\chi^j(\tilde p)\omega(\tilde x)-(\tilde{x}^j-\tilde{p}^j)||_{C^0_x(D(1))}\notag\\
&\le ||\omega(\tilde p)\tilde\chi^j(\tilde x)-\omega(\tilde p)\tilde{x}^j||_{C^0_x(D(1))} +||\tilde x^j\omega(\tilde p)-\tilde{x}^j||_{C^0_x(D(1))}\notag \\
&\qquad+||\tilde\chi^j(\tilde p)\omega(\tilde x)-\tilde{p}^j\omega(\tilde x)||_{C^0_x(D(1))}+||\tilde p^j\omega(\tilde x)-\tilde{p}^j||_{C^0_x(D(1))}\notag\\
&\le Cr^2,\label{good-est4}
\end{align}
where $C>0$ is a constant independent of $\tilde p$. Similarly, we find for $i,j=1,2$,
\begin{align}
||\partial_{\tilde x^i}\gamma^j(\tilde{p},\tilde{x})-\delta^j_i||_{C^{0}_x(D(1))}&\le Cr^2,\label{gamma-est1}\\
||\partial_{\tilde p^i}\gamma^j(\tilde{p},\tilde{x})+\delta^j_i||_{C^{0}_x(D(1))}&\le Cr^2,\label{gamma-est2}\\
||\partial_{\tilde p^k}\partial_{\tilde p^i}\gamma^j(\tilde{p},\tilde{x})||_{C^{0}_x(D(1))}&\le Cr^2,\label{gamma-est3}\\
||\partial_{\tilde p^k}\partial_{\tilde x^i}\gamma^j(\tilde{p},\tilde{x})||_{C^{0}_x(D(1))}&\le Cr^2,\label{gamma-est4}
 \end{align}
where the constant is independent of $\tilde p$.

Consider $\gamma^1$ and $\gamma^2$ as above. From (\ref{good-est1}), (\ref{good-est2}), and (\ref{good-est3}), 
\begin{align}
||\partial_{\tilde x^1}\gamma^1(\tilde p,\tilde p)\partial_{\tilde x^2}\gamma^2(\tilde p,\tilde p)-\partial_{\tilde x^2}\gamma^1(\tilde p,\tilde p)\partial_{\tilde x^1}\gamma^2(\tilde p,\tilde p)||_{C^0_x(D(1))}\sim 1+Cr^2.
\end{align}
Then, the function
\begin{align*}
\tilde\psi_{\tilde p}(\tilde x) &:= \frac{\partial_{\tilde x^2}\gamma^2(\tilde p,\tilde p)\gamma^1(\tilde p,\tilde x)-\partial_{\tilde x^2}\gamma^1(\tilde p,\tilde p)\gamma^2(\tilde p,\tilde x)}{\partial_{\tilde x^1}\gamma^1(\tilde p,\tilde p)\partial_{\tilde x^2}\gamma^2(\tilde p,\tilde p)-\partial_{\tilde x^2}\gamma^1(\tilde p,\tilde p)\partial_{\tilde x^1}\gamma^2(\tilde p,\tilde p)}
\end{align*}
is well defined and satisfies conditions 2-3. By linearity of the operator $(\Delta_{g_\mathbb{E}} +r^2\tilde V)$, $\tilde\psi_{\tilde p}$ satisfies condition 1:
\begin{align*}
(\Delta_{g_\mathbb{E}} +r^2\tilde V)\tilde\psi_{\tilde p} &=0&& \text{on }D(1),\\
\tilde\psi_{\tilde p} &=\tilde f_{\tilde p} && \text{on }\partial D(1),
\end{align*} 
where the boundary data is explicitly given by
\begin{align*}
\tilde f_{\tilde p}(\tilde x) &:=  \frac{\partial_{\tilde x^2}\gamma^2(\tilde p,\tilde p)[\omega(\tilde p)\tilde x^1 -\tilde\chi^1(\tilde p)]-\partial_{\tilde x^2}\gamma^1(\tilde p,\tilde p)[\omega(\tilde p)\tilde x^2-\tilde\chi^2(\tilde p)]}{\partial_{\tilde x^1}\gamma^1(\tilde p,\tilde p)\partial_{\tilde x^2}\gamma^2(\tilde p,\tilde p)-\partial_{\tilde x^2}\gamma^1(\tilde p,\tilde p)\partial_{\tilde x^1}\gamma^2(\tilde p,\tilde p)}.
\end{align*}
%

Now, we prove preliminary estimates for $\tilde\psi_{\tilde p}$ and its derivatives which will enable us to obtain the desired function $\psi_{p}$ and estimates claimed for $\psi_{p}$. First, we show 
\begin{align}
||\tilde\psi_{\tilde p}-(\tilde{x}^j-\tilde{p}^j)||_{C^0_x(D(1))}&\le Cr^2,\label{c0-est-psi-x}
\end{align}
 for some constant $C>0$ independent of $\tilde p$. Indeed, from the inequalities (\ref{good-est1})--(\ref{good-est4}) and (\ref{gamma-est1})--(\ref{gamma-est4}),

\begin{align}
||\tilde\psi_{\tilde p}(\tilde x)-(\tilde{x}^1-\tilde{p}^1)||_{C^0_x(D(1))} &= \left|\left|\frac{\partial_{\tilde x^2}\gamma^2(\tilde p,\tilde p)\gamma^1(\tilde p,\tilde x)+\partial_{\tilde x^2}\gamma^1(\tilde p,\tilde p)\gamma^2(\tilde p,\tilde x)}{\partial_{\tilde x^1}\gamma^1(\tilde p,\tilde p)\partial_{\tilde x^2}\gamma^2(\tilde p,\tilde p)-\partial_{\tilde x^2}\gamma^1(\tilde p,\tilde p)\partial_{\tilde x^2}\gamma^2(\tilde p,\tilde p)}-(\tilde{x}^j-\tilde{p}^j)\right|\right|_{C^0_x(D(1))}\notag\\[2pt]
&\le \frac{||\partial_{\tilde x^2}\gamma^2(\tilde p,\tilde p)\left[\gamma^1(\tilde p,\tilde x)-(\tilde{x}^1-\tilde{p}^1)\partial_{\tilde x^1}\gamma^1(\tilde p,\tilde p)\right]||_{C^0_x(D(1))}}{|\partial_{\tilde x^1}\gamma^1(\tilde p,\tilde p)\partial_{\tilde x^2}\gamma^2(\tilde p,\tilde p)-\partial_{\tilde x^2}\gamma^1(\tilde p,\tilde p)\partial_{\tilde x^2}\gamma^2(\tilde p,\tilde p)|}\notag\\[2pt]
&\qquad+\frac{||\partial_{\tilde x^2}\gamma^1(\tilde p,\tilde p)\left[\gamma^2(\tilde p,\tilde x)-(\tilde{x}^1-\tilde{p}^1)\partial_{\tilde x^1}\gamma^2(\tilde p,\tilde p)\right]||_{C^0_x(D(1))}}{|\partial_{\tilde x^1}\gamma^1(\tilde p,\tilde p)\partial_{\tilde x^2}\gamma^2(\tilde p,\tilde p)-\partial_{\tilde x^2}\gamma^1(\tilde p,\tilde p)\partial_{\tilde x^2}\gamma^2(\tilde p,\tilde p)|}\notag\\[2pt]
%
%
&\le Cr^2.
\end{align}

We perform similar computations and use the inequalities (\ref{good-est1})--(\ref{good-est4}) and (\ref{gamma-est1})--(\ref{gamma-est4}) to derive an estimate for $\partial_{\tilde x^j}\tilde\psi_{\tilde p}(\tilde x)$, $j=1,2$.
\begin{align}
||\partial_{\tilde x^j}\tilde\psi_{\tilde p}(\tilde x)-\delta^1_j||_{C^0_x(D(1))} &= \left|\left|\frac{\partial_{\tilde x^2}\gamma^2(\tilde p,\tilde p)\partial_{\tilde x^j}\gamma^1(\tilde p,\tilde x)+\partial_{\tilde x^2}\gamma^1(\tilde p,\tilde p)\partial_{\tilde x^j}\gamma^2(\tilde p,\tilde x)}{\partial_{\tilde x^1}\gamma^1(\tilde p,\tilde p)\partial_{\tilde x^2}\gamma^2(\tilde p,\tilde p)-\partial_{\tilde x^2}\gamma^1(\tilde p,\tilde p)\partial_{\tilde x^2}\gamma^2(\tilde p,\tilde p)}-\delta^1_j\right|\right|_{C^0_x(D(1))}\notag\\[2pt]
&\le ||\partial_{\tilde x^2}\gamma^2(\tilde p, \tilde p)||_{C^0_p(D(1))}||\partial_{\tilde x^j}\gamma^1(\tilde p,\tilde x)-\delta^1_j\partial_{\tilde x^1}\gamma^1(\tilde p, \tilde p)||_{C^0_x(D(1))}\notag\\[2pt]
&\qquad + ||\partial_{\tilde x^2}\gamma^1(\tilde p, \tilde p)||_{C^0_p(D(1))}||\partial_{\tilde x^j}\gamma^2(\tilde p,\tilde x)-\delta^1_j\partial_{\tilde x^1}\gamma^2(\tilde p, \tilde p)||_{C^0_x(D(1))}\notag\\[2pt]
%
%
%
%
&\le Cr^2.
\end{align}

Additionally, for $j,k=1,2,$ and using (\ref{gamma-est1})--(\ref{gamma-est4}), we obtain the inequality
\begin{align}
||\partial_{\tilde x^k}\partial_{\tilde x^j}\tilde\psi_{\tilde p}(\tilde x)||_{C^0_x(D(1))} &\le \frac{||\partial_{\tilde x^2}\gamma^2(\tilde p,\tilde p)\partial_{\tilde x^k}\partial_{\tilde x^j}\gamma^1(\tilde p,\tilde x)||_{C^0_x(D(1))}}{|\partial_{\tilde x^1}\gamma^1(\tilde p,\tilde p)\partial_{\tilde x^2}\gamma^2(\tilde p,\tilde p)-\partial_{\tilde x^2}\gamma^1(\tilde p,\tilde p)\partial_{\tilde x^1}\gamma^2(\tilde p,\tilde p)|}\notag\\[2pt]
&\qquad +\frac{||\partial_{\tilde x^2}\gamma^1(\tilde p,\tilde p)\partial_{\tilde x^k}\partial_{\tilde x^j}\gamma^2(\tilde p,\tilde x)||_{C^0_x(D(1))}}{|\partial_{\tilde x^1}\gamma^1(\tilde p,\tilde p)\partial_{\tilde x^2}\gamma^2(\tilde p,\tilde p)-\partial_{\tilde x^2}\gamma^1(\tilde p,\tilde p)\partial_{\tilde x^1}\gamma^2(\tilde p,\tilde p)|}\notag\\[2pt]
&\le Cr^2.
\end{align}
By an analogous computation we find the claimed estimate 
\begin{align}
||\partial_{\tilde p^l}\partial_{\tilde x^k}\partial_{\tilde x^j}\tilde\psi_{\tilde p}(\tilde x)||_{C^0_x(D(1))} &\le Cr.
\end{align}
%

Now we show estimates for the derivatives $\partial_{\tilde p^i}\tilde\psi_{\tilde p}$, $\partial_{\tilde p^j}\partial_{\tilde p^i}\tilde\psi_{\tilde p}$, $\partial_{\tilde p^j}\partial_{\tilde x^i}\tilde\psi_{\tilde p}$, and $\partial_{\tilde p^k}\partial_{\tilde p^j}\partial_{\tilde x^i}\tilde\psi_{\tilde p}$, where $i,j,k=1,2$.

First we show $||\partial_{\tilde p^i}\tilde\psi_{\tilde p}+\delta^1_i||_{C^1_x(D(1))}$. By definition,
\begin{align*}
\partial_{\tilde p^i}\tilde \psi_{\tilde p}(\tilde x) &:=  \frac{\partial_{\tilde p^i}\partial_{\tilde x^2}\gamma^2(\tilde p,\tilde p)\gamma^1(\tilde p,\tilde x)[\partial_{\tilde x^1}\gamma^1(\tilde p,\tilde p)\partial_{\tilde x^2}\gamma^2(\tilde p,\tilde p)-\partial_{\tilde x^2}\gamma^1(\tilde p,\tilde p)\partial_{\tilde x^1}\gamma^2(\tilde p,\tilde p)]}{[\partial_{\tilde x^1}\gamma^1(\tilde p,\tilde p)\partial_{\tilde x^2}\gamma^2(\tilde p,\tilde p)-\partial_{\tilde x^2}\gamma^1(\tilde p,\tilde p)\partial_{\tilde x^1}\gamma^2(\tilde p,\tilde p)]^2}\\[2pt]
&\qquad+\frac{\partial_{\tilde x^2}\gamma^2(\tilde p,\tilde p)\partial_{\tilde p^i}\gamma^1(\tilde p,\tilde x)[\partial_{\tilde x^1}\gamma^1(\tilde p,\tilde p)\partial_{\tilde x^2}\gamma^2(\tilde p,\tilde p)-\partial_{\tilde x^2}\gamma^1(\tilde p,\tilde p)\partial_{\tilde x^1}\gamma^2(\tilde p,\tilde p)]}{[\partial_{\tilde x^1}\gamma^1(\tilde p,\tilde p)\partial_{\tilde x^2}\gamma^2(\tilde p,\tilde p)-\partial_{\tilde x^2}\gamma^1(\tilde p,\tilde p)\partial_{\tilde x^1}\gamma^2(\tilde p,\tilde p)]^2}\\[2pt]
&\qquad-\frac{\partial_{\tilde p^i}\partial_{\tilde x^2}\gamma^1(\tilde p,\tilde p)\gamma^2(\tilde p,\tilde x)[\partial_{\tilde x^1}\gamma^1(\tilde p,\tilde p)\partial_{\tilde x^2}\gamma^2(\tilde p,\tilde p)-\partial_{\tilde x^2}\gamma^1(\tilde p,\tilde p)\partial_{\tilde x^1}\gamma^2(\tilde p,\tilde p)]}{[\partial_{\tilde x^1}\gamma^1(\tilde p,\tilde p)\partial_{\tilde x^2}\gamma^2(\tilde p,\tilde p)-\partial_{\tilde x^2}\gamma^1(\tilde p,\tilde p)\partial_{\tilde x^1}\gamma^2(\tilde p,\tilde p)]^2}\\[2pt]
&\qquad-\frac{\partial_{\tilde x^2}\gamma^1(\tilde p,\tilde p)\partial_{\tilde p^i}\gamma^2(\tilde p,\tilde x)[\partial_{\tilde x^1}\gamma^1(\tilde p,\tilde p)\partial_{\tilde x^2}\gamma^2(\tilde p,\tilde p)-\partial_{\tilde x^2}\gamma^1(\tilde p,\tilde p)\partial_{\tilde x^1}\gamma^2(\tilde p,\tilde p)]}{[\partial_{\tilde x^1}\gamma^1(\tilde p,\tilde p)\partial_{\tilde x^2}\gamma^2(\tilde p,\tilde p)-\partial_{\tilde x^2}\gamma^1(\tilde p,\tilde p)\partial_{\tilde x^1}\gamma^2(\tilde p,\tilde p)]^2}\\[2pt]
&\qquad+\frac{\partial_{\tilde x^2}\gamma^2(\tilde p,\tilde p)\gamma^1(\tilde p,\tilde x)[\partial_{\tilde p^i}\partial_{\tilde x^1}\gamma^1(\tilde p,\tilde p)\partial_{\tilde x^2}\gamma^2(\tilde p,\tilde p)-\partial_{\tilde p^i}\partial_{\tilde x^2}\gamma^1(\tilde p,\tilde p)\partial_{\tilde x^1}\gamma^2(\tilde p,\tilde p)]}{[\partial_{\tilde x^1}\gamma^1(\tilde p,\tilde p)\partial_{\tilde x^2}\gamma^2(\tilde p,\tilde p)-\partial_{\tilde x^2}\gamma^1(\tilde p,\tilde p)\partial_{\tilde x^1}\gamma^2(\tilde p,\tilde p)]^2}\\[2pt]
&\qquad-\frac{\partial_{\tilde x^2}\gamma^1(\tilde p,\tilde p)\gamma^2(\tilde p,\tilde x)[\partial_{\tilde p^i}\partial_{\tilde x^1}\gamma^1(\tilde p,\tilde p)\partial_{\tilde x^2}\gamma^2(\tilde p,\tilde p)-\partial_{\tilde p^i}\partial_{\tilde x^2}\gamma^1(\tilde p,\tilde p)\partial_{\tilde x^1}\gamma^2(\tilde p,\tilde p)]}{[\partial_{\tilde x^1}\gamma^1(\tilde p,\tilde p)\partial_{\tilde x^2}\gamma^2(\tilde p,\tilde p)-\partial_{\tilde x^2}\gamma^1(\tilde p,\tilde p)\partial_{\tilde x^1}\gamma^2(\tilde p,\tilde p)]^2}.
\end{align*}
Thus,
\begin{align*}
||\partial_{\tilde p^i}\tilde \psi_{\tilde p}(\tilde x) +\delta^1_i||_{C^0_x(D(1))}&\le ||\partial_{\tilde p^i}\partial_{\tilde x^2}\gamma^2(\tilde p,\tilde p)\gamma^1(\tilde p,\tilde x)\partial_{\tilde x^1}\gamma^1(\tilde p,\tilde p)\partial_{\tilde x^2}\gamma^2(\tilde p,\tilde p)||_{C^0_x(D(1))}\\
&\qquad+||\partial_{\tilde p^i}\partial_{\tilde x^2}\gamma^2(\tilde p,\tilde p)\gamma^1(\tilde p,\tilde x)\partial_{\tilde x^2}\gamma^1(\tilde p,\tilde p)\partial_{\tilde x^1}\gamma^2(\tilde p,\tilde p)||_{C^0_x(D(1))}\\
&\qquad+||[\partial_{\tilde p^i}\gamma^1(\tilde p,\tilde x)+\delta^1_i\partial_{\tilde x^1}\gamma^1(\tilde p,\tilde p)]\partial_{\tilde x^2}\gamma^2(\tilde p,\tilde p)||_{C^0_x(D(1))}\\
&\qquad+||[\partial_{\tilde p^i}\gamma^1(\tilde p,\tilde x)+\delta^1_i\partial_{\tilde x^2}\gamma^1(\tilde p,\tilde p)]\partial_{\tilde x^1}\gamma^2(\tilde p,\tilde p)||_{C^0_x(D(1))}\\
&\qquad+||\partial_{\tilde p^i}\partial_{\tilde x^2}\gamma^1(\tilde p,\tilde p)\gamma^2(\tilde p,\tilde x)\partial_{\tilde x^1}\gamma^1(\tilde p,\tilde p)\partial_{\tilde x^2}\gamma^2(\tilde p,\tilde p)||_{C^0_x(D(1))}\\
&\qquad+||\partial_{\tilde p^i}\partial_{\tilde x^2}\gamma^1(\tilde p,\tilde p)\gamma^2(\tilde p,\tilde x)\partial_{\tilde x^2}\gamma^1(\tilde p,\tilde p)\partial_{\tilde x^1}\gamma^2(\tilde p,\tilde p)||_{C^0_x(D(1))}\\
&\qquad+||\partial_{\tilde x^2}\gamma^1(\tilde p,\tilde p)\partial_{\tilde p^i}\gamma^2(\tilde p,\tilde x)\partial_{\tilde x^1}\gamma^1(\tilde p,\tilde p)\partial_{\tilde x^2}\gamma^2(\tilde p,\tilde p)||_{C^0_x(D(1))}\\
&\qquad+||\partial_{\tilde x^2}\gamma^1(\tilde p,\tilde p)\partial_{\tilde p^i}\gamma^2(\tilde p,\tilde x)\partial_{\tilde x^2}\gamma^1(\tilde p,\tilde p)\partial_{\tilde x^1}\gamma^2(\tilde p,\tilde p)||_{C^0_x(D(1))}\\
&\qquad+||\partial_{\tilde x^2}\gamma^2(\tilde p,\tilde p)\gamma^1(\tilde p,\tilde x)[\partial_{\tilde p^i}\partial_{\tilde x^1}\gamma^1(\tilde p,\tilde p)\partial_{\tilde x^2}\gamma^2(\tilde p,\tilde p)||_{C^0_x(D(1))}\\
&\qquad+||\partial_{\tilde x^2}\gamma^2(\tilde p,\tilde p)\gamma^1(\tilde p,\tilde x)\partial_{\tilde p^i}\partial_{\tilde x^2}\gamma^1(\tilde p,\tilde p)\partial_{\tilde x^1}\gamma^2(\tilde p,\tilde p)||_{C^0_x(D(1))}\\
&\qquad+||\partial_{\tilde x^2}\gamma^1(\tilde p,\tilde p)\gamma^2(\tilde p,\tilde x)\partial_{\tilde p^i}\partial_{\tilde x^1}\gamma^1(\tilde p,\tilde p)\partial_{\tilde x^2}\gamma^2(\tilde p,\tilde p)||_{C^0_x(D(1))}\\
&\qquad+||\partial_{\tilde x^2}\gamma^1(\tilde p,\tilde p)\gamma^2(\tilde p,\tilde x)\partial_{\tilde p^i}\partial_{\tilde x^2}\gamma^1(\tilde p,\tilde p)\partial_{\tilde x^1}\gamma^2(\tilde p,\tilde p)||_{C^0_x(D(1))}.
\end{align*}
From (\ref{gamma-est1})--(\ref{gamma-est4}), we see
\begin{align}
||\partial_{\tilde p^i}\tilde \psi_{\tilde p}(\tilde x) +\delta^1_i||_{C^0_x(D(1))}&\le Cr^2.
\end{align}

By analogous computations, $\tilde \psi_{\tilde p}(\tilde x)$ satisfies
%
\begin{align}
||\partial_{\tilde p^j}\partial_{\tilde p^i}\tilde \psi_{\tilde p}(\tilde x)||_{C^0_x(D(1))}&\le Cr^2,\quad
||\partial_{\tilde p^j}\partial_{\tilde x^i}\tilde \psi_{\tilde p}(\tilde x)||_{C^0_x(D(1))}\le Cr^2,\\
||\partial_{\tilde x^j}\partial_{\tilde x^i}\tilde \psi_{\tilde p}(\tilde x)||_{C^0_x(D(1))}&\le Cr^2,\quad 
||\partial_{\tilde p^k}\partial_{\tilde x^j}\partial_{\tilde x^i}\tilde \psi_{\tilde p}(\tilde x)||_{C^0_x(D(1))}\le Cr^2, \\
||\partial_{\tilde p^k}\partial_{\tilde p^j}\partial_{\tilde x^i}\tilde \psi_{\tilde p}(\tilde x)||_{C^0_x(D(1))}&\le Cr^2, \quad
||\partial_{\tilde p^k}\partial_{\tilde p^j}\partial_{\tilde p^i}\tilde \psi_{\tilde p}(\tilde x)||_{C^0_x(D(1))}\le Cr^2,\label{tilde-psi-last-est}
\end{align}
for $i.j,k=1,2$.

%

Having constructed $\tilde\psi_{\tilde p}$ which satisfies the estimates, we rescale $\tilde \psi_{\tilde p}$ to achieve the desired function $\psi_p$ on the disk $D(r)$. Define $$\psi_p(x):= r\tilde\psi_{\tilde p}\left(\frac xr\right).$$ We claim that this is the function which satisfies conditions 1-12.

By construction, $\psi_p$ satisfies conditions 1, 2, and 3. We now prove the estimates 4--11. Note that the coordinates change as 
$$x=r\tilde x,\quad p=r\tilde p.$$
Hence, the derivatives change as
$$\partial_{\tilde x^j}=r\partial_{x^j},\quad \partial_{\tilde p^j}=r\partial_{p^j}.$$
where $j=1,2$. In particular we have
\begin{align*}
&\partial_{\tilde x^j}\tilde \psi_{\tilde p}(\tilde x)=\partial_{x^j}\psi_p(x),\quad
&&\partial_{\tilde x^k}\partial_{\tilde x^j}\tilde \psi_{\tilde p}(\tilde x) =r \partial_{x^k}\partial_{x^j}\psi_p(x),\\
&\partial_{\tilde p^k}\partial_{\tilde x^j}\tilde \psi_{\tilde p}(\tilde x) = r\partial_{p^k}\partial_{x^j}\psi_p(x),\quad 
&&\partial_{\tilde p^l}\partial_{\tilde x^k}\partial_{\tilde x^j}\tilde \psi_{\tilde p}(\tilde x) = r^2\partial_{p^l}\partial_{x^k}\partial_{x^j}\psi_p(x),\\
&\partial_{\tilde p^l}\partial_{\tilde p^k}\partial_{\tilde x^j}\tilde \psi_{\tilde p}(\tilde x) = r^2\partial_{p^l}\partial_{p^k}\partial_{x^j}\psi_p(x),\quad 
&&\partial_{\tilde p^l}\partial_{\tilde p^k}\partial_{\tilde p^j}\tilde \psi_{\tilde p}(\tilde x) = r^2\partial_{p^l}\partial_{p^k}\partial_{x^j}\psi_p(x),\\
&\partial_{\tilde p^j}\tilde \psi_{\tilde p}(\tilde x)=\partial_{p^j}\psi_p(x),\quad
&&\partial_{\tilde p^k}\partial_{\tilde p^j}\tilde \psi_{\tilde p}(\tilde x) = r\partial_{p^k}\partial_{p^j}\psi_p(x),
\end{align*}
for $j,k,l=1,2$. 
By the change of coordinates and the estimates (\ref{c0-est-psi-x}) -- (\ref{tilde-psi-last-est}) for $\tilde\psi_{\tilde p}$, we have $\psi_{p}$ obeys the estimates 4--12.

Now for the last condition 13: By Proposition \ref{area_to_DN}, the area data for $(M,g)$ determines $\psi_p$. 

Thus, $\psi_p$ is the desired function which satisfies conditions 1--13.

\end{proof}
\bigskip

%
%

Now, from the above lemma, for $i=1,2$, and each fixed $t\in(-1,1)$, and $p\in Y_1(t)$, there exists families of foliations $Y_{i,1}(s,t)$ given by embeddings
\begin{align*}
&h_{i,1}(\cdot,\cdot,t):[0,S]\times D(r(t))\to M\subset \MM\\
&\left.(h_{i,1})_*\left(\frac{\partial}{\partial s}\right)\right|_{s=0} = \psi_{p,i,1}\vec{n_1},
\end{align*}
for which $\psi_{p,i,1}:\RR^2\to\RR$ has the properties defined in Lemma \ref{psi-lem-thin} on the disc $D(r(t))\subset \RR^2$; in particular,
$$\Delta_{g_\EE}\psi_{p,i,1}+e^{2\phi_1}\left(\text{Ric}_{g_1}(\vec{n},\vec{n})+||A||^2_{g_1}\right)\psi_{p,i,1}=0$$
on $D(r(t))$, and  
\begin{align*}
\psi_{p,i,1}(f_1(p))&=0, \\
\nabla\psi_{p,i,1}(f_1(p)) &=\frac{\partial}{\partial x^i}.
\end{align*}

The induced variation of the coordinate $x^3$ is written in Taylor expanded form as $$x^3_i(s)=x^3+s\dot{x}^3_i+\mathcal{O}(s^2).$$
As shown by Proposition \ref{inf_sep_prop}, the knowledge of the areas of $Y_{i,1}(s,t):=h_{i,1}(D(r(t)),s,t)$ determines the functions 
\begin{align*}
||\nabla x^3_i(s)||_{g_1}^2(p)
\end{align*}
on $\RR^2$, in the coordinates $(x^\alpha)$, for all $s\in[0,S]$, $i=1,2$.

Linearizing in $s$, by Lemma \ref{sep_lin_lem} we obtain a nonlinear, non-local, coupled system of equations for $g^{k3}_1$, $k=1,2$ of the form
\begin{align}
\left.\frac{d}{ds}||\nabla x^3_1(s)||_{g_1}(p)\right|_{s=0}
&=g^{3\alpha}_1\partial_\alpha\psi_{p,1,1}(x^1,x^2)+\partial_\alpha||\nabla x^3||_g(p)\dot{x}^\alpha_1,\label{evoleq11}\\
\left.\frac{d}{ds}||\nabla x^3_2(s)||_{g_1}(p)\right|_{s=0}
&=g^{3\alpha}_1\partial_\alpha\psi_{p,2,1}(x^1,x^2)+\partial_\alpha||\nabla x^3||_g(p)\dot{x}^\alpha_2,\label{evoleq12}
\end{align}
where the first order change in conformal coordinates $\dot{x}^\alpha_i$, $\alpha =1,2,3$, depends on $p$, $\psi_{p,i,1}$, $\delta g$ and the first and second derivatives of $\delta g$, as shown in Lemma \ref{sep_lin_dotx}. By Proposition \ref{inf_sep_prop}, our area information determines the functions $||\nabla x^3_1(s)||_{g_1}(p)$ and $||\nabla x^3_2(s)||_{g_1}(p)$ in the coordinates $(x^\alpha)$.

By the very same argument as above, if we consider instead the foliation $Y_2(t)$ of $(M,g_2)$, we may obtain a nonlinear system of equations for $g^{k3}_2$, $k=1,2$ of the form
\begin{align}
\left.\frac{d}{ds}||\nabla y^3_1(s)||_{g_2}(p)\right|_{s=0}
&=g^{3\alpha}_2\partial_\alpha\psi_{p,1,2}(x^1,x^2)+\partial_\alpha||\nabla x^3||_g(p)\dot{y}^\alpha_1\circ F\circ\Phi_1,\label{evoleq_21}\\
\left.\frac{d}{ds}||\nabla y^3_2(s)||_{g_2}(p)\right|_{s=0}
&=g^{3\alpha}_2\partial_\alpha\psi_{p,2,2}(x^1,x^2)+\partial_\alpha||\nabla x^3||_g(p)\dot{y}^\alpha_2\circ F\circ\Phi_1,\label{evoleq22}
\end{align}
where $\dot{y}^\alpha_i$ depends on $p$, $\psi_{p,i,2}$, $\delta g$ and the first and second derivatives of $\delta g$, as as shown in Lemma \ref{sep_lin_dotx}, and the functions $\psi_{p,i,2}\in H^2(\RR^2)$, $i=1,2$ are solutions of the Jacobi equation 
$$\Delta_{g_\EE}\psi_{p,i,2}+e^{2\phi_2}\left[\left(\text{Ric}_{g_2}(\vec{n}_2,\vec{n}_2)+||A||^2_{g_2}\right)\circ F\circ\Phi_1\right]\psi_{p,i,2}=0$$
on $\RR^2$, which additionally satisfy the conditions of Lemma \ref{psi-lem-thin} on $D(r)$. Again, employing Proposition \ref{inf_sep_prop}, our area information determines the functions $||\nabla y^3_1(s)||_{g_2}(p)$ and $||\nabla y^3_2(s)||_{g_2}(p)$ in the coordinates $(x^\alpha)$.

\begin{lemma}
In the coordinates $(x^\alpha)$, $\psi_{p,i,1}=\psi_{p,i,2}$ for $i=1,2$ on $\RR^2$.
\end{lemma}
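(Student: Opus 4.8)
The plan is to show that $\psi_{p,i,1}$ and $\psi_{p,i,2}$ solve the \emph{same} boundary value problem on the common domain $D(r(t))$, and then invoke uniqueness. Recall that $\psi_{p,i,1}$ solves $\Delta_{g_\EE}\psi_{p,i,1}+V_1\psi_{p,i,1}=0$ with $V_1:=e^{2\phi_1}(\text{Ric}_{g_1}(\vec n,\vec n)+\|A\|^2_{g_1})$, while $\psi_{p,i,2}$ solves the analogous equation with potential $V_2:=e^{2\phi_2}[(\text{Ric}_{g_2}(\vec n_2,\vec n_2)+\|A\|^2_{g_2})\circ F\circ\Phi_1]$. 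By Proposition \ref{area_to_DN} (part 1), the area data determines the Dirichlet-to-Neumann map $\Lambda_{g_\EE}$ associated to the Schr\"odinger operator $\Delta_{g_\EE}+V_k$ in isothermal coordinates on $Y_k(t)$, and this DN map is expressed relative to the \emph{same} domain $D(r(t))$ (since Proposition \ref{outside-same}/Corollary \ref{DN-outside-same} already gave $\tilde\Omega_1(t)=\tilde\Omega_2(t)=D(r(t))$ and we normalized the conformal maps accordingly). Since by hypothesis the areas of $Y_1(s,t)$ and $Y_2(s,t)$ agree for all perturbations, the two DN maps coincide: $\Lambda_{g_\EE}^{(1)}=\Lambda_{g_\EE}^{(2)}$ on $\partial D(r(t))$.

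Next I would invoke the identifiability result of \cite{IN} (as used in the proof of Proposition \ref{area_to_DN}, part 2): equality of the DN maps for two Schr\"odinger operators $\Delta_{g_\EE}+V_1$ and $\Delta_{g_\EE}+V_2$ on $D(r(t))$ forces $V_1=V_2$ on $D(r(t))$. Thus $\psi_{p,i,1}$ and $\psi_{p,i,2}$ satisfy the identical PDE $(\Delta_{g_\EE}+V)\psi=0$ on $D(r(t))$, with a common (now known) potential $V$.

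It then remains to match the normalization. Both functions are pinned by the interior conditions $\psi(f(p))=0$ and $\nabla\psi(f(p))=\partial/\partial x^i$ at the same point (using $F(p)=\Phi_2^{-1}\circ\Phi_1(p)$ so the base points correspond), and Lemma \ref{psi-lem-thin} constructs, for the given potential $V$, a \emph{unique} such solution (the construction in that lemma produces $\psi_{p,i}$ explicitly from the $\gamma^j$'s, and one checks that any two solutions of the homogeneous equation agreeing to first order at $p$ must coincide, since their difference is a solution vanishing to first order at an interior point of a domain on which the stability operator is non-degenerate, forcing it to be identically zero). Hence $\psi_{p,i,1}=\psi_{p,i,2}$ on $\RR^2$ (extending past $D(r(t))$ by the same construction outside). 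The main obstacle is the bookkeeping in the first paragraph: one must be careful that the DN maps of the stability operators on $Y_1(t)$ and $Y_2(t)$, after pushing forward by the respective isothermal maps and invoking $\Phi_1=\Phi_2$ on the exterior, really are DN maps \emph{for the same boundary} $\partial D(r(t))$ with respect to the same Euclidean metric, so that the uniqueness theorem of \cite{IN} applies verbatim; this is exactly the content already assembled in Corollary \ref{DN-outside-same} and the proof of Proposition \ref{area_to_DN}, so the argument is essentially a matter of citing those results in the right order.
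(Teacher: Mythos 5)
Your strategy matches the paper's proof in outline: argue that the DN maps of the stability operators on $Y_1(t)$ and $Y_2(t)$, pushed forward by the respective isothermal maps to the common domain $D(r(t))$, coincide because the area data (and hence second variations of area) agree; invoke the 2D Schr\"odinger identifiability result of \cite{IN} to conclude the potentials $V_1$ and $V_2$ agree on $D(r(t))$; and then pin down $\psi_{p,i,k}$ as the unique solution with the given normalization. The paper's proof is exactly this sequence.

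However, the parenthetical you use to close the uniqueness step is incorrect. You claim that \emph{any two solutions of the homogeneous equation agreeing to first order at an interior point $p$ must coincide, since their difference vanishes to first order at $p$ and the stability operator is non-degenerate, forcing it to vanish identically.} This is false for a second-order elliptic operator in two variables: the difference could, e.g., behave like $\mathrm{Re}\,(z-p)^2$ near $p$. Non-degeneracy of the Dirichlet problem gives uniqueness for prescribed \emph{boundary} data, not for Cauchy data (value and gradient) at a single interior point, and the latter is nowhere near enough to fix a solution. The correct closing argument — the one the paper implicitly uses — is that the two extended foliations $\YY_1(t)$, $\YY_2(t)$ coincide on $\MM\setminus M$, hence $\psi_{p,i,1}$ and $\psi_{p,i,2}$ agree outside $D(r(t))$ and in particular on $\partial D(r(t))$; with the same potential and the same Dirichlet boundary data, non-degeneracy then forces agreement in $D(r(t))$. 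Equivalently (and you gesture at this correctly before the faulty parenthetical), the explicit construction of Lemma \ref{psi-lem-thin} produces $\psi_{p,i}$ purely from the potential $V$, the domain $D(r(t))$, and the boundary traces $x^1,x^2,1$, all of which are the same for $k=1,2$ once $V_1=V_2$. Either of these repairs the gap; the first-order-vanishing claim does not.
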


\begin{proof}
Fix $x^3=y^3=t$, and consider the leaves $\YY_1(t)$ and $\YY_2(t)$. Since the foliations $\YY_1(t)$ and $\YY_2(t)$ agree outside the disc $D(r(t))\times\{t\}\equiv D(r(t))\subset \RR^2$, so do the functions $\psi_{p,i,1}$ and $\psi_{p,i,2}$. As shown by Proposition \ref{n-dim_area_to_DN}, our area data determines the Dirichlet-to-Neumann maps associated to the operators 
\begin{align*}
\JJ_1&:= \Delta_{g_{1,t}}+\left(\text{Ric}_{g_1}(\vec{n},\vec{n})+||A||^2_{g_1}\right)\\
\JJ_2&:= \Delta_{g_{2,t}}+\left(\text{Ric}_{g_2}(\vec{n},\vec{n})+||A||^2_{g_2}\right)
\end{align*}
on $D(r(t))$, in the coordinates $(x^\alpha)$ and $(y^\alpha)$ respectively. Here $g_{k,t}$ denotes the metric induced on $Y_k(t)$ by $g_k$, $k=1,2$. Via the map $F$, we can express both the operators in the coordinate system $(x^\alpha)$ as
\begin{align}
\JJ_1&:= \Delta_{g_\EE}+e^{2\phi_1}\left(\text{Ric}_{g_1}(\vec{n},\vec{n})+||A||^2_{g_1}\right)\label{opj1}\\
\JJ_2&:= \Delta_{g_\EE}+e^{2\phi_2}\left(\text{Ric}_{g_2}(\vec{n},\vec{n})+||A||^2_{g_2}\right)\circ F\circ\Phi_1.\label{opj2}
\end{align}

From Proposition \ref{area_to_DN}, the Dirichlet-to-Neumann maps to these operators is also determined as expressed in the coordinates $(x^\alpha)$. By construction the foliation $Y_1(t)$ agrees with the foliation $Y_2(t)$ on the boundary of $M$. By hypothesis, for each fixed $t$ the area of $Y_1(t)$ as measured by $g_1$ is equal to the area of $Y_2(t)$ as measured by $g_2$. Thus, Proposition \ref{n-dim_area_to_DN}, the Dirichlet-to-Neumann maps associated to the operators (\ref{opj1}) and (\ref{opj2}) agree as maps $H^{\frac12}(\partial D(r(t)))\to H^{-\frac12}(\partial D(r(t)))$. By the linear result in \cite{IN}, the potential functions 
\begin{align*}
&e^{2\phi_1}\left(\text{Ric}_{g_1}(\vec{n},\vec{n})+||A||^2_{g_1}\right)\\
&e^{2\phi_2}\left(\text{Ric}_{g_2}(\vec{n},\vec{n})+||A||^2_{g_2}\right)\circ F\circ\Phi_1
\end{align*}
agree on $D(r(t))$, as written in the coordinates $(x^\alpha)$.

So in the coordinates $(x^\alpha)$, $\psi_{p,i,1}=\psi_{p,i,2}$ on $D(r(t))$ for $i=1,2$.
\end{proof}
With the above lemma in hand, we simplify notation a bit and write $$\psi_{p,i}:=\psi_{p,i,1}=\psi_{p,i,2}$$
for $i=1,2$.

Now in the above setting, we may obtain pseudodifferential equations which relate the differences of the metric components $\delta\phi$, $\delta g^{31}$, and $\delta g^{32}$:

\begin{lemma}
For each $p\in M$, the unknown differences $\delta\phi$, $\delta g^{31}$, $\delta g^{32}$ satisfy the following system of equations:
\begin{align}
0&=\delta g^{3k}(p)\partial_k\psi_{p,1}(x(p))+\partial_k||\nabla x^3||_{g_1}(p)\delta\dot x^k_1(p)\label{dgEQ1}\\
0&=\delta g^{3k}(p)\partial_k\psi_{p,2}(x(p))+\partial_k||\nabla x^3||_{g_1}(p)\delta\dot x^k_2(p)\label{dgEQ2}\\
    0 &= g^{k3}_1\partial_k(\delta\phi)+g^{33}_1\partial_3(\delta\phi)+\delta g^{k3}\left(\partial_k\phi_2-\frac{1}{2}\partial_k\log(g^{33}_1)\right)+\frac{1}{2}\partial_k(\delta g^{3k}).\label{meancurve3}
\end{align}
Furthermore, by construction $\delta\phi=0$, and $\delta g^{3k}=0$ for $k=1,2$ on $\MM\setminus M\cup \partial M$.
\end{lemma}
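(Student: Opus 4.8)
The plan is to derive all three equations by subtracting the identities already established for $g_1$ and for $g_2$, using throughout that the area data, the Jacobi solutions $\psi_{p,i}$, and the lapse function are common to the two metrics. For (\ref{dgEQ1}) and (\ref{dgEQ2}), fix $i\in\{1,2\}$ and subtract (\ref{evoleq_21}) from (\ref{evoleq11}) (respectively (\ref{evoleq22}) from (\ref{evoleq12})). By Proposition \ref{inf_sep_prop} the left-hand sides are the $s$-linearizations of the lapse functions of the perturbed foliations $Y_{i,1}(s,t)$ and $Y_{i,2}(s,t)$, hence are determined from the areas of those foliations; since the boundary curves of these foliations on $\partial M$ are prescribed by $\psi_{p,i}|_{\partial D(r(t))}$, which the preceding lemma shows to be the same for $g_1$ and $g_2$, the two left-hand sides are the same known quantity and cancel. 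On the right, the preceding lemma gives $\psi_{p,i,1}=\psi_{p,i,2}=:\psi_{p,i}$, and Lemma \ref{g33-equiv} gives $g^{33}_1=g^{33}_2$, hence $||\nabla x^3||_{g_1}=||\nabla y^3||_{g_2}\circ F$ (written $||\nabla x^3||_g$ below). Using $\dot x^3_i=\psi_{p,i}\,||\nabla x^3||_g$, as in the proof of Lemma \ref{sep_lin_lem}, the $\alpha=3$ contributions to $\delta g^{3\alpha}\,\partial_\alpha\psi_{p,i}$ and to $\partial_\alpha||\nabla x^3||_g\,\delta\dot x^\alpha_i$ vanish: the former since $\psi_{p,i}$ depends only on $(x^1,x^2)$ and $\delta g^{33}=0$, the latter since $\delta\dot x^3_i=0$ (both factors of $\dot x^3_i$ being common to $g_1,g_2$). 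What remains is exactly $0=\delta g^{3k}(p)\,\partial_k\psi_{p,i}(x(p))+\partial_k||\nabla x^3||_{g_1}(p)\,\delta\dot x^k_i(p)$.

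For (\ref{meancurve3}) I would apply the transport identity (\ref{meancurve1}) of Proposition \ref{phi_lem} to $g_1$ and to $g_2$ and subtract. Using $g^{33}_1=g^{33}_2$, the $g^{33}\partial_3\phi$ term contributes $g^{33}_1\partial_3(\delta\phi)$, the $\frac{1}{2}\partial_kg^{3k}$ term contributes $\frac{1}{2}\partial_k(\delta g^{3k})$, and the $-\frac{1}{2}g^{3k}\partial_k\log(g^{33})$ term contributes $-\frac{1}{2}\,\delta g^{3k}\,\partial_k\log(g^{33}_1)$ since $\log g^{33}$ is common. For the remaining pair, the decomposition $g^{3k}_1\partial_k\phi_1-g^{3k}_2\partial_k\phi_2=g^{3k}_1\partial_k(\delta\phi)+\delta g^{3k}\,\partial_k\phi_2$ is exact, with no quadratic remainder; collecting the pieces yields (\ref{meancurve3}). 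Finally, $\delta\phi$ and $\delta g^{3k}$ vanish on $\MM\setminus M\cup\partial M$ because there $F=\text{Id}$ (Proposition \ref{outside-same} together with the construction of $F$), and both $\afg_1$ and $\afg_2$ are obtained from the common boundary data $g_1|_{\partial M}=g_2|_{\partial M}$ by the same fixed extension operator, so the isothermal coordinates, the conformal factors, and all inverse-metric components coincide there.

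I expect the only genuinely delicate point to be the assertion that the left-hand sides of (\ref{evoleq11})--(\ref{evoleq22}) are the same known quantity for the two metrics: this needs the bookkeeping that the perturbed foliations are attached to a common family of boundary curves carrying the common (hypothesis) area data, together with the careful accounting that makes every $\alpha=3$ term and every quadratic cross-term disappear. Everything else is a direct subtraction and regrouping of terms.
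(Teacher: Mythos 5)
Your proof is correct and follows the same approach as the paper's one-sentence proof, which simply asserts that the three equations arise by subtracting the identities of Lemma \ref{sep_lin_lem} and Proposition \ref{phi_lem} for $g_1$ and $g_2$, using the preceding lemma to identify $\psi_{p,i,1}=\psi_{p,i,2}$ and Lemma \ref{g33-equiv} to identify the lapse. Your proposal simply spells out the bookkeeping the paper leaves implicit (cancellation of the known left-hand sides, vanishing of the $\alpha=3$ terms, and the exact decomposition $g^{3k}_1\partial_k\phi_1-g^{3k}_2\partial_k\phi_2=g^{3k}_1\partial_k(\delta\phi)+\delta g^{3k}\partial_k\phi_2$).
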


\begin{proof}
These equations follow from the previous lemma and from taking the difference of the the equations on the leaves $Y_1(t)$ and $Y_2(t)$, $t\in(-1,1)$, given in Lemma \ref{sep_lin_lem} and Proposition \ref{phi_lem}.

\end{proof}


%
\subsubsection{Expressing $\delta g^{3k}$ via pseudodifferential operators:}
Our next goal is to solve equations (\ref{dgEQ1}) and (\ref{dgEQ2}) by expressing $\delta g^{3k}$, $k=1,2$, as a linear combination of pseudodifferential operators acting on $\delta\phi$ and $\partial_3\delta\phi$. To do this, we use the fact that $(M,g_i)$, $i=1,2$ are either $C^3$-close to Euclidean or $(K,\epsilon_0,\delta_0)$-thin.  Furthermore, the assumptions of either $C^3$-close to Euclidean or $(K,\epsilon_0,\delta_0)$-thin allow us to obtain estimates for the pseudodifferential operators acting on $\delta\phi$ and $\partial_3\delta\phi$ which appear in the expressions for $\delta g^{3k}$, $k=1,2$.  From these estimates, we show equation (\ref{meancurve3}) can be written as a hyperbolic pseudodifferential operator acting on $\delta\phi$. A standard energy argument then gives uniqueness for the conformal factors, which implies uniqueness for the metrics $g_1$ and $g_2$.

\begin{prop}[$\delta g^{31}$ and $\delta g^{31}$ are $\Psi$DOs]\label{psido_lem}
At each $p\in M$, the above differences $\delta g^{31}$ and $\delta g^{32}$ can be expressed as
\begin{align}
\delta g^{31}(p) &=: P^1_{-1}(\delta\phi,p)+Q^1_{-2}(\partial_3\delta\phi,p)\label{psido01}\\
\delta g^{32}(p) &=: P^2_{-1}(\delta\phi,p)+Q^2_{-2}(\partial_3\delta\phi,p),\label{psido02}
\end{align}
where $P^k_{-1}:L^2(D(r(t)))\to H^{1}(D(r(t)))$, $Q^k_{-2}:L^2(D(r(t)))\to H^{2}(D(r(t)))$, $k=1,2$, $s>0$, are respectively order $-1$ and $-2$ pseudodifferential operators in the tangential directions $\partial_k$, $k=1,2$. 

Further, we have the estimates
\begin{align} 
|| P^k_{-1}(\delta\phi)||_{H^1}&\le C\epsilon_0||\delta\phi||_{L^2}\label{p-est1}\\
|| Q^k_{-2}(\partial_3\delta\phi)||_{H^2}&\le C\epsilon_0||\partial_3\delta\phi||_{L^2}\label{q-est1}\\
|| \partial_j Q^k_{-2}(\partial_3\delta\phi)||_{L^2}&\le C\epsilon_0||\partial_3\delta\phi||_{L^2}\label{dq-est1}
\end{align}
in the first case of Theorem \ref{global_metric_thm}, and 
\begin{align}
|| P^k_{-1}(\delta\phi)||_{H^1}&\le CK||\delta\phi||_{L^2}\label{p-est}\\
|| Q^k_{-2}(\partial_3\delta\phi)||_{H^2}&\le CK||\partial_3\delta\phi||_{L^2},\label{q-est}\\
|| \partial_j Q^k_{-2}(\partial_3\delta\phi)||_{L^2}&\le CK\epsilon_0||\partial_3\delta\phi||_{L^2}\label{dq-est}
\end{align}
in the second case of Theorem \ref{global_metric_thm}.
%
In both settings, $C>0$ is a uniform constant independent of $r(t)$ and $\epsilon_0$ (but depending on $\delta_0$ in Definition \ref{thin-defn} in the second case of Theorem \ref{global_metric_thm}), and $j,k\in\{1,2\}$.

\end{prop}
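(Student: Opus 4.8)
The starting point is the pair of pointwise identities (\ref{dgEQ1})--(\ref{dgEQ2}). Since by construction $\partial_k\psi_{p,i}(x(p))=\delta_{ki}$ (Lemma \ref{psi-lem-thin}, item 3), these reduce to $\delta g^{3i}(p)=-\partial_k\|\nabla x^3\|_{g_1}(p)\,\delta\dot x^k_i(p)$ for $i=1,2$, with $\partial_k\|\nabla x^3\|_{g_1}$ a known bounded function. The plan is to substitute into these the representation of $\delta\dot x^k_i$ furnished by Lemma \ref{sep_lin_dotx}: for each metric $\dot x^k$ solves a flat Poisson equation $\Delta_{g_\EE}\dot x^k=\mathcal F^k$ (with the common functions $\psi_{p,i}$, after the lemma identifying $\psi_{p,i,1}=\psi_{p,i,2}$), and because the two foliations agree on $\partial M$ the difference $\delta\dot x^k_i$ vanishes on $\partial D(r(t))$; hence $\delta\dot x^k_i=\Delta_{g_\EE}^{-1}\big(\mathcal F^k(g_1;\psi_{p,i})-\mathcal F^k(g_2;\psi_{p,i})\big)$, the inverse being the flat Dirichlet Green operator on the disc. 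I would then write $\mathcal F^k(g_1)-\mathcal F^k(g_2)=\int_0^1 D\mathcal F^k(g_s)[\delta g]\,ds$ along a path $g_s$ interpolating $g_1,g_2$, which turns this difference into an \emph{exactly} linear second-order expression in $(\delta g^{31},\delta g^{32},\delta\phi)$; by Lemma \ref{g33-equiv} all terms carrying $\delta g^{33}$ or its derivatives drop. Inspecting the explicit form (\ref{dotx2}) one checks that every genuinely second-order term in this expression carries a factor of $\psi_{p,i}$ (which vanishes at $p$) together with a further small factor — a component of $A$, of $\nabla\phi$, of the off-diagonal $g^{3l}$, or of $\mathrm{Rm}$.

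Next, I would separate the resulting integro-differential system into a principal part and a remainder. Composing $\Delta_{g_\EE}^{-1}$ with the second-order-in-$\delta g^{3m}$ terms produces order-$0$ tangential $\Psi$DOs acting on the pair $(\delta g^{31},\delta g^{32})$, and their $L^2(D(r(t)))$ operator norms are controlled by the $\mathcal C^1$-size of the coefficients; by Lemma \ref{psi-lem-thin} (items 4--12), Lemma \ref{conf.bounds}, and the Class-1 resp.\ Class-2 bounds on $A,\mathrm{Rm},g^{33},g^{3l}$ — here the constraints $r(t)\le\epsilon_0$ and, in Class 2, the smallness of $K\epsilon_0$ are what is used — this size is $\le C\epsilon_0$ in the first case and $\le CK$ in the second. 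Consequently the operator $\mathcal R$ coming from the $(\delta g^{31},\delta g^{32})$-dependence of $p\mapsto-\partial_k\|\nabla x^3\|_{g_1}(p)\,\delta\dot x^k_i(p)$ is a contraction on $L^2(D(r(t)))^2$, so $I-\mathcal R$ is boundedly invertible. Solving the system for $(\delta g^{31},\delta g^{32})$ then exhibits them as $(I-\mathcal R)^{-1}$ applied to the remaining terms, which depend only on $\delta\phi$ and $\partial_3\delta\phi$ — the first- and zeroth-order-in-$\delta g$ pieces are folded back into $\mathcal R$, only improving the contraction.

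It then remains to identify the orders of the resulting operators and the estimates (\ref{p-est1})--(\ref{dq-est1}) / (\ref{p-est})--(\ref{dq-est}). The $\delta\phi$- and $\partial_3\delta\phi$-dependence enters through $\Delta_{g_\EE}^{-1}$ composed with differential operators; integration by parts against the two-dimensional Green kernel transfers the tangential derivatives off $\delta\phi$, $\partial_3\delta\phi$ onto the smooth, uniformly bounded coefficients (using the $C^3_x$- and $p$-derivative control of $\psi_{p,i}$ from Lemma \ref{psi-lem-thin} and the $\mathcal C^2$-bounds on $\phi_t$ from Lemma \ref{conf.bounds}), while the factor $\psi_{p,i}(y)$ multiplying the kernel — which vanishes at $y=p$, the only place where the kernel is singular — gains an extra order of smoothing. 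Together these yield the order-$(-1)$ operator $P^k_{-1}:L^2\to H^1$ acting on $\delta\phi$ and the order-$(-2)$ operator $Q^k_{-2}:L^2\to H^2$ acting on $\partial_3\delta\phi$, with the declared prefactors $\epsilon_0$ (Class 1) or $K$ (Class 2); the sharper bound $\|\partial_jQ^k_{-2}(\partial_3\delta\phi)\|_{L^2}\le CK\epsilon_0\|\partial_3\delta\phi\|_{L^2}$ in Class 2 picks up the additional $\epsilon_0$ from $r(t)\le\epsilon_0$ in the coefficient of the $\partial_3\delta\phi$ term.

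The main obstacle, as I see it, is the bookkeeping underlying the last two paragraphs: making the split into a contractive principal part legitimate forces one to verify that \emph{all} second-order terms in $D\mathcal F^k(g_s)[\delta g]$ simultaneously carry the $\psi_{p,i}$-factor and the geometric smallness (and that the $\delta g^{33}$-terms really are the only ones killed), and pinning down the exact orders of $P^k_{-1}$ and $Q^k_{-2}$ requires carefully balancing the vanishing of $\psi_{p,i}$ at $p$ against the logarithmic singularity of the planar Green kernel under both $x$- and $p$-differentiation. The close-to-Euclidean and thin/straight hypotheses are used precisely to make the coefficient norms small enough for the contraction, and this is the step where those hypotheses are essential.
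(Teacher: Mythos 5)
Your proposal takes essentially the same route as the paper: substitute the Green-function representation of $\delta\dot x^k_i$ (from Lemma~\ref{sep_lin_dotx}) into (\ref{dgEQ1})--(\ref{dgEQ2}), drop the $\delta g^{33}$-terms via Lemma~\ref{g33-equiv}, and invert the resulting $2\times 2$ system for $(\delta g^{31},\delta g^{32})$ by a Neumann series, with the extra order of smoothing coming from the vanishing of $\psi_{p,i}$ at $p$ against the planar Green kernel. Your path-integral linearization and the paper's direct factoring produce the same schematic linear expression (\ref{delta-mathcalF}); the one slight imprecision is that in Class~2 the relevant $L^2\to L^2$ norm for the contraction is $\le CK\epsilon_0$ (the factor $\varepsilon_k\sim K$ times the Green-operator gain $\sim r^3\epsilon_0^{-2}\le\epsilon_0$), not $\le CK$, though you do correctly identify the smallness of $K\epsilon_0$ as the hypothesis being invoked.
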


\begin{proof}
First we derive the operators $P^k_{-1}$, $Q^k_{-2}$. Let $w\in M$, and set $\Delta_{g_\mathbb{E}} = \frac{\partial^2}{\partial (w^1)^2}+ \frac{\partial^2}{\partial (w^2)^2}$. From Lemma \ref{sep_lin_dotx}, for each of our choices $\psi_{p,i}$, $i=1,2$, and for each $t$, the 
first order change in the conformal coordinates $(x^1,x^2)$ on $Y_1(t)$ is given schematically as 

\begin{align}
\Delta_{g_\EE}\dot{x}^k_i&=\psi_{p,i} A^{ijk}_{\alpha}\partial_i\partial_jg^{3\alpha}_1+\psi_{p,i} B_{\alpha}^{mk}\partial_m\partial_\alpha\phi_1+\psi_{p,i} C^{mjk}_\alpha\partial_m\phi_1\partial_jg^{3\alpha}_1\notag\\
&\qquad+\psi_{p,i} D^{mjk}_{\alpha \beta}\partial_mg^{3\alpha}_1\partial_jg^{3\beta}_1+\psi_{p,i} F^{mk\alpha }\partial_m\phi_1\partial_\alpha\phi_1\notag\\
&\qquad+(\psi_{p,i} H^{mk}_\alpha+\nabla_j\psi_{p,i}I^{mjk}_{\alpha})\partial_m g^{3\alpha}_1+(\psi_{p,i}J^{k\alpha}_1+\nabla_m\psi_{p}J^{mk\alpha}_2)\partial_\alpha \phi_1,\notag\\
&=:\mathcal{F}^k(g^{13}_1,g^{23}_1,\phi_1,\psi_{p,i}),\label{dox-again}
\end{align}
with an analogous equation for the conformal coordinates $(y^1,y^2)$ on $Y_2(t)$, with respect to the metric $g_2$. Here the functions $A^{mjk}_{\alpha},\dots,J^{mk\alpha}_2$ appearing in (\ref{dox-again}) are polynomials in the components of $g_1$ and $g_1^{-1}$ and the function $e^{\phi_1}$ and $e^{-\phi_1}$. Here $\partial_i:=\partial_{w^i}$. The indices take values $\alpha,\beta\in\{1,2,3\}$ and $i,j,k,l,m\in\{1,2\}$.

The differences in the conformal coordinate functions $\delta\dot x^k(w):x^k-y^k$, $k=1,2$, satisfy on the the disc $D(r)$ $(r=r(t))$ an equation of the form
\begin{align*}
\Delta_{g_\mathbb{E}}\delta\dot x^k_i(w) &= \delta\mathcal{F}^k(w,\delta\phi,\delta g, g_1,g_2,\psi_{p,i})
\end{align*}
where the differential operator $\delta\mathcal{F}^k$ is written schematically as 
\begin{align}
\delta\mathcal{F}^k(w)&=\psi_{p,i}\bar{A}^{jkl}_{m}\partial_l\partial_j\delta g^{3m}(w)+\psi_{p,i}\bar{B}^{jk\alpha}\partial_j\partial_\alpha\delta\phi(w)+(\psi_{p,i}\bar{C}^{k\alpha}_1+\partial_j\psi_{p,i}\bar{C}^{jk\alpha}_2)(w)\partial_\alpha\delta\phi(w)\notag\\
&\qquad +(\psi_{p,i}\bar{D}_1+\partial_j\psi_{p,i}\bar{D}^j_2)\delta\phi+(\psi_{p,i}\bar{E}^{jk}_{1m}+\partial_l\psi_{p,i}{\bar{E}}^{jkl}_{2m})(w)\partial_j \delta g^{3m}(w)\notag\\
&\qquad +(\psi_{p,i}\bar{F}^{k}_{1m}+\partial_l\psi_{p,i}{\bar{F}}^{kl}_{2m})(w)\delta g^{3m}(w),\label{delta-mathcalF}
\end{align}
 for functions $\bar{A}^{jkl}_{m}, \dots,{\bar{F}}^{kl}_{2m}$, which depend on $w\in D(r)$ and are polynomials in the unknown metric coefficients $g^{13}_1,g^{23}_1, e^{\phi_1}, e^{-\phi_1}$ and $g^{13}_2,g^{23}_2, e^{\phi_2}, e^{-\phi_2}$ and their first and second derivatives at $w$ as follows:
\begin{align*}
&\bar{A}^{jkl}_{m}, \bar{B}^{jk\alpha}\text{ contain no derivatives of the metric coefficients,}\\
&\bar{C}^{k\alpha}_1,\bar{C}^{jk\alpha}_2,\bar{E}^{jk}_{1m},{\bar{E}}^{jkl}_{2m}\text{ contain up to first derivatives, and}\\
&\bar{D}_1,\bar{D}^j_2,\bar{F}^{k}_{1m},{\bar{F}}^{kl}_{2m}\text{ contain up to second derivatives.}
\end{align*}
Moreover, the functions $\bar{A}^{jkl}_{m}, \dots,{\bar{F}}^{kl}_{2m}$, are linear in the above derivatives of the metric components and are bounded in $L^\infty(D(r))$ as follows. In the case where $g_1$ and $g_2$ are $C^3$-close to Euclidean,
%
\begin{align}
||\bar{A}^{jkl}_{m}||_{L^\infty}, ||\bar{B}^{jk\alpha}||_{L^\infty} &\le C,\label{F.function.est11}\\
||\partial \bar{A}^{jkl}_{m}||_{L^\infty}, ||\partial\bar{B}^{jk\alpha}||_{L^\infty}&\le C\epsilon_0,\\
||\bar{C}^{k\alpha}_1||_{L^\infty}, \dots, ||{\bar{E}}^{jkl}_{2m}||_{L^\infty}&\le C\epsilon_0,
\end{align}
\begin{align}
||\partial\partial \bar{A}^{jkl}_{m}||_{L^\infty}, ||\partial\partial\bar{B}^{jk\alpha}||_{L^\infty}&\le C\epsilon_0,\\
||\partial\bar{C}^{k\alpha}_1||_{L^\infty}, \dots, ||\partial{\bar{E}}^{jkl}_{2m}||_{L^\infty}&\le C\epsilon_0,\\
||\bar{D}_1||_{L^\infty},\dots, ||{\bar{F}}^{kl}_{2m}||_{L^\infty}&\le C\epsilon_0
\end{align}
\begin{align}
 ||\partial\partial\partial\bar{A}^{jkl}_{m}||_{L^\infty}, ||\partial\partial\partial\bar{B}^{jk\alpha}||_{L^\infty} &\le C\epsilon_0,\\
 ||\partial\partial\bar{C}^{k\alpha}_1||_{L^\infty}, \dots, ||\partial\partial{\bar{E}}^{jkl}_{2m}||_{L^\infty}&\le C\epsilon_0,\\
  ||\partial\bar{D}_1||_{L^\infty},\dots, ||\partial{\bar{F}}^{kl}_{2m}||_{L^\infty}&\le C\epsilon_0,\label{F.function.est21}
\end{align}
and
\begin{align}
 ||\partial\partial\bar{D}_1||_{L^\infty},\dots, ||\partial\partial{\bar{F}}^{kl}_{2m}||_{L^\infty}&\le C\label{F.function.est31}
\end{align}
for a uniform constant $C>0$ independent of $\epsilon_0$.

In the case where $g_1$ and $g_2$ are $(K,\epsilon_0,\delta_0)$-thin, we have the estimates
\begin{align}
||\bar{A}^{jkl}_{m}||_{L^\infty}, ||\bar{B}^{jk\alpha}||_{L^\infty} &\le C,\label{F.function.est12}\\
||\partial \bar{A}^{jkl}_{m}||_{L^\infty}, ||\partial\bar{B}^{jk\alpha}||_{L^\infty}&\le C\epsilon_0^{-1},\\
||\bar{C}^{k\alpha}_1||_{L^\infty}, \dots, ||{\bar{E}}^{jkl}_{2m}||_{L^\infty}&\le C\epsilon_0^{-1},
\end{align}
\begin{align}
||\partial\partial \bar{A}^{jkl}_{m}||_{L^\infty}, ||\partial\partial\bar{B}^{jk\alpha}||_{L^\infty}&\le C\epsilon_0^{-2},\\
||\partial\bar{C}^{k\alpha}_1||_{L^\infty}, \dots, ||\partial{\bar{E}}^{jkl}_{2m}||_{L^\infty}&\le C\epsilon_0^{-2},\\
||\bar{D}_1||_{L^\infty},\dots, ||{\bar{F}}^{kl}_{2m}||_{L^\infty}&\le C\epsilon_0^{-2}
\end{align}
\begin{align}
 ||\partial\partial\partial\bar{A}^{jkl}_{m}||_{L^\infty}, ||\partial\partial\partial\bar{B}^{jk\alpha}||_{L^\infty} &\le C\epsilon_0^{-3},\\
 ||\partial\partial\bar{C}^{k\alpha}_1||_{L^\infty}, \dots, ||\partial\partial{\bar{E}}^{jkl}_{2m}||_{L^\infty}&\le C\epsilon_0^{-3},\\
  ||\partial\bar{D}_1||_{L^\infty},\dots, ||\partial{\bar{F}}^{kl}_{2m}||_{L^\infty}&\le C\epsilon_0^{-3},
\end{align}
and
\begin{align}
 ||\partial\partial\bar{D}_1||_{L^\infty},\dots, ||\partial\partial{\bar{F}}^{kl}_{2m}||_{L^\infty}&\le C\epsilon_0^{-4}\label{F.function.est32}
\end{align}
These estimates follow from Lemma \ref{conf.bounds} and the estimates for $g_1$, $g_2$, and their derivatives in the definition of either $C^3$-close to Euclidean or $(K,\epsilon_0,\delta_0)$-thin, which continue to hold in the coordinates $(x^\alpha)$, $\alpha\in\{1,2,3\}$ (see Remark \ref{est.still.hold.note}). 

Now, given (\ref{dox-again}) and since $\delta \dot{x}_i^k=0$ on $\partial M$, at a point $p\in M$ we have the expression 
\begin{align}
\delta\dot x^k_i(p) &= \int_{D(r)} G(p,w)\delta\mathcal{F}^k(w)\, dw,\label{dotx_equation}
\end{align}
where $G(p,w)$ is the Dirichlet Green's function on the disc $D(r)$:
\begin{align*}
\Delta_{g_\EE}G(p,w)&= \delta(p-w)&&\text{for }x(p)\in D(r)\\
G(p,w)&=0 &&\text{for }x(p)\in \partial D(r).
\end{align*}

Since $\delta\mathcal{F}^k$ vanishes on $\partial D(r)$, using (\ref{delta-mathcalF}) and integrating by parts, (\ref{dotx_equation}) becomes 
\begin{align*}
\delta\dot{x}^k_i(p)&=\int_{D(r)} G(p,w)\delta\mathcal{F}^k(w)\, dw \\
&\overset{IBP}{=} \mathcal{K}_{i,1}^k(\delta g^{31})(p)+\mathcal{K}_{i,2}^k(\delta g^{32})(p)+ \mathcal{L}_{i,1}^k(\delta\phi)(p) +\mathcal{L}_{i,2}^k(\partial_3\delta\phi)(p),
%
\end{align*}
where
\begin{align*}
\mathcal{K}_{i,j}^k(f)(p)&:=\int_{D(r)} K_{i,j}^k(p,w)f(w)\,dw,\\
\mathcal{L}_{i,j}^k(f)(p)&:=\int_{D(r)} L_{i,j}^k(p,w)f(w)\,dw
\end{align*}
are integral operators with kernels defined as
\begin{align}
K_{i,1}^k(p,w)&:= \partial_l\partial_j[G(p,w)(\psi_{p,i}\bar{A}^{jkl}_{1})(w)]-\partial_j[G(p,w)(\psi_{p,i}\bar{E}^{jk}_{11}+\partial_l\psi_{p,i}{\bar{E}}^{jkl}_{21})(w)]\notag\\
&\qquad+G(p,w)(\psi_{p,i}\bar{F}^{k}_{11}+\partial_l\psi_{p,i}{\bar{F}}^{kl}_{21}),\label{K-kernel-1}\\
K_{i,2}^k(p,w)&:= \partial_l\partial_j[G(p,w)(\psi_{p,i}\bar{A}^{jkl}_{2})(w)]-\partial_j[G(p,w)(\psi_{p,i}\bar{E}^{jk}_{12}+\partial_l\psi_{p,i}{\bar{E}}^{jkl}_{22})(w)]\notag\\
&\qquad+G(p,w)(\psi_{p,i}\bar{F}^{k}_{12}+\partial_l\psi_{p,i}{\bar{F}}^{kl}_{22}),\label{K-kernel-2}\\
L_{i,1}^k(p,w)&:=\partial_j\partial_l[G(p,w)(\psi_{p,i}\bar{B}^{jkl})(w)]-\partial_l[G(p,w)(\psi_{p,i}\bar{C}^{kl}_1+\partial_j\psi_{p,i}\bar{C}^{jkl}_2)(w)]\notag\\
&\qquad +G(p,w)(\psi_{p,i}\bar{D}_1+\partial_j\psi_{p,i}\bar{D}^j_2)(w),\label{L-kernel-1}\\
L_{i,2}^k(p,w)&:=\partial_j[G(p,w)(\psi_{p,i}\bar{B}^{jk3})(w)]+G(p,w)(\psi_{p,i}\bar{C}^{k3}_1+\partial_j\psi_{p,i}\bar{C}^{jk3}_2)(w),\label{L-kernel-2}
\end{align}
which have singularities of order $-1$, $-1$,$-1$, and $-2$ respectively. Above we denote $\partial_i:=\partial_{w^i}$, and the indices take values $\alpha,\beta\in\{1,2,3\}$ and $i,j,k,l,m\in\{1,2\}$.


To solve for $\delta g^{3k}$, $k=1,2$ in each of the settings where the metrics area either $C^3$-close to Euclidean or $(K,\epsilon_0,\delta_0)$-thin, we require that the operators  $\mathcal{K}_{i,j}^k$, and $\mathcal{L}_{i,1}^k$, $i,j=1,2$ be bounded in the relevant spaces
\begin{align}
\mathcal{K}_{i,j}^k&:L^2(D(r))\to H^1(D(r)), && ||\mathcal{K}_{i,j}^k(f)||_{H^1}\le C||f||_{L^2},\label{k-bounds-1}\\
\mathcal{K}_{i,j}^k&:H^2(D(r))\to H^2(D(r)), && ||\mathcal{K}_{i,j}^k(f)||_{H^2}\le C||f||_{H^2},\label{k-bounds-2}\\
\mathcal{L}_{i,1}^k&:L^2(D(r))\to H^1(D(r)), && ||\mathcal{L}_{i,1}^k(f)||_{H^1}\le C||f||_{L^2},\label{l-bounds-h1}\\
\mathcal{L}_{i,2}^k&:L^2(D(r))\to H^2(D(r)), && ||\mathcal{L}_{i,2}^k(f)||_{H^2}\le C||f||_{L^2}\label{l-bounds-1},
\end{align}
where $C>0$ is a constant whose dependence on the parameters of the problem will be discussed below.

To see these bounds hold, first consider the following integrand appearing in $\mathcal{K}_{i,j}^k$: $$\partial_l\partial_j[G(p,w)(\psi_{p,i}\bar{A}^{jkl}_{m})(w)].$$ Expanding,
\begin{align*}
\partial_l\partial_j[G(p,w)(\psi_{p,i}\bar{A}^{jkl}_{m})(w)]&=\partial_l\partial_jG(p,w)\psi_{p,i}(w)\bar{A}^{jkl}_{m}(w)+\partial_lG(p,w)\partial_j\psi_{p,i}(w)\bar{A}^{jkl}_{m}(w)\\[2pt]
&\qquad +\partial_jG(p,w)\partial_l\psi_{p,i}(w)\bar{A}^{jkl}_{m}(w)+\partial_lG(p,w)\psi_{p,i}(w)\partial_j\bar{A}^{jkl}_{m}(w)\\[2pt]
&\qquad +\partial_jG(p,w)\psi_{p,i}(w)\partial_l\bar{A}^{jkl}_{m}(w)+G(p,w)\partial_j\psi_{p,i}(w)\partial_l\bar{A}^{jkl}_{m}(w)\\[2pt]
&\qquad+G(p,w)\partial_l\psi_{p,i}(w)\partial_j\bar{A}^{jkl}_{m}(w)+G(p,w)\partial_l\partial_j\psi_{p,i}(w)\bar{A}^{jkl}_{m}(w)\\[2pt]
&\qquad +G(p,w)\psi_{p,i}(w)\partial_l\partial_j\bar{A}^{jkl}_{m}(w),
\end{align*}
where above $\partial_j:=\partial_{w^j}$. Notice that the terms above are schematically of the form of a function $\bar{A}$ (which depends on the metric coefficients of $g_1$ and $g_2$ and their first and second derivatives) multiplied by either $\partial_{w^k}\partial_{w^j}G(p,w)\psi_{p,i}(w)$, $\partial_{w^k}G(p,w)\partial_{w^j}\psi_{p,i}(w)$, or $G(p,w)\partial_{w^k}\partial_{w^j}\psi_{p,i}(w)$.  Also, observe that since the Green's function on the disc is symmetric, that is, $G(p,w)=G(w,p)$, the derivatives in $w$ and $p$ satisfy $$|\partial_{w^j}G(p,w)|=|\partial_{p^j}G(p,w)|\text{ and } |\partial_{w^k}\partial_{w^j}G(p,w)|=|\partial_{p^k}\partial_{p^j}G(p,w)|.$$ Thus, to prove that the operators $\mathcal{K}_{i,j}^k$,$\mathcal{L}_{i,1}^k$, and $\mathcal{L}_{i,2}^k$, $i,j,k=1,2$ satisfy the bounds of the form (\ref{k-bounds-1}), (\ref{l-bounds-h1}), and (\ref{l-bounds-1}) respectively, it will suffice to show that for any $f\in L^2(D(r))$, the function
\begin{align*}
T(f)(p)&:=\int_{D(r)}G(p,w)\psi_{p,i}(w)f(w)\,dw\\
\end{align*}
lies in $H^3(D(r))$, and $||T(f)||_{H^3(D(r))}\le C||f||_{L^2(D(r))}$ for some universal constant $C>0$ (depending on $\delta_0$ in Case 2). In particular, 
\begin{lemma}\label{T-L2-H3-bound} For $f\in L^2(D(r))$, the operator $T$ given by
\begin{align*}
T(f)(p)&:=\int_{D(r)}G(p,w)\psi_{p,i}(w)f(w)\,dw\\
\end{align*}
maps $L^2(D(r))\to H^3(D(r))$. Moreover, $T$ satisfies 
\begin{align}
||T(f)||_{H^j(D(r))}&\le Cr^{3-j}||f||_{L^2(D(r))}\label{T-est-hj}
\end{align}
 for $j\in\{0,1,2\}$ and some universal constant $C>0$ independent of $r$ and $\epsilon_0$.
\end{lemma}

\begin{proof}
Notice that by Lemma \ref{psi-lem-thin} the functions $\psi_{p,i}$ vanish at the point $p$, so
\begin{align}
\Delta_{g_\EE}T(f)(p) 
&=\psi_{p,i}(p)f(p) +\int 2\delta^{jk}\partial_{p^j}G(p,w)\partial_{p^k}\psi_{p,i}(w)f(w)\,dw\notag\\
&\qquad +\int G(p,w)\Delta_{g_\EE}\psi_{p,i}(w)f(w)\,dw\notag\\
&= 2T_1(f)(p)+ T_2(f)(p),\label{T-function-delta}
\end{align}
where
\begin{align}
T_1(f)(p)&=\int \delta^{jk}\partial_{p^j}G(p,w)\partial_{p^k}\psi_{p,i}(w)f(w)\,dw,\label{T-function-delta1}\\
T_2(f)(p)&=\int G(p,w)\Delta_{g_\EE}\psi_{p,i}(w)f(w)\,dw,\label{T-function-delta2}
\end{align}
and all the integrals here and below are computed over $D(r)$. Further, since $G(p,w)=0$ on $\partial D(r)$, so too $T(f)(p)=0$ on $\partial D(r)$. It thus will suffice for us to show that the functions $T_1(f)$ and $T_2(f)$
\begin{align}
T_1(f)(p)&=\int \delta^{jk}\partial_{p^j}G(p,w)\partial_{p^k}\psi_{p,i}(w)f(w)\,dw,\label{T-function-delta1}\\
T_2(f)(p)&=\int G(p,w)\Delta_{g_\EE}\psi_{p,i}(w)f(w)\,dw,\label{T-function-delta2}
\end{align}
are uniformly bounded in $H^{1}(D(r))$ in terms of the $L^2(D(r))$ norm of $f$, and with a constant whose dependence on the various parameters is as above. \\

We first show $||T_2(f)||_{L^2}\le Cr^3\log r||f||_{L^2}$ and $||T_2(f)||_{H^1}\le Cr^2(1+\log r)||f||_{L^2}$ for $f\in L^2(D(r))$. Observe
\begin{align*}
||T_2(f)||_{L^2} 
&\le C\sup_p||\Delta_{g_\EE}\psi_{p,i}(w)||_{C^0_w}\left|\left|\log||w||_{g_\EE}\right|\right|_{L^1}||f(w)||_{L^2}\\
&\le Cr^2\log r\sup_p||\Delta_{g_\EE}\psi_{p,i}(w)||_{C^0_w}||f||_{L^2},
\end{align*}
where $C>0$ is a uniform constant. From the estimates for $\psi_{p,i}$ in Lemma \ref{psi-lem-thin}, $||\Delta_{g_\EE}\psi_{p,i}(w)||_{C^0_w}\le Cr$ for some constant $C$ (which depends on $\delta_0$ in the second case of Theorem \ref{global_metric_thm}). Thus $T_2:L^2(D(r))\to L^2(D(r))$, and $||T_2(f)||_{L^2}\le Cr^3\log r||f||_{L^2}$. Additionally, for $j=1,2$
\begin{align*}
\partial_{p^j}T_2(f)(p)&=\int\partial_{p^j}G(p,w)\Delta_{g_\EE}\psi_{p,i}(w)f(w)+G(p,w)\partial_{p^j}\Delta_{g_\EE}\psi_{p,i}(w)f(w)\,dw.
\end{align*}
We estimate
\begin{align*}
\left|\left|\int\partial_{p^j}G(p,w)\Delta_{g_\EE}\psi_{p,i}(w)f(w)\, dw\right|\right|_{L^2}
&\le C\sup_p||\Delta_{g_\EE}\psi_{p,i}(w)||_{C^0_w}\left|\left|\frac{1}{||w||_{g_\EE}}\right|\right|_{L^1}||f(w)||_{L^2}\\
&\le Cr^2||f||_{L^2},
\end{align*}
and similarly
\begin{align*}
\left|\left|\int G(p,w)\partial_{p^j}\Delta_{g_\EE}\psi_{p,i}(w)f(w)\, dw\right|\right|_{L^2}
&\le C\sup_p||\partial_{p^j}\Delta_{g_\EE}\psi_{p,i}(w)||_{C^0_w}\left|\left|\log||w||_{g_\EE}\right|\right|_{L^1}||f(w)||_{L^2}\\
&\le Cr^2\log r||f||_{L^2},
\end{align*}
where we used $||\partial_{p^j}\Delta_{g_\EE}\psi_{p,i}(w)||_{C^0_w}\le C$ for some constant $C$ (see Lemma \ref{psi-lem-thin}). 
In summary, $T_2:L^2(D(r))\to H^1(D(r))$ and $||T_2(f)||_{H^1}\le Cr^2(1+\log r)||f||_{L^2}$.

As above, we have $T_1:L^2(D(r))\to L^2(D(r))$ and $||T_1(f)||_{L^2}\le Cr||f||_{L^2}$.

Next, we prove $\partial_{p^l}T_1:L^2(D(r))\to L^2(D(r))$, $l=1,2$, and $\partial_{p^l}||T_1(f)||_{L^2}\le C||f||_{L^2}$. We have 
\begin{align*}
\partial_{p^l}T_1(f)(p)&=\int\delta^{jk}\partial_{p^j}\partial_{p^l}G(p,w)\partial_{p^k}\psi_{p,i}(w)f(w)+\delta^{jk}\partial_{p^j}G(p,w)\partial_{p^k}\partial_{p^l}\psi_{p,i}(w)f(w)\,dw\\
&=: I_1(f)(p)+I_2(f)(p).
\end{align*}
First, in view of Lemma \ref{psi-lem-thin}, the Taylor expansion of $\partial_{p^j}\psi_{p,i}$ about $p=w$ yields 
$$ \partial_{p^j}\psi_{p,i}(w)= -\delta_{ij}+ \nabla_{p}\partial_{p^j}\psi_{p,i}(p)\cdot(w-p)+(w-p)\cdot\nabla_p\nabla_p\partial_{p^j}\psi_{p,i}(p)\cdot(w-p)+\BO(||w-p||_{g_\EE}^3),$$
 with $||\nabla_{p}\partial_{p^j}\psi_{p,i}(p)||_{g_\EE}\le Cr$ and $||\nabla_{p}\nabla_{p}\partial_{p^j}\psi_{p,i}(p)||_{g_\EE}\le C$ for some constants (which depend on $\delta_0$ in the Class 2  case of Theorem \ref{global_metric_thm}). Then,
\begin{align*}
I_1(f)(p)&=-\int\delta^{j}_i\partial_{p^j}\partial_{p^l}G(p,w)f(w)\,dw\\
&\quad +\int\delta^{jk}\partial_{p^j}\partial_{p^l}G(p,w)\nabla_{p^a}\nabla_{p^b}\psi_{p,i}(p)[\delta^a_k(w-p)^b+\delta^b_k(w-p)^a]f(w)\, dw\\
&\qquad +\int\delta^{jk}\partial_{p^j}\partial_{p^l}G(p,w)\cdot\BO(||w-p||_{g_\EE}^2)f(w)\,dw\\
&=: J_1(f)(p)+J_2(f)(p)+J_3(f)(p).
\end{align*}
 
Since $G(p,w)$ is the Dirichlet Green's function over $D(r)$, there is a uniform constant such that $$||J_1(f)||_{L^2}\le C||f||_{L^2}.$$ 
The operators $J_2$, $J_3$ and $I_2$ are weakly singular, and bounded on $L^2(D(r))$ by estimates similar to those for $T_2$ above. Hence $\partial_{p^l}T_1:L^2(D(r))\to L^2(D(r))$ and $||\partial_{p^l}T_1(f)||_{L^2}\le C||f||_{L^2}$.

Therefore, we have $$||\Delta_{g_\EE}T(f)(p)||_{H^1}= ||2T_1(f)(p)+T_2(p)(f)(p)||_{H^1}\le C||f||_{L^2}.$$ By standard elliptic regularity, $T:L^2(D(r))\to H^{3}(D(r))$ with $||T(f)||_{H^3(D(r))}\le C||f||_{L^2(D(r))}$ for some universal constant $C>0$ independent of $r$. 

From the $L^2(D(r))$ estimates for $T_1$ and $T_2$ we have
$$||\Delta_{g_\EE}T(f)(p)||_{L^2}= ||2T_1(f)(p)+T_2(p)(f)(p)||_{L^2}\le Cr[1+r^2\log r]||f||_{L^2}.$$
We now use this estimate to prove (\ref{T-est-hj}).

The inverse $G$ of the Dirichlet Laplacian on $D(r)$ satisfies
\begin{align*}
||G(f)||_{H^j(D(r))}&\le Cr^{2-j}||f||_{L^2(D(r))}
\end{align*}
for $j\in\{0,1,2\}$. Thus,
\begin{align*}
||T(f)||_{H^j(D(r))} &= ||G\Delta_{g_\EE}T(f)||_{H^j(D(r))}\le Cr^{2-j}||G\Delta_{g_\EE}T(f)||_{L^2(D(r))}\le Cr^{3-j}||f||_{L^2(D(r))},
\end{align*}
which proves estimate (\ref{T-est-hj}).

\end{proof}


Together with Lemma \ref{T-L2-H3-bound} above, the $L^\infty(D(r))$ estimates (\ref{F.function.est11})--(\ref{F.function.est31}) or (\ref{F.function.est12})--(\ref{F.function.est32}), for the functions $\bar{A}^{jkl}_{m}, \dots,{\bar{F}}^{kl}_{2m}$, $i,j,k,l,m\in\{1,2\}$ and their first and second derivatives allow us to prove the estimates (\ref{k-bounds-1}), (\ref{l-bounds-h1}), and (\ref{l-bounds-1}) for the operators $\mathcal{K}_{i,j}^k$ and $\mathcal{L}_{i,1}^k$,  $\mathcal{L}_{i,2}^k$, $i,j,k\in\{0,1,2\}$ respectively. For example, a term in the operator  $\mathcal{K}_{i,j}^k$ has a kernel of the form 
\begin{align}
U(p,w) &= \partial_{w}^j (G(p,w) \psi_p(w)) \partial_{w}^l \bar{A}(w),\label{kernel-form-1}
\end{align}
with $j+l\le2$. Consequently, given the above bounds for the operator $T$ holds, the norm of the operator $U$ with kernel $U(p,w)$ from $L^2(D(r))$ into $H^{3-j}(D(r))$ satisfies
\begin{align}
||U(f)||_{H^{3-j}}&\le ||\partial_{w}^l \bar{A}||_{L^\infty}||\partial^j_pT||_{L^2\to H^{3-j}}||f||_{L^2}.\label{kernel-estimate-1}
\end{align}
Proceeding term by term, Lemma \ref{T-L2-H3-bound} and estimates of the form (\ref{kernel-estimate-1}) prove (\ref{k-bounds-1}), (\ref{l-bounds-h1}), and (\ref{l-bounds-1}). To show (\ref{k-bounds-2}), for $h\in H^2(D(r))$, we note that the kernels of the terms in $\partial_p^2\mathcal{K}_{i,j}^k(h)(p)$ are of the form  
\begin{align*}
\partial_{p}^2\partial_{w}^j (G(p,w) \psi_p(w)) \partial^l\bar{A}(w).
\end{align*}
for $j+l=2$. When $l\ne0$, Lemma \ref{T-L2-H3-bound} and estimates of the form (\ref{kernel-estimate-1}) give the desired bound. Thus, to prove  (\ref{k-bounds-2}), when the kernel has a term with $l=0$ it suffices to show that for $i,j\in{1,2}$, the operator
\begin{align}
U(h)(p) &:= \int_{D(r)}\partial_{w^i}\partial_{w^j} (G(p,w) \psi_p(w))\bar{A}(w)h(w)\, dw
\end{align}
maps $H^2(D(r))\to H^2(D(r))$, 
 bounded. 

 
 %
\begin{lemma} We have 
\begin{align}
||U(h)||_{H^2(D(r))}\le C||h||_{H^2(D(r))}.\label{U-estimate}
\end{align}
\end{lemma}

\begin{proof}
Integration by parts twice yields
\begin{align}
U(h)(p) &= \int_{D(r)} G(p,w) \psi_p(w) \partial_{w^i}\partial_{w^j} [\bar{A}(w)h(w)]\,dw \\
&\qquad + \int_{\partial D(r)}\nu^i\partial_{w^j}[G(p,w) \psi_p(w) ]\bar{A}(w)h(w)\, dS\\
&:=  U_1(h)(p) + U_2(h)(p).
\end{align}
Above $i,j\in\{1,2\}$, and each of the functions $\nu^i(w)$ are equal to either ${\rm cos}(\theta)$ or ${\rm sin}(\theta)$. 
Notice that we have only one nontrivial boundary integral $U_2(h)(p)$ as $G(p,w)=0$ for 
$w\in\partial D(r)$.

Consider the operator $U_1(h)$. We have
\begin{align}
U_1(h)(p) = T(\partial_{w^i}\partial_{w^j} [\bar{A}(w)h(w)])
\end{align} 
where $T:L^2\to H^3$ is defined in Lemma \ref{T-L2-H3-bound}. Therefore, inequality 
(\ref{U-estimate}) for $U_1$ follows from Lemma \ref{T-L2-H3-bound} and the estimates 
(\ref{F.function.est11})--(\ref{F.function.est31}) or respectively (\ref{F.function.est12})--
(\ref{F.function.est32}), for the function $\bar{A}(w)$ and its derivatives.

Now consider $U_2(h)(p)$. Since $G(p,w)$ and its tangential derivative are zero for $w\in\partial D(r)$, we can rewrite $U_2(h)$ as
\begin{align*}
U_2(h)&=\int_{\partial D(r)}\nu^i(w)\nu^j(w)\frac{\partial G(p,w)}{\partial \nu(w)} \psi_p(w) [\bar{A}(w)h(w)]z(\theta)\, dS(w)
\end{align*}
Let $v_{ij}(w)$ be the harmonic extension of $\nu^i(w)\nu^j(w) = \frac{w^iw^j}{r^2}$ to $D(r)$. Then, using Green's identity we have
\begin{align*}
 U_2(h)&= v_{ij}(p)\psi_p(p)\bar{A}(p)h(p) -  \int_{D(r)} G(p,w)\Delta_{g_\EE}[  v_{ij}(w)\psi_p(w) \bar{A}(w)h(w)]\, dw.
\end{align*}

The first term vanishes since $\psi_p(p)=0$. The integral term is bounded on $H^2(D(r))$ uniformly in $r$ by arguments similar to those in Lemma \ref{T-L2-H3-bound}. Here we use $|v_{ij}(w)|\le 1$, $|\nabla v_{ij}(w)|\le \frac1r$, as well as the pointwise estimates on $ \psi_p(w)$ (see Lemma \ref{psi-lem-thin}), $ \bar{A}$ (see (\ref{F.function.est11})--(\ref{F.function.est31}) or (\ref{F.function.est12})--(\ref{F.function.est32})), and their derivatives up to second order.

\end{proof}


Employing this strategy to each term in appearing in the kernels (\ref{K-kernel-1})--(\ref{L-kernel-2}), in the close to Euclidean case we have:
\begin{align}
\mathcal{K}_{i,j}^k&:L^2(D(r))\to H^1(D(r)), && ||\mathcal{K}_{i,j}^k(f)||_{H^1}\le C\epsilon_0||f||_{L^2},\label{k-bounds-1-3e}\\
\mathcal{K}_{i,j}^k&:H^2(D(r))\to H^2(D(r)), && ||\mathcal{K}_{i,j}^k(f)||_{H^2}\le C\epsilon_0||f||_{H^2},\label{k-bounds-2-3e}\\
\mathcal{L}_{i,1}^k&:L^2(D(r))\to H^1(D(r)), && ||\mathcal{L}_{i,1}^k(f)||_{H^1}\le C\epsilon_0||f||_{L^2},\label{l-bounds-1-3e}\\
\mathcal{L}_{i,2}^k&:L^2(D(r))\to H^1(D(r)), && ||\mathcal{L}_{i,2}^k(f)||_{H^1}\le C\epsilon_0||f||_{L^2},\\
\mathcal{L}_{i,2}^k&:L^2(D(r))\to H^2(D(r)), && ||\mathcal{L}_{i,2}^k(f)||_{H^2}\le C\epsilon_0||f||_{L^2}\label{l-bounds-2-3e},
\end{align}
where $C>0$ is a uniform constant independent of $\epsilon_0$.

In the $(K,\epsilon_0,\delta_0)$-thin case, from the estimates (\ref{F.function.est12})--(\ref{F.function.est32}) the differentiated terms $\partial_{w}^\alpha\bar{A}^{jkl}_{m}$, $\dots$,$\partial_{w}^\alpha{\bar{F}}^{kl}_{2m}$, $i,j,k,l,m\in\{1,2\}$, $\alpha\in\{1,2,3\}$ in each of the kernels (\ref{K-kernel-1})--(\ref{L-kernel-2}) will satisfy $L^\infty(D(r))$ bounds.

 Then by Lemma \ref{T-L2-H3-bound}, we bound each of the operators $\mathcal{K}_{i,j}^k$ and $\mathcal{L}_{i,j}^k$ term by term to obtain 
\begin{align}
\mathcal{K}_{i,j}^k&:L^2(D(r))\to L^2(D(r)), && ||\mathcal{K}_{i,j}^k(f)||_{L^2}\le C\epsilon_0^{-2}r^3||f||_{L^2},\label{k-bounds-1-thin}\\
\mathcal{K}_{i,j}^k&:L^2(D(r))\to H^1(D(r)), && ||\mathcal{K}_{i,j}^k(f)||_{H^1}\le C||f||_{L^2},\\
\mathcal{K}_{i,j}^k&:H^2(D(r))\to H^2(D(r)), && ||\mathcal{K}_{i,j}^k(f)||_{H^2}\le C\epsilon_0^{-2}r^3||f||_{H^2},
\end{align}
\begin{align}
\mathcal{L}_{i,1}^k&:L^2(D(r))\to L^2(D(r)), && ||\mathcal{L}_{i,1}^k(f)||_{L^2}\le C\epsilon_0^{-2}r^3||f||_{L^2},\label{l-bounds-1-thin}\\
\mathcal{L}_{i,1}^k&:L^2(D(r))\to H^1(D(r)), && ||\mathcal{L}_{i,1}^k(f)||_{H^1}\le C||f||_{L^2},
\end{align}
\begin{align}
\mathcal{L}_{i,2}^k&:L^2(D(r))\to L^2(D(r)), && ||\mathcal{L}_{i,2}^k(f)||_{L^2}\le C\epsilon_0^{-1}r^3|| f||_{L^2},\\
\mathcal{L}_{i,2}^k&:L^2(D(r))\to H^1(D(r)), && ||\mathcal{L}_{i,2}^k(f)||_{H^1}\le C\epsilon_0^{-1}r^2|| f||_{L^2},\\
\mathcal{L}_{i,2}^k&:L^2(D(r))\to H^2(D(r)), && ||\mathcal{L}_{i,2}^k(f)||_{H^2}\le C||f||_{L^2},\label{l-bounds-2-thin}
\end{align}
where the constant $C>0$ is independent of $r, \epsilon_0$ and in the second class depends only on $\delta_0$ as in Definition \ref{thin-defn}.

With the above estimates in hand, we may define the relevant inverse operators to solve for $\delta g^{31}$ and $\delta g^{32}$ in terms of $\delta\phi$, $\partial_3 \delta\phi$.  Indeed, the equations (\ref{dgEQ1}), (\ref{dgEQ2}), which describe $\delta g^{31}$ and $\delta g^{32}$, can be written in terms of the operators $\mathcal{K}_{i,j}^k$, and $\mathcal{L}_{i,1}^k$, $i,j,k=1,2$, via the system
\begin{align}
[I-\mathcal{K}](\delta g^{31},\delta g^{32}) &= \mathcal{L}(\delta\phi,\partial_3 \delta\phi)\label{matrixdg}
\end{align}
where  
\begin{align*}
I&= \begin{pmatrix} 1 &0\\ 0&1\end{pmatrix},\\
\varepsilon_k&:=\partial_k||\nabla x^3||_{g_1},\\
\mathcal{K} &= \begin{pmatrix} \varepsilon_k\mathcal{K}_{1,1}^k & \varepsilon_k\mathcal{K}_{1,2}^k\\
\varepsilon_k\mathcal{K}_{2,1}^k & \varepsilon_k^k\mathcal{K}_{2,2}^k\end{pmatrix},\\
\mathcal{L} &= \begin{pmatrix} \varepsilon_k\mathcal{L}_{1,1}^k & \varepsilon_k\mathcal{L}_{1,2}^k\\
\varepsilon_k\mathcal{L}_{2,1}^k & \varepsilon_k\mathcal{L}_{2,2}^k\end{pmatrix},
\end{align*}
$k\in\{1,2\}$.

To solve this system for $\delta g^{31}$, $\delta g^{32}$ in terms of $\delta\phi$, $\partial_3 \delta\phi$, we need to invert $I-\mathcal{K}.$ It will suffice to show that the operators $\varepsilon_k\mathcal{K}_{i,j}^k$, $i,j,k=1,2$, have small norms as operators $L^2(D(r))\to L^2(D(r))$. We show the necessary smallness requirement in each case of Theorem \ref{global_metric_thm}. Further, for $i,j=1,2$ and $k\in\{1,2\}$, we will derive $H^1(D(r))$ estimates for $\varepsilon_k\mathcal{K}_{i,j}^k$ and $\varepsilon_k\mathcal{L}_{i,1}^k$, as well as $H^2(D(r))$ estimates for $\varepsilon_k\mathcal{L}_{i,2}^k$. The estimates for $\varepsilon_k\mathcal{K}_{i,j}^k$ we use to invert the system (\ref{matrixdg}), and also together with the estimates for $\varepsilon_k\mathcal{L}_{i,2}^k$ we prove (\ref{p-est})--(\ref{dq-est}).

Consider $\varepsilon_k\mathcal{K}_{i,j}^k$, and let $f\in L^2(D(r))$. Then, 
\begin{align*}
|| \varepsilon_k\mathcal{K}_{i,j}^k(f)||_{L^2} &\le 
||\varepsilon_k||_{L^\infty}|| \mathcal{K}_{i,j}^k(f)||_{L^2}
\end{align*}
and
\begin{align*}
|| \partial(\varepsilon_k\mathcal{K}_{i,j}^k)(f)||_{L^2}
&\le ||\varepsilon_k||_{L^\infty}|| \partial\mathcal{K}_{i,j}^k(f)||_{L^2}+ ||\partial\varepsilon_k||_{L^\infty}||\mathcal{K}_{i,j}^k(f)||_{L^2}.
\end{align*}
Note 
\begin{align}
|\varepsilon_k(p)|&:=|\partial_k||\nabla x^3||_{g_1}(p)|= |\partial_k\sqrt{g_1^{33}}(p)| \le C||\nabla g^{33}_1||_{L^\infty},\label{lapse-est-1}\\
|\partial \varepsilon_k(p)|& \le C||\nabla\nabla g^{33}_1||_{L^\infty},\label{lapse-est-2}
\end{align}
for some constant universal constant $C$ independent.
Hence,
\begin{align}
|| \varepsilon_k\mathcal{K}_{i,j}^k(f)||_{L^2} &\le C||\nabla g^{33}_1||_{L^\infty}||\mathcal{K}_{i,j}^k(f)||_{L^2}\label{K-est-l2}\\
|| \partial(\varepsilon_k\mathcal{K}_{i,j}^k)(f)||_{L^2}&\le C\left(||\nabla g^{33}_1||_{L^\infty}|| \partial\mathcal{K}_{i,j}^k(f)||_{L^2}+ ||\nabla\nabla g^{33}_1||_{L^\infty}|| \mathcal{K}_{i,j}^k(f)||_{L^2}\right),\label{K-est-h1}\\
|| \partial\partial(\varepsilon_k\mathcal{K}_{i,j}^k)(f)||_{L^2}&\le C\left(||\nabla g^{33}_1||_{L^\infty}|| \partial\partial\mathcal{K}_{i,j}^k(f)||_{L^2}+ 2||\nabla\nabla g^{33}_1||_{L^\infty}|| \partial\mathcal{K}_{i,j}^k(f)||_{L^2}\right.\notag\\
&\qquad\left. +||\nabla\nabla\nabla g^{33}_1||_{L^\infty}|| \mathcal{K}_{i,j}^k(f)||_{L^2}\right).\label{K-est-h2}
\end{align}
By analogous computations, we obtain estimates for the operators $\mathcal{L}_{i,j}$, $i,j=1,2$ (for some universal constant $C$): 
 \begin{align}
||\varepsilon_k\mathcal{L}_{i,j}^k(f)||_{L^2}&\le C||\nabla g^{33}_1||_{L^\infty}||\mathcal{L}_{i,j}^k(f)||_{L^2},\label{L-est-l2}\\
||\partial(\varepsilon_k\mathcal{L}_{i,j}^k)(f)||_{L^2}&\le C\left(||\nabla g^{33}_1||_{L^\infty}|| \partial\mathcal{L}_{i,j}^k(f)||_{L^2}+ |||\nabla\nabla g^{33}_1||_{L^\infty}|| \mathcal{L}_{i,j}^k(f)||_{L^2}\right).\label{L-est-h1}\\
|| \partial\partial(\varepsilon_k\mathcal{L}_{i,j}^k)(f)||_{L^2}&\le C\left(||\nabla g^{33}_1||_{L^\infty}|| \partial\partial\mathcal{L}_{i,j}^k(f)||_{L^2}+ 2||\nabla\nabla g^{33}_1||_{L^\infty}|| \partial\mathcal{L}_{i,j}^k(f)||_{L^2}\right.\notag\\
&\qquad\left. +||\nabla\nabla\nabla g^{33}_1||_{L^\infty}|| \mathcal{L}_{i,j}^k(f)||_{L^2}\right).\label{L-est-h2}
\end{align}

Now we consider the first case of Theorem \ref{global_metric_thm}. Recall in this case, we are assuming that the metric $g_1$ and $g_2$ are $C^3$-close to Euclidean. 
From (\ref{k-bounds-1-3e}) and (\ref{K-est-l2})--(\ref{K-est-h1}), the operators $\varepsilon_k\mathcal{K}_{i,j}: L^2\to L^2$ have norm controlled by $\epsilon_0$, which is small. Thus $I-\mathcal{K}: L^2\to L^2$ is invertible. 

In the second case of Theorem \ref{global_metric_thm}, we are assuming that the metric $g_1$ and $g_2$ are $(K,\epsilon_0,\delta_0)$-thin for some $K>0$ and sufficiently small $\delta_0,\epsilon_0>0$. Using (\ref{k-bounds-1-thin}) and (\ref{K-est-l2})--(\ref{K-est-h1}) together with the bounds for the operators $\varepsilon_k\mathcal{K}_{i,j}: L^2\to L^2$, obey 
\begin{align}
|| \varepsilon_k\mathcal{K}_{i,j}^k(f)||_{L^2} &\le C||\nabla g^{33}_1||_{L^\infty}\epsilon_0^{-2}r^3||(f)||_{L^2}\\
&\le C||\nabla g^{33}_1||_{L^\infty}\epsilon_0^{-2}r^3||(f)||_{L^2}.
\end{align}
Since $0<r<\epsilon_0$, we derive that $$||\nabla g^{33}_1||_{L^\infty}
\epsilon^{-2}_0r^3\le K \epsilon_0,$$ which is sufficiently small; this implies that $I-\mathcal{K}: L^2\to L^2$ is invertible. 
 
Therefore, in both cases the system (\ref{matrixdg}) is solvable in terms of $\delta\phi$ and $\partial_3\delta\phi$:
\begin{align*}
\delta g^{31} &= P^1_{-1}(\delta\phi)+Q^1_{-2}(\partial_3\delta\phi),\\
\delta g^{32} &= P^2_{-1}(\delta\phi)+Q^2_{-2}(\partial_3\delta\phi),
\end{align*}
where $P^k_{-1}$, $Q^k_{-2}$, $k=1,2$ are respectively order $-1$ and $-2$ pseudodifferential operators in the tangential directions $\partial_k$, $k=1,2$, given by the compositions
\begin{align*}
\begin{pmatrix} P^1_{-1}(\delta\phi)\\ P^2_{-1}(\delta\phi)\end{pmatrix}&:=(I-\mathcal{K})^{-1}\mathcal{L}\begin{pmatrix}\delta\phi\\0\end{pmatrix},\\
\begin{pmatrix} Q^1_{-2}(\partial_3\delta\phi)\\ Q^2_{-2}(\partial_3\delta\phi)\end{pmatrix}&:=(I-\mathcal{K})^{-1}\mathcal{L}\begin{pmatrix}0\\\partial_3\delta\phi\end{pmatrix}.
\end{align*}


The estimates (\ref{K-est-l2})--(\ref{L-est-h1}) together with (\ref{k-bounds-1-3e})--(\ref{l-bounds-2-3e}) or respectively (\ref{k-bounds-1-thin})--(\ref{l-bounds-2-thin}) give the claimed inequalities (\ref{p-est}) and (\ref{q-est}). Indeed, under the $\epsilon_0$-$C^3$-close to Euclidean assumption, from (\ref{L-est-l2}), (\ref{k-bounds-1-3e})--(\ref{l-bounds-2-3e}), and the $L^2$ bound on $(I-\mathcal{K})^{-1}$, we have 
\begin{align*}
|| P^k_{-1}(\delta\phi)||_{L^2}&\le C\epsilon_0||\nabla g^{33}_1||_{L^\infty}||\delta\phi||_{L^2}\\
|| Q^k_{-2}(\partial_3\delta\phi)||_{L^2}&\le C\epsilon_0||\nabla g^{33}_1||_{L^\infty}||\partial_3\delta\phi||_{L^2}.
\end{align*}
By (\ref{L-est-l2}) and (\ref{L-est-h1}), the operator $\partial P^k_{-1}:L^2\to L^2$, $k=1,2$, satisfies
\begin{align*}
||\partial P^k_{-1}(\delta\phi)||_{L^2}&\le ||(I-\mathcal{K})^{-1}||_{L^2\to L^2}\left[||\partial(\varepsilon_j\mathcal{L}_{2,1}^j)(\delta\phi)||_{L^2}+||\partial(\varepsilon_j\mathcal{L}_{1,1}^j)(\delta\phi)||_{L^2}\right]\\
&\qquad+||\partial(I-\mathcal{K})^{-1}||_{L^2\to L^2}\left[||\varepsilon_j\mathcal{L}_{2,1}^j(\delta\phi)||_{L^2}+||\varepsilon_j\mathcal{L}_{1,1}^j(\delta\phi)||_{L^2}\right]\\
%
&\le C\epsilon_0\left(||\nabla\nabla g^{33}_1||_{L^\infty} + ||\nabla g^{33}_1||_{L^\infty}\right)||\delta\phi||_{L^2}\\
& \qquad+ C\epsilon_0\left(||\nabla\nabla g^{33}_1||_{L^\infty} + ||\nabla g^{33}_1||_{L^\infty}\right)||\nabla g^{33}_1||_{L^\infty}||\delta\phi||_{L^2},\\
&\le C\epsilon_0^2||\delta\phi||_{L^2},
\end{align*}
since 
\begin{align}
\partial(I-\mathcal{K})^{-1} &= (I-\mathcal{K})^{-1}\partial \mathcal{K}(I-\mathcal{K})^{-1};\label{KK-invert}
\end{align}
which is bounded from $L^2\to L^2$ since $(I-\mathcal{K})^{-1}$ and $\partial\mathcal{K}$ are bounded from $L^2\to L^2$.

Similarly, inequalities (\ref{L-est-l2}) and (\ref{L-est-h1}) imply for $\partial Q^k_{-2}:L^2\to L^2$, $k=1,2$, 
\begin{align*}
|| \partial Q^k_{-2}(\partial_3\delta\phi)||_{L^2}&\le||(I-\mathcal{K})^{-1}||_{L^2\to L^2}\left[||\partial(\varepsilon_j\mathcal{L}_{1,2}^j)(\partial_3\delta\phi)||_{L^2}+||\partial(\varepsilon_j\mathcal{L}_{2,2}^j)(\partial_3\delta\phi)||_{L^2}\right]\\
&\qquad+||\partial(I-\mathcal{K})^{-1}||_{L^2\to L^2}\left[||\varepsilon_j\mathcal{L}_{1,2}^j(\partial_3\delta\phi)||_{L^2}+||\varepsilon_j\mathcal{L}_{2,2}^j(\partial_3\delta\phi)||_{L^2}\right]\\
&\le C\epsilon_0\left(||\nabla\nabla g^{33}_1||_{L^\infty} + ||\nabla g^{33}_1||_{L^\infty}\right)||\delta\phi||_{L^2}\\
& \qquad+ C\epsilon_0\left(||\nabla\nabla g^{33}_1||_{L^\infty} + ||\nabla g^{33}_1||_{L^\infty}\right)||\nabla g^{33}_1||_{L^\infty}||\delta\phi||_{L^2},\\
&\le C\epsilon_0^2||\delta\phi||_{L^2},
\end{align*}
Further,
\begin{align}
||\partial\partial Q^k_{-2}(\partial_3\delta\phi)||_{L^2}&\le ||(I-\mathcal{K})^{-1}||_{L^2\to L^2}\left[||\partial\partial(\varepsilon_j\mathcal{L}_{1,2}^j)(\partial_3\delta\phi)||_{L^2}+||\partial\partial(\varepsilon_j\mathcal{L}_{2,2}^j)(\partial_3\delta\phi)||_{L^2}\right]\notag\\
&\quad+2||\partial(I-\mathcal{K})^{-1}||_{L^2\to L^2}\left[||\partial(\varepsilon_j\mathcal{L}_{1,2}^j)(\partial_3\delta\phi)||_{L^2}+||\partial(\varepsilon_j\mathcal{L}_{2,2}^j)(\partial_3\delta\phi)||_{L^2}\right]\notag\\
&\quad+||\partial\partial(I-\mathcal{K})^{-1}||_{H^2\to L^2}\left[||\varepsilon_j\mathcal{L}_{1,2}^j(\partial_3\delta\phi)||_{H^2}+||\varepsilon_j\mathcal{L}_{2,2}^j(\partial_3\delta\phi)||_{H^2}\right]\label{ddQ}
\end{align}
is an estimate for $\partial\partial Q^k_{-2}$, $k=1,2$. We claim that when the metrics are of Class 1,
\begin{align*}
|| \partial\partial Q^k_{-2}(\partial_3\delta\phi)||_{L^2}&\le C\epsilon_0||\partial_3\delta\phi||_{L^2}
\end{align*}
for a universal constant $C$.

To see this, recall $\mathcal{K}$ is comprised of the operators $\varepsilon_k\mathcal{K}_{i,j}^k$, which obey estimates (\ref{K-est-l2})--(\ref{K-est-h2}). This together with
\begin{align}
\partial\partial(I-\mathcal{K})^{-1} &= (I-\mathcal{K})^{-1}\partial\partial \mathcal{K}(I-\mathcal{K})^{-1}-2(I-\mathcal{K})^{-1}\partial \mathcal{K}(I-\mathcal{K})^{-1}\partial \mathcal{K}(I-\mathcal{K})^{-1}
\end{align}
 implies $\partial\partial(I-\mathcal{K})^{-1}:H^2\to L^2$ is bounded. From the argument below (\ref{KK-invert}), we know $(I-\mathcal{K})^{-1},\partial(I-\mathcal{K})^{-1}:L^2\to L^2$ are uniformly bounded. Then (\ref{K-est-l2})--(\ref{L-est-h2}) proves the claim.

This proves the inequalities (\ref{p-est1}), (\ref{q-est1}), and (\ref{dq-est1}). 
\newline

Now we show the inequalities (\ref{p-est})--(\ref{dq-est}) for the second case of Theorem \ref{global_metric_thm}.
From (\ref{L-est-l2}) together with (\ref{k-bounds-1-thin})--(\ref{l-bounds-2-thin}), plus the bound on $(I-\mathcal{K})^{-1}$ we have 
\begin{align*}
|| P^k_{-1}(\delta\phi)||_{L^2}&\le C||(I-\mathcal{K})^{-1}||_{L^2\to L^2}||\nabla g^{33}_1||_{L^\infty}\epsilon_0^{-2}r^3||\delta\phi||_{L^2}\\
&\le CK \epsilon_0^{-2}r^3 ||\delta\phi||_{L^2}\\
|| Q^k_{-2}(\partial_3\delta\phi)||_{L^2} &\le C||(I-\mathcal{K})^{-1}||_{L^2\to L^2}||\nabla g^{33}_1||_{L^\infty}\epsilon_0^{-1}r^3||\partial_3\delta\phi||_{L^2}\\
&\le CK\epsilon_0^{-1}r^3||\partial_3\delta\phi||_{L^2},
\end{align*}
where $C$ is a universal constant. By the same computation as above, using (\ref{K-est-l2})--(\ref{K-est-h1}), (\ref{L-est-l2})--(\ref{L-est-h1}), and (\ref{k-bounds-1-thin})--(\ref{l-bounds-2-thin}), $\partial P^k_{-1}:L^2\to L^2$, $k=1,2$, satisfies
\begin{align*}
||\partial P^k_{-1}(\delta\phi)||_{L^2}&\le ||(I-\mathcal{K})^{-1}||_{L^2\to L^2}[||\partial(\varepsilon_j\mathcal{L}_{2,1}^j)(\delta\phi)||_{L^2}+||\partial(\varepsilon_j\mathcal{L}_{1,1}^j)(\delta\phi)||_{L^2}]\\
&\qquad+||\partial(I-\mathcal{K})^{-1}||_{L^2\to L^2}[||\varepsilon_j\mathcal{L}_{2,1}^j(\delta\phi)||_{L^2}+||\varepsilon_j\mathcal{L}_{1,1}^j(\delta\phi)||_{L^2}]\\
&\le C\left(||\nabla\nabla g^{33}_1||_{L^\infty}\epsilon_0^{-2}r^3 + ||\nabla g^{33}_1||_{L^\infty}\right)||\delta\phi||_{L^2}\\
& \qquad+ C\left(||\nabla\nabla g^{33}_1||_{L^\infty}\epsilon_0^{-2}r^3 + ||\nabla g^{33}_1||_{L^\infty}\right)||\nabla g^{33}_1||_{L^\infty} \epsilon_0^{-2}r^3||\delta\phi||_{L^2},
%
%
\end{align*}
where $C$ is a universal constant and having used $0<r<\epsilon_0$ and the estimates for $||\nabla^kg^{33}_1||_{L^\infty}$ in Defintion \ref{thin-defn}. This shows (\ref{p-est}) in the $(K,\delta_0,\epsilon_0)$-thin case of Theorem \ref{global_metric_thm}.
 
Now, to prove (\ref{q-est}), we calculate as before
 \begin{align*}
|| \partial Q^k_{-2}(\partial_3\delta\phi)||_{L^2}&\le||(I-\mathcal{K})^{-1}||_{L^2\to L^2}[||\partial(\varepsilon_j\mathcal{L}_{1,2}^j)(\partial_3\delta\phi)||_{L^2}+||\partial(\varepsilon_j\mathcal{L}_{2,2}^j)(\partial_3\delta\phi)||_{L^2}]\\
&\qquad+||\partial(I-\mathcal{K})^{-1}||_{L^2\to L^2}[||\varepsilon_j\mathcal{L}_{1,2}^j(\partial_3\delta\phi)||_{L^2}+||\varepsilon_j\mathcal{L}_{2,2}^j(\partial_3\delta\phi)||_{L^2}],\\
&\le C\left(||\nabla\nabla g^{33}_1||_{L^\infty}\epsilon_0^{-1}r^3 + ||\nabla g^{33}_1||_{L^\infty}\epsilon_0^{-1}r^2\right)||\delta\phi||_{L^2}\\
& \qquad+ C\left(||\nabla\nabla g^{33}_1||_{L^\infty}\epsilon_0^{-2}r^3 + ||\nabla g^{33}_1||_{L^\infty}\right)||\nabla g^{33}_1||_{L^\infty} \epsilon_0^{-1}r^3||\delta\phi||_{L^2},
\end{align*}
where we used estimates (\ref{K-est-l2})--(\ref{K-est-h1}), (\ref{L-est-l2})--(\ref{L-est-h1}), and (\ref{k-bounds-1-thin})--(\ref{l-bounds-2-thin}). 

Additionally, to show (\ref{dq-est}), we use estimates (\ref{K-est-l2})--(\ref{K-est-h1}), (\ref{L-est-l2})--(\ref{L-est-h1}), and (\ref{k-bounds-1-thin})--(\ref{l-bounds-2-thin}) in (\ref{ddQ}) to obtain
\begin{align*}
|| \partial\partial Q^1_{-2}(\partial_3\delta\phi)||_{L^2}&\le C\left[||\nabla g^{33}_1||_{L^\infty} + 2||\nabla\nabla g^{33}_1||_{L^\infty}r^2\epsilon_0^{-1}+ ||\nabla\nabla\nabla g^{33}_1||_{L^\infty}r^3\epsilon_0^{-1}\right.\\
&\quad +2\left(||\nabla\nabla g^{33}_1||_{L^\infty}r^3\epsilon_0^{-2} + ||\nabla g^{33}_1||_{L^\infty}\right)\left( ||\nabla g^{33}_1||_{L^\infty}r^2\epsilon_0^{-1}+||\nabla\nabla g^{33}_1||_{L^\infty}r^3\epsilon_0^{-1}\right)\\
&\quad + \left(||\nabla g^{33}_1||_{L^\infty}r^3\epsilon_0^{-2} + 2||\nabla\nabla g^{33}_1||_{L^\infty}r^3\epsilon_0^{-1}+ ||\nabla\nabla\nabla g^{33}_1||_{L^\infty}r^3\right)\left(||\nabla g^{33}_1||_{L^\infty} \right.\\
&\qquad\left.+ 2||\nabla\nabla g^{33}_1||_{L^\infty}r^2\epsilon_0^{-1}+ ||\nabla\nabla\nabla g^{33}_1||_{L^\infty}r^3\epsilon_0^{-1}\right)\\
&\quad + \left(||\nabla g^{33}_1||_{L^\infty}r^3\epsilon_0^{-1} + ||\nabla\nabla g^{33}_1||_{L^\infty}r^3\epsilon_0^{-2}\right)^2\left(||\nabla g^{33}_1||_{L^\infty} \right.\\
&\qquad\left.\left.+ 2||\nabla\nabla g^{33}_1||_{L^\infty}r^2\epsilon_0^{-1}+ ||\nabla\nabla\nabla g^{33}_1||_{L^\infty}r^3\epsilon_0^{-1}\right)\right]||\partial_3\delta\phi||_{L^2}.
\end{align*}
\newline

Summarizing in the second case of Theorem \ref{global_metric_thm}, from the bounds for $||\nabla^kg^{33}_1||_{L^\infty}$ in the definition of $(K,\delta_0,\epsilon_0)$-thin, and since $0<r< \epsilon_0$, we have for some universal constant $C$
\begin{align}
||\partial P^k_{-1}(\delta\phi)||_{L^2} &\le CK||\delta\phi||_{L^2},\notag\\
||\partial Q^k_{-2}(\delta\phi)||_{L^2} &\le CK\epsilon_0||\partial_3\delta\phi||_{L^2} \label{Q-est-1}\\
||\partial\partial Q^k_{-2}(\delta\phi)||_{L^2} &\le CK||\partial_3\delta\phi||_{L^2}
%
\end{align}
which completes the proof.

\end{proof}

\subsubsection{Reduction to uniqueness for $\delta\phi$}
From Proposition \ref{psido_lem}, we readily obtain the following uniqueness result:

\begin{lemma}
If $\delta\phi\equiv0$ on $M$, then 
\begin{align*}
g^{11}_1&=g^{11}_2,\quad g^{22}_1=g^{22}_2,\quad g^{31}_1=g^{31}_2,\\
g^{32}_1 &= g^{32}_2,\quad g^{12}_1= g^{12}_2,\quad g^{21}_1= g^{21}_2,
\end{align*}
on $M$.
\end{lemma}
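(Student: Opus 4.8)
The plan is to unwind the normalized gauge and show that once $\delta\phi\equiv 0$, every remaining inverse-metric component coincides. First I would recall the structure of the metric in the $(x^\alpha)$-coordinates built in Section \ref{AFcoords}: in these coordinates $g_k = (g_k)_{3\alpha}dx^3dx^\alpha + e^{2\phi_k(t)}[(dx^1)^2+(dx^2)^2]$, so the restriction of each $g_k$ to a leaf $Y_k(t)$ is conformally flat with conformal factor $e^{2\phi_k}$. Since the induced metric on the leaf has the form $e^{2\phi_k}g_{\EE}$, the tangential components of the \emph{inverse} metric are $g^{11}_k = g^{22}_k = e^{-2\phi_k}$ and $g^{12}_k = g^{21}_k = 0$ (this is just the inversion formula for the block metric, as recorded in the Appendix and used in Proposition \ref{phi_lem}). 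Thus $\delta\phi\equiv 0$ immediately forces $g^{11}_1 = e^{-2\phi_1} = e^{-2\phi_2} = g^{11}_2$, and likewise $g^{22}_1 = g^{22}_2$ and $g^{12}_1 = g^{12}_2 = 0 = g^{21}_1 = g^{21}_2$.

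Next I would handle $g^{31}$ and $g^{32}$. By Lemma \ref{g33-equiv} we already know $g^{33}_1 = g^{33}_2$ on all of $\MM$, hence $\partial_3 g^{33}_1 = \partial_3 g^{33}_2$ and in particular the lapse-gradient quantities $\varepsilon_k = \partial_k\|\nabla x^3\|_{g_1}$ are common to both metrics. Then I would invoke Proposition \ref{psido_lem}, which expresses
\begin{align*}
\delta g^{31}(p) &= P^1_{-1}(\delta\phi,p) + Q^1_{-2}(\partial_3\delta\phi,p),\\
\delta g^{32}(p) &= P^2_{-1}(\delta\phi,p) + Q^2_{-2}(\partial_3\delta\phi,p).
\end{align*}
Since $\delta\phi\equiv 0$ on $M$, we also have $\partial_3\delta\phi\equiv 0$ on $M$ (the derivative of the identically-zero function), so both pseudodifferential operators annihilate their arguments and $\delta g^{31}=\delta g^{32}=0$ on $M$. (On $\MM\setminus M\cup\partial M$ these differences already vanish by the diffeomorphism $F$ being the identity there, consistent with this.) This gives $g^{31}_1 = g^{31}_2$ and $g^{32}_1 = g^{32}_2$.

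The only genuinely routine obstacle here is bookkeeping: one must be careful that the block-inversion formula is applied correctly, i.e.\ that the tangential inverse components of $g_k$ are exactly $e^{-2\phi_k}\delta^{ij}$ and not merely conformal to it — this is true because $(g_k)_{i3}$ contributes to $g^{ij}_k$ only through terms involving $g^{33}_k$ and $(g_k)_{i3}$, and when one writes $g^{ij}_k = (g_k|_{Y(t)})^{ij}$ via the Schur complement one finds the corrections cancel in the standard way (as is implicit in the relations $g_{31} = -\tfrac{g^{31}}{g^{33}}e^{2\phi}$ used in Proposition \ref{phi_lem}). Since $g^{11}, g^{22}, g^{12}, g^{21}, g^{31}, g^{32}$ are now all matched, together with $g^{33}_1 = g^{33}_2$ from Lemma \ref{g33-equiv} this accounts for all six independent inverse-metric components, and the Lemma follows. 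In the actual proof I would simply cite the Appendix for the inversion identities and Proposition \ref{psido_lem} for the $g^{3k}$ vanishing, so the argument is a short paragraph.
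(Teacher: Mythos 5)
Your proposed argument contains a genuine error at the very first step. You assert that in the gauge $g_k=(g_k)_{3\alpha}dx^3dx^\alpha+e^{2\phi_k}[(dx^1)^2+(dx^2)^2]$ the tangential components of the \emph{inverse} metric are $g^{11}_k=g^{22}_k=e^{-2\phi_k}$ and $g^{12}_k=g^{21}_k=0$, and you invoke the Appendix as justification. But the Appendix records precisely the opposite: a block inversion gives (up to the paper's sign convention)
\begin{align*}
g^{12}=g^{21}=\frac{g^{31}g^{32}}{g^{33}},\qquad g^{11}=e^{-2\phi}+\text{(terms in }g^{31},g^{32},g^{33}),
\end{align*}
so the tangential inverse components are \emph{not} pure conformal factors; they carry corrections through $g^{3k}$ and $g^{33}$. (Your $g^{ij}=e^{-2\phi}\delta^{ij}$ would hold only if $\partial_1,\partial_2$ were $g_k$-orthogonal to $\partial_3$, i.e.\ if $g^{3k}_k\equiv 0$, which is not assumed.) Your ``bookkeeping'' remark that these corrections ``cancel in the standard way'' is incorrect — the Schur complement of the $3\times3$ block does not make the off-diagonal terms drop out; they remain and are controlled only after one already knows $g^{31},g^{32},g^{33}$. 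Thus as written the first paragraph of your argument does not establish $g^{11}_1=g^{11}_2$ etc.\ from $\delta\phi=0$ alone.

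The fix is simply to reorder. The paper's own proof goes: $\delta g^{33}=0$ from Lemma~\ref{g33-equiv}; $\delta g^{31}=\delta g^{32}=0$ by substituting $\delta\phi\equiv 0$ (hence $\partial_3\delta\phi\equiv 0$) into the pseudodifferential expressions of Proposition~\ref{psido_lem}, exactly as in your second paragraph; and only \emph{then}, with $\phi$, $g^{31}$, $g^{32}$, $g^{33}$ all matched, does one invoke the Appendix's algebraic identities to conclude $g^{11}_1=g^{11}_2$, $g^{12}_1=g^{12}_2$, etc. Since you do correctly derive $\delta g^{3k}=0$ in your second paragraph, all the needed ingredients appear in your proposal — but the claimed independence of $g^{ij}$ from $g^{3k}$ is false, and without reordering the logic the argument does not close.
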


\begin{proof}
From Lemma \ref{g33-equiv}, $\delta g^{33}:=g^{33}_1-g^{33}_2=0$ on $M$.

Substituting $\delta\phi\equiv0$ into the pseudodifferential expressions (\ref{psido01}) and (\ref{psido02}), gives $\delta g^{31}=0$ and $\delta g^{32}=0$ on $M$. From simple algebraic equations for the other metric components, $\delta\phi=\delta g^{31}=\delta g^{32}=\delta g^{33}=0$ implies $g_1^{\alpha \beta}=g_2^{\alpha \beta}$, on $M$ for ${\alpha, \beta}=1,2,3$.

\end{proof}



In light of the above Lemma, to conclude $g_1=g_2$ in our chosen coordinates, it only remains to prove $\delta\phi\equiv0$ on $M$. Below we show that from the pseudodifferential expressions (\ref{psido01}) and (\ref{psido02}), the equation (\ref{meancurve3}) for $\delta\phi$ may be expressed as a hyperbolic Cauchy problem for $\delta\phi$ with initial data $\delta\phi=0$ on $Y(0)$. Then, using a standard energy estimate we prove $\delta\phi=0$ on $M$ as desired.

\subsubsection{A hyperbolic Cauchy problem for $\delta\phi$:}
Substituting the expressions (\ref{psido01}), (\ref{psido02}) into equation (\ref{meancurve3}) gives us the following evolution equation for $\delta\phi$ on $M$:
\begin{align}
0&= g^{33}_1\partial_3\delta\phi + g^{31}_1\partial_1(\delta\phi)+g^{32}_1\partial_2(\delta\phi)+\left(\partial_k\phi_2-\frac{1}{2}\partial_k\log(g^{33}_1)\right) P^k_{-1}(\delta\phi)\notag\\
&\qquad+\left(\partial_k\phi_2-\frac{1}{2}\partial_k\log(g^{33}_1)\right) Q^k_{-2}(\partial_3\delta\phi)+\frac{1}{2}\partial_k P^k_{-1}(\delta\phi)+\frac{1}{2}\partial_k Q^k_{-2}(\partial_3\delta\phi).\label{phiEQalmost}
\end{align}
Now since $P^k_{-1},Q^k_{-2}$ are pseudodifferential operators of order $-1$ and $-2$ respectively, $\partial_kP^k_{-1}(\delta\phi)$, $\partial_kQ^k_{-2}(\partial_3\delta\phi)$ are respectively order $0$  and $-1$ pseudodifferential operators in the tangential directions $\partial_k$.
So, equation (\ref{phiEQalmost}) takes the form
\begin{align}
(I-Q_0)\partial_3\delta\phi + Q_1(\delta\phi)&=0,\label{psido_c1}
\end{align}
where 
\begin{align}
Q_1 &= \frac{g^{31}_1}{g^{33}_1}\partial_1(\delta\phi)+\frac{g^{32}_1}{g^{33}_1}\partial_2+\frac{1}{g^{33}_1}\left(\partial_k\phi_2-\frac{1}{2}\partial_k\log(g^{33}_1)\right) P^k_{-1} +\frac{1}{2g^{33}_1}\partial_k P^k_{-1}
\end{align} 
is an order $1$ pseudodifferential operator and 
\begin{align}
Q_0 &= \frac{1}{g^{33}_1}\left(\partial_k\phi_2-\frac{1}{2}\partial_k\log(g^{33}_1)\right) Q^k_{-2} +\frac{1}{2g^{33}_1}\partial_k Q^k_{-2}
\end{align} 
is a pseudodifferential operator of order $0$, both of which act only in the tangential directions $\partial_1$ and $\partial_2$. Now, by Lemma \ref{conf.bounds} and Sobolev embedding, $\phi_2$ and $\partial_k\phi_2$ are uniformly bounded in $L^\infty(D(r))$. This together with  the bounds given in Lemma \ref{psido_lem} shows that $Q_1:L^2(D(r))\to L^2(D(r))$ is uniformly bounded. We now argue that $I-Q_0:L^2(D(r))\to L^2(D(r))$ is invertible.

From (\ref{dq-est}) in Proposition \ref{psido_lem},
\begin{align*}
|| \partial Q^k_{-2}(\partial_3\delta\phi)||_{L^2(D(r))}&\le CK\epsilon_0||\partial_3\delta\phi||_{L^2(D(r))},
\end{align*}
for $k=1,2$. Therefore, 
\begin{align*}
||(I-Q_0)\partial_3\delta\phi||_{L^2(D(r))}&\ge ||[I-CK\epsilon_0]\partial_3\delta\phi||_{L^2(D(r))}.
\end{align*}
Since $\epsilon_0>0$ is small, $(I-Q_0)$ is invertible.

Inverting $(I-Q_0)$, we derive a hyperbolic Cauchy problem for $\delta\phi$ of the form
\begin{align}
\partial_t\delta\phi + \tilde{Q}_1(\delta\phi)&=0 \quad\text{ on }M\label{hyperbolicEQ}\\
\delta\phi(t)&=0 \quad\text{ on }\partial M,\nonumber\\
\lim_{t\to-1}||\delta\phi(t)||_{H^1(D(r))}&=0 \nonumber
\end{align}
where $\tilde{Q}_1=(I-Q_0)^{-1}Q_1$ is an order 1 pseudodifferential operator in the tangential directions. Recall $x^3=t$.

\begin{lemma}
For $\delta\phi\in C^3(M)$, the Cauchy problem (\ref{hyperbolicEQ}) has a unique solution $\delta\phi\equiv0$.
\end{lemma}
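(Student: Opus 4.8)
The plan is to prove uniqueness for the first-order hyperbolic Cauchy problem \eqref{hyperbolicEQ} by a standard energy estimate, integrating ``downwards'' in $t$ from the top of the foliation (or, by time-reversal symmetry of the argument, from the slice $t=0$ where $\delta\phi$ vanishes) and using that $\tilde Q_1$ is a tangential pseudodifferential operator of order $1$ which is bounded as an operator $H^1(D(r(t)))\to L^2(D(r(t)))$ uniformly in $t$. Concretely, I would set $E(t):=\frac12\|\delta\phi(t)\|_{H^1(D(r(t)))}^2$ (or equivalently work with the $L^2$ norm of $\delta\phi$ together with its tangential derivatives $\partial_1\delta\phi,\partial_2\delta\phi$), differentiate in $t$, and use \eqref{hyperbolicEQ} to replace $\partial_t\delta\phi$ by $-\tilde Q_1(\delta\phi)$.

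The key steps, in order, would be: (1) Record the boundedness of $\tilde Q_1=(I-Q_0)^{-1}Q_1$ on the relevant spaces: $Q_1:H^1(D(r))\to L^2(D(r))$ is bounded uniformly in $t$ by Lemma \ref{psido_lem} (the estimates \eqref{p-est}, \eqref{dq-est}) together with the uniform $L^\infty$ bounds on $\phi_2$, $\partial_k\phi_2$ and on $g^{33}_1$, $\partial_k g^{33}_1$, $g^{3k}_1/g^{33}_1$ coming from Lemma \ref{conf.bounds}, Definition \ref{thin-defn}/Class 1, and the pinching $\tfrac12<\|g^{33}\|_{L^\infty}<2$; invertibility of $I-Q_0$ on $L^2$ was already established above \eqref{hyperbolicEQ}. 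One also needs commutator control: to close the energy estimate at the $H^1$ level I would commute one tangential derivative $\partial_j$ ($j=1,2$) through the equation, using that $[\partial_j,\tilde Q_1]$ is again a tangential $\Psi$DO of order $1$ with uniformly bounded coefficients (this uses the derivative estimates on the kernels in Proposition \ref{psido_lem}, e.g. \eqref{dq-est}, \eqref{Q-est-1}). (2) Differentiate $E(t)$: because the domains $D(r(t))$ vary with $t$, there is a boundary term from the moving boundary $\partial D(r(t))$, but this is harmless because $\delta\phi(t)\equiv0$ on $\partial M$, hence on $\partial D(r(t))$ — one should note this explicitly. (3) Estimate $\frac{d}{dt}E(t)\le C(t)E(t)$ with $C(t)$ integrable (indeed uniformly bounded, by Step (1)), using Cauchy–Schwarz: $|\langle\delta\phi,\partial_t\delta\phi\rangle_{H^1}|=|\langle\delta\phi,\tilde Q_1\delta\phi\rangle_{H^1}|\le \|\delta\phi\|_{H^1}\|\tilde Q_1\delta\phi\|_{H^1}\le C\|\delta\phi\|_{H^1}^2$ after the commutator manipulation. (4) Apply Grönwall with the initial condition $\lim_{t\to-1}\|\delta\phi(t)\|_{H^1(D(r))}=0$ (equivalently, $\delta\phi=0$ on the collapsing leaf), concluding $E(t)\equiv0$, hence $\delta\phi\equiv0$ on $M$.

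I would also remark that the regularity hypothesis $\delta\phi\in C^3(M)$ guarantees $t\mapsto\|\delta\phi(t)\|_{H^1}$ is $C^1$ and that all the integrations by parts and the commutator estimates are legitimate, so no separate existence/approximation argument is needed; and that the conclusion $\delta\phi\equiv0$ then feeds back through the preceding lemma to give $g_1^{\alpha\beta}=g_2^{\alpha\beta}$ on $M$, which is the assertion of Theorem \ref{global_metric_thm}.

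The main obstacle I anticipate is \emph{not} the Grönwall mechanics but the bookkeeping needed to make ``$\tilde Q_1$ is a bounded order-$1$ tangential $\Psi$DO, uniformly in $t$, with uniformly controlled first commutators'' genuinely rigorous: one must track that inverting $I-Q_0$ does not destroy the order and the uniformity (it does not, since $Q_0$ is order $0$ with small norm, so $(I-Q_0)^{-1}=\sum_{m\ge0}Q_0^m$ converges in operator norm uniformly in $t$, and it commutes with $\partial_j$ up to order-$0$ errors), and that the coefficient functions $g^{3k}_1/g^{33}_1$, $\partial_k\phi_2$, $\partial_k\log g^{33}_1$ appearing in $Q_1$ — and hence in the commutator $[\partial_j,\tilde Q_1]$ — are bounded with their first derivatives uniformly over all leaves. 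All of the needed bounds are available from Lemma \ref{conf.bounds}, Definition \ref{thin-defn}, and Proposition \ref{psido_lem}, but assembling them into a single clean uniform constant $C$ is where the real work lies; once that is in hand the energy argument is entirely routine.
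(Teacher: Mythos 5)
Your proposed Cauchy--Schwarz step is where the argument fails: to conclude
$|\langle\delta\phi,\tilde Q_1\delta\phi\rangle_{H^1}|\le C\|\delta\phi\|_{H^1}^2$
you need $\|\tilde Q_1\delta\phi\|_{H^1}\le C\|\delta\phi\|_{H^1}$, i.e.\ $H^1\!\to\!H^1$ boundedness of an \emph{order-$1$} operator, which is false and in fact contradicts your own step~(1), where you correctly record $Q_1:H^1\to L^2$. Moving one tangential derivative through the equation (i.e.\ writing $\partial_t(\partial_j\delta\phi)+\tilde Q_1(\partial_j\delta\phi)+[\partial_j,\tilde Q_1]\delta\phi=0$) does not close the circle either: the inner product $\langle\partial_j\delta\phi,\tilde Q_1\partial_j\delta\phi\rangle_{L^2}$ still costs one derivative of $\partial_j\delta\phi$, so the estimate loses a derivative at every stage and one never returns to $H^1$. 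The difficulty you flag (``assembling uniform constants'') is not the actual obstruction; what is missing from your plan is an algebraic cancellation in the top-order part.

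The ingredient that rescues the energy estimate --- and the one the paper actually uses --- is the symmetrization identity
\begin{align*}
\partial_t\|\delta\phi\|_{L^2}^2
= 2\,\mathrm{Re}\,\langle \partial_t\delta\phi,\delta\phi\rangle_{L^2}
= -\langle(\tilde Q_1+\tilde Q_1^*)\delta\phi,\delta\phi\rangle_{L^2}
\quad\text{(plus the vanishing boundary term),}
\end{align*}
together with the observation that $\tilde Q_1+\tilde Q_1^*$ is a tangential $\Psi$DO of order $0$, hence $L^2\!\to\!L^2$ bounded uniformly in $t$. This is because the principal part of $Q_1$ is the real vector field $\frac{g^{3k}_1}{g^{33}_1}\partial_k$, which is skew up to a zeroth-order multiplication operator, so the order-$1$ contributions cancel in $Q_1+Q_1^*$. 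The paper then expands
$\tilde Q_1+\tilde Q_1^*=(I-Q_0)^{-1}[Q_1+Q_1^*]-(I-Q_0)^{-1}Q_1^*+Q_1^*(I-Q_0^*)^{-1}$
and verifies each piece is $L^2$-bounded, using the identity $\partial_j(I-Q_0^*)^{-1}=(I-Q_0^*)^{-1}\,\partial_jQ_0^*\,(I-Q_0^*)^{-1}$ and the derivative estimates from Proposition~\ref{psido_lem} to absorb the loose derivative in $Q_1^*$. With that, the estimate already closes at the $L^2$ level and no commutator or $H^1$ bookkeeping is needed. Your observations about the moving boundary and the final Gr\"onwall step are correct and agree with the paper; you should replace the Cauchy--Schwarz step by the $\tilde Q_1+\tilde Q_1^*$ symmetrization and work in $L^2$.
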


\begin{proof}
We use a standard energy argument. First we show that $\tilde{Q}_1+\tilde{Q}_1^*:L^2(D(r))\to L^2(D(r))$ is bounded.
 Expanding,
\begin{align*}
\tilde{Q}_1+\tilde{Q}_1^* &= (I-Q_0)^{-1}Q_1+Q_1^*(I-Q_0^*)^{-1}\\
&= (I-Q_0)^{-1}[Q_1+Q_1^*] - (I-Q_0)^{-1}Q_1^*+Q_1^*(I-Q_0^*)^{-1}.
\end{align*}
Now, $Q_1= \frac{g^{31}_1}{g_1^{33}}\partial_1 + \frac{g^{32}_1}{g_1^{33}}\partial_2 + \text{l.o.t.}$, thus $Q_1+Q_1^*$ is a pseudodifferential operator of order 0 and is uniformly bounded from $L^2(D(r))\to L^2(D(r))$. Using Proposition \ref{psido_lem}, $(I-Q_0)^{-1}$ and $Q_1^*$  are uniformly bounded operators from $L^2(D(r))\to L^2(D(r))$, thus $(I-Q_0)^{-1}Q_1^*:L^2(D(r))\to L^2(D(r))$ is uniformly bounded.

It remains to show that $Q_1^*(I-Q_0^*)^{-1}:L^2(D(r))\to L^2(D(r))$ is uniformly bounded.

Since the leading order term in $Q_1^*$ is  a regular first order differential operator and
$$ \partial_j(I-Q_0^*)^{-1} = (I-Q_0^*)^{-1}\partial_j Q_0^* (I-Q_0^*)^{-1}, $$
it suffices to bound $\partial_j Q_0^*:L^2(D(r))\to L^2(D(r))$. By definition, $$\partial_j Q_0^* := \partial_j \left[\frac{1}{g^{33}_1}\left(\partial_k\phi_2-\frac{1}{2}\partial_k\log(g^{33}_1)\right) Q^k_{-2} +\frac{1}{2g^{33}_1}\partial_k Q^k_{-2}\right]^*,$$ where $j,k\in\{1,2\}$. By inspection of $\partial_j Q_0^*$ above, it suffices to uniformly bound $\partial_j Q^k_{-2}$ and $\partial_j\partial_k Q^k_{-2}$ from $L^2(D(r))\to L^2(D(r))$. These bounds are achieved by estimate $\ref{q-est}$ in Proposition \ref{psido_lem} for $Q^j_{-2}:L^2(D(r))\to H^2(D(r))$.

Now, let $(\delta\phi,\delta\phi):=||\delta\phi||_2^2:= \int_{D(r)} |\delta\phi(x)|^2\,d x^1dx^2$. Recall that $r=r(t)$. Differentiating with respect to $t$,
\begin{align*}
\partial_t||\delta\phi||_2^2 
&= \int_{\partial D(r(t))} |\delta\phi(x)|^2\,dS+ ([\tilde{Q}_1+\tilde{Q}_1^*](\delta\phi), \delta\phi)\\
&\le C(t)||\delta\phi||_2^2
\end{align*}
where we used Cauchy-Schwarz and the fact $\delta\phi=0$ on $\partial D(r(t))$ for each $t\in(-1,1)$. 

By  Gronwall's inequality,
\begin{align*}
\partial_t||\delta\phi(t)||_2^2 &\le C||\delta\phi(-1)||_2^2=0,
\end{align*}
therefore $\delta\phi\equiv0$ on $\bigcup_{t\in(-1,1)}D(r(t))$.

\end{proof}
With the above lemma in hand, we conclude $g_1=F^*(g_2)$ as desired. 
\end{proof}

\subsection{Proofs of Theorems \ref{local_metric_thm} and \ref{onion_thm}}

We now show a purely local result near a point on the boundary. 
\begin{figure}[h]
\centering
 \includegraphics[width=0.43\textwidth]{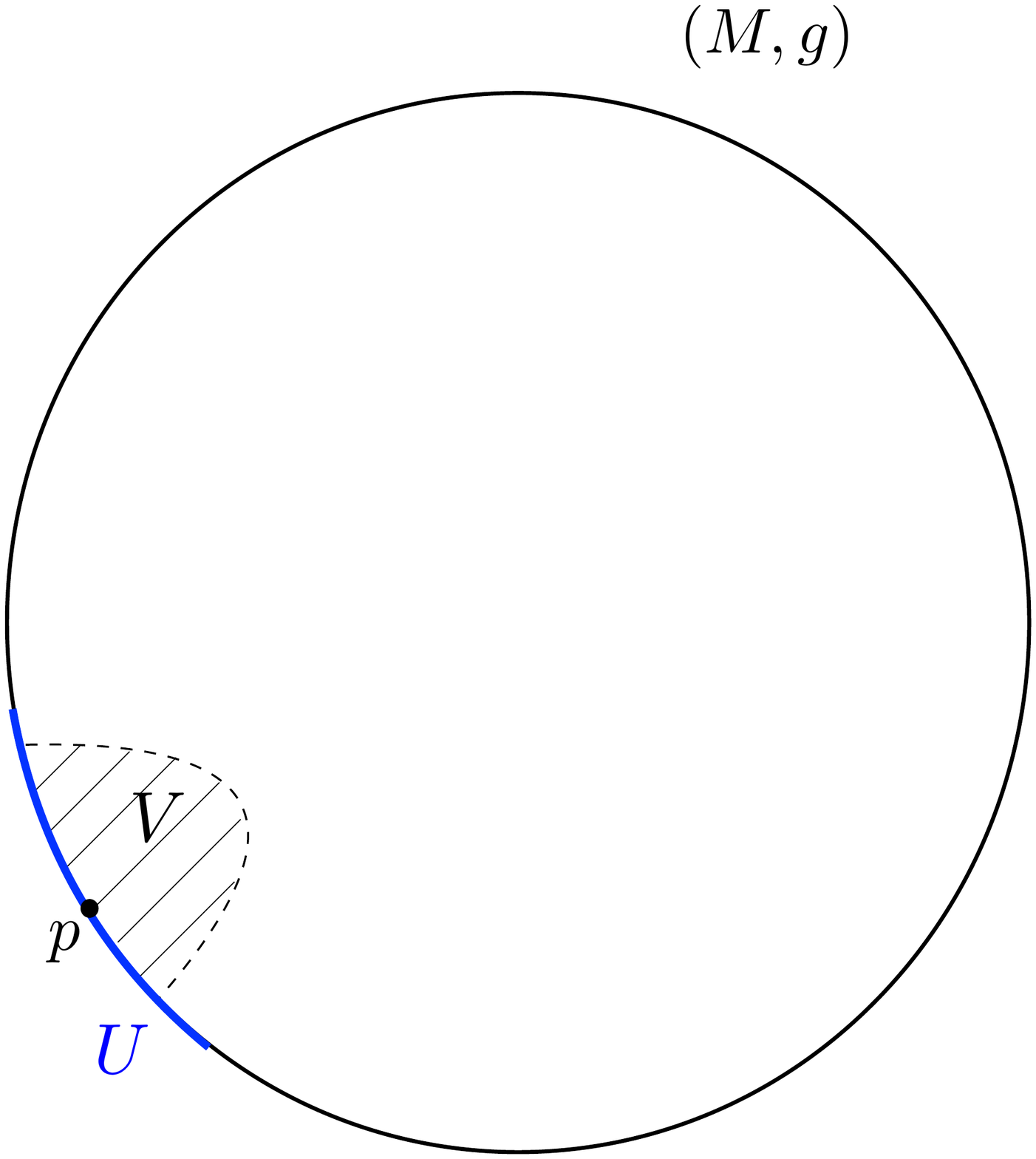}
  \caption{Neighbourhood near a point on the manifold $(M,g)$.}
\end{figure}

\begin{reptheorem}{local_metric_thm}
Let $(M,g)$ be a $C^4$-smooth, Riemannian manifold with boundary $\partial M$. Assume that $\partial M$ is both $C^4$-smooth and mean convex at $p\in \partial M$. Let $U\subset \partial M$ be a neighbourhood of $p$, and let $\{\gamma(t)\,:\,t\in(-1,1)\}$ be a foliation of $U$ by simple, closed curves which satisfy the estimates in Definition \ref{class1and2}. Suppose that $g|_{U}$ is known, and for each $\gamma(t)$ and any nearby perturbation $\gamma(s,t)\subset U$, we know the area of the properly embedded surface $Y(s,t)$ which solves the least-area problem for $\gamma(s,t)$. Then, there exists a neighbourhood $V\subset M$ of $p$ such that, up to isometries which fix the boundary, $g$ is uniquely determined on $V$.
\end{reptheorem}

\begin{proof}
Let $V\subset M$ be a neighbourhood near $p\in M$ for which we know the above area information. Further, we may choose $V$ sufficiently small so that $(V,g|_{V})$ is a $(K,\epsilon_0,\delta_0)$-thin manifold for some parameters $K,\epsilon_0,\delta_0>0$. Since $\partial M$ is mean convex at $p$, we may further choose $V$ so that $V\cap \partial M$ is mean convex. In this case, the induced foliation $Y(t)\subset V$ by properly embedded, area-minimizing curves is non-degenerate.

Applying Theorem \ref{global_metric_thm} to $(V,g|_{V})$, we may recover the metric 
$g$ near $p\in M$. 
\end{proof}

 Finally we sketch the proof of Theorem \ref{onion_thm}, which we restate below.
 
 \begin{reptheorem}{onion_thm} 
Let $(M,g)$ be a $C^4$-smooth Riemannian manifold which admits foliations from all directions, 
and let $g|_{\partial M}$ be given. Suppose that for all $p\in M$ and for each $\gamma(t,p)$ 
as in Definition \ref{foliations-all-directions}, and any nearby perturbation 
$\gamma(s,t,p)\subset \partial M$, we know the area of the properly embedded surface
 $Y(s,t,p)$ which solves the least-area problem for $\gamma(s,t,p)$.  \\

Then the knowledge of these areas uniquely determines the metric $g$ (up to isometries 
which fix the boundary).
\end{reptheorem}

\begin{proof}[Sketch of proof.] 
 
Consider two metrics $g_1, g_2$, as in the assumption of our Theorem. 
 Applying Theorem \ref{local_metric_thm} we derive that $g_1$
 restricted to 
 \[
 \bigcup_{r\in [0,\epsilon)}N(r),
 \]
 for some small $\epsilon$
is isometric to $g_2$ restricted on an open set of $M$ which has nonempty intersection with $\partial M$. 

We will then show that this is true for all $r_0<1$. This will conclude our argument.

 To do this, we can apply an open-closed argument: Let $R\subset [0,1)$ be the largest 
 connected set of  values for which $g_1$ is isometric to  a portion of $g_2$  over
 \[
 \bigcup_{r\in R}N(r). 
 \]
 We wish to show that $R=[0,1)$. The continuity of the metrics implies that $R$ is relatively 
 closed; it 
 remains to show it is open. We assume that $R=[0,r_0]$ with $r_0<1$, and  we will
  reach a contradiction. 
 
To derive the contradiction we need to show that for any point $p\in N(r_0)$ there is a 
small neighbourhood $\Omega$ 
 of $p$
 for which $g_1|_\Omega$ is isometric to a region of $(M,g_2)$ containing $p$. 
 
 Consider the minimal surface $Y(t_0,p)$ (for $g_1$) 
 for which $p\in Y(t_0,p)$ and $Y(t_0,p)$ is tangent 
 to $N(r_0)$ at $p$. 
We can employ the same strategy as in the previous theorems, considering ``tilts'' 
$Y(s,t,p)\subset M$
of the surfaces $Y(t,p)$ with $t-t_0$ suitably small 
 (see Lemma \ref{sep_lin_lem}). One difference  
are the bounds on the ``tilt'' functions $\psi_{p,i}$, $i=1,2$ (see Lemma \ref{psi-lem-thin}) 
can now have very large norms in $W^{3,p}$, in the absence 
of a thinness condition. Accordingly $g^{33}$, $|Rm|$, $|A|$ and their derivatives can be large in the relevant
norms. However 
restricting attention to $t\in [t_0, t_0+\zeta]$ for some $\zeta>0$ small enough, 
the corresponding functions $\delta\phi, \delta g^{3k}$ appearing in the analogous system 
(\ref{dgEQ1})--(\ref{meancurve3}) are now supported on 
discs of size\footnote{Here ``size'' is measured by the corresponding area.} 
$\eta>0$, where $\eta>0$ can be chosen to be arbitrarily small. 

  This (employing the argument in Lemma \ref{psido_lem}) 
  provides the invertibility of the operators $I-\mathcal{K}$
in equation (\ref{matrixdg}) and $(I-Q_0)$ in equation (\ref{psido_c1}) over 
  $Y(s,t,p)$ for $t\in [t_0, t_0+\zeta]$, with $\zeta>0$ and $|s|$ small 
  enough, yielding that $g_1, g_2$ are isometric over 
  
  \[
\left(\bigcup_{v\in [t_0,t_0+\eta)} Y(v,p)\right)\bigcap \left(\bigcup_{r\in [r_0,1)}
N(r)\right). 
 \]
This can be done for all $p\in N(r_0)$ (and the set of these points form a compact set),
 thus proving that $g_1,g_2$ are isometric up to 
\[
 \bigcup_{r\in [1,r_0+\eta_0)}N(r). 
 \]

\end{proof} 
 


\section{Appendix}\label{appendix}

   	

\subsection{Proof of Proposition \ref{C1.sees}} \label{alg-comp-g}


\begin{proof} The ``only if'' part of the statement is easy and follows by a standard perturbation argument for the minimal surface equation. Let us show that if $A(t)$ 
is $C^1$ then the area minimizing surfaces $Y(t)$ are unique for each $t\in (0,1)$. 

We argue by contradiction and assume that this is not the case. Then, there exists a $T<1$ for which two things hold: 

\begin{itemize}
\item The family of surfaces $Y(t), t\in (0,T)$ are the unique area-minimizing fill-ins 
of $\gamma(t)$ for $t<T$, 
and the surfaces $Y(t)$ converge to an area-minimizing surface $Y_{-}(T)$ which fills in 
$\gamma(T)$.

\item There exists a sequence $t_i, t_i>T$ with $t_i\to T^+$ so that some area-minimizing 
surfaces $Y(t_i)$ filling in $\gamma(t_i)$ converge to an area-minimizer 
$Y_+(T)$ filling in $\gamma(T)$ with $Y_+(T)\ne Y_{-}(T)$. 
\end{itemize}

 To see this, we observe that for some small $\epsilon>0$ the mean-convexity of the boundary  
 the fill-ins $Y(t), t<\epsilon$ are indeed unique, and thus define a foliation. We choose 
 $T$ to be the maximum value of $\tau\ge \epsilon$ for which 
 the family of surfaces $Y(t), t\in (0,\tau)$ are the unique area minimizers, and thus 
 define a $C^1$ foliation.
 
 It follows that the left limit of these surfaces $Y(t), t<\tau$ must be an area-minimizer;
 we denote this limit by $Y_{-}(T)$. The right limit is denoted similarly by $Y_{+}(T)$. Now, since $T$ was chosen to be maximal, 
 there must exist a sequence $t_i$ as described in the second requirement above. 
\medskip

Therefore, to derive our claim we need to show that 
\[
Y_{+}(T)\bigcap Y_{-}(T)=\emptyset.
\] 
This will hold if: 

\begin{equation}
\label{cont}
Y_{+}(T)\bigcap (\bigcup_{t<T}Y(t))=\emptyset.
\end{equation}
Equation \eqref{cont} follows by the maximum principle, using the fact that $Y(t)$, $t<T$ is a foliation. 
 \medskip
 
 Now, let $\nu_{-}$ be the inward pointing unit normal vector field along $\gamma(T)$ that is tangent to $Y_{-}(T)$, and $\nu_{+}$ the inward pointing unit normal vector field along $\gamma(T)$ that is tangent to $Y_{+}(T)$.
 Using the Hopf maximum principle, 
 we know that $\nu_{+}$ is transverse everywhere to $Y_{-}(T)$ and points above 
 $Y_{-}(T)$. 
 
 Let us now derive  a contradiction to the assumption that $A(t)$ is $C^1$-smooth in $t$. 
 On the one hand, using the first variation of area formula for minimal-surfaces, the left derivative of $A(t)$ at $t=T$ must equal:
 \[
 \int_{\gamma(T)} \nu_{-}\cdot\dot{\gamma}(T).
 \]
 Here we note that the vector $\dot{\gamma}(T)$ is tangent to $\partial M$, and captures the first variation in the foliation $\{\gamma(t)\,:\, t\in(-1,1)\}$ at $t=T$. This follows from the first property of $\gamma(T)$.
 From the second property of $\gamma(T)$ we find that the lim-sup of the 
right derivative of $A(t)$ at $T$ must be bounded below 
by  
  \[
 \int_{\gamma(T)} \nu_{+}\cdot\dot{\gamma}(T).
 \]
 Since the integral
 \[
 \int_{\gamma(T)}  (\nu_{-}-\nu_{+})\cdot\dot{\gamma}(T)
 \]
 is never zero, we have derived that 
 $A(t)$ cannot be differentiable at $t=T$ and we have derived our contradiction.  
 \end{proof}

\subsection{Algebraic Relationships Between the Components of $g$ and $g^{-1}$} \label{alg-comp-g}

Let $(x^\alpha)$, $\alpha =1,2,3$ be a local coordinate system on a Riemannian manifold $(M,g)$ such that in these coordinates the metric $g$ takes the form
\begin{align*}
g&= \begin{pmatrix} e^{2\phi} & 0 & g_{13}\\
                                   0 & e^{2\phi} & g_{23}\\
                                   g_{31} & g_{32} & g_{33}
                                   \end{pmatrix},
 \end{align*}
 where the functions $g_{13}=g_{31}$  and $g_{23}=g_{32}$.                                
Then, simple cofactor expansion gives
\begin{align*}
g^{-1}&:= \begin{pmatrix} g^{11} & g^{12} &g^{13}\\
                                   g^{21} & g^{22} & g^{23}\\
                                   g^{31} & g^{32} & g^{33}
                                   \end{pmatrix}\\
          &= -\det(g^{-1})\begin{pmatrix} \frac{e^{2\phi}g_{33} -(g_{32})^2}{e^{2\phi}} & \frac{g_{31}g_{32}}{e^{2\phi}} & -g_{13}\\
                                   \frac{g_{31}g_{32}}{e^{2\phi}} & \frac{e^{2\phi}g_{33} -(g_{31})^2}{e^{2\phi}} & -g_{23}\\
                                   -g_{31} & -g_{32} & e^{2\phi}
                                   \end{pmatrix}.
 \end{align*} 
Thus we have the following relationships:
\begin{align*}
\det(g^{-1})&= -\frac{e^{-2\phi}}{g^{33}},\\
g_{31} &= \frac{g^{31}}{\det(g^{-1})} = -\frac{g^{31}}{g^{33}}e^{2\phi},\\
g_{32} &= \frac{g^{32}}{\det(g^{-1})} = -\frac{g^{32}}{g^{33}}e^{2\phi}.
\end{align*}
Now, the determinant of $g^{-1}$ is
\begin{align*}
\det(g^{-1}) &= \det(g)^{-1}\\
&= \left[e^{2\phi}g_{33}+(g_{31})^2+(g_{32})^2\right]^{-1}.
\end{align*}
So 
\begin{align*}
g^{33}&= -\det(g^{-1})e^{-2\phi}\\
&=\frac{e^{-2\phi}}{\left[e^{2\phi}g_{33}+(g_{31})^2+(g_{32})^2\right]},
\end{align*}
and manipulating the above we obtain an expression for $g_{33}$ in terms of $g^{31},g^{32}$ and $\phi$:
\begin{align*}
g_{33} &= -\frac{1}{g^{33}} -\frac{(g_{31})^2+(g_{32})^2}{(g^{33})^2}e^{2\phi}.
\end{align*}

Therefore, we derive the following expressions for the components of $g^{-1}$ in the $\partial_1,\partial_2$ directions in terms of the functions $g^{33},g^{31},g^{32}$ and $\phi$:
\begin{align*}
g^{12} &=g^{21}= \frac{g^{31}g^{32}}{g^{33}},\\ 
g^{11}&=-e^{-2\phi}-\frac{(g^{13})^2+2(g^{23})^2}{g^{33}},\\
g^{22}&=-e^{-2\phi}-\frac{2(g^{13})^2+(g^{23})^2}{g^{33}}.
\end{align*}

Lastly, we may compute
\begin{align*}
e^{-2\phi}\left[g_{31}\partial_1g^{33}+g_{32}\partial_2g^{33}\right]&= e^{-2\phi}\left[-\frac{g^{31}}{g^{33}}e^{2\phi}\partial_1g^{33}-\frac{g^{32}}{g^{33}}e^{2\phi}\partial_2g^{33}\right]\\
&= -g^{31}\partial_1\log(g^{33})-g^{32}\partial_2\log(g^{33}).
\end{align*}

%
%




\addcontentsline{toc}{section}{Bibliography}




\end{document}